\numberwithin{equation}{section}
\begin{document}
\theoremstyle{plain}
\newtheorem{thm}{Theorem}[section]
\newtheorem{theorem}[thm]{Theorem}
\newtheorem{lemma}[thm]{Lemma}
\newtheorem{corollary}[thm]{Corollary}
\newtheorem{proposition}[thm]{Proposition}
\newtheorem{conjecture}[thm]{Conjecture}
\theoremstyle{definition}
\newtheorem{remark}[thm]{Remark}
\newtheorem{remarks}[thm]{Remarks}
\newtheorem{definition}[thm]{Definition}
\newtheorem{example}[thm]{Example}

\newcommand{\caA}{{\mathcal A}}
\newcommand{\caB}{{\mathcal B}}
\newcommand{\caC}{{\mathcal C}}
\newcommand{\caD}{{\mathcal D}}
\newcommand{\caE}{{\mathcal E}}
\newcommand{\caF}{{\mathcal F}}
\newcommand{\caG}{{\mathcal G}}
\newcommand{\caH}{{\mathcal H}}
\newcommand{\caI}{{\mathcal I}}
\newcommand{\caJ}{{\mathcal J}}
\newcommand{\caK}{{\mathcal K}}
\newcommand{\caL}{{\mathcal L}}
\newcommand{\caM}{{\mathcal M}}
\newcommand{\caN}{{\mathcal N}}
\newcommand{\caO}{{\mathcal O}}
\newcommand{\caP}{{\mathcal P}}
\newcommand{\caQ}{{\mathcal Q}}
\newcommand{\caR}{{\mathcal R}}
\newcommand{\caS}{{\mathcal S}}
\newcommand{\caT}{{\mathcal T}}
\newcommand{\caU}{{\mathcal U}}
\newcommand{\caV}{{\mathcal V}}
\newcommand{\caW}{{\mathcal W}}
\newcommand{\caX}{{\mathcal X}}
\newcommand{\caY}{{\mathcal Y}}
\newcommand{\caZ}{{\mathcal Z}}
\newcommand{\fA}{{\mathfrak A}}
\newcommand{\fB}{{\mathfrak B}}
\newcommand{\fC}{{\mathfrak C}}
\newcommand{\fD}{{\mathfrak D}}
\newcommand{\fE}{{\mathfrak E}}
\newcommand{\fF}{{\mathfrak F}}
\newcommand{\fG}{{\mathfrak G}}
\newcommand{\fH}{{\mathfrak H}}
\newcommand{\fI}{{\mathfrak I}}
\newcommand{\fJ}{{\mathfrak J}}
\newcommand{\fK}{{\mathfrak K}}
\newcommand{\fL}{{\mathfrak L}}
\newcommand{\fM}{{\mathfrak M}}
\newcommand{\fN}{{\mathfrak N}}
\newcommand{\fO}{{\mathfrak O}}
\newcommand{\fP}{{\mathfrak P}}
\newcommand{\fQ}{{\mathfrak Q}}
\newcommand{\fR}{{\mathfrak R}}
\newcommand{\fS}{{\mathfrak S}}
\newcommand{\fT}{{\mathfrak T}}
\newcommand{\fU}{{\mathfrak U}}
\newcommand{\fV}{{\mathfrak V}}
\newcommand{\fW}{{\mathfrak W}}
\newcommand{\fX}{{\mathfrak X}}
\newcommand{\fY}{{\mathfrak Y}}
\newcommand{\fZ}{{\mathfrak Z}}

\newcommand{\bA}{{\mathbb A}}
\newcommand{\bB}{{\mathbb B}}
\newcommand{\bC}{{\mathbb C}}
\newcommand{\bD}{{\mathbb D}}
\newcommand{\bE}{{\mathbb E}}
\newcommand{\bF}{{\mathbb F}}
\newcommand{\bG}{{\mathbb G}}
\newcommand{\bH}{{\mathbb H}}
\newcommand{\bI}{{\mathbb I}}
\newcommand{\bJ}{{\mathbb J}}
\newcommand{\bK}{{\mathbb K}}
\newcommand{\bL}{{\mathbb L}}
\newcommand{\bM}{{\mathbb M}}
\newcommand{\bN}{{\mathbb N}}
\newcommand{\bO}{{\mathbb O}}
\newcommand{\bP}{{\mathbb P}}
\newcommand{\bQ}{{\mathbb Q}}
\newcommand{\bR}{{\mathbb R}}
\newcommand{\bT}{{\mathbb T}}
\newcommand{\bU}{{\mathbb U}}
\newcommand{\bV}{{\mathbb V}}
\newcommand{\bW}{{\mathbb W}}
\newcommand{\bX}{{\mathbb X}}
\newcommand{\bY}{{\mathbb Y}}
\newcommand{\bZ}{{\mathbb Z}}
\newcommand{\id}{{\rm id}}

\title[New structures for colored HOMFLY-PT invariants]{New structures for colored HOMFLY-PT invariants}
\author[Shengmao Zhu]{Shengmao Zhu}
\address{Department of Mathematics \\ Zhejiang International Studies
University}
\address{Center of Mathematical Sciences \\ Zhejiang University
\\Hangzhou, 310027, China }
\email{shengmaozhu@126.com, szhu@zju.edu.cn}

\maketitle

\begin{abstract}
In this paper, we present several new structures for the colored
HOMFLY-PT invariants of framed links. First, we prove the strong
integrality property for the normalized colored HOMFLY-PT invariants
by purely using the HOMFLY-PT skein theory developed by H. Morton
and his collaborators. By this strong integrality property, we
immediately obtain several symmetric properties for the full colored
HOMFLY-PT invariants of links. Then, we apply our results to refine
the mathematical structures appearing in the
Labastida-Mari\~no-Ooguri-Vafa (LMOV) integrality conjecture for
framed links. As another application of the strong integrality, we
obtain that the $q=1$ and $a=1$ specializations of the normalized
colored HOMFLY-PT invariant are well-defined link polynomials. We
find that a conjectural formula for the colored Alexander polynomial
which is the $a=1$ specialization of the normalized colored
HOMFLY-PT invariant implies that a special case of the LMOV
conjecture for frame knot holds.
\end{abstract}

\tableofcontents

\section{Introduction}
\subsection{Colored HOMFLY-PT invariants}
\label{subsection-coloredhom} Colored HOMFLY-PT invariant is an
invariant of framed oriented links in $S^3$ whose components are
colored with partitions. It can be viewed as a generalization of the
HOMFLY-PT polynomial which was first discovered by Freyd-Yetter,
Hoste, Lickorish-Millet, Ocneanu \cite{FYHLMO85} and
Przytycki-Traczyk \cite{PT87}.  Based on Turaev's work
\cite{Tu88-2}, the HOMFLY-PT polynomial can be derived from the
quantum group invariant labeled with the fundamental representation
of the quantum group $U_q(sl_N)$. More generally, the colored
HOMFLY-PT invariants can be  constructed by using the quantum group
invariants labeled with arbitrary irreducible representations of
$U_q(sl_N)$ which are labeled by partitions. We refer to
\cite{LaMa02,LiZh10} for detailed definition of the colored
HOMFLY-PT invariants along this way.

Historically, the quantum group approach to the colored HOMFLY-PT
invariant originates from the $SU(N)$ Chern-Simons theory due to E.
Witten \cite{Witten89}. The large $N$ duality between the $SU(N)$
Chern-Simons theory and topological string theory established in
\cite{Witten95,GV99,OV00} makes the colored HOMFLY-PT invariant
become a central object in geometry and physics. There have been
spectacular developments on colored HOMFLY-PT invariant and its
related topics over the years. For examples,
 the dependence of the colors is governed by recurrence relations which was proposed in \cite{AV12}, and proved in \cite{GLL18}.
Moreover, these recurrence relations are conjectured to have deep
relationships to the augmentation polynomials in knot contact
homology \cite{AV12,AENV13}. The relationship to the topological
string theory connects the colored HOMFLY-PT invariants to
Gromov-Witten theory \cite{MV02,LLZ03,LLZ07,DSV13}, Hilbert schemes
\cite{OS12}, stable pair invariants \cite{DHS12,Ma16}, quiver
varieties \cite{LuoZhu16,KRSS17-1,KRSS17-2,PSS18,SW20-1,SW20-2},
knot contact homology \cite{AV12,AENV13,EN18,ES21} etc. There also
have being a lot of works devoted to the computations of the colored
HOMFLY-PT invariants, see
\cite{LiZh10,IMMM12,NPZ12,NRZ13,GJ15,Ka16,Wed19} and references
therein.

On the other hand side, it is well-known that the HOMFLY-PT
polynomial satisfies the fantastic skein relations which provide a
combinatorial way to study them. From this point of view, Morton and
his collaborators
\cite{Mo93,Ai96,Ai97-1,Ai97-2,MA98,Mo02,HaMo06,Mo07,MoMa08,MoSa17}
developed a systematic method, referred as HOMFLY-PT skein theory in
this paper, to investigate the colored HOMFLY-PT invariants. The
equivalence of the two approaches to colored HOMFLY-PT invariants
via quantum group theory and HOMFLY-PT skein theory are presented in
\cite{Ai96,Lu01}, we also refer the reader to the appendix Section
\ref{section-Appendix} for an explanation of this equivalence.

Comparing to the quantum group theory, HOMFLY-PT skein theory is
more intuitional and straightforward. Therefore, in the paper, we
mainly use the HOMFLY-PT skein theory as our tool to derive several
new structures for colored HOMFLY-PT invariants.

\subsubsection{Strong integrality}
Let $\mathcal{L}$ be a framed oriented link with $L$ components
$\mathcal{K}_\alpha$ for $\alpha=1,...,L$ \footnote{In the
following, when we mention a link $\mathcal{L}$, we always assume it
is a framed oriented link with $L$ components given by
$\mathcal{K}_\alpha$ for $\alpha=1,...,L$.}, given
$\vec{\lambda}=(\lambda^1,...,\lambda^L)$ and
$\vec{\mu}=(\mu^1,...,\mu^L)$, where each $\lambda^\alpha$ (resp.
$\mu^{\alpha}$) for $\alpha=1,..,L$ denotes a partition of a
positive integer, the framed full colored HOMFLY-PT invariant of
$\mathcal{L}$ colored by $[\vec{\lambda},\vec{\mu}]$ is given by
\begin{align}
\mathcal{H}_{[\vec{\lambda},\vec{\mu}]}(\mathcal{L};q,a)=
\mathcal{H}(\mathcal{L}\star
\otimes_{\alpha=1}^LQ_{\lambda^\alpha,\mu^{\alpha}}),
\end{align}
where $\mathcal{H}(\mathcal{L}\star
\otimes_{\alpha=1}^LQ_{\lambda^\alpha,\mu^{\alpha}})$ denotes the
framed HOMFLY-PT polynomial of the link $\mathcal{L}$ decorated by
 $\otimes_{\alpha=1}^LQ_{\lambda^\alpha,\mu^{\alpha}}$, where each $%
Q_{\lambda^\alpha,\mu^{\alpha}}$ denotes the skein basis element in
the full skein $\mathcal{C}$ of the annulus, we recall their
definitions in Section \ref{Section-homflyptskein}. In particular,
when all the $\mu^{\alpha}$ is equal to the empty partition
$\emptyset$, we write
$Q_{\lambda^{\alpha}}=Q_{\lambda^{\alpha},\mu^{\alpha}}$ for
$\alpha=1,...,L$, and denote
$\mathcal{H}_{\vec{\lambda}}(\mathcal{L};q,a)=
\mathcal{H}(\mathcal{L}\star
\otimes_{\alpha=1}^LQ_{\lambda^\alpha})$. Then we obtain the
standard (framing independent) colored HOMFLY-PT invariant for
$\mathcal{L}$ (e.g. the definitions in \cite{LiZh10,Zhu13})
\begin{align}
W_{\vec{\lambda}}(\mathcal{L};q,a)=q^{-\sum_{\alpha=1}^L\kappa_{\lambda^{\alpha}}
w(\mathcal{K}_{\alpha})}a^{-\sum_{\alpha=1}^L|\lambda^{\alpha}|w(\mathcal{K}_{\alpha})}\mathcal{H}_{\vec{\lambda}}(\mathcal{L};q,a).
\end{align}

When $\mathcal{L}$ has only one component, i.e. it is a knot which
is usually denoted by $\mathcal{K}$.  We define the {\em normalized
framed full colored HOMFLY-PT invariants} of $\mathcal{K}$ as follow
\begin{align}
\mathcal{P}_{[\lambda,\mu]}(\mathcal{K};q,a)=\frac{\mathcal{H}_{[\lambda,\mu]}(\mathcal{K};q,a)}{\mathcal{H}_{[\lambda,\mu]}(U;q,a)}
\end{align}
where $U$ denotes the unknot throughout this paper. Then we have the
following strong integrality property for
$\mathcal{P}_{[\lambda,\mu]}(\mathcal{K};q,a)$.
\begin{theorem} \label{Theorem-strongintegral1}
For any framed knot $\mathcal{K}$, the normalized framed full
colored HOMFLY-PT invariant satisfies
\begin{align} \label{formula-strongintegrality0}
\mathcal{P}_{[\lambda,\mu]}(\mathcal{K};q,a)\in
a^{\epsilon}\mathbb{Z}[q^{\pm 2},a^{\pm 2}].
\end{align}
where $\epsilon\in \{0,1\}$ is determined by the following formula
\begin{align}
(|\lambda|+|\mu|)w(\mathcal{K})&\equiv \epsilon \mod 2.
\end{align}
where $w(\mathcal{K})$ is the writhe number of $\mathcal{K}$.
\end{theorem}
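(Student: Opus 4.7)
The plan is to reduce the composite-coloring case $[\lambda,\mu]$ to the already-understood single-partition integrality via a skein-theoretic product expansion, and then track parities through the unknot normalization.

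First, I would exploit the Hadji--Morton product structure of the full annulus skein $\mathcal{C}$. The Littlewood-type identity
\[
Q_\nu \cdot \bar{Q}_\rho \;=\; \sum_{\sigma,\tau} N^{\nu,\rho}_{\sigma,\tau}\, Q_{\sigma,\tau}
\]
(with $\bar{Q}_\rho$ the reverse-oriented basis element and $N^{\nu,\rho}_{\sigma,\tau}\in\mathbb{Z}$) is unitriangular in an appropriate partial order, so it can be inverted to write
\[
Q_{\lambda,\mu} \;=\; \sum_{\nu,\rho} c^{\lambda,\mu}_{\nu,\rho}\, Q_\nu\cdot\bar{Q}_\rho,
\qquad c^{\lambda,\mu}_{\nu,\rho}\in\mathbb{Z},
\]
with only finitely many nonzero terms, satisfying $|\nu|+|\rho|\le|\lambda|+|\mu|$ and the crucial parity constraint $|\nu|+|\rho|\equiv|\lambda|+|\mu|\pmod{2}$ forced by the inversion.

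Next, for each product term the decorated invariant $\mathcal{H}(\mathcal{K}\star(Q_\nu\cdot\bar{Q}_\rho))$ is precisely the framed HOMFLY-PT polynomial of the two-component cable of $\mathcal{K}$ obtained by taking two framing-parallel push-offs, colored by $\nu$ and $\rho$ respectively, with the second component's orientation reversed. To this link I would apply the existing strong integrality for normalized colored HOMFLY-PT invariants of links with single-partition colorings (in the skein-theoretic form of Morton--Aiston and subsequently strengthened by Liu--Peng); this gives that the ratio with the appropriate product of unknot values belongs to $a^{\epsilon_{\nu,\rho}}\mathbb{Z}[q^{\pm 2}, a^{\pm 2}]$ with $\epsilon_{\nu,\rho}\equiv(|\nu|+|\rho|)w(\mathcal{K})\pmod{2}$.

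Finally, I would verify the compatible factorization on the unknot, namely an Aiston--Morton-type closed formula expressing $\mathcal{H}_{[\lambda,\mu]}(U;q,a)$ through composite Schur functions in such a way that it ``factors'' along the same expansion as in the numerator, so that the denominator cancels term-by-term up to a common integrality factor. The parity assertion then drops out from the congruence $|\nu|+|\rho|\equiv|\lambda|+|\mu|\pmod{2}$ combined with the writhe correction $a^{(|\nu|+|\rho|)w(\mathcal{K})}$ absorbed into each term. The main obstacle is precisely this last step: a priori, division by the single denominator $\mathcal{H}_{[\lambda,\mu]}(U)$ is \emph{not} compatible with the term-by-term skein decomposition above, and naively yields only rational functions in $(q,a)$. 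Recovering integrality requires a clean common-factor analysis of the composite quantum dimension; once this is in place, the integrality of each $\mathcal{P}_\nu$ factor and the $q^{\pm 2},a^{\pm 2}$ lattice propagate to $\mathcal{P}_{[\lambda,\mu]}(\mathcal{K})$, completing the argument.
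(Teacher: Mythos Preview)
Your proposal has a genuine gap, and you have correctly identified it yourself: the division by the single denominator $\mathcal{H}_{[\lambda,\mu]}(U;q,a)$ is incompatible with your term-by-term expansion. Each summand $\mathcal{H}(\mathcal{K}\star Q_\nu\bar{Q}_\rho)$, after normalizing by \emph{its own} unknot value $\mathcal{H}_\nu(U)\mathcal{H}_\rho(U)$, may indeed be integral by the link result you cite, but the ratios $\mathcal{H}_\nu(U)\mathcal{H}_\rho(U)/\mathcal{H}_{[\lambda,\mu]}(U)$ are genuinely non-integral rational functions in $q,a$, and there is no ``common-factor analysis'' that repairs this. The composite quantum dimension $\mathcal{H}_{[\lambda,\mu]}(U)$ does not divide the individual product quantum dimensions, so the argument stalls exactly where you say it does.

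The paper avoids division altogether by recognizing that $\mathcal{P}_{[\lambda,\mu]}(\mathcal{K};q,a)$ is the \emph{eigenvalue} of the skein endomorphism $T_{\mathcal{K}}:\mathcal{C}\to\mathcal{C}$ on the eigenvector $Q_{\lambda,\mu}$ (the meridian map forces $T_{\mathcal{K}}$ to be diagonal in this basis). Since $Q_{\lambda,\mu}=Q_\lambda Q_\mu^*+W$ with $W\in\mathcal{C}^{(|\lambda|-1,|\mu|-1)}$, that eigenvalue can be read off as the coefficient of $Q_\lambda Q_\mu^*$ in $T_{\mathcal{K}}(Q_\lambda Q_\mu^*)$ modulo $\mathcal{C}^{(|\lambda|-1,|\mu|-1)}$. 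This in turn is computed by resolving the $(n,m)$-parallel $T_{n,m}(\mathcal{K})$ of the $1$-tangle in the totally-descending basis of $H_{n,m}(q,a)$, keeping only the $k=0$ part $T^{(0)}_{n,m}(\mathcal{K})=\sum_{\pi,\rho}c_{\pi,\rho}(\mathcal{K})\,\omega_\pi\otimes\omega_\rho^*$, and then using the idempotent formula $y_\lambda\omega_\pi y_\lambda=(\pm q^{\,\cdot})y_\lambda$ to extract scalars $c(\pi,\lambda)\in q^{l(\pi)}\mathbb{Z}[q^2]$. The eigenvalue is then the finite sum $\sum c_{\pi,\rho}(\mathcal{K})\,c(\pi,\lambda)c(\rho,\mu)$, and a careful parity bookkeeping on $c_{\pi,\rho}(\mathcal{K})$ (tracking $l(\pi)+l(\rho)$ for the $q$-power and $(n+m)w(\mathcal{K})$ for the $a$-power through the skein resolution) yields the claimed membership in $a^\epsilon\mathbb{Z}[q^{\pm2},a^{\pm2}]$. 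No unknot value is ever inverted.
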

In particular, the {\em normalized colored HOMFLY-PT invariant}
$P_{\lambda}(\mathcal{K};q,a)$ of $\mathcal{K}$, defined as
$P_{\lambda}(\mathcal{K};q,a)=\frac{W_{\lambda}(\mathcal{K};q,a)}{W_{\lambda}(U;q,a)}$,
satisfies
\begin{align} \label{formula-strongcolorhomfly}
P_{\lambda}(\mathcal{K};q,a)\in \mathbb{Z}[q^2,a^2].
\end{align}

More generally, for a link $\mathcal{L}$ with $L$ components, one
can define the {\em $\alpha$-normalized framed full colored
HOMFLY-PT invariants} for $\alpha=1,...,L$ as follow. We only
decorate the $\alpha$-component $\mathcal{K}_{\alpha}$ by
$Q_{\lambda^{\alpha},\mu^{\alpha}}$, and keep the other components
invariant, we denote the corresponding colored HOMFLY-PT invariant
of $\mathcal{L}$ by
$\mathcal{H}_{[\lambda^{\alpha},\mu^{\alpha}]}(\mathcal{L};q,a)$.
Then the $\alpha$-normalized colored framed full HOMFLY-PT invariant
of $\mathcal{L}$ is given by
\begin{align} \label{formula-tauframing0}
\mathcal{P}_{[\lambda^{\alpha},\mu^{\alpha}]}(\mathcal{L};q,a)=
\frac{\mathcal{H}_{[\lambda^{\alpha},\mu^{\alpha}]}(\mathcal{L};q,a)}{\mathcal{H}_{[\lambda^\alpha,\mu^\alpha]}(U;q,a)}.
\end{align}

We also have the following strong integrality theorem.
\ref{Section-strongintegrality}.
\begin{theorem} \label{Theorem-strongintegral2}
For any framed link $\mathcal{L}$ with $L$ components
$\mathcal{K}_{\alpha}$ for $\alpha=1,...,L$, the $\alpha$-normalized
framed full colored HOMFLY-PT invariants satisfies
\begin{align}  \label{formula-strongintegrality1}
\mathcal{P}_{[\lambda^{\alpha},\mu^{\alpha}]}(\mathcal{L};q,a)\in
a^{\epsilon}\mathbb{Z}[q^{\pm 2},a^{\pm 2}].
\end{align}
where $\epsilon\in \{0,1\}$ which is determined by the following
formula
\begin{align}
(|\lambda^\alpha|+|\mu^\alpha|)w(\mathcal{K}_{\alpha})+\sum_{\beta\neq
\alpha}w(\mathcal{K}_\beta) +L-1 &\equiv \epsilon \mod 2.
\end{align}
\end{theorem}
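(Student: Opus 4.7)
The plan is to reduce Theorem \ref{Theorem-strongintegral2} to the knot case (Theorem \ref{Theorem-strongintegral1}) by repeating its skein-theoretic argument and carefully accounting for the $L-1$ undecorated components.

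I would first expand $Q_{\lambda^\alpha,\mu^\alpha}\in\mathcal{C}$ in the same basis of the full annular skein used in the proof of Theorem \ref{Theorem-strongintegral1}. That proof already provides an integer-structured expansion witnessing the strong integrality for the corresponding decorated-knot invariants, and the same expansion can be inserted along $\mathcal{K}_\alpha$ inside the link $\mathcal{L}$ without disturbing the other components. By linearity of the HOMFLY-PT skein functional, $\mathcal{H}_{[\lambda^\alpha,\mu^\alpha]}(\mathcal{L};q,a)$ becomes a $\mathbb{Z}$-linear combination of ordinary framed HOMFLY-PT polynomials of auxiliary framed links obtained from $\mathcal{L}$ by replacing the annular neighborhood of $\mathcal{K}_\alpha$ with a fixed cabling pattern while preserving $\mathcal{K}_\beta$ for $\beta\neq\alpha$.

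Next, I would invoke the classical integrality of the framed HOMFLY-PT polynomial of a multi-component framed link: for an $L$-component framed link $\mathcal{L}$,
\begin{align*}
\mathcal{H}(\mathcal{L};q,a)\in a^{\sum_{\beta}w(\mathcal{K}_\beta)}\left(\frac{a-a^{-1}}{q-q^{-1}}\right)^{L}\mathbb{Z}[q^{\pm 2},a^{\pm 2}],
\end{align*}
which follows directly from the skein relation $a\mathcal{H}(L_+)-a^{-1}\mathcal{H}(L_-)=(q-q^{-1})\mathcal{H}(L_0)$ together with the known values on an unlink. After normalizing by $\mathcal{H}_{[\lambda^\alpha,\mu^\alpha]}(U;q,a)$, whose structure over $\mathbb{Z}[q^{\pm 2},a^{\pm 2}]$ carries exactly one factor of $(a-a^{-1})/(q-q^{-1})$ (since $U$ is a single unknot), the remaining $L-1$ factors of $(a-a^{-1})/(q-q^{-1})$ in the numerator each contribute an odd power of $a$, and the framing factors $a^{\sum_{\beta\neq\alpha}w(\mathcal{K}_\beta)}$ add their own parity. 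Together with the knot-case contribution $(|\lambda^\alpha|+|\mu^\alpha|)w(\mathcal{K}_\alpha)$ already handled in Theorem \ref{Theorem-strongintegral1}, this produces exactly the stated parity $\epsilon$.

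The main obstacle will be verifying that the $(q-q^{-1})^{-1}$ denominator structure balances exactly, i.e., that the expansion of $Q_{\lambda^\alpha,\mu^\alpha}$ used in Theorem \ref{Theorem-strongintegral1} interacts correctly with the multi-component integrality formula above so that the denominators cancel after normalization. An equivalent and perhaps cleaner route is to perform $L-1$ band sums merging each $\mathcal{K}_\beta$ into $\mathcal{K}_\alpha$ (away from the decoration region) and applying the skein relation at each new crossing, so that $\mathcal{H}_{[\lambda^\alpha,\mu^\alpha]}(\mathcal{L};q,a)$ is expressed as a $\mathbb{Z}[a^{\pm 1},(q-q^{-1})^{-1}]$-combination of decorated-knot invariants; Theorem \ref{Theorem-strongintegral1} then applies term-by-term, and the $L-1$ band-sum denominators are absorbed by the normalization, yielding the parity $\epsilon$ directly.
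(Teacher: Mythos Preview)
Your proposal has a genuine gap: neither route actually establishes the \emph{integrality} of the quotient, only (at best) the parity of the $a$-degree.

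First, the displayed ``classical integrality'' formula is false. An $L$-component framed link does not in general carry $(a-a^{-1})^L$ as a factor; the Lickorish--Millett structure (cf.\ the paper's \cite{LiMi87} and formula \eqref{homflyexpansion}) gives only $\mathcal{H}(\mathcal{L};q,a)\in z^{-L}\mathbb{Z}[z,a^{\pm 1}]$, with a single factor of $(a-a^{-1})$ coming from the unknot normalization. The Hopf link already fails your formula. Consequently the ``$L-1$ extra factors of $(a-a^{-1})/(q-q^{-1})$'' that you rely on to produce the $L-1$ in the parity are simply not there, and more seriously there is nothing to cancel the denominators in $\mathcal{H}_{[\lambda^\alpha,\mu^\alpha]}(U;q,a)$.

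Second, the band-sum route does not work either: a band sum between $\mathcal{K}_\beta$ and the satellite $\mathcal{K}_\alpha\star Q_{\lambda^\alpha,\mu^\alpha}$ does not produce a knot decorated by $Q_{\lambda^\alpha,\mu^\alpha}$, so Theorem~\ref{Theorem-strongintegral1} does not apply term by term. And the normalization $\mathcal{H}_{[\lambda^\alpha,\mu^\alpha]}(U;q,a)$ carries no $z^{-(L-1)}$ to absorb anything.

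The key idea you are missing, and which the paper uses, is the \emph{eigenvalue} interpretation: cut $\mathcal{K}_\alpha$ open to obtain a $1$-tangle that now contains the $L-1$ other components as closed curves, and observe that $\mathcal{P}_{[\lambda^\alpha,\mu^\alpha]}(\mathcal{L};q,a)$ is the eigenvalue of the induced map $T_{\mathcal{K}_\alpha}^{\mathcal{L}}:\mathcal{C}\to\mathcal{C}$ on $Q_{\lambda^\alpha,\mu^\alpha}$. The proof of Theorem~\ref{Theorem-strongintegral1} never ``expands $Q_{\lambda,\mu}$ in a basis''; it computes this eigenvalue as the top coefficient $C(\lambda,\mu)=\sum_{\pi,\rho}c_{\pi,\rho}(\mathcal{K})c(\pi,\lambda)c(\rho,\mu)$ via the refined resolution of the cabled $1$-tangle (Theorem~\ref{Theorem-cpirho} and Proposition~\ref{Proposition-ylambda}), and it is this coefficient formula that gives integrality directly without ever dividing. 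For the link case one repeats exactly this argument; the only change is that the $1$-tangle $T_{n,m}$ now has $L-1$ closed components, so in Lemmas~\ref{Lemma-z} and~\ref{Lemma-a} one has $L(T_{n,m})=L-1$ and $w(T_{n,m})=(n+m)^2 w(\mathcal{K}_\alpha)+\sum_{\beta\neq\alpha}w(\mathcal{K}_\beta)+2(n+m)\sum_{\beta\neq\alpha}\mathrm{lk}(\mathcal{K}_\alpha,\mathcal{K}_\beta)$, which is precisely where the extra $\sum_{\beta\neq\alpha}w(\mathcal{K}_\beta)+(L-1)$ in $\epsilon$ comes from.
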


As a consequence of Theorem \ref{Theorem-strongintegral2}, we have
\begin{corollary} \label{corollary-symmetries}
For any framed link $\mathcal{L}$, the framed full colored HOMFLY-PT
invariants
$\mathcal{H}_{[\vec{\lambda},\vec{\mu}]}(\mathcal{L};q,a)$ satisfies
the following symmetric properties
\begin{align}
\mathcal{H}_{[\vec{\lambda},\vec{\mu}]}(\mathcal{L};-q,a)&=(-1)^{\sum_{\alpha=1}^L(|\lambda^{\alpha}|+|\mu^{\alpha}|)}
\mathcal{H}_{[\vec{\lambda},\vec{\mu}]}(\mathcal{L};q,a)\\
\mathcal{H}_{[\vec{\lambda},\vec{\mu}]}(\mathcal{L};q,-a)&=(-1)^{\sum_{\alpha=1}^L(|\lambda^{\alpha}|+|\mu^{\alpha}|)(w(\mathcal{K}_{\alpha})+1)}
\mathcal{H}_{[\vec{\lambda},\vec{\mu}]}(\mathcal{L};q,a)\\
\mathcal{H}_{[\vec{\lambda},\vec{\mu}]}(\mathcal{L};q^{-1},a)&=(-1)^{\sum_{\alpha=1}^L(|\lambda^{\alpha}|+|\mu^{\alpha}|)}\mathcal{H}_{[\vec{\lambda}^\vee,\vec{\mu}^\vee]}(\mathcal{L};q,a).
\end{align}
\end{corollary}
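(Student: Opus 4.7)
The plan is to derive all three symmetries by factoring the fully colored invariant as a product of unknot factors and a polynomial factor (using Theorem \ref{Theorem-strongintegral2}), and then combining the known symmetries of each piece. First, I would upgrade Theorem \ref{Theorem-strongintegral2} to the fully normalized statement
$$\tilde{\mathcal{P}}_{[\vec\lambda,\vec\mu]}(\mathcal{L};q,a):=\frac{\mathcal{H}_{[\vec\lambda,\vec\mu]}(\mathcal{L};q,a)}{\prod_{\beta=1}^L\mathcal{H}_{[\lambda^\beta,\mu^\beta]}(U;q,a)}\in a^{\tilde\epsilon}\mathbb{Z}[q^{\pm 2},a^{\pm 2}],$$
with $\tilde\epsilon\equiv\sum_\alpha(|\lambda^\alpha|+|\mu^\alpha|)w(\mathcal{K}_\alpha)\pmod 2$, obtained by iterating the single-component normalization of Theorem \ref{Theorem-strongintegral2} over $\beta=1,\ldots,L$ while tracking the framing corrections induced by successive cabling. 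In parallel, I would read off the three symmetries of the unknot factor from its closed form (a hook-content Schur-type expression on the annulus skein $\mathcal{C}$):
$\mathcal{H}_{[\lambda,\mu]}(U;-q,a)=(-1)^{|\lambda|+|\mu|}\mathcal{H}_{[\lambda,\mu]}(U;q,a)$,
$\mathcal{H}_{[\lambda,\mu]}(U;q,-a)=(-1)^{|\lambda|+|\mu|}\mathcal{H}_{[\lambda,\mu]}(U;q,a)$, and
$\mathcal{H}_{[\lambda,\mu]}(U;q^{-1},a)=(-1)^{|\lambda|+|\mu|}\mathcal{H}_{[\lambda^\vee,\mu^\vee]}(U;q,a)$, the last identity encoding the Hecke-algebra duality $Q_{\lambda,\mu}\leftrightarrow Q_{\lambda^\vee,\mu^\vee}$ at $q\mapsto q^{-1}$.

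Combining the two ingredients finishes each symmetry. Under $q\to -q$: the polynomial factor is invariant because only even powers of $q$ appear, and the $L$ unknot factors contribute $\prod_\alpha(-1)^{|\lambda^\alpha|+|\mu^\alpha|}=(-1)^{\sum(|\lambda^\alpha|+|\mu^\alpha|)}$, matching the first identity. Under $a\to -a$: the prefactor $a^{\tilde\epsilon}$ supplies $(-1)^{\tilde\epsilon}$ and the unknots supply $(-1)^{\sum(|\lambda^\alpha|+|\mu^\alpha|)}$; substituting $\tilde\epsilon\equiv\sum_\alpha(|\lambda^\alpha|+|\mu^\alpha|)w(\mathcal{K}_\alpha)$ collapses the product to the claimed $(-1)^{\sum(|\lambda^\alpha|+|\mu^\alpha|)(w(\mathcal{K}_\alpha)+1)}$. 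Under $q\to q^{-1}$: the same skein duality that drives the unknot identity lifts to the polynomial part as $\tilde{\mathcal{P}}_{[\vec\lambda,\vec\mu]}(\mathcal{L};q^{-1},a)=\tilde{\mathcal{P}}_{[\vec\lambda^\vee,\vec\mu^\vee]}(\mathcal{L};q,a)$, so the partitions are conjugated on the right-hand side while the unknot factors contribute exactly the sign $(-1)^{\sum(|\lambda^\alpha|+|\mu^\alpha|)}$.

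The main obstacle is the iteration in the first step: one must verify that accumulating the individual parities $\epsilon_\alpha$ from Theorem \ref{Theorem-strongintegral2} collapses, after the framing cross-terms from other components cancel, to the clean total $\tilde\epsilon\equiv\sum_\alpha(|\lambda^\alpha|+|\mu^\alpha|)w(\mathcal{K}_\alpha)\pmod 2$. Each successive normalization inserts a $Q_{\lambda^\beta,\mu^\beta}$ on a further component and perturbs the writhes of the underlying diagram through cabling, so the bookkeeping requires care. For the third symmetry the duality on the normalized polynomial part must also be checked; this is standard once the $q\mapsto q^{-1}$ behaviour of the $Q$-basis is in hand, but must be applied in the fully decorated setting.
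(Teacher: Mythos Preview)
Your approach differs from the paper's. You aim first to prove a \emph{fully} normalized strong integrality, $\tilde{\mathcal P}_{[\vec\lambda,\vec\mu]}(\mathcal L)\in a^{\tilde\epsilon}\mathbb Z[q^{\pm 2},a^{\pm 2}]$, by iterating Theorem~\ref{Theorem-strongintegral2} over all $L$ components, and then read off the three symmetries from that factorization. The paper never establishes or needs full normalization. Instead it normalizes by a \emph{single} unknot factor: using the Frobenius and Littlewood--Richardson expansions it rewrites each $Q_{\lambda^\beta,\mu^\beta}$ for $\beta\neq\alpha$ as a rational combination of products $X_{\tau^\beta}X^*_{\delta^\beta}$ of closed-braid power sums, so that $\mathcal H_{[\vec\lambda,\vec\mu]}(\mathcal L)$ becomes a sum of terms $(\text{const})\cdot\prod_{\beta\neq\alpha}\{\tau^\beta\}^{-1}\{\delta^\beta\}^{-1}\cdot t_{\mathcal K_\alpha}^{\mathcal L'}(\lambda^\alpha,\mu^\alpha)\cdot\mathcal H_{[\lambda^\alpha,\mu^\alpha]}(U)$, where each $\mathcal L'$ is an honest uncolored link. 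Now Theorem~\ref{Theorem-strongintegral2} applies once, directly to $\mathcal L'$, giving the $a$-parity of $t_{\mathcal K_\alpha}^{\mathcal L'}$; the $q$-parities of the remaining factors are elementary. For the third symmetry the paper is even more direct: the determinant formula for $Q_{\lambda,\mu}$ yields the skein-level identity $Q_{\lambda,\mu}|_{q\to -q^{-1}}=Q_{\lambda^\vee,\mu^\vee}$, whence $\mathcal H_{[\vec\lambda,\vec\mu]}(\mathcal L;-q^{-1},a)=\mathcal H_{[\vec\lambda^\vee,\vec\mu^\vee]}(\mathcal L;q,a)$ immediately, and combining with the already-proved $q\to -q$ symmetry finishes.

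The iteration you propose is where the real work hides, and it is not a mere citation of Theorem~\ref{Theorem-strongintegral2}. That theorem assumes the components other than $\mathcal K_\alpha$ are \emph{undecorated}; once you place $Q_{\lambda^1,\mu^1}$ on $\mathcal K_1$ the result is a skein element rather than a link, and ``apply Theorem~\ref{Theorem-strongintegral2} to component~$2$'' now means re-running the entire argument of Section~\ref{Section-strongintegrality} (the refined coefficient Theorem~\ref{Theorem-cpirho} together with Proposition~\ref{Proposition-ylambda}) in the presence of idempotent-decorated extra strands. That can be done, but it is a genuine extension, not an iteration, and the writhe/parity bookkeeping is more involved than a single ``framing correction''. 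The paper's power-sum trick sidesteps all of this by reducing to honest links in one step. Your route, if completed, would deliver the stronger full-normalization integrality as a byproduct; the paper's route is shorter for the symmetries alone and needs no new ingredients beyond Theorem~\ref{Theorem-strongintegral2}.
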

\begin{remark}
We prove Theorem \ref{Theorem-strongintegral1} and
\ref{Theorem-strongintegral2} by purely using HOMFLY-PT skein
theory. It essentially uses all the crucial results developed by
Morton and his collaborators. Comparing to our previous work
\cite{Zhu19-1}, the proof presented here is essential and
self-contained.
\end{remark}
\begin{remark}
From Corollary \ref{corollary-symmetries}, one immediately obtain
the following symmetries for colored HOMFLY-PT invariants
$W_{\vec{\lambda}}(\mathcal{L};q,a)$:

\begin{align}
W_{\vec{\lambda}}(\mathcal{L};-q,a)&=(-1)^{\sum_{\alpha=1}^L|\lambda^{\alpha}|}
W_{\vec{\lambda}}(\mathcal{L};q,a),\\
W_{\vec{\lambda}}(\mathcal{L};q,-a)&=(-1)^{\sum_{\alpha=1}^L|\lambda^{\alpha}|}
W_{\vec{\lambda}}(\mathcal{L};q,a),\\
W_{\vec{\lambda}}(\mathcal{L};q^{-1},a)&=(-1)^{\sum_{\alpha=1}^L|\lambda^{\alpha}|}
W_{\vec{\lambda}^{\vee}}(\mathcal{L};q,a),
\end{align}
which have appeared in previous literatures, see for example
\cite{LP11,Zhu13}.
\end{remark}

\subsubsection{Specializations}
From Theorem \ref{Theorem-strongintegral1}, we obtain that the $q=1$
specialization of the normalized framed full colored HOMFLY-PT
invariant $\mathcal{P}_{[\lambda,\mu]}(\mathcal{K};1,a)$ belongs to
the ring $a^{\epsilon}\mathbb{Z}[a^2]$. With a slight modification
of the proof presented in \cite{CZ20}, we obtain the formula
\begin{align}
\mathcal{P}_{[\lambda,\mu]}(\mathcal{K};1,a)=\mathcal{P}(\mathcal{K};1,a)^{|\lambda|+|\mu|},
\end{align}
where
$\mathcal{P}(\mathcal{K};q,a)=\frac{\mathcal{H}(\mathcal{K};q,a)}{\mathcal{H}(U;q,a)}$
is the normalized framed HOMFLY-PT polynomial.

 We also consider the composite invariants studied in
\cite{Mar10,CZ20} which is defined as follow. Given a link
$\mathcal{L}$ with $L$ components and $\vec{\lambda},
\vec{\mu},\vec{\nu} \in \mathcal{P}^{L}$. Let
$c_{\vec{\lambda},\vec{\mu}}^{\nu}=\prod_{\alpha=1}^{L}c_{\lambda^{\alpha},\mu^{\alpha}}^{\nu^{\alpha}}$,
where each $c_{\lambda^{\alpha},\mu^{\alpha}}^{\nu^{\alpha}}$ is the
Littlewood-Richardson coefficient, then the framed composite
invariant of $\mathcal{L}$ is defined by
\begin{align}
\mathcal{C}_{\vec{\nu}}(\mathcal{L};q,a)=\sum_{\vec{\lambda},\vec{\mu}}c_{\vec{\lambda},\vec{\mu}}^{\vec{\nu}}
\mathcal{H}_{[\vec{\lambda},\vec{\mu}]}(\mathcal{L};q,a).
\end{align}

In particular, for the knot $\mathcal{K}$, we prove that the special
framed composite invariant which is the following limit exists and
satisfies the relation
\begin{align}
\lim_{q\rightarrow
1}\frac{\mathcal{C}_{\nu}(\mathcal{K};q,a)}{\mathcal{C}_{\nu}(U;q,a)}=\mathcal{P}(\mathcal{K};1,a)^{|\nu|}.
\end{align}

On the other hand side, one can also consider the $a=1$
specialization of the normalized framed full colored HOMFLY-PT
invariants, we define the full colored Alexander polynomial of
$\mathcal{K}$ as follow
\begin{align}
\mathcal{A}_{[\lambda,\mu]}(\mathcal{K};q)=\mathcal{P}_{[\lambda,\mu]}(\mathcal{K};q,1).
\end{align}
Then Theorem \ref{Theorem-strongintegral1} implies that
\begin{align}
\mathcal{A}_{[\lambda,\mu]}(\mathcal{K};q)\in \mathbb{Z}[q^2].
\end{align}
\begin{remark}
In \cite{IMMM12}, H. Itoyama et al. first studied the following
limit of the colored HOMFLY-PT invariant
\begin{align}
A_{\lambda}(\mathcal{K};q)=\lim_{a\rightarrow
1}\frac{W_{\lambda}(\mathcal{K};q,a)}{W_{\lambda}(U;q,a)},
\end{align}
and they proposed that $
A_{\lambda}(\mathcal{K};q)=A(\mathcal{K};q^{|\lambda|}), $ where
$A(\mathcal{K};q)$ is the classical Alexander polynomial of
$\mathcal{K}$. Later, in \cite{Zhu13}, the author found that this
formula doesn't hold for general partitions.   The colored Alexander
polynomial in the sense of \cite{IMMM12} can be written in our
notation as
\begin{align}
A_{\lambda}(\mathcal{K};q)=q^{-\kappa_{\lambda}w(\mathcal{K})}\mathcal{A}_{[\lambda,\emptyset]}(\mathcal{K};q).
\end{align}
that's also why we refer
$\mathcal{A}_{[\lambda,\mu]}(\mathcal{K};q)$ as full colored
Alexander polynomial, see also \cite{MST21}.
\end{remark}

In conclusion, we have the following standard conjecture for the
colored Alexander polynomial $A_{\lambda}(\mathcal{K};q)$.
\begin{conjecture} \label{conjecture-Alexander0}
For any hook partition $\lambda$, we have
\begin{align}
A_{\lambda}(\mathcal{K};q)=A(\mathcal{K};q^{|\lambda|}).
\end{align}
\end{conjecture}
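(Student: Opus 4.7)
My plan is to reduce the hook case $\lambda=(r,1^s)$ to the already-established row ($\lambda=(n)$) and column ($\lambda=(1^n)$) cases by lifting the classical Jacobi--Trudi hook identity
$$s_{(r,1^s)}=\sum_{j=0}^{s}(-1)^{j}h_{r+j}\,e_{s-j}$$
into the skein algebra of the annulus. Since $\mathcal{C}$ is isomorphic to the ring of symmetric functions with $Q_{(n)}\leftrightarrow h_n$ and $Q_{(1^n)}\leftrightarrow e_n$, this translates into
$$Q_{(r,1^s)}=\sum_{j=0}^{s}(-1)^{j}Q_{(r+j)}\cdot Q_{(1^{s-j})}$$
in $\mathcal{C}$, and by Theorem \ref{Theorem-strongintegral1} all relevant normalized invariants lie in $a^{\epsilon}\mathbb{Z}[q^{\pm 2},a^{\pm 2}]$, so setting $a=1$ is unambiguous term by term.

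Applying this skein identity to a framed knot $\mathcal{K}$ and interpreting the product in $\mathcal{C}$ as placing two parallel copies of the decorated annulus around $\mathcal{K}$ gives
$$\mathcal{H}_{(r,1^s)}(\mathcal{K};q,a)=\sum_{j=0}^{s}(-1)^{j}\mathcal{H}_{((r+j),(1^{s-j}))}\bigl(\mathcal{K}^{(2)};q,a\bigr),$$
where $\mathcal{K}^{(2)}$ is the $2$-component parallel cable carrying the framing of $\mathcal{K}$ (and the $j=s$ term degenerates to a single knot coloured by $(r+s)$). Dividing by $W_{(r,1^s)}(U;q,a)$, taking the $a\to 1$ limit of the unknot normalization via the hook-content formula, and using the known one-component identities $A_{(n)}(\mathcal{K};q)=A(\mathcal{K};q^{n})=A_{(1^{n})}(\mathcal{K};q)$ together with a Torres-type formula for the mixed row/column colored invariant of the cable, I reduce the problem to a factorization of the shape $\Delta(\mathcal{K}^{(2)};t_1,t_2)=A(\mathcal{K};t_1t_2)\cdot R(t_1,t_2)$ with $R$ depending only on the framing and not on $\mathcal{K}$. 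Substituting $t_1=q^{r+j}$, $t_2=q^{s-j}$, the factor $A(\mathcal{K};q^{r+s})=A(\mathcal{K};q^{|\lambda|})$ is independent of $j$ and pulls out of the alternating sum, leaving a purely combinatorial $q$-identity that I would verify by specializing to $\mathcal{K}=U$ and telescoping $q$-hook factors.

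The main obstacle is the mixed row/column Torres-type formula needed in the second step: although the single-component symmetric and antisymmetric $a=1$ specializations are classical, controlling the $a=1$ limit of a two-component colored HOMFLY--PT invariant with one symmetric and one antisymmetric coloring on a $2$-cable, and producing the clean factorization above, will require either a direct Seifert-matrix computation for the cable or, more cleanly, an identification of the $a=1$ limit of the quantum invariant with a $U_q(\mathfrak{gl}(1|1))$-type invariant for which hook representations constitute the typical modules. The failure of the identity $A_\lambda(\mathcal{K};q)=A(\mathcal{K};q^{|\lambda|})$ outside of hooks, emphasised in the remark preceding the conjecture, is consistent with this plan: it is precisely the hook structure that allows the Jacobi--Trudi expansion to collapse into a single Alexander factor after the combinatorial summation, so verifying this cancellation cleanly is the heart of the argument.
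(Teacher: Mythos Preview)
The statement you are attempting to prove is presented in the paper as an \emph{open conjecture} (Conjecture~\ref{conjecture-Alexander0}, restated as Conjecture~\ref{conjecture-Alexander}); the paper does not prove it for general knots. What the paper does prove is the special case of torus knots (Theorem~\ref{Theorem-torusknot}), via a completely different route: it uses the Lin--Zheng character formula~(\ref{formula-Wlambda}) to rewrite $A_{(m|n)}(\mathcal{K};q)$ as an alternating sum of Hecke characters, reduces the conjecture to the explicit identity~(\ref{formula-characteridentity}), and then verifies that identity for $T_{r,s}$ using the closed formula $\zeta^{\Lambda}(X_\lambda(T_{r,s}))=c^{\Lambda}_{\lambda;r}q^{-s\kappa_{\lambda}+s\kappa_{\Lambda}/r}$ together with Lemma~\ref{lemma-sumchi}. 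No Jacobi--Trudi expansion, cabling, or Torres-type argument appears.

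Your proposal has two genuine gaps that prevent it from being a proof even in outline. First, you invoke ``the already-established row and column cases'' $A_{(n)}(\mathcal{K};q)=A(\mathcal{K};q^{n})=A_{(1^{n})}(\mathcal{K};q)$, but these are precisely the hook cases $(n{-}1\,|\,0)$ and $(0\,|\,n{-}1)$ of the conjecture itself; the paper does not prove them for general $\mathcal{K}$, nor does it cite a proof. Assuming them is circular. Second, the ``Torres-type factorization'' you need for the mixed row/column colored invariant of the $2$-cable is not a known theorem; you acknowledge this yourself as the main obstacle and only sketch two possible attack routes (a Seifert-matrix computation, or a $U_q(\mathfrak{gl}(1|1))$ identification). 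Neither is carried out, and the second would in fact be the natural conceptual explanation of the full conjecture (typical $\mathfrak{gl}(1|1)$-modules are exactly the hooks), so it is not a lemma one can expect to obtain more cheaply than the conjecture itself. In short, the reduction you propose replaces one open problem by two open problems of comparable difficulty.
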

This conjecture can be reduced to an identity for the  character of
Hecke algebra by using the work of \cite{LiZh10}, see formula
(\ref{formula-characteridentity}) in Section
\ref{Section-coloredAlender}.  We prove that the Conjecture
\ref{conjecture-Alexander0} holds for the torus knot $T_{r,s}$. We
remark that, recently, Mishnyakov et al. \cite{MST21} proposed
another new conjectural formula for the colored Alexander polynomial
$A_{\lambda}(\mathcal{K};q)$.

Interestingly, we find that the Conjecture
\ref{conjecture-Alexander0} is closely related to the LMOV
integrality conjecture for the framed knot which will be introduced
in the following.
\subsection{LMOV integrality structures}
Based on the large $N$ duality \cite{Witten95,GV99} and BPS
integrality structures in topological strings, Labastida, Mari\~no,
Ooguri and Vafa \cite{OV00,LaMaVa00,LaMa02} formulated an
interesting integrality conjecture for colored HOMFLY-PT invariants
which was referred as LMOV conjecture in the later literatures.
Roughly speaking, the LMOV conjecture states that the LMOV functions
which is a plethystic transformation of the generating functions for
colored HOMFLY-PT invariants inherits an amazing integrality
structure. During the studying of the LMOV conjecture, we find that
such integrality structures appear frequently in other settings.  So
we think it is worth to put these questions together and to see if
there is a unified theory to deal with them. In the following, we
first formulate the general framework for such integrality structure
which we refer as the LMOV integrality structure. Then we describe
the subtle LMOV integrality structure for framed links in detail and
present several new results as the application of the results
obtained in previous subsection \ref{subsection-coloredhom}.

Let $\Lambda(\mathbf{x})$ be the ring of symmetric functions of
$\mathbf{x}=(x_1,x_2,...)$ over the field $\mathbb{Q}(q,a)$. For
$\vec{\mathbf{x}}=(\mathbf{x}^1,...,\mathbf{x}^L)$,  we consider the
ring
$\Lambda(\vec{\mathbf{x}})=\Lambda(\mathbf{x})\otimes_{\mathbb{Z}}\cdots
\otimes_{\mathbb{Z}}\Lambda(\mathbf{x}^L)$ of functions separatedly
symmetric in $\mathbf{x}^1,...,\mathbf{x}^L$, where
$\mathbf{x}^{i}=(x_1^i,x_2^i,...)$.  Suppose
$B_{\vec{\lambda}}(\vec{\mathbf{x}})$, $\vec{\lambda}\in
\mathcal{P}^{L}$ forms a basis of $\Lambda(\vec{\mathbf{x}})$ (e.g.
taking $B_{\vec{\lambda}}(\vec{\mathbf{x}})$ as the Schur function,
Macdonald function etc.), consider the generating function of a
series of functions $S_{\vec{\lambda}}(q,a)\in \mathbb{Q}(q,a)$,
\begin{align}
Z(\vec{\mathbf{x}};q,a)=\sum_{\vec{\lambda}}S_{\vec{\lambda}}(q,a)B_{\vec{\lambda}}(\vec{\mathbf{x}}).
\end{align}

The LMOV function for $\{S_{\vec{\lambda}}(q,a)|\vec{\lambda}\in
\mathcal{P}^L\}$ is defined as the plethystic transformation of
$Z(\vec{\mathbf{x}};q,a)$, i.e.
\begin{align}
f_{\vec{\lambda}}(q,a)=\langle Log(Z(\vec{\mathbf{x}};q,a)),
B_{\vec{\lambda}}(\vec{\mathbf{x}})\rangle,
\end{align}
where $\langle \cdot, \cdot\rangle$ denotes the Hall pair in the
ring of symmetric functions $\Lambda(\vec{\mathbf{x}})$.

The LMOV integrality structure states that the LMOV function will be
an integral coefficient polynomial of $q$ and $a$, moreover, these
integral coefficients have different geometric meanings under
different settings. Here, we provide two examples.
\subsubsection*{Character varieties}
In \cite{HLRV11}, Hausel et al. proposed a general conjecture for
the mixed Hodge polynomial of the generic character varieties. In
our language,  they conjectured that the mixed Hodge polynomial is
the LMOV function for the HLV kernel functions introduced in
\cite{HLRV11}, and these integral coefficients appearing in LMOV
functions can be interpreted as the mixed Hodge numbers of the
generic character varieties. The integrality of the LMOV function in
this case was proved by A. Mellit \cite{Mel18}. Moreover, a possible
interpretation for such LMOV structure as a kind of topological
quantum field theory (TQFT) was described in \cite{Mel20} (cf.
section 1.6 of \cite{Mel20}). See also \cite{CDDNP20} and the
references therein for several physical generalizations of the above
framework.

\subsubsection*{Donaldson-Thomas invariants for quivers}
Given a quiver, there is an associated algebra named chomological
Hall algebra introduced by Kontsevich and Soilbelman \cite{KS11}. In
order to study the integrality of the Donaldson-Thomas invariants,
they introduced the notion of admissibility for a series of rational
functions. The admissibility condition is equivalent to say that the
 corresponding LMOV function carries integrality structure in
our language. Based on this observation, we started to interpret the
LMOV invariant for frame unknots in term of quiver representation
theory \cite{LuoZhu16,Zhu19-2}. There have been great developments
in this direction, see subsection \ref{subsubsection-new} for more
descriptions about it.

\subsubsection{LMOV structures for framed links}
Actually, the original LMOV structures for framed links are more
subtle. Given a framed link $\mathcal{L}$ with $L$ components
$\mathcal{K}_1,...,\mathcal{K}_L$. Denoted by
$\vec{\tau}=(\tau^1,...,\tau^L)\in \mathbb{Z}^L$ the framing of
$\mathcal{L}$, i.e. $\tau^\alpha=w(\mathcal{K}_{\alpha})$ for
$\alpha=1,...,L$. In the following, we use the notation
$\mathcal{L}_{\vec{\tau}}$ to denote this framed link $\mathcal{L}$
if we want to emphasize its framing.

We introduce the notions of  $\tau$-framed full colored HOMFLY-PT
invariants and  $\tau$-framed composite invariant for
$\mathcal{L}_{\vec{\tau}}$ as follows:
\begin{align}
H_{\vec{\lambda}}(\mathcal{L}_{\vec{\tau}};q,a)=(-1)^{\sum_{\alpha=1}^L|\lambda^\alpha|\tau^\alpha}
a^{-\sum_{\alpha=1}^L|\lambda^\alpha|\tau^\alpha}\mathcal{H}_{\vec{\lambda}}(\mathcal{L}_{\vec{\tau}};q,a).
\end{align}
\begin{align}
C_{\vec{\nu}}(\mathcal{L}_{\vec{\tau}};q,a)=(-1)^{\sum_{\alpha=1}^L|\lambda^\alpha|\tau^\alpha}\sum_{\vec{\lambda},\vec{\mu}}
a^{-\sum_{\alpha=1}^L|\nu^\alpha|\tau^\alpha}
c_{\vec{\lambda},\vec{\mu}}^{\vec{\nu}}
\mathcal{H}_{[\vec{\lambda},\vec{\mu}]}(\mathcal{L}_{\vec{\tau}};q,a).
\end{align}
We remark that the above definitions of $\tau$-framed invariants
were motivated by \cite{MV02}, here the term
$(-1)^{\sum_{\alpha=1}^L|\lambda^\alpha|\tau^\alpha}$ is essential
if we consider the framing change in Chern-Simon quantum field
theory. The LMOV functions for
$\{H_{\vec{\lambda}}(\mathcal{L}_{\vec{\tau}};q,a)\}$ and
$\{C_{\vec{\lambda}}(\mathcal{L}_{\vec{\tau}};q,a)\}$ are given by
\begin{align}
f_{\vec{\lambda}}^{(0)}(\mathcal{L}_{\vec{\tau}};q,a)=\langle
\text{Log}\sum_{\vec{\lambda}}H_{\vec{\lambda}}(\mathcal{L}_{\vec{\tau}};q,a)s_{\vec{\lambda}}(\vec{\mathbf{x}}),s_{\vec{\lambda}}(\vec{\mathbf{x}})
\rangle,
\end{align}
and
\begin{align}
f_{\vec{\lambda}}^{(1)}(\mathcal{L}_{\vec{\tau}};q,a)=\langle
\text{Log}\sum_{\vec{\lambda}}C_{\vec{\lambda}}(\mathcal{L}_{\vec{\tau}};q,a)s_{\vec{\lambda}}(\vec{\mathbf{x}}),s_{\vec{\lambda}}(\vec{\mathbf{x}})
\rangle.
\end{align}

The reformulated LMOV functions is defined as the character
transformation of the LMOV functions,
\begin{align}
g_{\vec{\mu}}^{(h)}(\mathcal{L}_{\vec{\tau}};q,a)
=\frac{1}{\{\vec{\mu}\}}\sum_{\vec{\lambda}}\chi_{\vec{\lambda}}(\vec{\mu})f_{\vec{\lambda}}^{(h)}(\mathcal{L}_{\vec{\tau}};q,a),
\   \ h=0,1.
\end{align}

From formula (\ref{formula-tauframing0}), we obtain that for
$\vec{\lambda}\neq \vec{0}$, both
$H_{\vec{\lambda}}(\mathcal{L}_{\vec{\tau}};q,a)$  and
$C_{\vec{\lambda}}(\mathcal{L}_{\vec{\tau}};q,a)$ contain a factor
$\frac{a-a^{-1}}{q-q^{-1}}$. As an application of Theorem
\ref{Theorem-strongintegral2} and Corollary
\ref{corollary-symmetries},  we obtain
\begin{theorem} \label{Theorem-reformulatedLMOV}
The reformulated LMOV functions
$g^{(h)}_{\vec{\mu}}(\mathcal{L}_{\vec{\tau}};q,a)$ for both $h=0$
and $1$ can be written in the following form
\begin{align}
g^{(h)}_{\vec{\mu}}(\mathcal{L}_{\vec{\tau}};q,a)=(a-a^{-1})\tilde{g}^{(h)}_{\vec{\mu}}(\mathcal{L}_{\vec{\tau}};q,a),
\end{align}
and $\tilde{g}^{(h)}_{\vec{\mu}}(\mathcal{L}_{\vec{\tau}};q,a)$ has
the properties
\begin{align}
\tilde{g}^{(h)}_{\vec{\mu}}(\mathcal{L}_{\vec{\tau}};q,-a)&=(-1)^{|\vec{\mu}|-1}\tilde{g}^{(h)}_{\vec{\mu}}(\mathcal{L}_{\vec{\tau}};q,a),
\\
\tilde{g}^{(h)}_{\vec{\mu}}(\mathcal{L}_{\vec{\tau}};-q,a)&=\tilde{g}^{(h)}_{\vec{\mu}}(\mathcal{L}_{\vec{\tau}};q,a),
\\
\tilde{g}^{(h)}_{\vec{\mu}}(\mathcal{L}_{\vec{\tau}};q^{-1},a)&=\tilde{g}^{(h)}_{\vec{\mu}}(\mathcal{L}_{\vec{\tau}};q,a).
\end{align}

\end{theorem}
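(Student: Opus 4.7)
The plan is to derive clean symmetries for the auxiliary invariants $H_{\vec\lambda}$ and $C_{\vec\nu}$ from Corollary~\ref{corollary-symmetries}, to factor out $(a-a^{-1})$ using the divisibility observation just stated (which is a consequence of Theorem~\ref{Theorem-strongintegral2}), and then to propagate the symmetries through the plethystic logarithm and the character transformation. Substituting the definitions of $H_{\vec\lambda}$ and $C_{\vec\nu}$ and applying Corollary~\ref{corollary-symmetries} should yield the identities
\begin{align*}
H_{\vec\lambda}(\mathcal{L}_{\vec\tau};-q,a) &= (-1)^{|\vec\lambda|}H_{\vec\lambda}(\mathcal{L}_{\vec\tau};q,a),\\
H_{\vec\lambda}(\mathcal{L}_{\vec\tau};q,-a) &= (-1)^{|\vec\lambda|}H_{\vec\lambda}(\mathcal{L}_{\vec\tau};q,a),\\
H_{\vec\lambda}(\mathcal{L}_{\vec\tau};q^{-1},a) &= (-1)^{|\vec\lambda|}H_{\vec\lambda^\vee}(\mathcal{L}_{\vec\tau};q,a),
\end{align*}
and the parallel identities for $C_{\vec\nu}$ via the Littlewood--Richardson transposition $c^{\vec\nu}_{\vec\lambda^\vee,\vec\mu^\vee}=c^{\vec\nu^\vee}_{\vec\lambda,\vec\mu}$; the writhe-dependent prefactors in the $\tau$-framed definitions are engineered precisely to absorb the framing-dependent signs in Corollary~\ref{corollary-symmetries}.

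For the factor $(a-a^{-1})$, the generating function $Z=\sum_{\vec\lambda}H_{\vec\lambda}s_{\vec\lambda}$ has $Z-1$ divisible by $(a-a^{-1})$, so $\log Z=\sum_{k\ge 1}(-1)^{k-1}(Z-1)^k/k$ is divisible by $(a-a^{-1})$ as well. Since the plethystic Adams operator $\psi_n$ sends $a-a^{-1}$ to $a^n-a^{-n}=(a-a^{-1})(a^{n-1}+a^{n-3}+\cdots+a^{-(n-1)})$, the full series $\text{Log}(Z)=\sum_{n\ge 1}\frac{\mu(n)}{n}\psi_n(\log Z)$ is also divisible by $(a-a^{-1})$. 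Expanding in power sums, the coefficient of $p_{\vec\mu}(\vec{\mathbf{x}})$ in $\text{Log}(Z)$ equals $\{\vec\mu\}(q)g^{(0)}_{\vec\mu}(q,a)/z_{\vec\mu}$; since $\{\vec\mu\}(q)/z_{\vec\mu}$ is $a$-independent, this forces $g^{(0)}_{\vec\mu}(q,a)=(a-a^{-1})\tilde g^{(0)}_{\vec\mu}(q,a)$, and the composite case $h=1$ is completely parallel.

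Finally I would translate the three $H$-symmetries into identities on $Z$ in the power sum alphabet: the first two give $Z(p;-q,a)=Z(P;q,a)$ and $Z(p;q,-a)=Z(P;q,a)$ with $P_k=(-1)^k p_k$, while the transposition identity, combined with $s_{\vec\lambda^\vee}=\omega(s_{\vec\lambda})$ for the standard involution $\omega$, repackages as the cleaner $Z(p;q^{-1},a)=Z(-p;q,a)$. A case-by-case analysis of $\text{Log}(Z)=\sum_n\frac{\mu(n)}{n}\psi_n(\log Z)$, splitting according to the parity of $n$ for the two sign-substitutions and using the direct commutation $\psi_n\circ(p_k\mapsto -p_k)=(p_k\mapsto -p_k)\circ\psi_n$ for the third, then propagates each identity to $\text{Log}(Z)$. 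Comparing coefficients of $p_{\vec\mu}$ and using $\{\vec\mu\}(-q)=(-1)^{|\vec\mu|}\{\vec\mu\}(q)$ together with $\{\vec\mu\}(q^{-1})=(-1)^{\ell(\vec\mu)}\{\vec\mu\}(q)$ produces $g^{(h)}_{\vec\mu}(q,-a)=(-1)^{|\vec\mu|}g^{(h)}_{\vec\mu}(q,a)$, $g^{(h)}_{\vec\mu}(-q,a)=g^{(h)}_{\vec\mu}(q,a)$, and $g^{(h)}_{\vec\mu}(q^{-1},a)=g^{(h)}_{\vec\mu}(q,a)$; dividing by $(a-a^{-1})$, which changes sign under $a\to-a$ and is invariant under $q\to\pm q^{\pm 1}$, then yields the stated signs for $\tilde g^{(h)}_{\vec\mu}$. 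The main obstacle is the $q\to q^{-1}$ case, since $\omega$ does \emph{not} commute with the Adams operations; the key observation that resolves it is that the extra sign $(-1)^{|\vec\lambda|}$ in Corollary~\ref{corollary-symmetries} combines with $\omega$ to yield the substitution $p_k\mapsto -p_k$, which \emph{does} commute with every $\psi_n$, making the $q\to q^{-1}$ identity fall out without obstruction.
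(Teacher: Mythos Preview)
Your proposal is correct and follows essentially the same route as the paper: establish the three symmetries of $H_{\vec\lambda}$ (resp.\ $C_{\vec\nu}$) from Corollary~\ref{corollary-symmetries}, push them through $\log Z$ to the $F$-invariants and then through the M\"obius sum to $g_{\vec\mu}$, and extract the factor $(a-a^{-1})$ from the divisibility of each nontrivial $H_{\vec\lambda}$. Your handling of the $q\to q^{-1}$ case via the substitution $p_k\mapsto -p_k$ (which commutes with every $\psi_n$) is a slightly cleaner packaging of the step that the paper records only as the bare assertion that the $F_{\vec\mu}$-symmetries imply the $g_{\vec\mu}$-symmetries.
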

\begin{remark}
We remark that the notation $z=q-q^{-1}$ will be used frequently
throughout the article.
\end{remark}

Furthermore, the LMOV conjecture for framed colored HOMFLY-PT
invariants \cite{MV02,CLPZ14} and framed composite invariants
\cite{Mar10,CZ20} states that, for $h=0$ and $1$,
\begin{align}
g_{\vec{\mu}}^{(h)}(\mathcal{L}_{\vec{\tau}};q,a)\in
z^{-2}\mathbb{Z}[z^2,a^{\pm 1}].
\end{align}

Combining with Theorem \ref{Theorem-reformulatedLMOV}, we obtain
\begin{conjecture}[Refined LMOV conjecture for framed links]
\label{conjecture-refinedLMOV} For  $h=0$ and $1$, the reformulated
LMOV functions can be written as
\begin{align}
g^{(h)}_{\vec{\mu}}(\mathcal{L}_{\vec{\tau}};q,a)=(a-a^{-1})\tilde{g}^{(h)}_{\vec{\mu}}(\mathcal{L}_{\vec{\tau}};q,a).
\end{align}
where
\begin{align}
\tilde{g}^{(h)}_{\vec{\mu}}(\mathcal{L}_{\vec{\tau}};q,a)\in
z^{-2}a^{\epsilon}\mathbb{Z}[z^2,a^{\pm 2}]
\end{align}
$\epsilon\in \{0,1\}$ is determined by $ |\vec{\mu}|-1\equiv
\epsilon \mod 2. $

In other words, there are integral invariants
\begin{align}
\tilde{N}_{\vec{\mu},g,Q}^{(h)}(\mathcal{L}_{\vec{\tau}})\in
\mathbb{Z},
\end{align}
 such that
\begin{align}
\tilde{g}^{(h)}_{\vec{\mu}}(\mathcal{L}_{\vec{\tau}};q,a)=\sum_{g\geq
0}\sum_{Q\in
\mathbb{Z}}\tilde{N}_{\vec{\mu},g,Q}^{(h)}(\mathcal{L}_{\vec{\tau}})z^{2g-2}a^{2Q+\epsilon}\in
z^{-2}a^{\epsilon}\mathbb{Z}[z^2,a^{\pm 2}].
\end{align}
\end{conjecture}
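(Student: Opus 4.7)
The plan is to derive Conjecture \ref{conjecture-refinedLMOV} by combining the already-proved Theorem \ref{Theorem-reformulatedLMOV} with the original LMOV integrality statement $g^{(h)}_{\vec{\mu}}(\mathcal{L}_{\vec{\tau}};q,a)\in z^{-2}\mathbb{Z}[z^2,a^{\pm 1}]$. Theorem \ref{Theorem-reformulatedLMOV} already supplies the factorization $g^{(h)}=(a-a^{-1})\tilde g^{(h)}$ together with the three symmetries under $a\mapsto -a$, $q\mapsto -q$, and $q\mapsto q^{-1}$; what remains is to promote the rational expression $\tilde g^{(h)}$ to a Laurent polynomial with the correct $a$-parity and to extract the integer invariants $\tilde N^{(h)}_{\vec{\mu},g,Q}$.

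First, assuming the LMOV integrality $g^{(h)}_{\vec{\mu}}\in z^{-2}\mathbb{Z}[z^2,a^{\pm 1}]$, the factorization supplied by Theorem \ref{Theorem-reformulatedLMOV} shows that $(a-a^{-1})$ divides $g^{(h)}_{\vec{\mu}}$ in the ring $\mathbb{Z}[z^2,a^{\pm 1}]$. Writing $a-a^{-1}=a^{-1}(a^2-1)$ and performing polynomial division in the variable $a$ with the integer-coefficient Laurent polynomial $z^2\,g^{(h)}_{\vec{\mu}}$, one concludes $\tilde g^{(h)}_{\vec{\mu}}\in z^{-2}\mathbb{Z}[z^2,a^{\pm 1}]$.

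Second, the $a\mapsto -a$ symmetry from Theorem \ref{Theorem-reformulatedLMOV} reads $\tilde g^{(h)}_{\vec{\mu}}(q,-a)=(-1)^{|\vec{\mu}|-1}\tilde g^{(h)}_{\vec{\mu}}(q,a)$, so only monomials $a^{k}$ with $k\equiv |\vec{\mu}|-1\pmod 2$ survive in the Laurent expansion. Combined with the previous step, this upgrades membership to $\tilde g^{(h)}_{\vec{\mu}}\in z^{-2}a^{\epsilon}\mathbb{Z}[z^2,a^{\pm 2}]$ with $\epsilon\equiv |\vec{\mu}|-1\pmod 2$, using along the way the $q\mapsto -q$ and $q\mapsto q^{-1}$ symmetries to restrict to even powers of $z$. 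Expanding uniquely in the $\mathbb{Z}$-basis $\{z^{2g-2}a^{2Q+\epsilon}\}_{g\geq 0,\,Q\in\mathbb{Z}}$ then defines the integer invariants $\tilde N^{(h)}_{\vec{\mu},g,Q}(\mathcal{L}_{\vec{\tau}})$.

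The hard part is of course the LMOV integrality $g^{(h)}_{\vec{\mu}}\in z^{-2}\mathbb{Z}[z^2,a^{\pm 1}]$ itself, which remains open in full generality and is the sole nontrivial input. Conjecture \ref{conjecture-refinedLMOV} is therefore not an independent theorem but a repackaging: the proved content of Theorem \ref{Theorem-reformulatedLMOV} promotes the conjectural LMOV integrality into a sharper, parity-aware statement the moment it becomes available. An unconditional proof would require either geometric input from open Gromov--Witten or stable-pair theory (cf.\ \cite{OS12,Ma16}), or a manifestly integral combinatorial expression for $g^{(h)}_{\vec{\mu}}$ derived from the HOMFLY-PT skein manipulations used throughout the present paper.
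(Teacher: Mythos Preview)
Your proposal is correct and matches the paper's own route exactly: the paper also obtains Conjecture~\ref{conjecture-refinedLMOV} by combining the proved Theorem~\ref{Theorem-reformulatedLMOV} (the factorization $g^{(h)}=(a-a^{-1})\tilde g^{(h)}$ and the three symmetries) with the conjectural LMOV integrality $g^{(h)}_{\vec{\mu}}\in z^{-2}\mathbb{Z}[z^2,a^{\pm 1}]$. You have in fact spelled out the parity and divisibility steps in more detail than the paper, which simply writes ``Combining with Theorem~\ref{Theorem-reformulatedLMOV}, we obtain'' before stating the conjecture.
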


\begin{remark}
Similarly, we introduce the notation of special LMOV functions for
framed colored HOMFLY-PT invariants and composite invariants, there
are also integrality conjecture for them, see Conjecture
\ref{conjecture-specialLMOV} Section \ref{section-LMOV functions}.
\end{remark}
\begin{remark}
We name the Conjecture \ref{conjecture-refinedLMOV} as ``refined
LMOV conjecture". It should be noted  that the meaning of refinement
here is different from the notion ``refined" used in \cite{KN17},
where they actually proposed the LMOV conjecture for the refined
colored HOMFLY-PT invariants.
\end{remark}

As a special case of the refined LMOV Conjecture
\ref{conjecture-refinedLMOV}, we have
\begin{conjecture}
Given a framed knot $\mathcal{K}_{\tau}$, for any prime $p$, we have
\begin{align}
\tilde{g}_{p}^{(0)}(\mathcal{K}_{\tau};q,a)&=
\frac{1}{a-a^{-1}}\frac{1}{\{p\}}\left(\mathcal{H}(\mathcal{K}_{\tau}\star
P_{p};q,a)\right.\\\nonumber&\left.-(-1)^{(p-1)\tau}\mathcal{H}(\mathcal{K}_{\tau};q^p,a^p)\right)\in
z^{-2}a^{\epsilon}\mathbb{Z}[z^{2},a^{\pm 2}],
\end{align}
where $\epsilon=1$ when $p=2$ and $\epsilon=0$ when $p$ is an odd
prime.
\end{conjecture}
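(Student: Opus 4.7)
The plan splits the claim into two parts. First, verify the closed-form equality
\[
\tilde{g}_p^{(0)}(\mathcal{K}_\tau;q,a) = \frac{1}{(a-a^{-1})\{p\}}\bigl(\mathcal{H}(\mathcal{K}_\tau\star P_p;q,a) - (-1)^{(p-1)\tau}\mathcal{H}(\mathcal{K}_\tau;q^p,a^p)\bigr),
\]
which is essentially the specialization of the plethystic logarithm to a single-part partition of prime size. Second, prove the integrality statement $\tilde{g}_p^{(0)}\in z^{-2}a^\epsilon\mathbb{Z}[z^2,a^{\pm 2}]$, which is where the substantive LMOV content lies.

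For the equality I would start from
\[
g_{(p)}^{(0)}(\mathcal{K}_\tau;q,a) = \frac{1}{\{p\}}\sum_{\lambda\vdash p}\chi_\lambda((p))\,f_\lambda^{(0)}(\mathcal{K}_\tau;q,a) = \frac{1}{\{p\}}\langle \text{Log}\,Z,\,p_p\rangle,
\]
with $Z(\mathbf{x})=\sum_\lambda H_\lambda(\mathcal{K}_\tau;q,a)s_\lambda(\mathbf{x})$. Writing the plethystic logarithm as $\text{Log}\,Z=\sum_{d\geq 1}\frac{\mu(d)}{d}\psi_d(\log Z)$, with $\psi_d$ the Adams operation sending $p_n\mapsto p_{nd}$ and $(q,a)\mapsto(q^d,a^d)$, only $d=1$ and $d=p$ can contribute to the $p_p$-coefficient, since $\psi_d(p_\nu)=p_{d\nu}$ equals $p_{(p)}$ only when $d\mid p$ and the Möbius function is supported on squarefree $d$. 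A product $(Z-1)^k$ with $k\geq 2$ expands in power sums $p_\nu$ only with $\ell(\nu)\geq k\geq 2$, so $\langle\log Z,p_p\rangle=\langle Z,p_p\rangle$; by the same argument $[p_1]\log Z = H_{(1)}$, hence the $d=p$ term contributes $H_{(1)}(\mathcal{K}_\tau;q^p,a^p)$. Combined with the annulus-skein identity $P_p=\sum_\lambda \chi_\lambda((p))Q_\lambda$, one obtains
\[
\langle \text{Log}\,Z,\,p_p\rangle = \langle Z,\,p_p\rangle - H_{(1)}(\mathcal{K}_\tau;q^p,a^p),
\]
and substituting the framing prefactor $H_\lambda=(-1)^{|\lambda|\tau}a^{-|\lambda|\tau}\mathcal{H}_\lambda$ rearranges this into the stated formula, the sign $(-1)^{(p-1)\tau}$ emerging from the mismatch $(-1)^{p\tau}/(-1)^\tau$ between the two framing corrections.

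For the integrality, membership in $\mathbb{Z}[q^{\pm 2},a^{\pm 2}]$ with the correct parity $\epsilon$ is immediate from Theorem \ref{Theorem-strongintegral2} and Corollary \ref{corollary-symmetries} applied to both $\mathcal{H}(\mathcal{K}_\tau\star P_p)$ and $\mathcal{H}(\mathcal{K}_\tau;q^p,a^p)$. Divisibility by $(a-a^{-1})$ reduces to matching the $a=\pm 1$ specializations of the two terms: using the composite-invariant specialization $\mathcal{P}_{[\lambda,\mu]}(\mathcal{K};1,a)=\mathcal{P}(\mathcal{K};1,a)^{|\lambda|+|\mu|}$ established earlier in the paper, both pieces collapse to the same expression at $a=\pm 1$, and the sign $(-1)^{(p-1)\tau}$ is exactly what reconciles them, forcing an $(a-a^{-1})$ factor on the difference.

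The main obstacle is the divisibility by $\{p\}$, namely the Frobenius-type congruence
\[
\mathcal{H}(\mathcal{K}_\tau\star P_p;q,a) \equiv (-1)^{(p-1)\tau}\mathcal{H}(\mathcal{K}_\tau;q^p,a^p) \pmod{\{p\}}.
\]
This is a genuine $p$-adic statement, analogous to the $K$-theoretic Adams relation $\psi^p(x)\equiv x^p\pmod p$ lifted to the HOMFLY-PT skein of the annulus, and is not implied by the strong integrality of Theorems \ref{Theorem-strongintegral1}--\ref{Theorem-strongintegral2}. A natural route would be to prove the congruence at the level of idempotents in the Hecke algebra $H_n$ modulo $\{p\}$, showing that the skein projector realizing $P_p$ agrees modulo $\{p\}$ with a $p$-th Adams lift, and then transport this to link closures via the Markov trace. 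The extra $z^2$ factor responsible for the $z^{-2}$ pole should emerge from a second-order vanishing of the difference at $a=\pm q^{\pm 1}$, which can be verified against the torus-knot case through the Rosso-Jones formula.
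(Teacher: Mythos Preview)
The statement is a \emph{conjecture} in the paper, and the paper does not prove the integrality assertion $\tilde g_p^{(0)}\in z^{-2}a^\epsilon\mathbb Z[z^2,a^{\pm 2}]$. What the paper actually does is derive the closed-form expression for $\tilde g_p^{(0)}$ by specializing the general identity
\[
\tilde g_{\vec\mu}^{(h)}(\mathcal L_{\vec\tau};q,a)=\frac{1}{a-a^{-1}}\,\frac{\mathfrak z_{\vec\mu}}{\{\vec\mu\}}\sum_{d\mid\vec\mu}\frac{\mu(d)}{d}\,F_{\vec\mu/d}^{(h)}(\mathcal L_{\vec\tau};q^d,a^d)
\]
to a knot with $\mu=(p)$ prime, using $F_{(p)}^{(0)}(\mathcal K_\tau;q,a)=\frac{1}{p}(-1)^{p\tau}\mathcal H(\mathcal K_\tau\star P_p;q,a)$, and then records the integrality as the special case of the refined LMOV Conjecture~\ref{conjecture-refinedLMOV1}. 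Your derivation of the equality via the plethystic logarithm, picking out the two divisors $d=1$ and $d=p$, is the same computation written in slightly different language.

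Where you go beyond the paper is in attempting a direct proof of the integrality. You correctly isolate the divisibility by $\{p\}$ as the essential obstacle and correctly note that it is not a consequence of the strong integrality theorems; the paper's remark following Conjecture~\ref{conjecture-Specailcase} calls this the ``Hecke lifting'' and refers to the congruence skein relations of \cite{CLPZ14}, where the problem is studied but not settled in this generality. Your argument for the $(a-a^{-1})$ factor, however, is confused: you invoke the identity $\mathcal P_{[\lambda,\mu]}(\mathcal K;1,a)=\mathcal P(\mathcal K;1,a)^{|\lambda|+|\mu|}$, which is a $q\to 1$ specialization and says nothing about the $a\to\pm 1$ limit. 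In fact the $(a-a^{-1})$ factor is exactly the part the paper \emph{does} establish unconditionally, in Theorem~\ref{Theorem-reformulatedLMOV}, from the observation that every $H_{\vec\lambda}$ with $\vec\lambda\neq\vec 0$ carries the unknot factor $\frac{a-a^{-1}}{q-q^{-1}}$. Your closing suggestion about a second-order vanishing at $a=\pm q^{\pm 1}$ forcing the extra $z^2$ is speculative and is not how either the paper or the cited literature approaches the remaining pole order.
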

Furthermore, we prove in Section \ref{Section-coloredAlender} that
\begin{theorem}
If the Conjecture \ref{conjecture-Alexander0} holds, then we have
\begin{align}
\tilde{g}_p^{(0)}(\mathcal{K}_\tau;q,1)\in z^{-2}\mathbb{Z}[z^2].
\end{align}
In particular, $\tilde{g}_p^{(0)}(T_{r,s};q,1)\in
z^{-2}\mathbb{Z}[z^2]$ for the torus knot $T_{r,s}$.
\end{theorem}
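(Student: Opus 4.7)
The plan is to reduce the assertion for the knot $\mathcal{K}_\tau$ to the analogous assertion for the framed unknot $U_\tau$, using that when $p$ is prime only hook partitions contribute to the Schur expansion of the power sum $P_p$, and that the conjectural colored Alexander formula assigns the same value $A(\mathcal{K};q^p)$ to every hook of size $p$. First I would invoke the Murnaghan--Nakayama rule, which in the skein of the annulus gives
$$P_p=\sum_{k=0}^{p-1}(-1)^{k}\,Q_{(p-k,1^k)},$$
since $\chi_\lambda(p)$ vanishes on non-hook $\lambda$ and equals $(-1)^k$ for $\lambda=(p-k,1^k)$. Hence $\mathcal{H}(\mathcal{K}_\tau\star P_p;q,a)=\sum_k(-1)^k\mathcal{H}_{(p-k,1^k)}(\mathcal{K}_\tau;q,a)$. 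Each summand factors through the unknot as $\mathcal{H}_\lambda(\mathcal{K}_\tau;q,a)=P_\lambda(\mathcal{K};q,a)\,\mathcal{H}_\lambda(U_\tau;q,a)$, where the framing factors $q^{\kappa_\lambda\tau}a^{|\lambda|\tau}$ cancel between the two sides and $P_\lambda$ is the framing-independent normalized colored HOMFLY-PT invariant. Now $\mathcal{H}_\lambda(U_\tau;q,a)$ has a simple zero in $(a-a^{-1})$ while $P_\lambda(\mathcal{K};q,a)$ is regular at $a=1$, with limit $A_\lambda(\mathcal{K};q)$; under Conjecture~\ref{conjecture-Alexander0} this equals the $k$-independent constant $A(\mathcal{K};q^p)$, which therefore pulls out of the sum:
$$\lim_{a\to 1}\frac{\mathcal{H}(\mathcal{K}_\tau\star P_p;q,a)}{a-a^{-1}}=A(\mathcal{K};q^p)\lim_{a\to 1}\frac{\mathcal{H}(U_\tau\star P_p;q,a)}{a-a^{-1}}.$$

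For the Adams summand, the same factorization $\mathcal{H}(\mathcal{K}_\tau;q,a)=P(\mathcal{K};q,a)\,\mathcal{H}(U_\tau;q,a)$, applied after the substitution $(q,a)\mapsto(q^p,a^p)$, combined with $P(\mathcal{K};q^p,1)=A(\mathcal{K};q^p)$, shows that precisely the same scalar $A(\mathcal{K};q^p)$ factors out of $\lim_{a\to 1}\mathcal{H}(\mathcal{K}_\tau;q^p,a^p)/(a-a^{-1})$. Subtracting and dividing by $\{p\}$ yields the key factorization
$$\tilde g_p^{(0)}(\mathcal{K}_\tau;q,1)=A(\mathcal{K};q^p)\,\tilde g_p^{(0)}(U_\tau;q,1).$$
It then suffices to check $\tilde g_p^{(0)}(U_\tau;q,1)\in z^{-2}\mathbb{Z}[z^2]$. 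This is a purely skein-theoretic computation on the framed unknot, using the hook-content formula for $\mathcal{H}_{(p-k,1^k)}(U_0;q,a)$, the framing rule with $\kappa_{(p-k,1^k)}=p(p-1-2k)$, and the Adams identity $\mathcal{H}(U_0\star P_p;q,a)=\mathcal{H}(U_0;q^p,a^p)$ (which already forces $\tilde g_p^{(0)}(U_0;q,a)=0$). The refined LMOV integrality for framed unknots has essentially been established in earlier work (e.g. \cite{LuoZhu16,Zhu19-2}), so one may cite it or redo the explicit hook calculation; the outcome is exactly $\tilde g_p^{(0)}(U_\tau;q,1)\in z^{-2}\mathbb{Z}[z^2]$. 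Since $A(\mathcal{K};q^p)$ is a symmetric Laurent polynomial in $q^{\pm 2p}$, and $q^{2p}+q^{-2p}$ expresses as a polynomial in $z^2=(q-q^{-1})^2$ by a standard Chebyshev-type identity, we have $A(\mathcal{K};q^p)\in\mathbb{Z}[z^2]$, and the product lies in $z^{-2}\mathbb{Z}[z^2]$. The particular case of $T_{r,s}$ follows because Conjecture~\ref{conjecture-Alexander0} has already been verified for torus knots in the paper.

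The main obstacle will be the careful bookkeeping of the framing factors and the $(a-a^{-1})$-leading behavior of each summand, together with verifying that on the unknot side the Murnaghan--Nakayama combination of hook-content terms reassembles, after dividing by $\{p\}$, into an element of $z^{-2}\mathbb{Z}[z^2]$. The factorization step exploits that when $p$ is prime every partition appearing in the Murnaghan--Nakayama expansion is a hook of the same size $p$, so Conjecture~\ref{conjecture-Alexander0} produces a single common Alexander value across all summands; for non-prime $p$ or higher-cycle reformulated invariants, the same approach would require genuinely new input beyond the hook conjecture.
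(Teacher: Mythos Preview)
Your approach is correct and is essentially a reorganization of the paper's own argument. Both proofs use the Murnaghan--Nakayama identity $P_p=\sum_{m+n+1=p}(-1)^nQ_{(m|n)}$, the factorization $\mathcal{H}_\lambda(\mathcal{K}_\tau;q,a)=P_\lambda(\mathcal{K};q,a)\,\mathcal{H}_\lambda(U_\tau;q,a)$, and Conjecture~\ref{conjecture-Alexander0} to pull the common scalar $A(\mathcal{K};q^p)$ out of the $a\to 1$ limit; the paper arrives at the closed form $\tilde g_p^{(0)}(\mathcal{K}_\tau;q,1)=z^{-2}A(\mathcal{K};q^p)\,\alpha_p^{\tau}(q)$, which is exactly your factorization $A(\mathcal{K};q^p)\,\tilde g_p^{(0)}(U_\tau;q,1)$ since $\alpha_p^\tau$ depends only on the framing $\tau=w(\mathcal{K})$.

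The only substantive difference is how the unknot piece is handled. The paper does not cite external LMOV results for $U_\tau$; instead it proves directly, as an elementary polynomial lemma (Lemma~\ref{Lemma-alpha}), that
\[
\sum_{m+n+1=p}q^{(m-n)p\tau}-p\,(-1)^{(p-1)\tau}=[p]^2\,\alpha_p^\tau(q)
\]
with $\alpha_p^\tau\in\mathbb{Z}[z^2]$, by a Chebyshev-type induction for $p=2$ and a double-root argument at the primitive $2p$-th roots of unity for odd $p$. This makes the paper's proof entirely self-contained, whereas your version delegates the unknot step to \cite{LuoZhu16,Zhu19-2}. Your route is legitimate provided those references indeed establish the full refined LMOV integrality $\tilde g_p^{(0)}(U_\tau;q,a)\in z^{-2}\mathbb{Z}[z^2,a^{\pm 1}]$ (so that one may specialize $a=1$); if you want a self-contained write-up, it is simpler to supply the short lemma above than to invoke the general framed-unknot machinery. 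The treatment of $A(\mathcal{K};q^p)\in\mathbb{Z}[z^2]$ and the torus-knot consequence are identical in both proofs.
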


\subsection{Related works and future directions}
\subsubsection{The degrees of the $a$ and $q$}
As shown in Theorem \ref{Theorem-strongintegral2}, for a link with
$L$-components, the normalized framed full colored HOMFLY-PT
invariant
$\mathcal{P}_{[\lambda^{\alpha},\mu^{\alpha}]}(\mathcal{L};q,a)$
lies in the ring $a^{\epsilon}\mathbb{Z}[q^2,a^2]$, so we can write
it in the following two forms
\begin{align}
\mathcal{P}_{[\lambda^{\alpha},\mu^{\alpha}]}(\mathcal{L};q,a)&=\sum_{k=m}^{M}c_{k}(a)q^{2k}\\
\mathcal{P}_{[\lambda^{\alpha},\mu^{\alpha}]}(\mathcal{L};q,a)&=\sum_{k=n}^{N}d_{k}(q)a^{2k+\epsilon}.
\end{align}
for some integers $m,M,n,N$ which depends on $\mathcal{L}$ and the
partitions $\lambda^{\alpha},\mu^{\alpha}$.

At this point, one natural question is how to estimate these
integers $m,M,n,N$. For the  HOMFLY-PT polynomial \cite{LiMi87},
there was a classical result \cite{Mo86} due to Morton
 which states that the above integers are bounded by algebraic
crossing numbers and the number of Seifert circles of the knot
diagram. In \cite{Ve15}, R. van der Veen generalized Morton's result
to the case of colored HOMFLY-PT polynomial colored with one-box
partition. So it is natural to consider if there are analogue
bounded formulas in our general case. Furthermore, the topological
interpretation of the degrees would be another interesting question.
We hope to address this question in future work.

\subsubsection{Categorifications}
The HOMFLY-PT polynomial can be categorified to the
Khovanov-Rozansky homology \cite{KR08}, after that, there have being
a lot works devoted to the categorified theory for the colored
HOMFLY-PT invariants, e.g. \cite{WW17,Wed19-2,GNSSS15}.
  Our formulas (\ref{formula-strongintegrality0}) and (\ref{formula-strongintegrality1})
suggests that the normalized framed full colored HOMFLY-PT
invariants may have categorifications whose homology is finite
dimensional. It is a challenging question to construct such
categorifications for them.

\subsubsection{New integral link invariants}
\label{subsubsection-new} The refined LMOV conjecture
\ref{conjecture-refinedLMOV} predicts the existence of new integral
link invariants
$\tilde{N}_{\vec{\mu},g,Q}^{(h)}(\mathcal{L}_{\vec{\tau}})$ for the
framed link $\mathcal{L}_{\vec{\tau}}$. A natural question is how to
define these new integral invariants directly. In their original
literatures \cite{OV00, LaMaVa00}, Vafa et al. proposed that these
new integral link invariants can be interpreted as the Euler
characteristic number of certain moduli space which are expected to
being exist. In \cite{LuoZhu16}, through a straightforward
computation, we found these integral invariants for framed unknot
$U_{\tau}$ were related to the Betti number of cohomological Hall
algebra of a corresponding quiver. Then the idea of knot-quiver
correspondence was further extended in
\cite{Zhu19-1,KRSS17-1,KRSS17-2}, and see
\cite{EKL20,PSS18,SW20-1,SW20-2} for more recent developments.

Obviously, these integral link invariants are fully determined by
the Chern-Simons partition function of the framed link. From the
point of view of topological string theory, Aganagic and Vafa
\cite{AV12} introduced the deformed A-polynomial for colored
HOMFLY-PT invariants which can be viewed as the mirror geometry of
the topological string theory. According to the large $N$ duality of
Chern-Simons and topological string theory
\cite{GV99,OV00,LaMaVa00}, these new integral link invariants are
fully determined by the deformed A-polynomial. Later, the existence
of the deformed A-polynomials was proved rigidly in \cite{GLL18}.

On the other hand side, there is a proposal initiated by Aganagic,
Ekholm, Ng and Vafa in \cite{AENV13} which connects the topological
strings and contact homology theory. In this framework, it is
conjectured that the deformed A-polynomial is equal to the
augmentation polynomial from knot contact homology \cite{Ng14}.
Therefore, there should be a geometric way to define these new
integral link invariants from knot contact homology, we leave the
detailed discussion about it to another paper \cite{Zhu21}.

\subsubsection{Geometry of the colored HOMFLY-PT invariants}
Recall that the large $N$ duality proposes that the colored
HOMFLY-PT invariant of a link in $S^3$ gives the count of all
holomorphic curves in the resolved conifold with boundary on the
shifted Lagrangian conormal of this link. However, a theory of open
Gromov-Witten invariants was still missing. In \cite{ES19}, Ekholm
and Skende developed a new method to define the invariant counts of
holomorphic curves with Lagrangian boundary $L$. The basic idea is
to use framed HOMFLY-PT skein module $Sk(L)$ (see Section
\ref{Section-homflyptskein}
 for the definition of skein module) to describe the
 obstruction to invariance. They introduced the notion of skein-valued open Gromov-Witten
 invariant for a link in $S^3$ which is the generating function of
 colored HOMFLY-PT invariants of this link in the skein $Sk(L)$
 \cite{ES20,ES21}.  It is expected that the new structures for colored HOMFLY-PT
 invariants obtained in this paper would provide new insights in this counting theory of holomorphic curves.

\subsubsection{Colored Kauffman invariants}
It is well-known that Kauffman  polynomials \cite{Kau90} is another
useful two-variable polynomial invariants of knots and links. Its
colored version was established in \cite{CC12} following the work of
\cite{LiZh10}.

The corresponding skein theory for the colored Kauffman invariants
was established in \cite{BB01,Ry08}. So we hope to establish the
analogue structural properties for colored Kauffman invariants via
Kauffman skein theory \cite{CZ}. From the point of view of
Chern-Simons theory, the colored Kauffman invariants correspond to
the gauge group $SO(N)/Sp(N)$. In \cite{CC12}, L. Chen and Q. Chen
proposed an analogue LMOV integrality conjecture for colored
Kauffman invariants. See also \cite{Mar10} for a mixed version of
the LMOV integrality conjecture. In \cite{CZ20}, we have shown that
the composite invariants and colored Kauffman invariants carries the
same congruence skein relations. It would be interesting to consider
the analogue refined LMOV integrality structure for the colored
Kauffman invariants.

\subsection{The structure of this paper}
The rest of this paper is organized as follow. In Section
\ref{Section-homflyptskein} we survey the main results of the
HOMFLY-PT skein theory and fix the notations. In Section
\ref{Section-Idempotent} we briefly review the construction of the
idempotents of Hecke algebras $H_{n}(q,a)$ following the work
\cite{MA98}, then we establish a key formula related to the
idempotent which will be applied in Section
\ref{Section-strongintegrality} to prove the strong integrality for
the framed full normalized colored HOMFLY-PT invariants. In Section
\ref{section-LMOV functions} we introduce the notion of LMOV
structure, then we
 refine the LMOV integrality structure for the framed
colored HOMFLY-PT invariants and composite invariants by using the
results obtained in Section \ref{Section-strongintegrality}. The
next Section \ref{section-special polynomials} and Section
\ref{Section-coloredAlender} are devoted to two specializations of
the normalized framed full colored HOMFLY-PT invariants. We prove an
identity for special polynomials in Section \ref{section-special
polynomials} and present a conjectural formula for colored Alexander
polynomial in Section \ref{Section-coloredAlender}. Finally, we
present that this conjectural formula implies a special case of LMOV
conjecture for frame knots. In the final appendix Section
\ref{section-Appendix}, we provide the basic notations related to
partitions and symmetric functions used in this article. Then we
present the equivalence of the two approaches to the colored
HOMFLY-PT invariants via quantum group theory and HOMFLY-PT skein
theory.

\section{HOMFLY-PT skein theory} \label{Section-homflyptskein}
Given a 3-manifold $M$ (possibly with boundary), roughly speaking, a
skein module is a quotient of the free module over isotopy classes
of links in $M$ by suitably chosen skein relations. Let us recall
briefly the history of development of skein modules.

 Motivated by the
pioneering work of V. Jones \cite{Jo85}, Kauffman \cite{Kau87}
introduced the Kauffman bracket which provided a simple approach to
the Jones polynomials. More precisely, the Kauffman bracket is a
framed knot invariant defined by a local relation named the Kauffman
bracket skein relation together with the value of the unknot. By
imposing this relation and the value of unknot, any link in
$\mathbb{R}^3$ can be reduced to a Laurent polynomial in $q$ times
the empty link. In order to extend the definition of Jones
polynomial of knots in $\mathbb{R}^3$ to a general oriented
$3$-manifold $M$,  Turaev \cite{Tu88-1,Tu89} and Przytrychi
\cite{Pr91} independently introduced the notion of skein module of
$M$. In particular, the Kauffman bracket skein modules of manifolds
were for the first time defined in \cite{Pr91}, which are formal
linear combinations of framed unoriented links considered up to the
Kauffman bracket skein relation together with the value of the
unknot. Furthermore, the Kauffman bracket skein module of a surface
$\Sigma$ is defined as the Kauffman bracket skein module of the
cylinder $\Sigma\times [0,1]$ over the surface $\Sigma$. In this
setup, the skein module carries a natural multiplication structure
and becomes an algebra which is referred as the Kauffman bracket
skein algebra. In the past decades, the Kauffman bracket skein
algebras of surfaces have been proved to have connections and
applications to many interesting objects such as character
varieties,  quantum invariants, quantum Teichm\"uller spaces,
cluster algebras and so on (See e.g. \cite{Bu97,BW11,Le15, Mu16,
Th14}).

On the other hand side, it is well-known that there exists the
two-variable generalization of the Jones polynomial, i.e. HOMFLY-PT
polynomial \cite{FYHLMO85} which  can be defined through two local
relations named HOMFLY-PT skein relations as shown in Figure 1,
where $z=q-q^{-1}$.

Therefore, it is natural to define the HOMFLY-PT skein modules for a
general oriented $3$-manifold $M$ by imposing the HOMFLY-PT skein
relations \cite{Mo93}. The HOMFLY-PT skein theory was studied
extensively by Morton and his collaborators
\cite{Mo93,Ai96,Ai97-1,Ai97-2,MA98,Lu01,Mo02,Lu05,HaMo06,Mo07,MoMa08,MoSa17}
during the past decades. In the following context, we attempt to
provide a comprehensive review of HOMFLY-PT skein theory.

\subsection{HOMFLY-PT skein algebras}
The account here largely follows those of \cite{MoSa17,HaMo06,Mo07}.
Throughout this article, we will work over the coefficient ring
$R=\mathbb{C}[a^{\pm 1},q^{\pm 1}, (q-q^{-1})^{-k}]$ with $k$ ranges
over $\mathbb{N}$.

 Suppose $M$ is an
oriented 3-manifold. A framed oriented link in $M$ is a smooth
embedding of $\sqcup S^1\times [0,1]$ up to isotopy in $M$. Let
$\mathcal{L}(M)$ be the free $R$-module spanned by framed oriented
links in $M$, let $\mathcal{L}'(M)\subset \mathcal{L}(M)$ be the
$R$-submodule generated by the skein relations in Figure 1.

\begin{definition}
The {\em framed HOMFLY-PT skein module} $Sk(M)$ of the manifold $M$
is defined as the quotient
\begin{align}
Sk(M):=\mathcal{L}(M)/\mathcal{L}'(M).
\end{align}
\end{definition}

By its definition, the HOMFLY-PT skein $Sk(M)$ has several basic
properties, such as

(i) The $Sk(M)$ is graded by the first homology group $H_1(M)$,
since each skein relation involves only links in the same homology
class.

(ii) An oriented embedding $f: M\rightarrow N$ induced an $R$-linear
map $f_*: Sk(M)\rightarrow Sk(N)$. When $f$ is a homeomorphism the
map $f_*$ is an isomorphism.

(iii) In particular, when $F$ is an orientable surface and
$M=F\times [0,1]$, we use the notation $Sk(F)$ in place of
$Sk(F\times [0,1])$, and refer to $Sk(F)$ as the HOMFLY-PT skein of
the surface $F$. In this setup, $Sk(F)$ becomes an algebra over the
coefficient ring $R$. The product is given by stacking links. The
grading is additive under the product. Usually, we call $Sk(F)$ the
{\em HOMFLY-PT skein algebra} of the surface $F$.

\begin{remark}
One can fix $n$-points set $S\subset F$ and include $n$ arcs from
$S\times \{0\}$ to $S\times \{1\}$ which are called the $n$-tangles.
Similarly, all the $R$-linear combinations of $n$-tangles in $(F,S)$
up to isotopy module the skein relations gives an algebra denoted by
$Sk_n(F)$. When $n=0$, $Sk_0(F)$ is just the skein algebra $Sk(F)$.
\end{remark}

\begin{remark}
Framed links in $F\times [0,1]$ can be represented by diagrams in
$F$, with the blackboard framing from $F$. Therefore, one can also
regard elements of $Sk(F)$ as diagrams in $F$ modulo Reidemeister
moves $RII$ and $RIII$ and skein relations as shown in Figure 1. It
is easy to follow that the removal of an unknot is equivalent to
time a scalar $s=\frac{a-a^{-1}}{q-q^{-1}}$, i.e. we have the
relation showed in Figure 2.
\begin{figure}[!htb]
\begin{center}
\includegraphics[width=150 pt]{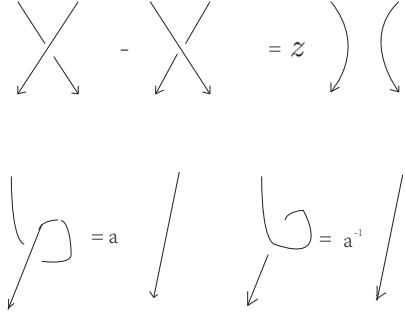}
\end{center}
\caption{Local relations}
\end{figure}

\begin{figure}[!htb]
\begin{center}
\includegraphics[width=80 pt]{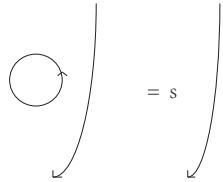}
\end{center}
\caption{Removal of an unknot}
\end{figure}
\end{remark}

\begin{remark}
For every surface $F$, we define the mirror map $- :
Sk(F)\rightarrow Sk(F)$ as follows. For every tangle $T$ in $F$, we
define $\overline{T}$ to be the $T$ with all its crossings switched.
As to the coefficient ring $R$, we define $\overline{q}=q^{-1}$ and
$\overline{a}=a^{-1}$. Then, these operations induce a linear
automorphism respects to the skein relation $- : Sk(F)\rightarrow
Sk(F)$, which was call the mirror map on $Sk(F)$.
\end{remark}

In the following, we present several basic examples of the HOMFLY-PT
skein algebras.

\subsubsection{The plane $\mathbb{R}^2$}

When $F$ is the plane $\mathbb{R}^2$, it is easy to follow that
every element in $Sk(\mathbb{R}^2)$ can be represented as a scalar
in the ring $R$. For a link $\mathcal{L}$
with a diagram $D_{\mathcal{L}}$, the resulting scalar $\langle D_{\mathcal{L%
}} \rangle \in R$ is the framed  HOMFLY-PT polynomial $%
\mathcal{H}(\mathcal{L};q,a)$ of the link $\mathcal{L}$, i.e. $\mathcal{H}(%
\mathcal{L};q,a)=\langle D_\mathcal{L}\rangle$. We use the convention $%
\langle \ \rangle=1$ for the empty diagram, so $\mathcal{H}(U;q,a)=\frac{%
a-a^{-1}}{q-q^{-1}}$. The two relations shown in Figure 1 lead to
\begin{align}
&\mathcal{H}(\mathcal{L}_+;q,a)-\mathcal{H}(\mathcal{L}_-;q,a)=z\mathcal{H}(%
\mathcal{L}_0;q,a), \label{classicalskein1} \\
&\mathcal{H}(\mathcal{L}^{+1};q,a)=a\mathcal{H}(\mathcal{L};q,a) \
\text{and} \
\mathcal{H}(\mathcal{L}^{-1};q,a)=a^{-1}\mathcal{H}(\mathcal{L};q,a).
\label{classicalskein2}
\end{align}
Under our notation, the classical HOMFLY-PT polynomial of a link
$\mathcal{L}$ is given by
\begin{align}
P(\mathcal{L};q,a)=\frac{a^{-w(\mathcal{L})}\mathcal{H}(\mathcal{L};q,a)}{%
\mathcal{H}(U;q,a)},
\end{align}
where $w(\mathcal{L})$ denotes the writhe number of the link
$\mathcal{L}$.

\subsubsection{The rectangle}
We write $H_{n,m}(q,a)$  for the skein algebra $Sk_{n,m}(F)$ of
$(n,m)$-tangles, where $F$ is the rectangle with $n$ inputs and $m$
outputs at the top and matching inputs and outputs at the bottom.
There is a natural algebra structure on $H_{n,m}(q,a)$ by placing
tangles one above the another. When $m=0$, we write
$H_n(q,a)=H_{n,0}(q,a)$ for brevity.
\begin{remark}
It is well-known that the Hecke algebra $H_n$ of type $A_{n-1}$  is
the algebra generated by $\sigma_i: i=1,...,n-1$, subjects to the
relations:
\begin{align*}
&\sigma_i\sigma_j=\sigma_j\sigma_i: |i-j|>1 \\\nonumber
&\sigma_i\sigma_{i+1}\sigma_i=\sigma_{i+1}\sigma_i\sigma_{i+1}:
1\leq i\leq n-1; \\\nonumber
 &\sigma_i-\sigma_i^{-1}=z.
\end{align*}
It is straight to see that the Hecke algebra $H_n$, with
$z=q-q^{-1}$ and coefficient ring extended to include $a^{\pm 1}$
and $q^{\pm 1}$, is isomorphic to the skein algebra $H_{n}(q,a)$
described above. In this algebra the extra variable $a$ in the
coefficient ring allows us to reduce general tangles to linear
combinations of braids, by means of the skein relations.
\end{remark}

The skein theory construction of the idempotent elements
$y_{\lambda}$ of $H_{n}(q,a)$ was presented in \cite{MA98}. This
construction and the main results are briefly reviewed in next
section.

\begin{figure}[!htb] \label{Tn}
\begin{center}
\includegraphics[width=150 pt]{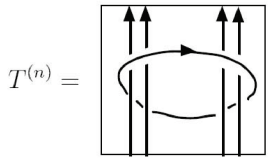}
\end{center}
\caption{The element $T^{(n)}$}
\end{figure}

Set $T^{(n)}$ to be the element  in $H_n(q,a)$ as illustrated in
Figure 3, and let
\begin{align}
t_{\lambda}=za\sum_{x\in \lambda}q^{2cn(x)}+\frac{a-a^{-1}}{z}.
\end{align}
where $cn(x)=j-i$ is the content of the cell in row $i$ and column
$j$ of the Young diagram for the partition $\lambda$. Then we have
\begin{align} \label{formula-TElamda}
T^{(n)}y_{\lambda}=t_{\lambda}y_{\lambda}.
\end{align}

Since a partition is uniquely determined by its contents, it is
readily verified that $t_{\lambda}=t_{\mu}$ if and only if
$\lambda=\mu$.

Similarly, we set $T^{(n,m)}$ to be the analogue elements in
$H_{n,m}(q,a)$, and let
\begin{align} \label{formula-tlambdamu}
t_{\lambda,\mu}=z\left(a\sum_{x\in
\lambda}q^{2cn(x)}-a^{-1}\sum_{x\in
\mu}q^{-2cn(x)}\right)+\frac{a-a^{-1}}{z}.
\end{align}
It is also clear that $t_{\lambda,\mu}=t_{\lambda',\mu'}$ if and
only if $\lambda=\lambda'$ and $\mu=\mu'$.

\subsubsection{The annulus}
When $F$ is the annulus $S^1\times [0,1]$, usually, we use
$\mathcal{C}$ to denote the skein algebra $Sk(S^1\times [0,1])$.
$\mathcal{C}$ is a commutative algebra with the product induced by
placing one annulus outside another.

The closure map $\hat{} : H_{n,m}(q,a)\rightarrow \mathcal{C}$ is an
$R$-linear map induced by taking an $(n,m)$-tangle $T$ to its
closure $\hat{T}$ in the annulus, whose image we call
$\mathcal{C}^{(n,m)}$. It is obvious that every diagram in the
annulus represents an element in some $\mathcal{C}^{(n,m)}$.

Turaev \cite{Tu88-1} showed that $\mathcal{C}$ is freely generated
as an algebra by the set $\{A_m: m\in \mathbb{Z}\}$ where $A_m,
m\neq 0$, is represented by the closure of the braid
$\sigma_{|m|-1}\cdots \sigma_2\sigma_1$. The orientation of the
curve around the annulus is counter-clockwise for positive $m$ and
clockwise for negative $m$. The element $A_0$ is the identity
element and is represented by the empty diagram. The algebra
$\mathcal{C}$ is the product of two subalgebras $\mathcal{C}^+$ and
$\mathcal{C}^-$ generated by $\{A_m: m\in \mathbb{Z}, m\geq 0\}$ and
$\{A_m: m\in \mathbb{Z}, m\leq 0\}$ respectively.

Take a diagram $X$ in the annulus and link it once with a simple
meridian loop, oriented in either direction, to give diagrams
$\varphi(X)$ and $\bar{\varphi}(X)$ in the annulus. This induces
linear endomorphisms $\varphi, \bar{\varphi}$ of the skein
$\mathcal{C}$, called the {\em meridian maps}. Each space
$\mathcal{C}^{(n,m)}$ is invariant under $\varphi$ and
$\bar{\varphi}$. We set $\varphi^{(n)}=\varphi|_{\mathcal{C}^{(n)}}$
and $\varphi^{(n,m)}=\varphi|_{\mathcal{C}^{(n,m)}}$.

\begin{remark}
Rotating diagrams in the annulus $S^1\times I$ about the horizontal
axis through the distinguished boundary points induces a linear
automorphism of the skein $\mathcal{C}$, which we denote by $* :
\mathcal{C}\rightarrow \mathcal{C}$. It is clear to see that
$A_m^{*}=A_{-m}$ and then $(\mathcal{C}^{+})^*=\mathcal{C}^{-}$ and
$(\mathcal{C}^{n,m})^*=\mathcal{C}^{m,n}$.
\end{remark}

\subsubsection*{The subspace $\mathcal{C}^{(n)}\subset \mathcal{C}^+$}

The subspace $\mathcal{C}^{(n)}$ is spanned by monomials in
$\{A_m\}$, with $m\in \mathbb{Z}^+$, of total weight $n$, where the
weight of $A_m$ is $m$. It is clear that this spanning set consists
of $\pi(n)$ elements, the number of partitions of $n$.
$\mathcal{C}^+$ is then graded as an algebra
\begin{align}
\mathcal{C}^+=\bigoplus_{n=0}^{\infty}\mathcal{C}^{(n)}.
\end{align}

 This subspace $\mathcal{C}^{+}$ can also viewed as the
image of $H_n(q,a)$ under the closure map $\hat{}$.
 Given the element $T^{(n)}\in H_n(q,a)$ as illustrated in Figure 3, take an element
$S\in H_n(q,n)$ with $\hat{S}\in \mathcal{C}^{(n)}$ and compose it
by $T^{(n)}$. Then
\begin{align}
\widehat{T^{(n)}S}=\varphi^{(n)}(\hat{S}).
\end{align}
Similarly,
\begin{align}
\widehat{\bar{T}^{(n)}S}=\bar{\varphi}^{(n)}(\hat{S}).
\end{align}

Then it is easy to show  that the eigenvalues of $\varphi^{(n)}$ are
all distinct. Indeed, set $Q_{\lambda}=\widehat{y_{\lambda}}\in
\mathcal{C}^{(n)}$. Taking closures on both sides of the formula
(\ref{formula-TElamda}), we immediately obtain
\begin{align}
\varphi^{(n)}(Q_{\lambda})=t_{\lambda}Q_{\lambda}.
\end{align}
The element $Q_\lambda$ is then an eigenvector of $\varphi^{(n)}$
with eigenvalue $t_{\lambda}$. There are $\pi(n)$ of these
eigenvectors, and the eigenvalues are all distinct. Hence all these
$Q_{\lambda}$ are linear independent. Since $\mathcal{C}^{(n)}$ is
spanned by $\pi(n)$ elements we can deduce that all the elements
$Q_{\lambda}$ with $|\lambda|=n$ form a basis for
$\mathcal{C}^{(n)}$ and that the eigenspaces are all
$1$-dimensional.

It was shown in \cite{Lu01,Lu05} that  $Q_\lambda$ can be expressed
as an explicit integral polynomial in $\{h_m\}_{m\geq 0}$ and
$\mathcal{C}^+$ can be regarded as the ring of symmetric functions
in variables $x_1,..,x_N,..$ with the coefficient ring $R$. In this
situation, $\mathcal{C}^{(n)}$ consists of the homogeneous functions
of degree $n$. The power sum $P_n=\sum x_i^m$ is a symmetric
function which can be presented in terms of the complete symmetric
functions, hence it represents a skein element which is also denoted
by $P_n\in \mathcal{C}^{(m)}$. Moreover, we have the identity
\begin{align}
[m]P_m=X_m=\sum_{j=0}^{m-1}A_{m-1-j,j}
\end{align}
where $[m]=\frac{q^m-q^{-m}}{q-q^{-1}}$ and $A_{i,j}$ is the closure
of the braid $$\sigma_{i+j}\sigma_{i+j-1}\cdots
\sigma_{j+1}\sigma_{j}^{-1}\cdots \sigma_1^{-1}.$$ Given a partition
$\mu$, we define
\begin{align}
P_{\mu}=\prod_{i=1}^{l(\mu)}P_{\mu_i}.
\end{align}
According to the Frobenius formula for Schur function, we have
\begin{align} \label{formula-frobeniusQ}
Q_{\lambda}=\sum_{\mu}\frac{\chi_{\lambda}(\mu)}{\frak{z}_{\mu}}P_\mu.
\end{align}

\subsubsection*{The subspace $\mathcal{C}^{(n,m)}$} The image of
$H_{n,m}(q,a)$ under the closure map is denoted by
$\mathcal{C}^{(n,m)}\subset \mathcal{C}$. Unlike the case for
$\mathcal{C}^{(n)}$ where $\mathcal{C}^{(n)}\cap
\mathcal{C}^{(n-1)}=\emptyset$, we have that:

\makeatletter
\let\@@@alph\@alph
\def\@alph#1{\ifcase#1\or \or $'$\or $''$\fi}\makeatother
\begin{subnumcases}
{\mathcal{C}^{(n,m)}\supset\mathcal{C}^{(n-1,m-1)}\supset
\cdots\supset}
 \mathcal{C}^{(n-m,0)}, &$\min(n,m)=m$, \\\nonumber
 \mathcal{C}^{(0,m-n)}, &$\min(n,m)=n$.
\end{subnumcases}
\makeatletter\let\@alph\@@@alph\makeatother and
\begin{align}
\mathcal{C}^{(n,0)}=\mathcal{C}^{(n)}_{(+)}, \
\mathcal{C}^{(0,m)}=\mathcal{C}^{(m)}_{(-)}.
\end{align}
where the subscripts indicate the direction of the strings around
the center of the annulus. We have that $\mathcal{C}^{(n_1,p_1)}\cap
\mathcal{C}^{(n_2,p_2)}=\emptyset$ if $n_1-m_1\neq n_2-m_2$. Indeed,
$\mathcal{C}^{(n,m)}$ is spanned by suitably weighted monomials in
\begin{align}
\{A_{n},...,A_{1},A_0,A_{-1},...,A_{-m}\}.
\end{align}
We can then see that
\begin{align}
\mathcal{C}^{(n,m)}=\left(\mathcal{C}_{+}^{(n)}\times
\mathcal{C}_{-}^{(m)}+\mathcal{C}^{(n-1,m-1)}\right).
\end{align}
The spanning set of $\mathcal{C}^{(n,m)}$ consists of $\pi(n,m)$
elements, where
\begin{align}
\pi(n,m):=\sum_{j=0}^{k}\pi(n-j)\pi(m-j).
\end{align}
with $k=\min(n,m)$.

Similar to grading of $\mathcal{C}^+$ with the $\mathcal{C}^{(n)}$,
 the full skein $\mathcal{C}$ can be written in terms of
 $\mathcal{C}^{(n,m)}$ as follow:
\begin{align}
\mathcal{C}=\bigoplus_{k=-\infty}^{\infty}\left(\bigcup_{n,m\geq
0}\{\mathcal{C}^{(n,m)}: n-m=k\}\right).
\end{align}

Now, we consider the meridian map:
\begin{align}
\varphi^{(n,m)}: \mathcal{C}^{(n,m)}\rightarrow \mathcal{C}^{(n,m)}
\end{align}
It is proved in \cite{MoHa02} that
\begin{proposition} [See Theorem 6 in \cite{MoHa02}]
$t_{\lambda,\mu}$ given by formula (\ref{formula-tlambdamu}) with
$|\lambda|\leq n, |\mu|\leq m$ and $|\lambda|-|\mu|=m$ are
eigenvalues of the meridian map $\varphi^{(n,m)}$, moreover, they
occur with multiplicity $1$.
\end{proposition}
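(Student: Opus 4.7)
The plan is to mimic the strategy that worked for $\mathcal{C}^{(n)}$ in the text above: construct explicit eigenvectors indexed by admissible pairs $(\lambda,\mu)$, show the corresponding eigenvalues $t_{\lambda,\mu}$ are pairwise distinct, and conclude via a dimension count against the spanning size $\pi(n,m)$ of $\mathcal{C}^{(n,m)}$.

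First I would construct, for each pair $(\lambda,\mu)$ with $|\lambda|\leq n$, $|\mu|\leq m$ and $|\lambda|-|\mu|=n-m$, an idempotent $y_{\lambda,\mu} \in H_{n,m}(q,a)$ whose closure $Q_{\lambda,\mu} := \widehat{y_{\lambda,\mu}}$ lies in $\mathcal{C}^{(n,m)}$. At the skein level the natural recipe is to combine a Morton--Aiston idempotent $y_\lambda$ on the $n$ upward strands with $y_\mu$ on the $m$ downward strands, and then apply cup/cap (turnback) projectors that kill the lower strata $\mathcal{C}^{(n-1,m-1)}, \mathcal{C}^{(n-2,m-2)},\ldots$ corresponding to pairs with smaller $|\lambda|+|\mu|$. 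Next I would establish the mixed analogue of formula~(\ref{formula-TElamda}):
\[
T^{(n,m)} y_{\lambda,\mu} = t_{\lambda,\mu}\, y_{\lambda,\mu}.
\]
Encircling the upward strands produces the contribution $za\sum_{x\in\lambda} q^{2cn(x)}$ by the positive case already used for $T^{(n)}$; encircling the downward strands produces $-za^{-1}\sum_{x\in\mu} q^{-2cn(x)}$ via the mirror map $a\mapsto a^{-1}$, $q\mapsto q^{-1}$; and the empty-string part contributes the constant $\tfrac{a-a^{-1}}{z}$. Summing these reproduces exactly the $t_{\lambda,\mu}$ of~(\ref{formula-tlambdamu}). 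Taking closures and using $\widehat{T^{(n,m)} S} = \varphi^{(n,m)}(\hat{S})$ then shows $Q_{\lambda,\mu}$ is an eigenvector of $\varphi^{(n,m)}$ with eigenvalue $t_{\lambda,\mu}$.

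Distinctness of the $t_{\lambda,\mu}$ is already noted in the text following~(\ref{formula-tlambdamu}): a partition is determined by its multiset of contents, and the positive and negative content sums occupy disjoint monomial ranges in $q$, so $(\lambda,\mu)\mapsto t_{\lambda,\mu}$ is injective. Consequently the $Q_{\lambda,\mu}$ are linearly independent. For the counting step, writing $k=|\mu|$ forces $|\lambda|=n-m+k$ with $\max(0,m-n)\leq k\leq m$, and the number of admissible pairs equals $\sum_{j=0}^{\min(n,m)}\pi(n-j)\pi(m-j)=\pi(n,m)$ after the index change $j=m-k$. This matches the size of the spanning set of $\mathcal{C}^{(n,m)}$, so the $Q_{\lambda,\mu}$ form a basis, each eigenspace of $\varphi^{(n,m)}$ is one-dimensional, and the claim follows.

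I expect the main obstacle to be the first step, namely the explicit skein-theoretic construction of the mixed idempotents $y_{\lambda,\mu}$ and the verification that the turnback projectors successfully remove the lower strata $\mathcal{C}^{(n-j,m-j)}$. Unlike $H_n(q,a)$, which admits a clean Young-symmetrizer description via the $y_\lambda$, the mixed algebra $H_{n,m}(q,a)$ blends the two orientations in a way that forces careful bookkeeping with the cup/cap maps; once this structural input is in place, the eigenvalue computation becomes a routine application of the positive case together with the mirror map, and the counting argument is automatic.
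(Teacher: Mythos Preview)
The paper does not supply its own proof of this proposition: it is quoted verbatim as Theorem~6 of \cite{MoHa02} and used as a black box, with the explicit eigenvectors $Q_{\lambda,\mu}$ then imported separately from \cite{HaMo06}. So there is nothing in the present paper to compare your argument against line by line.

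That said, your strategy diverges from the one actually carried out in \cite{MoHa02}, and the divergence is exactly at the point you flag as the main obstacle. Morton and Hadji do \emph{not} build mixed idempotents $y_{\lambda,\mu}\in H_{n,m}(q,a)$ and then compute $T^{(n,m)}y_{\lambda,\mu}$. Instead they exploit the filtration
\[
\mathcal{C}^{(n-k,m-k)}\subset\cdots\subset\mathcal{C}^{(n-1,m-1)}\subset\mathcal{C}^{(n,m)}
\]
and show that $\varphi^{(n,m)}$ is upper-triangular with respect to it: on the associated graded piece $\mathcal{C}^{(n,m)}/\mathcal{C}^{(n-1,m-1)}\cong \mathcal{C}^{(n)}_{+}\otimes\mathcal{C}^{(m)}_{-}$ the meridian map decouples, and the eigenvalues on that top slice are precisely the $t_{\lambda,\mu}$ with $|\lambda|=n$, $|\mu|=m$. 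Induction on $\min(n,m)$ then harvests all the $t_{\lambda,\mu}$ with the correct multiplicities, and distinctness (which you argue correctly) forces simplicity. The eigenvectors themselves are only produced afterwards, in \cite{HaMo06}, via the determinantal formula~(\ref{formula-Qdet}) rather than via idempotents.

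Your proposed route is not wrong in principle, but the ``turnback projector'' construction you sketch is genuinely delicate: building well-behaved idempotents in $H_{n,m}(q,a)$ that are simultaneously eigenvectors for $T^{(n,m)}$ is substantially harder than in the one-orientation case, and the literature (Kosuda--Murakami, Blanchet \cite{Bl00}) handles it with considerable machinery. The filtration argument of \cite{MoHa02} sidesteps this entirely, which is why it was the first published proof. If you want a self-contained skein argument, the triangularity approach is both shorter and avoids the structural input you correctly identify as the bottleneck.
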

A straightforward consequence is
\begin{corollary}
There is a basis of $\mathcal{C}^{(n,m)}$ given by
\begin{align}
\{Q_{\lambda,\mu}: |\lambda|\leq n, |\mu|\leq m,
|\lambda|-|\mu|=n-m\}
\end{align}
such that $\varphi(Q_{\lambda,\mu})=t_{\lambda,\mu}Q_{\lambda,\mu}$.
\end{corollary}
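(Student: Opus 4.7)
The plan is to combine a dimension count with the eigenvalue data of the preceding Proposition, so the corollary becomes a clean linear algebra consequence. First, I would observe that the pairs $(\lambda,\mu)$ satisfying $|\lambda|\leq n$, $|\mu|\leq m$ and $|\lambda|-|\mu|=n-m$ are precisely those of the form $|\lambda|=n-j$, $|\mu|=m-j$ for some $0\leq j\leq \min(n,m)$, so the total number of such pairs is
\begin{equation*}
\sum_{j=0}^{\min(n,m)}\pi(n-j)\pi(m-j)=\pi(n,m),
\end{equation*}
which matches the cardinality of the spanning set of $\mathcal{C}^{(n,m)}$ exhibited just above. In particular $\dim\mathcal{C}^{(n,m)}\leq\pi(n,m)$.

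Next, I would invoke the preceding Proposition, which tells us that each $t_{\lambda,\mu}$ with $(\lambda,\mu)$ in the above index set is an eigenvalue of the meridian map $\varphi^{(n,m)}$ of multiplicity $1$. Together with the observation made just after formula (\ref{formula-tlambdamu}) that $t_{\lambda,\mu}=t_{\lambda',\mu'}$ if and only if $(\lambda,\mu)=(\lambda',\mu')$, this supplies $\pi(n,m)$ pairwise distinct eigenvalues of $\varphi^{(n,m)}$, each with a $1$-dimensional eigenspace. The corresponding $\pi(n,m)$ eigenvectors are automatically linearly independent, so $\dim\mathcal{C}^{(n,m)}\geq \pi(n,m)$, and combining the two inequalities pins down $\dim\mathcal{C}^{(n,m)}=\pi(n,m)$.

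Finally, I would define $Q_{\lambda,\mu}$ to be any nonzero vector in the $1$-dimensional $t_{\lambda,\mu}$-eigenspace of $\varphi^{(n,m)}$, so that $\varphi^{(n,m)}(Q_{\lambda,\mu})=t_{\lambda,\mu}Q_{\lambda,\mu}$ holds by construction. Linear independence of eigenvectors for distinct eigenvalues, combined with the dimension equality just established, upgrades the collection $\{Q_{\lambda,\mu}\}$ from a linearly independent set to a basis of $\mathcal{C}^{(n,m)}$. There is no serious obstacle in this argument beyond the preceding Proposition; the only step that requires a moment's care is verifying the counting identity, because this is what converts simplicity of the spectrum into the basis statement and identifies $\{Q_{\lambda,\mu}\}$ as the natural extension of the $Q_\lambda$ basis of $\mathcal{C}^{(n)}$ to the full skein $\mathcal{C}^{(n,m)}$.
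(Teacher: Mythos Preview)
Your argument is correct and is exactly the linear algebra computation the paper is alluding to when it labels the Corollary ``a straightforward consequence'' of the preceding Proposition (the paper gives no proof of its own here). The counting identity, the upper bound $\dim\mathcal{C}^{(n,m)}\le\pi(n,m)$ from the explicit spanning set, and the lower bound coming from $\pi(n,m)$ linearly independent eigenvectors for pairwise distinct eigenvalues together pin down the basis just as you describe.
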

Then in \cite{HaMo06}, R. Hadji and H. Morton extended the method of
\cite{Lu01,Lu05} and obtained the explicit expression for these
basis elements $\{Q_{\lambda,\mu}\}$.

\subsubsection*{Explicit expression for the basis elements $Q_{\lambda,\mu}$}
Let element $h_m\in \mathcal{C}^{(m)}$ be the closure of the
elements $\frac{1}{\alpha_m}a_m\in H_m(q,a)$, i.e.
$h_m=\frac{1}{\alpha_m}\hat{a}_m$, where $a_m$ is the
quasi-idempotent whose definition is given by formula
(\ref{formula-anbn}), and $\alpha_m$ is the scalar given by
$\alpha_m=q^{m(m-1)/2}\prod_{i=1}^m\frac{q^i-q^{-i}}{q-q^{-1}}\in
R$.

There is a readily defined involution on the skein $\mathcal{C}$
denoted by $*$. It is the rotation of diagrams in the annulus
$S^1\times [0,1]$ by $\pi$ about the horizontal axis through the
distinguished boundary points. We can see that $(A_m)^{*}=A_{-m}$,
so that $(\mathcal{C}^+)^*=\mathcal{C}^{-}$. Furthermore,
$(\mathcal{C}^{(n,m)})^*=\mathcal{C}^{(m,n)}$.

The skein $\mathcal{C}^+$ (resp. $\mathcal{C}^-$) is spanned by the
monomials in $\{h_m\}_{m\geq 0}$ (resp. $\{h_k^*\}_{k\geq 0}$),
indeed, the explicit relations between $\{h_m\}$ and the Turaev's
basis $\{A_m\}$ are presented in \cite{Mo02} (cf.  Theorem 3.6 in
\cite{Mo02}) and \cite{MoMa08} (cf. Theorem 8 in \cite{MoMa08} ).
Then the whole skein $\mathcal{C}$ is spanned by the monomials in
$\{h_m, h_k^*\}_{m,k\geq 0}$.

 Given
two partitions $\lambda, \mu$ with $l$ and $r$ parts. We first
construct a $(l+r)\times (l+r)$-matrix $M_{\lambda,\mu}$ with
entries in $\{h_m, h_k^*\}_{m,k\in \mathbb{Z}}$ as follows, where we
have let $h_{m}=0$ if $m<0$ and $h_{k}^*=0$ if $k <0$.

\begin{align}
M_{\lambda,\mu}=
\begin{pmatrix}
h_{\mu_{r}}^* & h_{\mu_{r}-1}^* & \cdots & h_{\mu_{r}-r-l+1}^* \\
h_{\mu_{r-1}+1}^* & h_{\mu_{r-1}}^* & \cdots & h_{\mu_{r-1}-r-l}^*\\
\cdot & \cdot & \cdots & \cdot \\
h_{\mu_1+(r-1)}^* & h_{\mu_1+(r-2)}^* & \cdots & h_{\mu_1-l}^*\\
h_{\lambda_1-r} & h_{\lambda_1-(r-1)} & \cdots & h_{\lambda_1+l-1}\\
\cdot & \cdot & \cdots & \cdot \\
h_{\lambda_l-l-r+1} & h_{\lambda_l-s-r+2} & \cdots & h_{\lambda_l}
\end{pmatrix}
\end{align}
It is easy to see that the subscripts of the diagonal entries in the
$h$-rows are the parts $\lambda_1,\lambda_2,...,\lambda_l$ of
$\lambda$ in order, while the subscripts of the diagonal entries in
the $h^*$-rows are the parts $\mu_1,\mu_2,..,\mu_r$ of $\mu$ in
reverse order.

Then, $Q_{\lambda,\mu}$ is defined as the determinant of the matrix
$M_{\lambda,\mu}$, i.e.
\begin{align} \label{formula-Qdet}
Q_{\lambda,\mu}=\det M_{\lambda,\mu}.
\end{align}

\begin{example}
For two partitions $\lambda=(4,2,2)$ and $\mu=(3,2)$, we have
\begin{align}
Q_{\lambda,\mu}=\det
\begin{pmatrix}
h_{2}^* & h_{1}^* & 1 & 0 & 0 \\
h_{4}^* & h_{3}^* & h_2^* & h_1^* & 1 \\
h_{2} & h_{3} & h_4 & h_5 & h_6 \\
0 & 1 & h_1 & h_2 & h_3 \\
0 & 0 & 1 & h_1 & h_2
\end{pmatrix}.
\end{align}
\end{example}

Furthermore, we have
\begin{align} \label{formula-Qlambdamu}
Q_{\lambda,\mu}=\sum_{\sigma,\rho,\nu}(-1)^{|\sigma|}c_{\sigma,\rho}^{\lambda}c_{\sigma^t,\nu}^{\mu}Q_{\rho,\emptyset}
Q_{\emptyset,\nu},
\end{align}
where $c_{\sigma,\rho}^{\lambda}$ is the Littlewood-Richardson
coefficient which is determined by the product formula for Schur
functions:
\begin{align}
s_{\sigma}(\mathbf{x})s_{\rho}(\mathbf{x})=\sum_{\lambda}c_{\sigma,\rho}^{\lambda}s_{\lambda}(\mathbf{x}).
\end{align}
Moreover, the basis elements $Q_{\lambda,\mu}$ of $\mathcal{C}$ have
the property that the product of any two is a non-negative integral
linear combination of basis elements.

\subsection{Colored HOMFLY-PT invariants}
Let $\mathcal{L}$ be a framed link with $L$ components with a fixed
numbering. For diagrams $Q_1,..,Q_L$ in the skein model of annulus
with the positive oriented core $\mathcal{C}^+$, we define the
decoration of $\mathcal{L}$ with $Q_1,..,Q_L$ as the link
\begin{align}
\mathcal{L}\star \otimes_{i=1}^{L} Q_i
\end{align}
which derived from $\mathcal{L}$ by replacing every annulus
$\mathcal{L}$ by the annulus with the diagram $Q_i$ such that the
orientations of the cores match. Each $Q_i$ has a small backboard
neighborhood in the annulus which makes the decorated link
$\mathcal{L}\otimes_{i=1}^{L}Q_i$ into a framed link. (see Figure 4
for a framed trefoil $\mathcal{K}$ decorated by skein element
$\mathcal{Q}$).

\begin{figure}[!htb] \label{figure-trefoil}
\begin{align*}
\mathcal{K} \qquad\qquad\qquad\quad \mathcal{Q}
\qquad\qquad\qquad\quad
\mathcal{K}\star \mathcal{Q} \\
\includegraphics[width=50 pt]{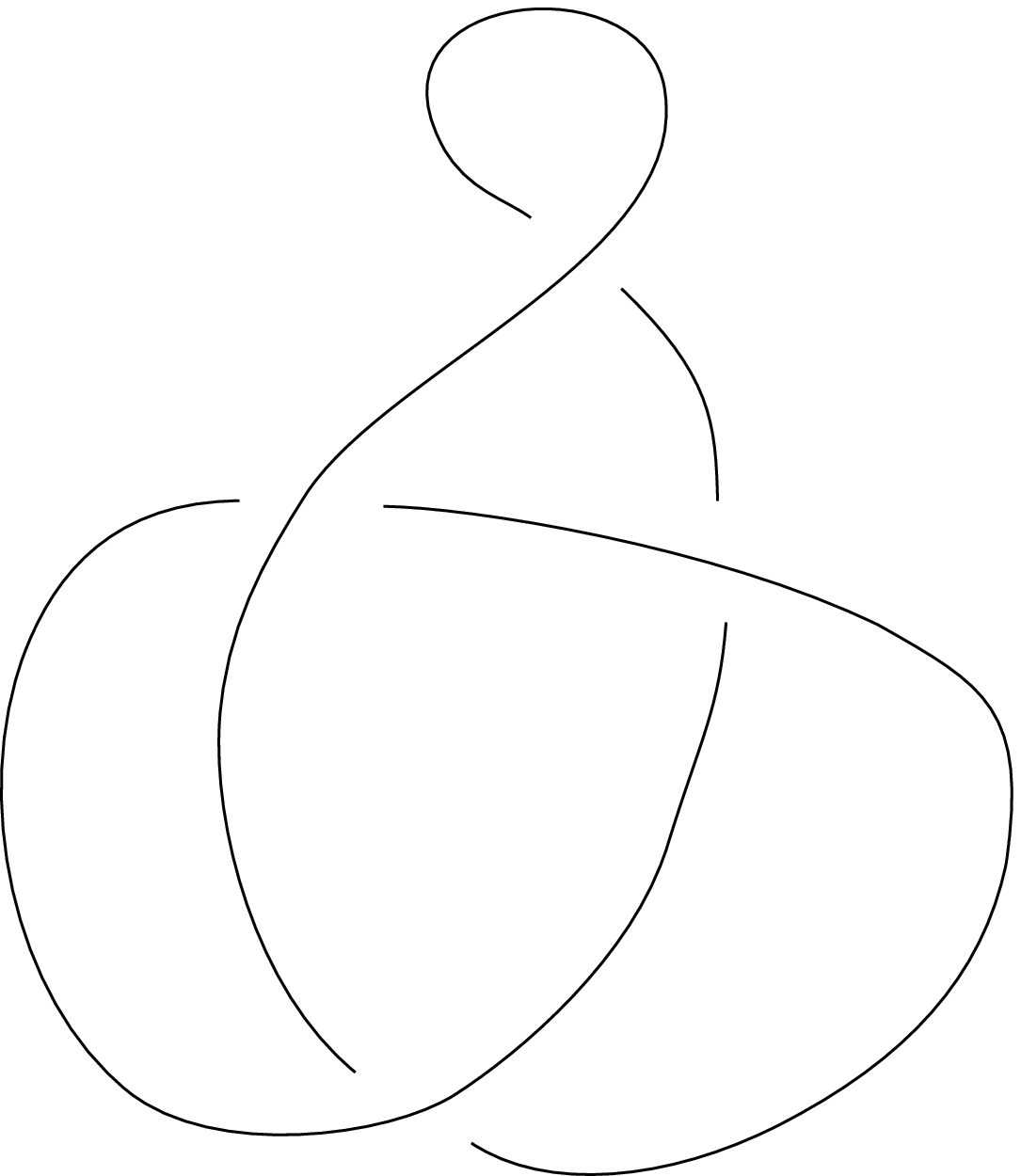}\qquad\qquad
\includegraphics[width=50
pt]{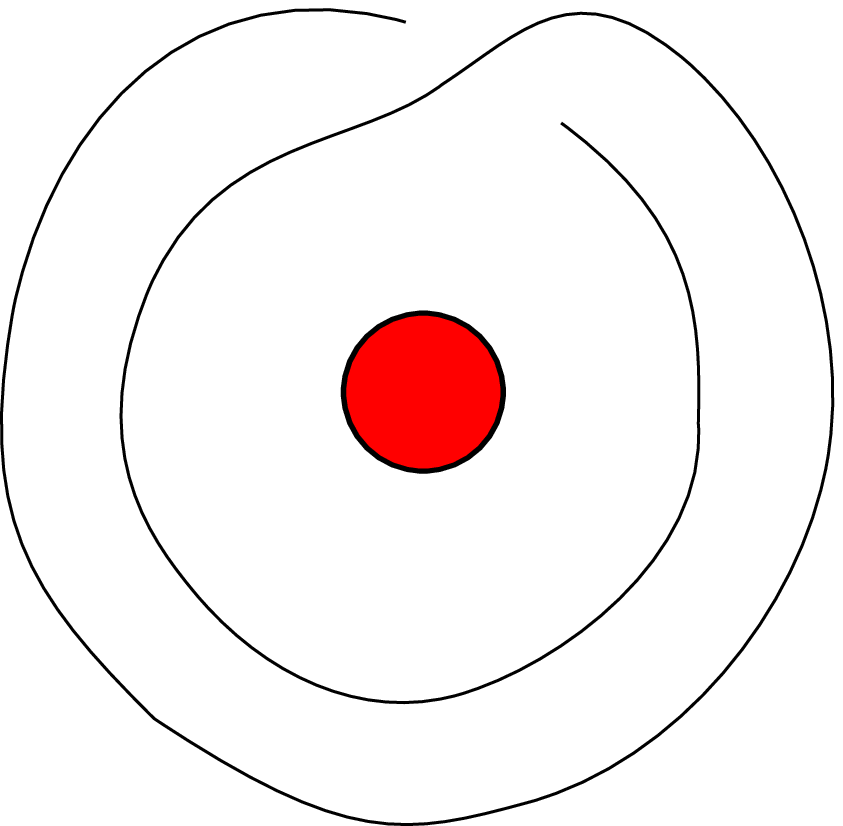} \qquad\quad
\includegraphics[width=50
pt]{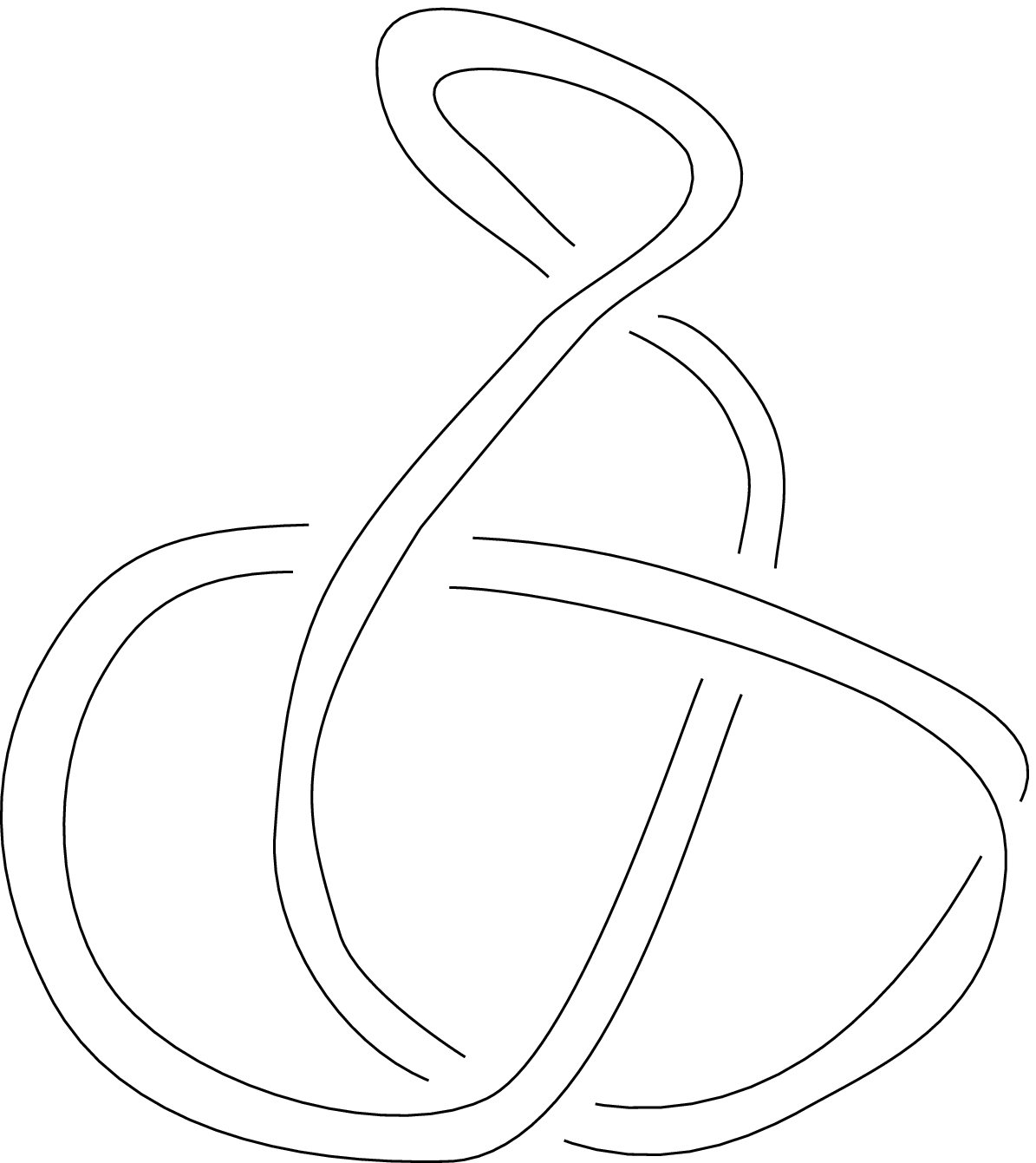}
\end{align*}%
\caption{$\mathcal{K}$ decorated by $\mathcal{Q}$}
\end{figure}

The framed colored HOMFLY-PT polynomial of $\mathcal{L}$ is defined
to be the framed HOMFLY-PT polynomial of the decorated link
$\mathcal{L}\star\otimes_{i=1}^{L}Q_i$ which is given by
\begin{align}
\mathcal{H}(\mathcal{L}\star\otimes_{i=1}^{L}Q_i;q,a)=\langle
\mathcal{L}\star\otimes_{i=1}^{L}Q_i \rangle.
\end{align}

In particular, if we choose $L$ basis elements
 $Q_{\lambda^{\alpha},\mu^{\alpha}}$ with $\alpha=1,...,L$ in the full skein $\mathcal{C}$,

\begin{definition}
The {\em framed full colored HOMFLY-PT invariant}
 of the framed link
$\mathcal{L}$ denoted by
$\mathcal{H}_{[\vec{\lambda},\vec{\mu}]}(\mathcal{L};q,a)$, is
defined as the framed HOMFLY-PT polynomial of the decorated link
$\mathcal{L}\star
\otimes_{\alpha=1}^LQ_{\lambda^{\alpha},\mu^{\alpha}}$, i.e.
\begin{align}
\mathcal{H}_{[\vec{\lambda},\vec{\mu}]}(\mathcal{L};q,a):=\mathcal{H}(\mathcal{L}\star
\otimes_{\alpha=1}^L Q_{\lambda^{\alpha},\mu^{\alpha}};q,a).
\end{align}
\end{definition}

In particular, when all $\mu^{\alpha}=\emptyset$, i.e.
$Q_{\lambda^{\alpha},\emptyset}=Q_{\lambda^{\alpha}}$. In this case,
the framing factor for $Q_{\lambda}$ is given by
$q^{-\kappa_\lambda}a^{-|\lambda|}$. So we can add a framing factor
to eliminate the framing dependency. It makes the framed colored
HOMFLY-PT invariant $ \mathcal{H}_{\vec{\lambda}}(\mathcal{L};q,a) $
into a framing independent invariant.
\begin{definition}
The (framing independent) colored HOMFLY-PT invariant for the link
$\mathcal{L}$ is given by
\begin{align} \label{formula-coloredhomfly-skein}
W_{\vec{\lambda}}(\mathcal{L};q,a)=q^{-\sum_{\alpha=1}^{L}\kappa_{\lambda^\alpha}|w(\mathcal{K_\alpha})|}
a^{-\sum_{\alpha=1}^{L}|\lambda^\alpha|w(\mathcal{K}_\alpha)}\mathcal{H}_{\vec{\lambda}}(
\mathcal{L};q,a)
\end{align}
where $\vec{\lambda}=(\lambda^1,..,\lambda^L)\in \mathcal{P}^L$.
\end{definition}

\begin{example}
For the unknot $U$,
\begin{align} \label{formula-unknot1}
W_{\mu}(U;q,a)=\mathcal{H}_{\mu}(U;q,a)=\prod_{x\in
\mu}\frac{aq^{cn(x)}-a^{-1}q^{-cn(x)}}{q^{h(x)}-q^{-h(x)}}.
\end{align}
By the formula (\ref{formula-Qlambdamu}), we have
\begin{align} \label{formula-unknot2}
\mathcal{H}_{[\lambda,\mu]}(U;q,a)=\langle Q_{\lambda,\mu}
\rangle=\sum_{\sigma,\rho,\nu}(-1)^{|\sigma|}c_{\sigma,\rho}^{\lambda}c_{\sigma^t,\nu}^{\mu}W_{\rho}(U;q,a)W_{\nu}(U;q,a).
\end{align}
\end{example}

\begin{definition}
Given a knot framed $\mathcal{K}$, the {\em normalized framed full
HOMFLY-PT invariants} of $\mathcal{K}$ is defined as follow
\begin{align}
\mathcal{P}_{[\lambda,\mu]}(\mathcal{K};q,a)=\frac{\mathcal{H}_{[\lambda,\mu]}(\mathcal{K};q,a)}{\mathcal{H}_{[\lambda,\mu]}(U;q,a)}.
\end{align}
\end{definition}
In other words,
\begin{align} \label{formula-Hlambdamu}
\mathcal{H}_{[\lambda,\mu]}(\mathcal{K};q,a)=
\mathcal{P}_{[\lambda,\mu]}(\mathcal{K};q,a)\mathcal{H}_{[\lambda,\mu]}(U;q,a).
\end{align}

If we draw the framed knot $\mathcal{K}$ in the annulus as the
closure of a $1$-tangle, then decorating it by $Q\in \mathcal{C}$
gives a diagram of $\mathcal{K}\star Q$ in the annulus, see Figure 5
for the case when $\mathcal{K}$ is the trefoil knot.

\begin{figure}[!htb]
\includegraphics[width=100 pt]{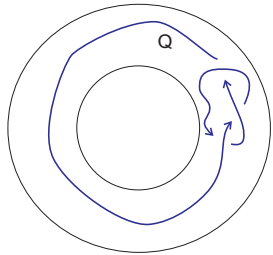}
\caption{The element $\mathcal{K}\star Q$ in the annulus}
\end{figure}

This construction induces a linear map $T_{\mathcal{K}}:
\mathcal{C}\rightarrow \mathcal{C}$. Suppose $Q$ is the eigenvector
of $T_{\mathcal{K}}$ with eigenvalue $t_{\mathcal{K}}(q,a)$, then
\begin{align} \label{formula-tkqa}
\mathcal{K}\star
Q=T_{\mathcal{K}}(Q)=t({\mathcal{K}};q,a)Q=t({\mathcal{K}};q,a)U\star
Q.
\end{align}

In particular, when $Q=Q_{\lambda,\mu}$, taking the HOMFLY-PT
polynomial, we obtain
\begin{align}
t(\mathcal{K};q,a)=\frac{\mathcal{H}_{[\lambda,\mu]}(\mathcal{K};q,a)}{\mathcal{H}_{[\lambda,\mu]}(U;q,a)}
\end{align}

Therefore,  $\mathcal{P}_{[\lambda,\mu]}(\mathcal{K};q,a)$ is also
referred as the framed HOMFLY-PT 1-tangle invariants in \cite{Mo07}.

For a link $\mathcal{L}$ with $L$ components, and let
$\vec{\nu}=(\nu^1,...,\nu^L), \
\vec{\lambda}=(\lambda^1,...,\lambda^L),
\vec{\mu}=(\mu^1,...,\mu^L)\in \mathcal{P}^L$, set
$c_{\vec{\lambda},\vec{\mu}}^{\vec{\nu}}=\prod_{\alpha=1}^Lc_{\lambda^\alpha,\mu^\alpha}^{\nu^\alpha}$,
where $c_{\lambda^\alpha,\mu^\alpha}^{\nu^\alpha}$ is the
Littlewood-Richardson coefficient.

\begin{definition} We define the framed composite invariants for $\mathcal{L}$ as follow
\begin{align}
\mathcal{C}_{\vec{\nu}}(\mathcal{L};q,a)=\sum_{\vec{\lambda},\vec{\mu}}c_{\vec{\lambda},\vec{\mu}}^{\vec{\nu}}
\mathcal{H}_{[\vec{\lambda},\vec{\mu}]}(\mathcal{L};q,a).
\end{align}
\end{definition}

\section{Idempotents in Hecke algebras}  \label{Section-Idempotent}

In this section, we  first briefly review the skein theory
construction of the idempotent of the Hecke algebra $H_n(q,a)$
follow \cite{MA98}. Then we establish an important formula which
will be used in the next section.
\subsection{Young diagrams}
Let $\lambda=(\lambda_1,...,\lambda_l)$ be a partition of $n$, then
$\lambda$ can be represented by a Young diagram which is a
collection of $n$ cells arranged in rows, with $\lambda_1$ cells in
the first row, $\lambda_2$ cells in the second row up to
$\lambda_{l(\lambda)}$. Usually, we denote both the partition and
its Young diagram by $\lambda$. The conjugate of $\lambda^{\vee}$ of
$\lambda$ is the Young diagram whose rows form the columns of
$\lambda$. For the cell, in the $i$-th row and $j$-th column of
$\lambda$ we write $(i,j)\in \lambda$, and refer to $(i,j)$ as the
coordinates of the cell.

A standard tableau $T'(\lambda)$ is an assignment of the numbers $1$
to $n$ to the cells of $\lambda$ such that the numbers increase from
left to right along the rows and top to bottom down the columns. In
particular, $T(\lambda)$ will denote the tableau where the cells of
the Young diagram are numbered from $1$ to $n$ along rows. Note that
the transposition of rows and columns doesn't take $T(\lambda)$ to
$T(\lambda^{\vee})$. We define the permutation $\pi_{\lambda}$ by
$\pi_{\lambda}(i)=j$ where the transposition of $\lambda$ carries
the cell $i$ in $T(\lambda)$ to the cell $j$ in $T(\lambda^{\vee})$.

Let $\lambda$ and $\mu$ be Young diagrams with $|\lambda|=|\mu|=n$.
We say that a permutation $\pi\in  S_n$ separates $\lambda$ from
$\mu$ if no pair of numbers in the same row of $T(\lambda)$ are
mapped by $\pi$ to the same row of $T(\mu)$. For example, the
permutation $\pi_\lambda$ separates $\lambda$ from its conjugate
$\lambda^{\vee}$.

Denoted by $R(\lambda)\subset S_n$ the subgroup of the permutations
which preserve the rows of $T(\lambda)$. It is easy to see that if
$\pi$ separates $\lambda$ from $\mu$ then so does $\rho\pi \sigma$
for any $\rho\in R(\lambda)$ and $\sigma\in R(\mu)$, and conversely,
if $\pi$ separates $\lambda$ from $\lambda^{\vee}$ then
$\pi=\rho\pi_{\lambda}\sigma$ with $\rho\in R(\lambda)$ and
$\sigma\in R(\lambda^{\vee})$. We say that $\lambda$ and $\mu$ are
inseparable if no permutation $\pi\in S_{n}$ separates $\lambda$
from $\mu$.

For every permutation $\pi\in S_n$, there exists a unique braid
$\omega_{\pi}$ ( called a positive permutation braid ) which is
uniquely determined by the following properties:

(i) all strings are oriented from top to bottom;

(ii) for $i=1,...,n$ the $i$-th string joins the point numbered $i$
at the top of the braid to the point numbered $\pi(i)$ at the bottom
of the braid,

(iii) all the crossings occur with positive sign and each pair of
strings cross at most once.

\begin{lemma} [cf. \cite{MA98}]
Let $\pi\in S_n$ be a permutation which separates $\lambda$ from
$\lambda^{\vee}$. Then there are $\rho\in R(\lambda)$ and $\sigma\in
R(\lambda^{\vee})$, such that
\begin{align}
\omega_{\pi}=\omega_{\rho}\omega_{\pi_{\lambda}}\omega_{\sigma}.
\end{align}
\end{lemma}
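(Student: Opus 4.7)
The plan is to rephrase the braid identity $\omega_{\pi}=\omega_{\rho}\omega_{\pi_{\lambda}}\omega_{\sigma}$ as a length-additivity statement in $S_n$ and verify it via combinatorial properties of $\pi_\lambda$. First I would recall the standard criterion for positive permutation braids: for $\alpha,\beta\in S_n$, one has $\omega_{\alpha}\omega_{\beta}=\omega_{\alpha\beta}$ in the positive braid monoid if and only if $\ell(\alpha\beta)=\ell(\alpha)+\ell(\beta)$, where $\ell(\cdot)$ counts inversions. Indeed the product $\omega_\alpha\omega_\beta$ has exactly $\ell(\alpha)+\ell(\beta)$ positive crossings by construction, and is itself a positive permutation braid precisely when no pair of strings crosses twice, equivalently when its crossing count is minimal among positive braids representing $\alpha\beta$. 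Combining this with the $S_n$-level decomposition $\pi=\rho\pi_{\lambda}\sigma$ already noted in the paragraph preceding the lemma, the statement reduces to exhibiting $\rho\in R(\lambda)$ and $\sigma\in R(\lambda^{\vee})$ with $\pi=\rho\pi_{\lambda}\sigma$ and
$$\ell(\rho\pi_{\lambda}\sigma)=\ell(\rho)+\ell(\pi_{\lambda})+\ell(\sigma).$$

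The combinatorial heart of the proof is two order-preserving properties of $\pi_{\lambda}$ that follow directly from its definition. If $a<b$ lie in the same row of $T(\lambda)$, then $\pi_{\lambda}(a)<\pi_{\lambda}(b)$: indeed, the $k$-th cell of row $i$ of $\lambda$ is carried to the $i$-th cell of row $k$ of $\lambda^{\vee}$, and increasing $k$ moves the image to a later row of $\lambda^{\vee}$, hence to a larger position in $T(\lambda^{\vee})$. Dually, if $a<b$ lie in the same row of $T(\lambda^{\vee})$, then $\pi_{\lambda}^{-1}(a)<\pi_{\lambda}^{-1}(b)$. These say that $\pi_{\lambda}$ cannot undo any inversion produced by an element of $R(\lambda)$ on the left or by an element of $R(\lambda^{\vee})$ on the right, which immediately yields both $\ell(\rho\pi_{\lambda})=\ell(\rho)+\ell(\pi_{\lambda})$ and $\ell(\pi_{\lambda}\sigma)=\ell(\pi_{\lambda})+\ell(\sigma)$ by a short inversion count.

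The last and most delicate step is to upgrade these two-factor additivities to triple additivity. The cleanest route is to identify $\pi_{\lambda}$ with the unique minimal-length element of its parabolic double coset $R(\lambda)\,\pi_{\lambda}\,R(\lambda^{\vee})\subset S_n$, and then to invoke the standard Iwahori--Matsumoto--Deodhar theorem on parabolic double cosets in Coxeter groups, which guarantees that every element of this coset admits a decomposition $\pi=\rho\pi_{\lambda}\sigma$ with lengths adding. Alternatively one can proceed by direct inversion bookkeeping, tracking how each simple transposition appearing in reduced words for $\rho$, $\pi_{\lambda}$, $\sigma$ contributes a new inversion of $\pi$. The main obstacle is precisely this triple step: when multiplying $\pi_{\lambda}$ by $\rho$ on one side and $\sigma$ on the other simultaneously, one must rule out that an inversion created by $\rho$ is cancelled by one created by $\sigma$ through the intermediate action of $\pi_{\lambda}$, and the double-coset picture (together with the minimality of $\pi_\lambda$) handles exactly this bookkeeping.
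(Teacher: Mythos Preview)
Your proposal is correct, but it takes a heavier and less direct route than the paper. Both arguments begin from the same key observation --- your order-preserving properties of $\pi_\lambda$ on rows of $T(\lambda)$ and of $\pi_\lambda^{-1}$ on rows of $T(\lambda^\vee)$ --- but the paper applies them \emph{directly} at the level of strand crossings rather than translating into lengths. The paper simply notes: in $\omega_\rho$ only pairs of strands in the same row of $T(\lambda)$ can cross; in $\omega_\sigma$ only pairs ending in the same row of $T(\lambda^\vee)$ can cross; and in $\omega_{\pi_\lambda}$ neither kind of pair crosses (exactly your two order-preserving facts). These three classes of pairs are pairwise disjoint, so no pair of strands crosses twice in the stack $\omega_\rho\omega_{\pi_\lambda}\omega_\sigma$, which is therefore a positive permutation braid. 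This is entirely elementary and in fact shows length additivity for \emph{every} choice of $\rho\in R(\lambda)$, $\sigma\in R(\lambda^\vee)$, not merely for a well-chosen one. By contrast, your route through the Iwahori--Matsumoto--Deodhar theorem on parabolic double cosets imports substantial Coxeter-theoretic machinery to obtain only the existence of a length-additive factorization, and still leaves you to verify that $\pi_\lambda$ is the minimal-length representative (which, to be fair, is exactly equivalent to the order-preserving properties you already stated). Your ``alternative'' direct inversion bookkeeping, which you mention but do not pursue, is precisely the paper's argument.
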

\begin{proof}
Since $\pi$ separates $\lambda$ from $\lambda^{\vee}$, there are
$\rho\in R(\lambda)$ and $\sigma\in R(\lambda^{\vee})$, such that
$\pi=\rho\pi_{\lambda}\sigma$. By the definition of
$\omega_{\pi_{\lambda}}$, the pairs of strings which start in the
same row of $T(\lambda)$ or end in the same row of
$T(\lambda^{\vee})$ would never cross. The only pairs which cross in
$\omega_{\rho}$ or $\omega_{\sigma}$ are in the same row of
$T(\lambda)$ or $T(\lambda^{\vee})$ respectively. It follows that
$\omega_{\rho}\omega_{\pi_{\lambda}}\omega_{\sigma}$ is a positive
permutation braid. Hence
$\omega_{\pi}=\omega_{\rho}\omega_{\pi_{\lambda}}\omega_{\sigma}.$
\end{proof}

\subsection{Constructions and properties}
Let $F=R_{n}^{n}$ be a rectangle with $n$ inputs at the top and $n$
outputs at the bottom. Let $H_n(q,a)$ be the skein $Sk_n(R_n^{n})$
of $n$-tangles. See Figure 6 for an element in $H_{n}(q,a)$.
\begin{figure}[!htb]
\begin{center}
\includegraphics[width=80 pt]{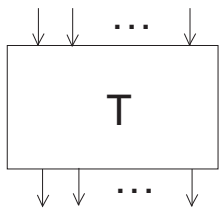}
\end{center}
\caption{An element in $H_n(q,a)$}
\end{figure}

Composing $n$-tangles by placing one above another induces a product
which makes $H_n(q,a)$ into the Hecke algebra of type $A_{n-1}$ with
the coefficients ring $R$. $H_n(q,a)$ has a presentation generated
by the elementary brads $\sigma_1,...,\sigma_{n-1}$ subjects to
relations
\begin{align}
&\sigma_i\sigma_j=\sigma_j\sigma_i, \ \  \ \ |i-j|\geq 2,\\
&\sigma_i\sigma_{i+1}\sigma_i=\sigma_{i+1}\sigma_i\sigma_{i+1}  \ \  \ |i-j|=1  \notag \\
&(\sigma_i-q)(\sigma_i+q^{-1})=0, \ \  i=1,2,...,n-1. \notag
\end{align}

The Hecke algebra $H_n$ can be viewed as the quantum deformation of
the group algebra $\mathbb{C}[S_n]$ of the symmetric group $S_n$,
whose idempotents are described by the classical Young symmetrisers.
For a Young diagram $\lambda$ its Young symmetriser is the product
of the sum of permutation which preserve the rows of $T(\lambda)$
and the alternating sum of permutations which preserve columns. In
order to construct the idempotents in Hecke algebra, the idea is to
make suitably quantum deformation of the classical Young
symmetrisers. The two simplest idempotents in $H_n$, corresponding
to the single row and column Young diagrams, are given algebraically
by Jones \cite{Jo87}. And the skein version of them are described by
H. Morton \cite{Mo93} in terms of positive permutation braids
$\omega_{\pi}, \pi\in S_n$. Then Gyoja \cite{Gy86} constructed the
idempotents for general $\lambda$. Finally, Morton and Aiston
\cite{MA98} provided skein theory construction of the idempotents in
the HOMFLY-PT skein $H_{n}(q,a)$.

We first define Jones's row and column elements $a_j$ and $b_j$,
following the account in Morton \cite{Mo93}. Write
$E_n(\sigma_1,\sigma_2,...,\sigma_{n-1})=\sum_{\pi\in
S_n}\omega_{\pi}$ for the sum of positive permutation braids. We
define
\begin{align} \label{formula-anbn}
a_n&=E_{n}(q\sigma_1,q\sigma_2,...,q\sigma_{n})=\sum_{\pi\in
S_n}q^{l(\pi)}\omega_{\pi}, \\\nonumber
b_n&=E_{n}(-q^{-1}\sigma_1,-q^{-1}\sigma_2,...,-q^{-1}\sigma_{n})=\sum_{\pi\in
S_n}(-q)^{-l(\pi)}\omega_{\pi}.
\end{align}

\begin{lemma} \label{lemma-anT}
The element $a_n$ (resp. $b_n$) can be factorised in $H_n(q,a)$ with
$\sigma_i+q^{-1}$ (resp. $\sigma_i-q$) as a left or a right factor
for any $i\in \{1,2,..,n-1\}$. If we define two linear homomorphisms
$\varphi, \psi$ from the Hecke algebra $H_n(q,a)$ to the ring of
scalars $R$ defined by $\varphi(\sigma_i)=-q^{-1}$ and
$\psi(\sigma_i)=q$ for $i=1,...,n-1$. Then for all $T\in
H_{n}(q,a)$, we have
\begin{align} \label{formula-anT}
a_n T=Ta_n=\psi(T)a_n, \ b_nT=Tb_n=\varphi(T)b_n.
\end{align}
\end{lemma}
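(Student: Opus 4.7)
The plan is to prove the two statements simultaneously by exhibiting the factorization explicitly from the definition of $a_n$ as a sum over $S_n$, and then deriving the eigenvector property as an immediate algebraic consequence of the Hecke relation $(\sigma_i - q)(\sigma_i + q^{-1}) = 0$.

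For the factorization, I would fix $i \in \{1,\dots,n-1\}$ and decompose the symmetric group as $S_n = A_i \sqcup A_i \cdot s_i$ where $A_i = \{\pi \in S_n : \pi(i) < \pi(i+1)\}$ consists exactly of those $\pi$ for which $l(\pi s_i) = l(\pi)+1$. For $\pi \in A_i$, the positive permutation braid satisfies $\omega_{\pi s_i} = \omega_\pi \sigma_i$ (since the lengths add, no pair of strings crosses twice). Regrouping the sum defining $a_n$ gives
\begin{align*}
a_n = \sum_{\pi \in A_i} q^{l(\pi)} \omega_\pi + \sum_{\pi \in A_i} q^{l(\pi)+1} \omega_\pi \sigma_i = \left(\sum_{\pi \in A_i} q^{l(\pi)} \omega_\pi\right)(1 + q\sigma_i),
\end{align*}
and since $1 + q\sigma_i = q(\sigma_i + q^{-1})$, this exhibits $\sigma_i + q^{-1}$ as a right factor of $a_n$. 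Using the left coset decomposition $S_n = B_i \sqcup s_i \cdot B_i$ in the same way yields $\sigma_i + q^{-1}$ as a left factor.

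To derive the eigenvector property from the factorization, I would note that from the quadratic Hecke relation $(\sigma_i - q)(\sigma_i + q^{-1}) = 0$ one obtains both $\sigma_i(\sigma_i + q^{-1}) = q(\sigma_i + q^{-1})$ and, symmetrically, $(\sigma_i + q^{-1})\sigma_i = q(\sigma_i + q^{-1})$. Writing $a_n = X(\sigma_i + q^{-1}) = (\sigma_i + q^{-1})Y$ using the two factorizations, this immediately gives $a_n \sigma_i = qa_n$ and $\sigma_i a_n = qa_n$ for every $i$. Since $H_n(q,a)$ is generated as an algebra (over the scalar ring $R$) by $\sigma_1, \dots, \sigma_{n-1}$ and $\psi$ is the algebra homomorphism sending each $\sigma_i$ to $q$, an induction on the word length of $T$ in the generators shows $a_n T = T a_n = \psi(T) a_n$ for all $T \in H_n(q,a)$.

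The argument for $b_n$ is entirely parallel: the same coset decomposition produces a factorization of $b_n$ with $\sigma_i - q$ as left and right factor (here one uses $-q^{-1}(\sigma_i - q) = 1 - q^{-1}\sigma_i$ to rewrite the sum), and then the Hecke relation in the form $\sigma_i(\sigma_i - q) = -q^{-1}(\sigma_i - q) = (\sigma_i - q)\sigma_i$ yields $b_n \sigma_i = \sigma_i b_n = -q^{-1} b_n = \varphi(\sigma_i) b_n$, which extends to all $T$ by the same generation argument. No step here is a serious obstacle; the only point that requires care is the claim $\omega_{\pi s_i} = \omega_\pi \sigma_i$ when $l(\pi s_i) = l(\pi)+1$, but this is the standard multiplicative property of positive permutation braids under addition of lengths and is foundational to the skein model of $H_n(q,a)$ used throughout the paper.
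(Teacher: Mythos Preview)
Your proposal is correct and follows essentially the same approach as the paper: both arguments use the coset decomposition $S_n = A_i \sqcup A_i s_i$ with $A_i = \{\pi : \pi(i) < \pi(i+1)\}$ together with $\omega_{\pi s_i} = \omega_\pi \sigma_i$ to obtain the factorization $a_n = a_n^{(i)}(1+q\sigma_i)$, then invoke the quadratic Hecke relation to get $a_n\sigma_i = qa_n$, and handle the left factorization via the analogous left coset decomposition. Your explicit mention of extending from the generators $\sigma_i$ to arbitrary $T$ by induction on word length is a detail the paper leaves implicit, but otherwise the two proofs are the same.
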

\begin{proof}
This lemma was first proved in \cite{Mo93}, for the sake of
completeness, we provide the the proof here. Given $i\in
\{1,2,,,.,n-1\}$, we can pair the permutations as follows. For each
permutation $\pi$, consider its composite $\pi'=\pi\circ(i,i+1)$
with transposition $(i,i+1)$, then $\pi'(i)=\pi(i+1)$ and
$\pi'(i+1)=\pi(i)$. Hence exactly one of them preserves the order of
$i$ and $i+1$, and suppose it is $\pi$ such that $\pi(i)<\pi(i+1)$.
Consider the positive braids $\omega_{\pi}$, and
$\sigma_i=\omega_{(i,i+1)}$, it is clear that $\omega_{\pi}\sigma_i$
is also a positive braid, then $\omega_{\pi'}=\omega_{\pi}\sigma_i$.

Therefore,
\begin{align} \label{formula-anfactor1}
a_n&=\sum_{\pi(i)<\pi(i+1)}q^{l(\pi)}\omega_{\pi}+\sum_{\pi'(i)>\pi'(i+1)}q^{l(\pi')}\omega_{\pi'}\\\nonumber
&=\sum_{\pi(i)<\pi(i+1)}q^{l(\pi)}\omega_{\pi}+\sum_{\pi(i)<\pi(i+1)}q^{l(\pi)+1}\omega_{\pi}\sigma_i\\\nonumber
&=a_n^{(i)}(1+q\sigma_i),
\end{align}
where  $a_n^{(i)}=\sum_{\pi(i)<\pi(i+1)}q^{l(\pi)}\omega_{\pi}$.

By using the quadratic condition of Hecck algebra:
$(\sigma_{i}+q^{-1})(\sigma_{i}-q)=0$ for $1\leq i\leq n-1$, we have
\begin{align}
a_n\sigma_i=a_n^{(i)}(1+q\sigma_i)\sigma_i=q
a_{n}^{i}(1+q\sigma_i)=qa_n.
\end{align}

On the other hand side, for given $i\in \{1,...,n-1\}$, one can also
consider the composition of permutations $\pi'=(i,i+1)\pi$. Then
\begin{align} \label{formula-anfactor2}
a_{n}&=\sum_{\pi^{-1}(i)<\pi^{-1}(i+1)}q^{l(\pi)}\omega_{\pi}+\sum_{\pi'^{-1}(i)>\pi'^{-1}(i+1)}q^{l(\pi')}\omega_{\pi'}\\\nonumber
&=\sum_{\pi^{-1}(i)<\pi^{-1}(i+1)}q^{l(\pi)}\omega_{\pi}+\sum_{\pi^{-1}(i)<\pi^{-1}(i+1)}q^{l(\pi)+1}\sigma_i\omega_{\pi}\\\nonumber
&=(1+q\sigma_i)\tilde{a}^{(i)}_n,
\end{align}
where
$\tilde{a}^{(i)}_n=\sum_{\pi^{-1}(i)<\pi^{-1}(i+1)}q^{l(\pi)}\omega_{\pi}$.
By this formula  we also obtain $\sigma_ia_n=qa_n$. Hence we prove
the formula (\ref{formula-anT}), the proof of the second part of
formula (\ref{formula-anT}) is similar.
\end{proof}

Given a partition
 $\lambda=(\lambda_1,...,\lambda_{l})$ of $n$, we define
\begin{align}
E_{\lambda}&=a_{\lambda_1}\otimes a_{\lambda_2}\otimes \cdots
\otimes a_{\lambda_{l}} \\
F_{\lambda}&=b_{\lambda_1}\otimes b_{\lambda_2}\otimes \cdots
\otimes b_{\lambda_{l}}.
\end{align}

Let $H_\lambda$ to the subalgebra of $H_n(q,a)$ generated by
$\{\omega_\rho, \rho\in R(\lambda)\}$, then $E_{\lambda},
F_{\lambda}\in H_\lambda$ and
\begin{align}
E_{\lambda}T&=TE_{\lambda}=\psi(T)E_{\lambda}, \\\nonumber
F_{\lambda}T&=TF_{\lambda}=\varphi(T)F_{\lambda},
\end{align}
 for any $T\in
H_{\lambda}$ by Lemma  \ref{lemma-anT}.

\begin{lemma} \label{lemma-elambda}
Let $\lambda$ and $\mu$ be two Young diagrams of $n$ cells, and let
$\pi\in S_n$ be a permutation which does not separate $\lambda$ from
$\mu$. Then
\begin{align} \label{formula-elambda}
E_{\lambda}\omega_{\pi}F_{\mu}=0=F_{\lambda}\omega_{\pi}E_{\mu}.
\end{align}
\end{lemma}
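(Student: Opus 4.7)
The plan is a scalar–absorption argument. The two key inputs are the character identities $E_\lambda T = \psi(T) E_\lambda$ for $T \in H_\lambda$ and $F_\mu T = \varphi(T) F_\mu$ for $T \in H_\mu$, both obtained by extending Lemma~\ref{lemma-anT} multiplicatively to the row subalgebras, together with the conjugation relation on row subgroups forced by the non–separation hypothesis. Unpacking that hypothesis, there exist $i < j$ such that $i,j$ lie in the same row of $T(\lambda)$ and $\pi(i),\pi(j)$ lie in the same row of $T(\mu)$. Setting $\tau = (i,j) \in R(\lambda)$ and $\tau' = (\pi(i),\pi(j)) \in R(\mu)$, the permutation identity $\pi\tau = \tau'\pi$ holds in $S_n$, and the absorption scalars are $E_\lambda \omega_\tau = q^{l(\tau)} E_\lambda$ and $\omega_{\tau'} F_\mu = (-q^{-1})^{l(\tau')} F_\mu$.

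The base case $\pi = e$ is the clean prototype: here $\tau = \tau' \in R(\lambda) \cap R(\mu)$, so the \emph{same} factor $\omega_\tau$ is absorbed on both sides, and inserting it via $E_\lambda = q^{-l(\tau)} E_\lambda \omega_\tau$ yields
\begin{align*}
E_\lambda F_\mu = q^{-l(\tau)}\,E_\lambda \omega_\tau F_\mu = q^{-l(\tau)} (-q^{-1})^{l(\tau)}\,E_\lambda F_\mu = (-q^{-2})^{l(\tau)} E_\lambda F_\mu,
\end{align*}
which forces $E_\lambda F_\mu = 0$ since $(-q^{-2})^{l(\tau)}\neq 1$ in the generic coefficient ring $R$. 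For general $\pi$ I plan to reduce to this base case via the double-coset decomposition: each $\pi$ has a unique factorisation $\pi = \alpha\,\pi_0\,\beta$ with $\alpha \in R(\lambda)$, $\beta \in R(\mu)$, and $\pi_0$ the minimum-length representative of the $R(\lambda)$–$R(\mu)$ double coset, the lengths being additive so that $\omega_\pi = \omega_\alpha \omega_{\pi_0}\omega_\beta$. Applying absorption on each end gives $E_\lambda\omega_\pi F_\mu = q^{l(\alpha)}(-q^{-1})^{l(\beta)} E_\lambda\omega_{\pi_0}F_\mu$, so everything reduces to the minimum-length case. For $\pi_0$, the length-additive condition $l(\pi_0\tau) = l(\pi_0)+l(\tau) = l(\tau')+l(\pi_0) = l(\tau'\pi_0)$ lifts the permutation equality $\pi_0\tau = \tau'\pi_0$ to the positive-braid identity $\omega_{\pi_0}\omega_\tau = \omega_{\tau'}\omega_{\pi_0}$ in $H_n(q,a)$, which, combined with simultaneous insertion of $\omega_\tau$ and $\omega_{\tau'}$ on opposite sides of $\omega_{\pi_0}$, replays the base-case computation and produces $E_\lambda\omega_{\pi_0} F_\mu = (-q^{-2})^{l(\tau)}E_\lambda\omega_{\pi_0}F_\mu$, hence $E_\lambda\omega_{\pi_0}F_\mu = 0$. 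The companion identity $F_\lambda\omega_\pi E_\mu = 0$ follows by the symmetric argument obtained by interchanging $\psi \leftrightarrow \varphi$ and $E \leftrightarrow F$ throughout.

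The main obstacle will be bridging the permutation-level conjugation $\pi\tau = \tau'\pi$ to the scalar cancellation: the absorptions naturally place $\omega_\tau$ to the right of $E_\lambda$ and $\omega_{\tau'}$ to the left of $F_\mu$, while the braid identity $\omega_{\pi_0}\omega_\tau = \omega_{\tau'}\omega_{\pi_0}$ has these factors on opposite sides of $\omega_{\pi_0}$. Matching the two requires either a careful choice of the witness pair $(i,j)$—ideally so close in their row of $T(\lambda)$ that $\tau,\tau'$ become simple transpositions, in which case the braid identity is immediate from $\sigma_k\sigma_{k+1}\sigma_k = \sigma_{k+1}\sigma_k\sigma_{k+1}$ and the commutation relations—or an induction on $l(\pi)$ handling the length defects that arise outside the double-coset-minimum case via the quadratic relation $\sigma_k^2 = 1 + (q-q^{-1})\sigma_k$; the correction terms it produces correspond to strictly shorter permutations that still fail to separate $\lambda$ from $\mu$ (the witness transpositions persist), and the inductive hypothesis disposes of them.
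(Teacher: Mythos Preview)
Your core idea—absorb a row-preserving transposition into $E_\lambda$ on one side and into $F_\mu$ on the other, using a conjugation relation through $\omega_\pi$—is exactly right and is what the paper does. But the double-coset machinery is a detour that creates rather than resolves the obstacle you flag. For the minimum-length representative $\pi_0$ there is no reason a simple reflection $s_i\in R(\lambda)$ should conjugate through $\pi_0$ to a simple reflection in $R(\mu)$; your chain $l(\pi_0\tau)=l(\pi_0)+l(\tau)=l(\tau')+l(\pi_0)$ tacitly assumes $l(\tau)=l(\tau')$, which fails unless both transpositions are simple, and nothing about $\pi_0$ forces double adjacency of the witnesses. Your proposed fix ``choose the witness pair adjacent'' only controls one side.

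The paper avoids this entirely by dropping the double-coset reduction. Since non-separation is preserved under left multiplication by $R(\lambda)$ and right multiplication by $R(\mu)$, one directly picks $\rho\in R(\lambda)$ and $\sigma\in R(\mu)$ so that $\pi'=\rho\pi\sigma$ sends two \emph{adjacent} cells $i,i+1$ in a row of $T(\lambda)$ to two \emph{adjacent} cells $j,j+1$ in a row of $T(\mu)$; this is possible simply because $R(\lambda)$ and $R(\mu)$ act transitively within each row. Absorption gives $E_\lambda\omega_\pi F_\mu=\psi(T)\varphi(T')\,E_\lambda\omega_{\pi'}F_\mu$ for suitable $T,T'$. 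Now both $\sigma_i\omega_{\pi'}$ and $\omega_{\pi'}\sigma_j$ are positive permutation braids for the same permutation, hence equal in $H_n(q,a)$. The paper then finishes not by your scalar contradiction but by the factorisations from Lemma~\ref{lemma-anT}: writing $E_\lambda=T(q^{-1}+\sigma_i)$ and $F_\mu=(q-\sigma_j)T'$ and sliding $(q^{-1}+\sigma_i)$ across to $(q^{-1}+\sigma_j)$ gives $(q^{-1}+\sigma_j)(q-\sigma_j)=0$ directly from the Hecke quadratic relation. This is cleaner than invoking $(-q^{-2})^{l(\tau)}\neq 1$ and needs no genericity of $q$.
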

\begin{proof}
This proof is essentially follows the Lemma 4.4 in \cite{MA98}.
Since $\pi$ does not separate $\lambda$ from $\mu$, there are two
cells in the same row of $\lambda$ which are sent to two cells in
the same row of $\mu$ by $\pi$. Then one can find $\rho\in
R(\lambda)$ and $\sigma\in R(\mu)$, such that $\pi'=\rho\pi\sigma$
send the two adjacent cells $i$ and $i+1$ in the same row of
$\lambda$ to the two adjacent cells $j$ and $j+1$ in the same row of
$\mu$. Then we have $\omega_{\pi}=T\omega_{\pi'}T'$ for some $T\in
H_\lambda$ and $T'\in H_\mu$. Therefore,
\begin{align}
E_\lambda\omega_\pi F_{\mu}=\psi(T)\varphi(T')E_\lambda\omega_{\pi'}
F_{\mu}.
\end{align}

So we only need to show $E_\lambda\omega_{\pi'}F_\mu=0$.  As to the
permutation $\pi'$, we have $\pi'(i+1)=\pi(i)+1=j+1$, where $(i
i+1)\in R(\lambda)$ and $(j j+1)\in R(\mu)$. By the adjacency, we
obtain $ \sigma_i\omega_{\pi'}=\omega_{\pi'}\sigma_j.$ Hence,
\begin{align}
(q^{-1}+\sigma_i)\omega_{\pi'}=\omega_{\pi'}(q^{-1}+\sigma_j).
\end{align}
By formulas (\ref{formula-anfactor1}) and (\ref{formula-anfactor2}),
we have $ E_\lambda=T (q^{-1}+\sigma_i) $ and
$F_{\mu}=(q-\sigma_j)T'$ for some $T\in H_\lambda$ and $T'\in
H_{\mu}$, then
\begin{align}
E_\lambda\omega_{\pi'}F_{\mu}=T(q^{-1}+\sigma_i)\omega_{\pi'}(q-\sigma_j)T'=T\omega_{\pi'}(q^{-1}+\sigma_j)(q-\sigma_j)T'=0.
\end{align}
The proof of the second identity in (\ref{formula-elambda}) is
similar.
\end{proof}

\begin{corollary} \label{corollary-elambda}
Given a Young diagram $\lambda$, \makeatletter
\let\@@@alph\@alph
\def\@alph#1{\ifcase#1\or \or $'$\or $''$\fi}\makeatother
\begin{subnumcases}
{E_{\lambda}\omega_{\pi}F_{\lambda^{\vee}}=}
(-1)^{l(\sigma)}q^{l(\rho)-l(\sigma)}E_{\lambda}\omega_{\pi_{\lambda}}F_{\lambda^{\vee}}, & if $\omega_{\pi}=\omega_{\rho}\omega_{\pi_\lambda}\omega_{\sigma}$  \nonumber\\
0, & otherwise \nonumber.
\end{subnumcases}
\makeatletter\let\@alph\@@@alph\makeatother and \makeatletter
\let\@@@alph\@alph
\def\@alph#1{\ifcase#1\or \or $'$\or $''$\fi}\makeatother
\begin{subnumcases}
{F_{\lambda^{\vee}}\omega_{\pi}E_{\lambda}=}
(-1)^{l(\sigma)}q^{l(\rho)-l(\sigma)}F_{\lambda^{\vee}}\omega_{\pi_{\lambda}^{-1}}E_{\lambda}, & if $\omega_{\pi}=\omega_{\sigma}\omega_{\pi_\lambda^{-1}}\omega_{\rho}$  \nonumber\\
0, & otherwise \nonumber.
\end{subnumcases}
\makeatletter\let\@alph\@@@alph\makeatother for some $\rho\in
R(\lambda), \sigma\in R(\lambda^{\vee})$

\end{corollary}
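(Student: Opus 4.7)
The plan is to combine Lemma \ref{lemma-elambda} with the factorization lemma stated just before it, and then exploit the algebraic properties of the quasi-idempotents $a_n,b_n$ recorded in Lemma \ref{lemma-anT}. Concretely, for the first identity I would start by applying Lemma \ref{lemma-elambda} with $\mu=\lambda^{\vee}$: if $\pi$ does not separate $\lambda$ from $\lambda^{\vee}$, then $E_{\lambda}\omega_{\pi}F_{\lambda^{\vee}}=0$, which already handles the second case of the piecewise formula.

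If $\pi$ does separate $\lambda$ from $\lambda^{\vee}$, then the factorization lemma gives, as an equality of braids,
\begin{equation*}
\omega_{\pi}=\omega_{\rho}\omega_{\pi_{\lambda}}\omega_{\sigma}
\end{equation*}
for some $\rho\in R(\lambda)$ and $\sigma\in R(\lambda^{\vee})$. Substituting and using that $\omega_{\rho}\in H_{\lambda}$ and $\omega_{\sigma}\in H_{\lambda^{\vee}}$, Lemma \ref{lemma-anT} (applied tensor-factor by tensor-factor to $E_{\lambda}$ and $F_{\lambda^{\vee}}$) lets me pull the outer braid factors past the idempotents as scalars:
\begin{equation*}
E_{\lambda}\omega_{\pi}F_{\lambda^{\vee}}
=\psi(\omega_{\rho})\,\varphi(\omega_{\sigma})\,E_{\lambda}\omega_{\pi_{\lambda}}F_{\lambda^{\vee}}.
\end{equation*}
Since $\omega_{\rho}$ and $\omega_{\sigma}$ are positive permutation braids of lengths $l(\rho)$ and $l(\sigma)$, they write as products of $l(\rho)$ and $l(\sigma)$ generators $\sigma_i$, so $\psi(\omega_{\rho})=q^{l(\rho)}$ and $\varphi(\omega_{\sigma})=(-q^{-1})^{l(\sigma)}=(-1)^{l(\sigma)}q^{-l(\sigma)}$. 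Multiplying these constants gives the required scalar $(-1)^{l(\sigma)}q^{l(\rho)-l(\sigma)}$.

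The second identity is handled by the mirror argument. Apply Lemma \ref{lemma-elambda} with the roles of $E$ and $F$ interchanged to get vanishing unless $\pi$ separates $\lambda^{\vee}$ from $\lambda$. In that case, invoke the factorization lemma with $\lambda$ and $\lambda^{\vee}$ swapped to write $\omega_{\pi}=\omega_{\sigma}\omega_{\pi_{\lambda^{\vee}}}\omega_{\rho}$ with $\sigma\in R(\lambda^{\vee})$ and $\rho\in R(\lambda)$. The observation that $\pi_{\lambda^{\vee}}=\pi_{\lambda}^{-1}$ (both permutations implement the same exchange between the row-numbering of $T(\lambda)$ and $T(\lambda^{\vee})$) then produces the stated $\omega_{\pi_{\lambda}^{-1}}$ in the middle. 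The same scalar extraction as above, now with $F_{\lambda^{\vee}}\omega_{\sigma}=\varphi(\omega_{\sigma})F_{\lambda^{\vee}}$ on the left and $\omega_{\rho}E_{\lambda}=\psi(\omega_{\rho})E_{\lambda}$ on the right, yields the identical prefactor $(-1)^{l(\sigma)}q^{l(\rho)-l(\sigma)}$.

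I do not expect a serious obstacle: the piecewise vanishing is immediate from Lemma \ref{lemma-elambda}, and the scalar on the nonzero branch is a mechanical consequence of the braid-level factorization together with the $\psi,\varphi$-eigen relations of $E_{\lambda}$ and $F_{\lambda^{\vee}}$. The only point demanding care is that the factorization $\omega_{\pi}=\omega_{\rho}\omega_{\pi_{\lambda}}\omega_{\sigma}$ must hold as \emph{braids} rather than merely as permutations (so that the Hecke-algebra manipulation is legitimate); this is exactly what the earlier lemma guarantees via the length additivity $l(\pi)=l(\rho)+l(\pi_{\lambda})+l(\sigma)$, and is what makes the scalar come out cleanly.
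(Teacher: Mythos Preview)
Your proposal is correct and follows essentially the same approach as the paper: apply Lemma \ref{lemma-elambda} for the vanishing case, use the braid-level factorization $\omega_{\pi}=\omega_{\rho}\omega_{\pi_{\lambda}}\omega_{\sigma}$ in the separating case, and extract the scalar via $E_{\lambda}\omega_{\rho}=\psi(\omega_{\rho})E_{\lambda}$ and $\omega_{\sigma}F_{\lambda^{\vee}}=\varphi(\omega_{\sigma})F_{\lambda^{\vee}}$; the second identity is handled by the same argument with $\omega_{\pi_{\lambda^{\vee}}}=\omega_{\pi_{\lambda}^{-1}}$. The paper's proof is line-for-line the same, only slightly terser in the scalar computation and in the treatment of the second identity.
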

\begin{proof}
By Lemma \ref{lemma-elambda}, if $\pi$ doesn't separate $\lambda$
from $\lambda^{\vee}$, then
$E_{\lambda}\omega_{\pi}F_{\lambda^{\vee}}=0$. Otherwise, there are
some $\rho\in R(\lambda), \sigma\in R(\lambda^{\vee})$, such that $
\omega_{\pi}=\omega_{\rho}\omega_{\pi_\lambda}\omega_{\sigma}.$ Then
\begin{align}
E_{\lambda}\omega_{\pi}F_{\lambda^{\vee}}=E_{\lambda}\omega_{\rho}\omega_{\pi_\lambda}\omega_{\sigma}F_{\lambda^{\vee}}
=\psi(\omega_{\rho})\varphi(\omega_{\sigma})E_{\lambda}\omega_{\pi_{\lambda}}F_{\lambda^{\vee}}=(-1)^{l(\sigma)}q^{l(\rho)-l(\sigma)}E_{\lambda}\omega_{\pi_{\lambda}}F_{\lambda^{\vee}}.
\end{align}

Similarly, if $\pi$ doesn't separate $\lambda^{\vee}$ from
$\lambda$, by Lemma \ref{lemma-elambda},
$F_{\lambda^{\vee}}\omega_{\pi}E_{\lambda}=0$. Otherwise, there are
some $\rho\in R(\lambda), \sigma\in R(\lambda^{\vee})$ such that
$\omega_{\pi}=\omega_{\sigma}\omega_{\pi_\lambda^{-1}}\omega_{\rho}$,
since $\omega_{\pi_{\lambda^{\vee}}}=\omega_{\pi_\lambda^{-1}}$
separate $\lambda^{\vee}$ from $\lambda.$ Then we obtain the second
formula.
\end{proof}

\begin{lemma} \label{Lemma-Flambda}
Given a Young diagram $\lambda$,
\begin{align} \label{formula-Fomega}
F_{\lambda^{\vee}}\omega_{\pi_{\lambda}^{-1}}E_{\lambda}=F_{\lambda^{\vee}}\overline{\omega}_{\pi_{\lambda}^{-1}}E_{\lambda}=F_{\lambda^{\vee}}\omega_{\pi_{\lambda}}^{-1}E_{\lambda}.
\end{align}
\end{lemma}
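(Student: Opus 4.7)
The plan is to treat the two equalities separately. The identity $\overline{\omega}_{\pi_{\lambda}^{-1}}=\omega_{\pi_{\lambda}}^{-1}$ actually holds already at the level of braids, before sandwiching with $F_{\lambda^{\vee}}$ and $E_{\lambda}$: the braid $\omega_{\pi_{\lambda}}^{-1}$, obtained by inverting a reduced positive word for $\pi_{\lambda}$, has only negative crossings, represents the permutation $\pi_{\lambda}^{-1}$, and inherits from $\omega_{\pi_{\lambda}}$ the property that each pair of strings crosses at most once. Hence it is a negative permutation braid for $\pi_{\lambda}^{-1}$, and by the obvious mirror of the uniqueness of positive permutation braids it must coincide with $\overline{\omega}_{\pi_{\lambda}^{-1}}$.

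For the remaining equality $F_{\lambda^{\vee}}\omega_{\pi_{\lambda}^{-1}}E_{\lambda}=F_{\lambda^{\vee}}\overline{\omega}_{\pi_{\lambda}^{-1}}E_{\lambda}$, I would write $\omega_{\pi_{\lambda}^{-1}}=\sigma_{i_{1}}\cdots\sigma_{i_{k}}$ as a reduced positive braid word (so $k=l(\pi_{\lambda}^{-1})$) and apply the Hecke relation $\sigma_{i}^{-1}=\sigma_{i}-z$ to obtain
\begin{align*}
\overline{\omega}_{\pi_{\lambda}^{-1}} \;=\; \prod_{j=1}^{k}\bigl(\sigma_{i_{j}}-z\bigr) \;=\; \omega_{\pi_{\lambda}^{-1}} \;+\; \sum_{S\subsetneq\{1,\dots,k\}} (-z)^{k-|S|}\,\omega_{S},
\end{align*}
where $\omega_{S}=\prod_{j\in S}\sigma_{i_{j}}$ is taken in the order inherited from $S$. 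Each $\omega_{S}$ with $|S|<k$ is a positive braid word of length strictly less than $k$; the standard reduction via the braid relations and the quadratic Hecke relation $\sigma_{i}^{2}=1+z\sigma_{i}$ then expresses it in the Iwahori basis as a $\mathbb{Z}[z]$-linear combination of positive permutation braids $\omega_{\pi}$ with $l(\pi)\le|S|<k$.

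At this stage the proof is completed by Lemma \ref{lemma-elambda} together with Corollary \ref{corollary-elambda}: if $F_{\lambda^{\vee}}\omega_{\pi}E_{\lambda}\neq 0$ then $\pi$ must separate $\lambda^{\vee}$ from $\lambda$, and any such $\pi$ admits a braid-length-additive factorisation $\omega_{\pi}=\omega_{\sigma}\omega_{\pi_{\lambda}^{-1}}\omega_{\rho}$, which forces $l(\pi)=l(\sigma)+k+l(\rho)\ge k$. Every $\omega_{\pi}$ appearing in the error sum has $l(\pi)<k$ and therefore cannot separate $\lambda^{\vee}$ from $\lambda$; each such term is consequently annihilated after left multiplication by $F_{\lambda^{\vee}}$ and right multiplication by $E_{\lambda}$, and the first equality follows. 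The one point that requires care is the length bookkeeping — namely that rewriting an arbitrary positive braid word of length $m$ in the Iwahori basis produces only terms $\omega_{\pi}$ with $l(\pi)\le m$ — but this is a standard consequence of the quadratic Hecke relation, and it is precisely the bridge that lets the braid-theoretic expansion meet the vanishing statement of Lemma \ref{lemma-elambda}.
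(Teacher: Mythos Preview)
Your proof is correct and follows essentially the same strategy as the paper's. The paper switches the crossings of $\omega_{\pi_\lambda^{-1}}$ one at a time via the skein relation and argues that each smoothing term is annihilated (appealing to ``a slight modification of the proof of Lemma \ref{lemma-elambda}''), whereas you expand $\prod_j(\sigma_{i_j}-z)$ globally and invoke the length inequality coming from the length-additive factorisation in Lemma 3.1/Corollary \ref{corollary-elambda} explicitly; the core mechanism---that the error terms lie in the span of $\{\omega_\pi : l(\pi)<l(\pi_\lambda^{-1})\}$ and are therefore killed by $F_{\lambda^\vee}(\cdot)E_\lambda$---is the same in both arguments.
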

\begin{proof}
Taking any one of crossings of $\omega_{\pi_{\lambda}^{-1}}$, the
skein relation gives
\begin{align}
\omega_{\pi_{\lambda}^{-1}}=\omega'_{\pi_{\lambda}^{-1}}-(q-q^{-1})L_0,
\end{align}
where $L_0$ arises by smoothing this crossing. Clearly, the
corresponding permutation determined by $L_0$ does not separate
$\lambda^{\vee}$ from $\lambda$. With a slight modification of the
proof of Lemma \ref{lemma-elambda}, we have
$F_{\lambda^{\vee}}L_0E_{\lambda}=0$. Hence
\begin{align}
F_{\lambda^{\vee}}\omega_{\pi_{\lambda}^{-1}}E_{\lambda}=F_{\lambda^{\vee}}\omega'_{\pi_{\lambda}^{-1}}E_{\lambda}.
\end{align}

Then switch the other crossings in $\omega'_{\pi_{\lambda}^{-1}}$,
using the above process inductively, we will obtain the formula
(\ref{formula-Fomega}).

\end{proof}

We now define the idempotent elements $y_{\lambda}\in H_n(q,a)$ for
each partition $\lambda$ as follow. Let $
e_{\lambda}=E_{\lambda}\omega_{\pi_{\lambda}}F_{\lambda^{\vee}}\omega_{\pi_{\lambda}}^{-1},
$ then by using Lemma \ref{lemma-elambda} and Corollary
\ref{corollary-elambda}, it is straightforward to compute that
$e_{\lambda}e_{\mu}=0$ for $\lambda\neq \mu$, and
\begin{align}
e_{\lambda}^2=\alpha_{\lambda}e_{\lambda}
\end{align}
 for some scalar $\alpha_{\lambda}\in R$.  With the help of a formula established in
\cite{Yo97} via $SU(N)$ skein theory,  Aiston \cite{Ai96} computes
that
\begin{align}
\alpha_{\lambda}&=\prod_{(i,j)\in
\lambda}q^{j-i}[\lambda_i+\lambda_j^{\vee}-i-j+1].
\end{align}
Then we define the idempotent $y_{\lambda}$ as follow
\begin{align}
y_{\lambda}=\frac{1}{\alpha_{\lambda}}e_{\lambda}.
\end{align}

Finally, we prove the following proposition which will be used in
the next section.
\begin{proposition} \label{Proposition-ylambda}
For any $\pi\in S_{n}$, we have \makeatletter
\let\@@@alph\@alph
\def\@alph#1{\ifcase#1\or \or $'$\or $''$\fi}\makeatother
\begin{subnumcases}
{y_{\lambda}\omega_{\pi}y_{\lambda}=}
(-1)^{l(\sigma)}q^{l(\pi)-2l(\sigma)}y_{\lambda}, & if $\omega_{\pi}=\omega_{\pi_\lambda}\omega_{\sigma}\omega_{\pi_\lambda^{-1}}\omega_{\rho}$ for some $\rho\in R(\lambda), \sigma\in R(\lambda^{\vee})$ \nonumber\\
0, & otherwise \nonumber.
\end{subnumcases}
\makeatletter\let\@alph\@@@alph\makeatother
\end{proposition}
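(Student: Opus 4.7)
The plan is to substitute the definition $y_\lambda = \alpha_\lambda^{-1} E_\lambda \omega_{\pi_\lambda} F_{\lambda^\vee} \omega_{\pi_\lambda}^{-1}$ into $y_\lambda \omega_\pi y_\lambda$ and reduce everything to a middle block sandwiched between $F_{\lambda^\vee}$ and $E_\lambda$, to which Corollary \ref{corollary-elambda} and Lemma \ref{Lemma-Flambda} will apply. Explicitly, I will write
\[
y_\lambda \omega_\pi y_\lambda = \alpha_\lambda^{-2}\, E_\lambda \omega_{\pi_\lambda} F_{\lambda^\vee} \bigl(\omega_{\pi_\lambda}^{-1} \omega_\pi\bigr) E_\lambda \omega_{\pi_\lambda} F_{\lambda^\vee} \omega_{\pi_\lambda}^{-1},
\]
so that everything hinges on the middle block $M := F_{\lambda^\vee}(\omega_{\pi_\lambda}^{-1} \omega_\pi) E_\lambda$. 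Once $M$ is identified as a scalar multiple of $F_{\lambda^\vee} \omega_{\pi_\lambda}^{-1} E_\lambda$, the outer factors will reassemble into $e_\lambda^2$ and collapse, via the identity $e_\lambda^2 = \alpha_\lambda e_\lambda$, to that same scalar times $y_\lambda$.

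Under the hypothesis $\omega_\pi = \omega_{\pi_\lambda} \omega_\sigma \omega_{\pi_\lambda^{-1}} \omega_\rho$ as positive permutation braids, the reduced factorization will give $\omega_{\pi_\lambda}^{-1} \omega_\pi = \omega_\sigma \omega_{\pi_\lambda^{-1}} \omega_\rho$ already in $H_n(q,a)$ (no skein expansion required). Applying Lemma \ref{lemma-anT} to the subalgebras $H_\lambda$ and $H_{\lambda^\vee}$, I will use $\omega_\rho E_\lambda = q^{l(\rho)} E_\lambda$ (since $\rho \in R(\lambda)$) and $F_{\lambda^\vee} \omega_\sigma = (-q^{-1})^{l(\sigma)} F_{\lambda^\vee}$ (since $\sigma \in R(\lambda^\vee)$) to extract a scalar $(-1)^{l(\sigma)} q^{l(\rho) - l(\sigma)}$, and then swap $\omega_{\pi_\lambda^{-1}}$ for $\omega_{\pi_\lambda}^{-1}$ inside the sandwich by Lemma \ref{Lemma-Flambda}. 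Combined with $e_\lambda^2 = \alpha_\lambda e_\lambda$, this should produce $y_\lambda \omega_\pi y_\lambda = (-1)^{l(\sigma)} q^{l(\rho) - l(\sigma)} y_\lambda$, which matches the stated form once the length additivity $l(\pi) = 2 l(\pi_\lambda) + l(\sigma) + l(\rho)$ of the reduced decomposition is used to rewrite the exponent.

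For the ``otherwise'' branch, I will expand $\omega_{\pi_\lambda}^{-1} \omega_\pi$ in the standard positive-permutation-braid basis of $H_n(q,a)$ by iteratively rewriting $\sigma_i^{-1} = \sigma_i - z$ and applying the basis theorem; each summand $c_\tau \omega_\tau$ will feed into the second part of Corollary \ref{corollary-elambda} and survive only when $\omega_\tau = \omega_{\sigma'} \omega_{\pi_\lambda^{-1}} \omega_{\rho'}$ for some $\rho' \in R(\lambda)$, $\sigma' \in R(\lambda^\vee)$. Pre-multiplying any such surviving $\omega_\tau$ by $\omega_{\pi_\lambda}$ would then provide exactly the forbidden decomposition of $\omega_\pi$, contradicting the hypothesis and forcing $M = 0$. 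The hard part will be making this expand-and-cancel argument fully rigorous: I will need to track how each skein smoothing (in the style of the proof of Lemma \ref{Lemma-Flambda}) produces a permutation that fails to separate $\lambda^\vee$ from $\lambda$, and to verify by induction on the number of inverse crossings that no forbidden permutation can sneak back in. This technical bookkeeping, rather than any computation in the ``good'' case, is the principal obstacle.
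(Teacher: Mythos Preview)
Your proof of the nonzero case is essentially identical to the paper's: expand $e_\lambda \omega_\pi e_\lambda$, absorb $\omega_\sigma$ into $F_{\lambda^\vee}$ and $\omega_\rho$ into $E_\lambda$ via Lemma~\ref{lemma-anT}, invoke Lemma~\ref{Lemma-Flambda} to replace $\omega_{\pi_\lambda^{-1}}$ by $\omega_{\pi_\lambda}^{-1}$, and collapse using $e_\lambda^2=\alpha_\lambda e_\lambda$; the paper likewise writes out only this case and leaves the ``otherwise'' branch to Corollary~\ref{corollary-elambda}, so your more detailed skein-expansion plan there already goes beyond what the paper records. One caveat: your length identity actually gives $l(\rho)-l(\sigma)=l(\pi)-2l(\pi_\lambda)-2l(\sigma)$, not $l(\pi)-2l(\sigma)$ --- the paper's own proof contains the same slip, and it is harmless for the intended parity application $c(\pi,\lambda)\in q^{l(\pi)}\mathbb{Z}[q^2]$ in Section~\ref{Section-strongintegrality}.
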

\begin{proof}
We only need to compute  $e_{\lambda}\omega_{\pi}e_{\lambda}$. By
the second formula in Corollary \ref{corollary-elambda} and Lemma
\ref{Lemma-Flambda}, when
$\omega_{\pi}=\omega_{\pi_\lambda}\omega_{\sigma}\omega_{\pi_\lambda^{-1}}\omega_{\rho}$,
we obtain
\begin{align}
e_{\lambda}\omega_{\pi}e_{\lambda}&=E_{\lambda}\omega_{\pi_{\lambda}}F_{\lambda^{\vee}}\omega_{\pi_{\lambda}}^{-1}\omega_{\pi}
E_{\lambda}\omega_{\pi_{\lambda}}F_{\lambda^{\vee}}\omega_{\pi_{\lambda}}^{-1}\\\nonumber
&=E_{\lambda}\omega_{\pi_{\lambda}}F_{\lambda^{\vee}}\omega_{\sigma}\omega_{\pi_\lambda^{-1}}\omega_{\rho}E_{\lambda}\omega_{\pi_{\lambda}}F_{\lambda^{\vee}}\omega_{\pi_{\lambda}}^{-1}\\\nonumber
&=(-1)^{l(\sigma)}q^{l(\rho)-l(\sigma)}E_{\lambda}\omega_{\pi_{\lambda}}F_{\lambda^{\vee}}\omega_{\pi_\lambda^{-1}}E_{\lambda}\omega_{\pi_{\lambda}}F_{\lambda^{\vee}}\omega_{\pi_{\lambda}}^{-1}\\\nonumber
&=(-1)^{l(\sigma)}q^{l(\rho)-l(\sigma)}E_{\lambda}\omega_{\pi_{\lambda}}F_{\lambda^{\vee}}\omega_{\pi_\lambda}E_{\lambda}\omega_{\pi_{\lambda}}F_{\lambda^{\vee}}\omega_{\pi_{\lambda}}^{-1}\\\nonumber
&=(-1)^{l(\sigma)}q^{l(\rho)-l(\sigma)}e_{\lambda}^2\\\nonumber
&=(-1)^{l(\sigma)}q^{l(\rho)-l(\sigma)}\alpha_{\lambda}e_{\lambda}\\\nonumber
&=(-1)^{l(\sigma)}q^{l(\pi)-2l(\sigma)}\alpha_{\lambda}e_{\lambda}.
\end{align}
\end{proof}

\section{Strong integrality theorem}
\label{Section-strongintegrality}

\subsection{Refined coefficient formula}
In this subsection, we study the resolutions in the skein
$H_{n,p}(q,a)$ carefully, and we obtain a refined formula for the
coefficients appearing in the resolutions. We follow the notations
in \cite{Mo07}. The diagram in a rectangle with $n$ outputs and $m$
inputs at the top, matched at the bottom are called $(n,m)$-tangles.
The resulting skein of the rectangle, denoted by $H_{n,m}(q,a)$, is
an algebra of dimension $(n+m)!$ over the ring $R$, where the
product is induced by placing one tangle above another.

A framed knot $\mathcal{K}$ can be represented as a 1-tangle
$T(\mathcal{K})$ by a single knotted arc as in Figure 7 for
$\mathcal{K}$ is the trefoil knot.

\begin{figure}[!htb]
\begin{center}
\includegraphics[width=100 pt]{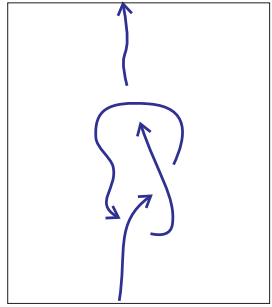}
\end{center}
\caption{The 1-tangle }
\end{figure}

The $(n,m)$-parallel of $T(\mathcal{K})$, denoted by
$T_{n,m}(\mathcal{K})$ in the skein $H_{n,m}(q,a)$ is constructed by
drawing $n+m$ parallel oriented strands to the arc $T(\mathcal{K})$,
with $n$ oriented in one sense and $m$ in the other. Clearly,
$T_{n,m}(\mathcal{K})\in H_{n,m}(q,a)$.

Applying the standard resolution produces to the tangle
$T_{n,m}(\mathcal{K})\in H_{n,m}(q,a)$, we can write
$T_{n,m}(\mathcal{K})$ as a $R$-linear combination of $(n+m)!$
totally descending tangles without closed components. We denote the
set of all such tangles by $S=\{T_{(i)}| i=1,...,(n+m)!\}$. In
particular, for case $m=0$, such tangles are the positive
permutation braids, denoted by $\{\omega_{\pi}: \pi\in S_n\}$, with
strings oriented from top to bottom, while for $n=0$, they are the
positive permutation braids, denoted by $\{\omega^*_{\rho}: \rho\in
S_m\}$, with string orientation from bottom to top.

For each tangle, we can count the number $k$ of its arcs which
connects input and output points at the top. Then $0\leq k\leq
\min(n,m)$. We can divides the set $S$ into two parts
\begin{align}
S=S^{(0)}\cup S^{(1)},
\end{align}
where $S^{(0)}$ consists tangles with $k=0$ in $S$, while  $S^{(1)}$
consists tangles with $k\geq 1$ in $S$. In particular, the tangles
with $k=0$ in $S^{(0)}$ have the form $\omega_{\pi}\otimes
\omega_{\rho}^*$ for some $\pi\in S_n$ and $\rho\in S_m$, where
$\otimes$ denotes juxtaposition of tangles side by side.

We can write $T_{n,m}(\mathcal{K})$ in the skein $H_{n,m}(q,a)$ as
\begin{align} \label{formula-T0T1}
T_{n,m}(\mathcal{K})=T_{n,m}^{(0)}(\mathcal{K})+T_{n,m}^{(1)}(\mathcal{K})
\end{align}
where $T_{n,m}^{(0)}(\mathcal{K})$ is a combination of tangles  in
$S^{(0)}$ and $T_{n,m}^{(1)}(\mathcal{K})$ is a combination of
tangles in $S^{(1)}$.  Then
\begin{align} \label{formula-T0}
T_{n,m}^{(0)}(\mathcal{K})=\sum_{\pi\in S_n,\rho\in
S_m}c_{\pi,\rho}(\mathcal{K})(\omega_\pi\otimes \omega_\rho^*),
\end{align}
and we have
\begin{lemma} \label{Lemma-z}
$c_{\pi,\rho}(\mathcal{K})\in \mathbb{Z}[a^{\pm},z]$ and the powers
of $z$ appearing in the coefficients $c_{\pi,\rho}(\mathcal{K})$ are
simultaneously even (resp. odd) if $l(\pi)+l(\rho)$ is even (resp.
odd).
\end{lemma}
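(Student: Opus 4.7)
The plan is to derive the expansion (\ref{formula-T0}) by applying the skein moves of $H_{n,m}(q,a)$ directly to $T_{n,m}(\mathcal{K})$, crossing by crossing, and reading off the coefficient of each descending basis element from the scalar accumulated along the resolution. The only elementary operations entering the procedure are
\begin{align}
\sigma_i - \sigma_i^{-1} = z, \qquad \sigma_i^{2} = 1 + z\sigma_i, \qquad \mathcal{L}^{\pm 1} = a^{\pm 1}\mathcal{L},
\end{align}
together with the closed-loop evaluation $\bigcirc = s = (a-a^{-1})/z$. The first three contribute only coefficients in $\mathbb{Z}[a^{\pm 1},z]$, so the only potential source of a negative power of $z$ is $s$. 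My first step would be to observe that a new closed component can appear in the resolution only when a smoothing reconnects two parts of the same arc, and that smoothing has just supplied a factor of $\pm z$; bundling every loop removal with the smoothing that created it turns the combined contribution into $\pm z \cdot s = \pm(a-a^{-1}) \in \mathbb{Z}[a^{\pm 1}]$, so the $z^{-1}$ in $s$ is absorbed in the same step and $c_{\pi,\rho}(\mathcal{K}) \in \mathbb{Z}[a^{\pm 1},z]$.

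For the parity assertion I would attach to every intermediate state $(D,P)$, where $D$ is the current tangle diagram and $P$ the monomial scalar accumulated so far, the $\bmod\, 2$ invariant
\begin{align}
\tau(D,P) = c(D) + \deg_z(P) + \deg_a(P) \pmod 2,
\end{align}
with $c(D)$ the total crossing count of $D$. A short case check shows $\tau$ is preserved by all four elementary moves: skein and Hecke each trade one unit of $c(D)$ for one unit of $\deg_z$; framing trades one unit of $c(D)$ for one unit of $\deg_a$; and loop removal with $s$ trades one unit of $\deg_z$ for $\pm 1$ in $\deg_a$. Starting from $T_{n,m}(\mathcal{K})$ with $\tau \equiv c(T_{n,m}(\mathcal{K})) \pmod 2$, and ending at $\omega_\pi \otimes \omega_\rho^*$ with a monomial $a^j z^k$ of $c_{\pi,\rho}(\mathcal{K})$, this yields, for every monomial,
\begin{align}
k + j \equiv c(T_{n,m}(\mathcal{K})) + l(\pi) + l(\rho) \pmod 2.
\end{align}

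The main obstacle, and the most delicate step of the proof, is to upgrade this single congruence on $k+j$ to the claimed congruence on $k$ alone. This amounts to showing that within a fixed $c_{\pi,\rho}(\mathcal{K})$ every monomial shares the same $a$-parity, matching $j \equiv c(T_{n,m}(\mathcal{K})) \pmod 2$, so that the congruence above collapses to $k \equiv l(\pi) + l(\rho) \pmod 2$. I expect this uniform $a$-parity to be extractable by an auxiliary induction that traces every $a$-factor in the resolution back to its source --- either a framing move or a bundled loop removal --- and pairs it with the compensating crossing-count change, using also that the $(n,m)$-parallel structure of $T_{n,m}(\mathcal{K})$ constrains all $a$-contributions to appear in rigid multiples of $w(\mathcal{K})$. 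Once this bookkeeping is carried out the lemma follows.
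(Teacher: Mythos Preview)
Your invariant $\tau(D,P)=c(D)+\deg_z(P)+\deg_a(P)\pmod 2$ is correctly preserved by all the moves, and it does yield the congruence $k+j\equiv c(T_{n,m}(\mathcal{K}))+l(\pi)+l(\rho)\pmod 2$. But, as you yourself flag, this constrains only $k+j$, and the lemma asks for a constraint on $k$ alone. To finish you would need to know that every monomial of $c_{\pi,\rho}(\mathcal{K})$ has the same $a$-parity $j\equiv c(T_{n,m}(\mathcal{K}))\pmod 2$. That statement is true, but it is exactly the content of the paper's separate Lemma~\ref{Lemma-a}, whose proof is itself nontrivial (it tracks $w(T)+L(T)+l(T)\pmod 2$ through the resolution and then does a careful count at the terminal diagrams). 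Your sketch ``trace every $a$-factor back to its source'' is not a proof of this; the framing-move contributions and the $(a-a^{-1})$ factors from loop removals interact in a way that is not obviously controlled by $w(\mathcal{K})$ alone without an argument at least as detailed as Lemma~\ref{Lemma-a}.

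The paper avoids this detour by choosing a different bookkeeping invariant: instead of the crossing number $c(D)$, it tracks the component count $l(T)+L(T)$, where $l(T)$ is the number of components in the closure of the open part of $T$ and $L(T)$ the number of closed components. The point is that the framing move leaves $l(T)+L(T)$ unchanged, so $a$ never enters the parity analysis, while switching leaves it fixed and smoothing changes it by $\pm 1$. Normalising by $z^{l(T)+L(T)}$ then shows directly that the coefficient of $T_{(i)}$ in $T_{n,m}(\mathcal{K})$ lies in $z^{\,l(T_{(i)})-(n+m)}\,\mathbb{Z}[a^{\pm 1},z^2]$. For $T_{(i)}=\omega_\pi\otimes\omega_\rho^*$ one has $l(T_{(i)})=l(c(\pi))+l(c(\rho))$, and the elementary symmetric-group identity $l(\pi)+l(c(\pi))\equiv n\pmod 2$ (Lemma~\ref{Lemma-symmgroup}) converts this into the desired congruence $k\equiv l(\pi)+l(\rho)\pmod 2$. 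So the fix is not to separate $k$ from $j$ after the fact, but to pick an invariant that never sees $j$ in the first place.
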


\begin{proof}
Given a $(n,m)$-tangle $T$, let $L(T)$ denote the number of the
closed components of $T$. Denote by $T'$ the tangle after removing
all the closed components of $T$, let $l(T)$ be number of the
components after closing the tangle $T'$. It is clear that
$l(\omega_\pi)=l(c(\pi))$, the length of the partition $c(\pi)$
which denotes the cycle type of the permutation $\pi$.

If we switch or smooth a crossing of sign $\pm 1$ in a tangle $T\in
H_{n,m}(q,a)$, we obtain tangles $T_{\mp }$ and $T_0$ which satisfy
the skein relation
\begin{align}
T=T_{\mp} \pm z T_0.
\end{align}
It is obvious that we have $l(T_{\mp})+L(T_{\mp})=l(T)+L(T)$ and
$l(T_{0})+L(T_{0})=l(T)+L(T)\pm 1$. Then
\begin{align}
z^{l(T)+L(T)} T=z^{l(T_{\mp})+L(T_{\mp})} T_{\mp}\pm z^{\epsilon}
z^{l(T_{0})+L(T_{0})}T_{0},
\end{align}
where $\epsilon$ takes the value $0$ or $2$.

By induction and using Reidemeister moves,  we obtain
\begin{align}
z^{l(T)+L(T)} T=\sum_{\tilde{T}_{(i)}\in
S}\tilde{c}_{(i)}z^{l(\tilde{T}_{(i)})+L(\tilde{T}_{(i)})}\tilde{T}_{(i)}
\end{align}
where $\tilde{c}_{(i)}\in \mathbb{Z}[a^{\pm 1},z^2]$,
$\tilde{T}_{(i)}$ is the tangle $T_{(i)}$ plus $L(\tilde{T}_{(i)})$
null-homotopic closed curves without crossings. Then $
z^{L(\tilde{T}_{(i)})}\tilde{T}_{_{(i)}}=(a-a^{-1})^{L(\tilde{T}_{(i)})}T_{(i)}$
by removing these null-homotopic closed curves.

In conclusion, we have the following expansion for any $T\in
H_{n,m}(q,a)$,
\begin{align}
z^{l(T)+L(T)} T=\sum_{T_{(i)}\in S}c_{(i)}z^{l(T_{(i)})}T_{(i)},
\end{align}
with $c_{(i)}\in \mathbb{Z}[a^{\pm },z^2]$.

In particular, for $T_{n,m}(\mathcal{K})\in H_{n,m}(q,a)$, it easy
to see that $L(T_{n,m}(\mathcal{K}))=0$ and
$l(T_{n,m}(\mathcal{K}))=n+m$,
 we have
\begin{align}
T_{n,m}(\mathcal{K})&=\sum_{T_{(i)}\in
S}c_{(i)}z^{l(T_{(i)})-(n+m)}T_{(i)}.
\end{align}

Since $S=S^{(0)}\cup S^{(1)}$ and $S^{(0)}=\{\omega_{\pi}\otimes
\omega^*_{\rho}: \pi\in S_n, \rho\in S_m\}$, we obtain
\begin{align}
T_{n,m}(\mathcal{K})&=\sum_{\pi\in S_n,\rho\in
S_p}c_{\pi,\rho}z^{l(\omega_{\pi}\otimes\omega_{\rho}^*)-(n+m)}\omega_{\pi}\otimes\omega_{\rho}^*\\\nonumber
&+\sum_{T_{(i)}\in S^{(1)}}c_{(i)}z^{l(T_{(i)})-(n+m)}T_{(i)},
\end{align}
with the coefficients $c_{\pi,\rho},c_{(i)}\in \mathbb{Z}[a^{\pm
1},z^2]$.

Therefore, the coefficients $c_{\pi,\rho}(\mathcal{K})$ in formula
(\ref{formula-T0}) is given by
\begin{align}
c_{\pi,\rho}(\mathcal{K})=c_{\pi,\rho}z^{l(\omega_{\pi}\otimes\omega_{\rho}^*)-(n+m)}
\end{align}

Clearly,
$l(\omega_{\pi}\otimes\omega_{\rho}^*)=l(\omega_{\pi})+l(\omega_\rho^*)=l(c(\pi))+l(c(\rho))$.
By the relation (\ref{formula-lpi}) which will be proved in the
following Lemma \ref{Lemma-symmgroup}, we obtain
\begin{align}
c_{\pi,\rho}(\mathcal{K})=c_{\pi,\rho}z^{l(c(\pi))+l(c(\rho))-(n+p)}=c_{\pi,\rho}z^{l(\pi)+l(\rho)+2k}
\end{align}
for some $k\in \mathbb{Z}$.  Combining $c_{\pi,\rho}\in
\mathbb{Z}[a^{\pm 1},z^2]$, we complete the proof.

\end{proof}

\begin{lemma} \label{Lemma-symmgroup}
Given a permutation $\pi\in S_n$, we have the following identity
\begin{align} \label{formula-lpi}
l(\pi)+l(c(\pi))=n\mod 2
\end{align}
\end{lemma}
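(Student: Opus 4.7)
The plan is to recognize this as a standard identity about the sign of a permutation, expressed two ways. Recall that the sign homomorphism $\mathrm{sgn}: S_n \to \{\pm 1\}$ is well-defined, and for any permutation $\pi$ it satisfies $\mathrm{sgn}(\pi) = (-1)^{l(\pi)}$, where $l(\pi)$ is the length (minimum number of adjacent transpositions in a reduced word, equivalently the number of inversions). This is the first of the two computations of the sign.

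For the second computation, I would decompose $\pi$ into disjoint cycles. If the cycle type is $c(\pi) = (c_1, c_2, \dots, c_{l(c(\pi))})$, then $\sum_i c_i = n$, and each cycle of length $c_i$ is a product of $c_i - 1$ transpositions, hence has sign $(-1)^{c_i - 1}$. Multiplying over all cycles,
\begin{align*}
\mathrm{sgn}(\pi) = \prod_{i=1}^{l(c(\pi))} (-1)^{c_i - 1} = (-1)^{\sum_i c_i - l(c(\pi))} = (-1)^{n - l(c(\pi))}.
\end{align*}

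Equating the two expressions for $\mathrm{sgn}(\pi)$ yields $(-1)^{l(\pi)} = (-1)^{n - l(c(\pi))}$, which is exactly the claim $l(\pi) + l(c(\pi)) \equiv n \pmod 2$. No step is really difficult; the only subtle point is making sure the convention for $l(c(\pi))$ matches (the \emph{number} of cycles, including fixed points, which is the length of the partition describing the cycle type). Once that is pinned down, the proof is a one-line application of the well-definedness of the sign homomorphism.
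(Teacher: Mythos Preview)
Your proof is correct and is genuinely different from the paper's argument. The paper proves the identity by a direct induction on $n$: it checks $n=2$ by hand, then for general $\pi\in S_d$ writes $\pi=\sigma\tau$ with $\tau$ the cycle containing $d$, observes that $l(c(\pi))=l(c(\sigma))+l(c(\tau))$ and that the number of crossings between $\omega_\sigma$ and $\omega_\tau$ is even so $l(\pi)\equiv l(\sigma)+l(\tau)\pmod 2$, and invokes the inductive hypothesis on the pieces. Your approach instead recognizes the statement as the equality of the two standard formulas for the sign, $\mathrm{sgn}(\pi)=(-1)^{l(\pi)}=(-1)^{n-l(c(\pi))}$, and finishes in one line. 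Your route is shorter and more conceptual, resting on the well-definedness of the sign homomorphism; the paper's route is more self-contained and stays within the braid/skein picture already in use. Both are fine; yours packages the combinatorics into a single invocation of a classical fact, while the paper's induction makes that fact explicit.
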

\begin{proof}
For every permutation $\pi\in S_n$, its length $l(\pi)$ can be
obtained by calculating the minimal number of the crossings in the
positive braid $\omega_\pi$.  When $n=2$, we have
$S_2=\{\pi_1=(1)(2), \pi_2=(12)\}$. Hence $c(\pi_1)=(11)$, and
$c(\pi_2)=(2)$. It is clear that
\begin{align}
l(\pi_1)+l(c(\pi_1))=l(\pi_2)+l(c(\pi_2))=2.
\end{align}
So Lemma \ref{Lemma-symmgroup} holds when $n=2$.

Now we assume Lemma \ref{Lemma-symmgroup} holds for $n\leq d-1$. As
to $n=d$, we consider a permutation $\pi\in S_{d}$. We first study
the special case, when $\pi$ has the cycle form: $\pi=\pi'(d)$,
where $\pi'$ is a permutation in $S_{d-1}$. It is easy to see that
$l(\pi)=l(\pi')$ and $l(c(\pi))=l(c(\pi'))+1$. By the induction
hypothesis, we have
\begin{align}
l(\pi')+l(c(\pi'))=d-1\mod 2
\end{align}
Thus we get
\begin{align}
l(\pi)+l(c(\pi))=d \mod 2.
\end{align}
Thus Lemma \ref{Lemma-symmgroup} holds for $\pi\in S_d$ with the
cycle form $\pi'(d)$, $\pi'\in S_{d-1}$.

For the general case, we can assume $\pi$ has the cycle form
$\pi=\sigma \tau$, where $\tau$ is the cycle containing the element
$d$ as the form $(i_1\cdots i_j d)$ for $\{i_1,..i_j\}\subset
\{1,..,d-1\}$, $1\leq j\leq d-1$, and $\sigma$ is a cycle in
$S_{d-j-1}$. Hence
\begin{align} \label{formula-lpi1}
l(c(\pi))=l(c(\sigma))+l(c(\tau)).
\end{align}
By the property of permutation, the number of the crossings between
$\omega_\sigma$ and $\omega_\tau$ must be an even number, thus
\begin{align} \label{formula-lpi2}
l(\pi)=l(\sigma)+l(\tau) \mod 2.
\end{align}
Combining (\ref{formula-lpi1}), (\ref{formula-lpi2}) and the
induction hypothesis, we have
\begin{align}
l(\pi)+l(c(\pi))=d \mod 2.
\end{align}
So, we finish the proof of Lemma \ref{Lemma-symmgroup}.
\end{proof}

\begin{lemma} \label{Lemma-a}
The powers of $a$ in the coefficients $c_{\pi,\rho}(\mathcal{K})$
are simultaneously even (resp. odd) if $(n+m)w(\mathcal{K})$ is even
(resp. odd).
\end{lemma}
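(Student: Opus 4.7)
The plan is to parallel the proof of Lemma~\ref{Lemma-z}, but now tracking the parity of $a$-exponents instead of $z$-exponents. The key observation is that, in the HOMFLY-PT skein $H_{n,m}(q,a)$, the skein relation $T_{+}-T_{-}=zT_{0}$ and the moves RII, RIII are $a$-independent; the only operations that introduce $a$ are Reidemeister~I (positive curl $=a\cdot\mathrm{id}$, negative curl $=a^{-1}\cdot\mathrm{id}$) and the removal of a null-homotopic unknot (factor $s=(a-a^{-1})/z$), both of odd $a$-parity. I would argue by induction on $|w(\mathcal{K})|$.

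\emph{Inductive step.} If $\mathcal{K}'$ is obtained from $\mathcal{K}$ by adding one positive curl, so $w(\mathcal{K}')=w(\mathcal{K})+1$, then
\[
T_{n,m}(\mathcal{K}') = T_{n,m}(\mathcal{K})\cdot\tau^{+}_{n,m}
\]
in $H_{n,m}(q,a)$, where $\tau^{+}_{n,m}$ is the $(n,m)$-parallel of a positive curl. The element $\tau^{+}_{n,m}$ is central in $H_{n,m}(q,a)$ and acts on each idempotent block $y_{[\lambda,\mu]}\cdot H_{n,m}(q,a)$ (indexed by pairs with $|\lambda|-|\mu|=n-m$, $|\lambda|\leq n$, $|\mu|\leq m$) by a framing eigenvalue of the form (monomial in $q$)$\cdot a^{|\lambda|+|\mu|}$. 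Since $|\lambda|+|\mu|=(n-m)+2|\mu|\equiv n+m\pmod 2$, every such eigenvalue has $a$-parity $(n+m)\bmod 2$. Moreover, the idempotents $y_{[\lambda,\mu]}$ are constructed from positive permutation braids with coefficients in $\mathbb{Z}[q^{\pm 1},\alpha^{-1}]$ (containing no $a$), so the change of basis between the idempotent decomposition and the $\{\omega_{\pi}\otimes\omega_{\rho}^{*}\}$-basis is $a$-independent. Consequently, right multiplication by $\tau^{+}_{n,m}$ uniformly shifts the $a$-parity of every coefficient $c_{\pi,\rho}$ by $n+m\bmod 2$; the negative-curl case is entirely symmetric.

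\emph{Base case.} For $w(\mathcal{K})=0$, one fixes a writhe-zero diagram of $\mathcal{K}$ and resolves its $(n,m)$-parallel to the descending basis using only the $a$-free operations (skein relation, RII, RIII) together with removal of null-homotopic unknots; the goal is then to show that the resulting $(a-a^{-1})$-contributions combine to give even $a$-parity in every $c_{\pi,\rho}(\mathcal{K})$. The chain of reductions terminates at the writhe-zero unknot, where $T_{n,m}(U_{0})=\mathrm{id}\in H_{n,m}(q,a)$ and the unique nonzero coefficient is $c_{e,e}=1$, of trivially even $a$-parity; one then propagates the evenness using the crossing-change skein relation together with the writhe-zero constraint, which pairs up $(a-a^{-1})$-factors. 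The main obstacle is exactly this base case: whereas the inductive step in $|w(\mathcal{K})|$ is a clean central-element calculation, establishing even $a$-parity uniformly for all $0$-framed knots requires careful bookkeeping of how null-homotopic loops arise during the skein resolution of the parallel diagram.
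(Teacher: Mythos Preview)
Your proposal has a genuine gap: the base case $w(\mathcal{K})=0$ is not established. You yourself flag it as ``the main obstacle'' and then offer only the sentence ``one then propagates the evenness using the crossing-change skein relation together with the writhe-zero constraint, which pairs up $(a-a^{-1})$-factors.'' That sentence is not an argument: crossing changes do \emph{not} preserve writhe zero, and there is no mechanism given that forces the unknot factors produced during the resolution to appear in pairs. Since the base case is exactly where all the $(a-a^{-1})$-contributions have to be controlled, the lemma is left unproven.

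The inductive step is also shakier than presented. You invoke idempotents $y_{[\lambda,\mu]}$ in $H_{n,m}(q,a)$ with $a$-free coefficients, but the paper never constructs such idempotents (it only builds $y_\lambda$ in $H_n(q,a)$ and the basis $Q_{\lambda,\mu}$ in the annulus skein $\mathcal C$), and in the mixed algebra $H_{n,m}(q,a)$ the structure constants \emph{do} involve $a$ through turnbacks. Even granting a central $\tau^{+}_{n,m}$ with the eigenvalues you claim, you still need that right multiplication by $\tau^{+}_{n,m}$ preserves the splitting $S^{(0)}\oplus S^{(1)}$ (or at least that the $S^{(1)}$-part does not feed back into $S^{(0)}$ with odd $a$-parity), and you have not argued this.

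The paper sidesteps both problems with a single mod~$2$ invariant: for any tangle $T$ one tracks $w(T)+L(T)+l(T)\pmod 2$, observes that this quantity is unchanged under crossing switches, smoothings, and Reidemeister moves~II/III, and hence is the same for $T_{n,m}(\mathcal{K})$ and for every final diagram $T^{\alpha}_{(i)}\otimes U^{\otimes\beta}\otimes P^{\otimes\gamma}$ produced by the resolution. Evaluating on both ends and applying Lemma~\ref{Lemma-symmgroup} gives $\alpha+\beta\equiv (n+m)^2w(\mathcal{K})\pmod 2$; since the $a$-contribution of such a final diagram is $a^{\alpha+\gamma}(a-a^{-1})^{\beta+\gamma}$, its $a$-parity is $\alpha+\beta$, and the result follows at once. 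No induction on writhe, no idempotents, and no base case are needed.
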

\begin{proof}
Applying the Reidemeister moves RII and RII moves and the HOMFLY-PT
skein relation to diagram $T_{n,m}(\mathcal{K})$ in $H_{n,m}(q,a)$,
we will eventually obtain the final diagrams in the following forms:
$T_{(i)}^\alpha\otimes U^{\otimes \beta}\otimes P^{\otimes \gamma}$
for $\alpha,\beta,\gamma\in \mathbb{N}$, where $T_{(i)}^{\alpha}$
denotes the tangle $T_{(i)}$ with $\alpha$ positive kinks, $U$
denotes the unknot (i.e. trivial knot), $P$ denotes the unknot with
one positive kink.

We let $S(\mathcal{K})$ be the set of all such final diagrams. Then,
we have

\begin{align}
T_{n,m}(\mathcal{K})=\sum_{T_{(i)}^\alpha\otimes U^{\otimes
\beta}\otimes P^{\otimes \gamma}\in
S(\mathcal{K})}c_{(i),\alpha,\beta,\gamma}(\mathcal{K})T_{(i)}^\alpha\otimes
U^{\otimes \beta}\otimes P^{\otimes \gamma},
\end{align}
and the coefficients $c_{(i),\alpha,\beta,\gamma}(\mathcal{K})\in
\mathbb{Z}[z]$.

For any tangle $T\in H_{n,m}(q,a)$, if we switch or smooth a
crossing of sign $\pm 1$ in $T$, we obtain tangles $T_{\pm 1}$ and
$T_0$. The key observation is that,  we have
\begin{align}
w(T_+)+L(T_+)+l(T_+)&=w(T_-)+L(T_-)+l(T_-) \mod 2\\\nonumber
&=w(T_0)+L(T_0)+l(T_0) \mod 2.
\end{align}

Therefore,
\begin{align} \label{formula-w1}
&w(T_{(i)}^\alpha\otimes U^{\otimes \beta}\otimes P^{\otimes
\gamma})+L(T_{(i)}^\alpha\otimes U^{\otimes \beta}\otimes P^{\otimes
\gamma})+l(T_{(i)}^\alpha\otimes U^{\otimes \beta}\otimes P^{\otimes
\gamma})\\\nonumber
&=w(T_{n,m}(\mathcal{K}))+L(T_{n,m}(\mathcal{K}))+l(T_{n,m}(\mathcal{K}))
\mod 2 \\\nonumber &=(n+m)^2w(\mathcal{K})+0+(n+m)  \mod 2.
\end{align}

In particular, when $T_{(i)}\in S^{(0)}$, we know that
$T_{(i)}=\omega_{\pi}\otimes \omega^*_{\rho}$ for some $\pi\in S_n$
and $\rho\in S_m$. It is easy to get that

\begin{align} \label{formula-w2}
w((\omega_{\pi}\otimes \omega_{\rho}^*)^{\alpha}\otimes U^{\otimes
\beta}\otimes P^{\otimes \gamma})&=l(\pi)+l(\rho)+\alpha+\gamma,
\\\nonumber L((\omega_{\pi}\otimes \omega_{\rho}^*)^{\alpha}\otimes U^{\otimes
\beta}\otimes P^{\otimes \gamma})&=\beta+\gamma,
\\\nonumber
 l((\omega_{\pi}\otimes \omega_{\rho}^*)^{\alpha}\otimes U^{\otimes
\beta}\otimes P^{\otimes \gamma})&=l(c(\pi))+l(c(\rho)).
\end{align}

According to Lemma \ref{Lemma-symmgroup}, we have
\begin{align}
l(\pi)+l(c(\pi))&=n \mod 2 \\\nonumber
 l(\rho)+l(c(\rho))&=p \mod 2
\end{align}

Combining the formulas (\ref{formula-w1}) and (\ref{formula-w2}), we
obtain
\begin{align}
\alpha+\beta\equiv (n+m)^2w(\mathcal{K}) \mod 2.
\end{align}

Finally, applying the Reidemeister move RI to the tangle
$(\omega_{\pi}\otimes \omega_{\rho}^*)^{\alpha}\otimes U^{\otimes
\beta}\otimes P^{\otimes \gamma}$, we obtain
\begin{align}
&(\omega_{\pi}\otimes \omega_{\rho}^*)^{\alpha}\otimes U^{\otimes
\beta}\otimes P^{\otimes \gamma}\\\nonumber
&=a^{\alpha+\gamma}(\omega_{\pi}\otimes \omega_{\rho}^*)\otimes
U^{\otimes \beta}\otimes U^{\otimes \gamma}\\\nonumber
&=\frac{a^{\alpha+\gamma}(a-a^{-1})^{\beta+\gamma}}{z^{\beta+\gamma}}\omega_\pi\otimes
\omega_{\rho}^{*}.
\end{align}

It is obvious that the degrees of $a$ in the term
$a^{\alpha+\gamma}(a-a^{-1})^{\beta+\gamma}$ have the same parity
with $\alpha+\beta$ which is equal to $(n+m)^2w(\mathcal{K})$ mod 2.
Note that the coefficient $c_{\pi,\rho}(\mathcal{K})$ is given by

\begin{align}
c_{\pi,\rho}(\mathcal{K})=c_{(i),\alpha,\beta,\gamma}(\mathcal{K})\frac{a^{\alpha+\gamma}(a-a^{-1})^{\beta+\gamma}}{z^{\beta+\gamma}}
\end{align}

Therefore, the degrees of $a$ appearing in the coefficient
$c_{\pi,\rho}(\mathcal{K})$ is equal to $(n+m)^2w(\mathcal{K})$ mod
2.

\end{proof}

In conclusion, combining Lemma \ref{Lemma-z} and Lemma \ref{Lemma-a}
together, we obtain the following refined coefficient Theorem.
\begin{theorem} \label{Theorem-cpirho}
The coefficient $c_{\pi,\rho}(\mathcal{K})$ given by formula
(\ref{formula-T0}) satisfies the following strong integrality
\begin{align}
c_{\pi,\rho}(\mathcal{K})\in
q^{\epsilon_1}a^{\epsilon_2}\mathbb{Z}[q^{\pm 2},a^{\pm 2}],
\end{align}
where $\epsilon_1,\epsilon_2\in \{0,1\}$ are determined by
\begin{align}
l(\pi)+l(\rho)&\equiv \epsilon_1 \mod 2\\\nonumber
(n+m)w(\mathcal{K})&\equiv \epsilon_2 \mod 2.
\end{align}
\end{theorem}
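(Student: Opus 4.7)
The proof will be essentially a merging of Lemma \ref{Lemma-z} and Lemma \ref{Lemma-a}, combined with a small change-of-variables observation converting powers of $z = q - q^{-1}$ into the $q^{\pm 2}$ grading. Since both lemmas are already in hand, the argument should be short.

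The plan is as follows. First, I would apply Lemma \ref{Lemma-z} to write $c_{\pi,\rho}(\mathcal{K})$ as an element of $\mathbb{Z}[a^{\pm 1},z]$ whose monomials all have $z$-degree of a single parity, namely the parity of $l(\pi)+l(\rho)$. Next, I would apply Lemma \ref{Lemma-a} to conclude that in each such monomial the $a$-degree has parity equal to that of $(n+m)w(\mathcal{K})$. Thus every monomial of $c_{\pi,\rho}(\mathcal{K})$ has the form $c\,z^{k}a^{m}$ with $c\in\mathbb{Z}$, $k\equiv\epsilon_1\pmod 2$, and $m\equiv\epsilon_2\pmod 2$.

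The only remaining step is to rewrite such monomials in the ring $q^{\epsilon_1}a^{\epsilon_2}\mathbb{Z}[q^{\pm 2},a^{\pm 2}]$. For this I note the elementary identities
\begin{align}
z^{2} &= q^{2} - 2 + q^{-2} \;\in\; \mathbb{Z}[q^{\pm 2}], \\
z &= q - q^{-1} = q^{-1}(q^{2}-1) \;\in\; q\,\mathbb{Z}[q^{\pm 2}].
\end{align}
Hence $z^{k}\in q^{\epsilon_1}\mathbb{Z}[q^{\pm 2}]$ for every $k$ of parity $\epsilon_1$, and similarly $a^{m}\in a^{\epsilon_2}\mathbb{Z}[a^{\pm 2}]$ for every $m$ of parity $\epsilon_2$. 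Multiplying these inclusions together and summing over the monomials yields the claimed containment.

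There is no genuine obstacle left at this stage: the two substantive parities were already established in Lemmas \ref{Lemma-z} and \ref{Lemma-a} by induction on skein resolutions together with the bookkeeping identity $l(\pi)+l(c(\pi))\equiv n\pmod 2$ from Lemma \ref{Lemma-symmgroup}. The only thing I must be careful about is to make sure integrality of the coefficients $c$ is preserved by the change-of-variables — but since $z^{2}$ has integer coefficients in $q^{\pm 2}$, and multiplication by $q^{-1}$ incurs no denominator beyond shifting by an even power of $q$, the coefficient ring remains $\mathbb{Z}$ throughout. This completes the proof.
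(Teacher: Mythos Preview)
Your proposal is correct and matches the paper's approach exactly: the paper simply states that combining Lemma~\ref{Lemma-z} and Lemma~\ref{Lemma-a} yields the theorem. Your additional explicit remark that $z^2\in\mathbb{Z}[q^{\pm 2}]$ and $z\in q\,\mathbb{Z}[q^{\pm 2}]$ spells out a change-of-variables step that the paper leaves implicit, but this is routine and does not constitute a different route.
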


\subsection{Proof of the strong integrality theorem }
In this subsection, we apply the technique used in \cite{Mo07}
together with Proposition \ref{Proposition-ylambda} and the refined
coefficient Theorem \ref{Theorem-cpirho} to prove the following
strong integrality theorem.
\begin{theorem} \label{Theorem-strongintegrality}
For any framed knot $\mathcal{K}$, the normalized framed full
colored HOMFLY-PT invariant
\begin{align}
\mathcal{P}_{[\lambda,\mu]}(\mathcal{K};q,a)\in
a^{\epsilon}\mathbb{Z}[q^{\pm 2},a^{\pm 2}].
\end{align}
where $\epsilon\in \{0,1\}$  is determined by
\begin{align}
(|\lambda|+|\mu|)w(\mathcal{K})&\equiv \epsilon \mod 2.
\end{align}
\end{theorem}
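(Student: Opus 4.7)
The plan is to lift the eigenvalue equation $\mathcal{K}\star Q_{\lambda,\mu} = \mathcal{P}_{[\lambda,\mu]}(\mathcal{K};q,a)\,U\star Q_{\lambda,\mu}$ from the annular skein $\mathcal{C}$ up to the algebra $H_{n,m}(q,a)$, where $n=|\lambda|$ and $m=|\mu|$. There is an idempotent $y_{\lambda,\mu}\in H_{n,m}(q,a)$ whose closure is $Q_{\lambda,\mu}$, and the $1$-tangle $T(\mathcal{K})$ yields the $(n,m)$-parallel $T_{n,m}(\mathcal{K})\in H_{n,m}(q,a)$ satisfying
\begin{align*}
y_{\lambda,\mu}\cdot T_{n,m}(\mathcal{K})\cdot y_{\lambda,\mu} = \mathcal{P}_{[\lambda,\mu]}(\mathcal{K};q,a)\cdot y_{\lambda,\mu}.
\end{align*}
So the task is to read the arithmetic properties of the scalar on the right from a manageable expression for the left.

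First I would apply the decomposition $T_{n,m}(\mathcal{K}) = T^{(0)}_{n,m}(\mathcal{K})+T^{(1)}_{n,m}(\mathcal{K})$ from (\ref{formula-T0T1}). The term $T^{(1)}_{n,m}(\mathcal{K})$ consists of tangles carrying at least one top-to-top arc; sandwiching by $y_{\lambda,\mu}$ should annihilate all such tangles, because the antisymmetriser built into $y_{\lambda,\mu}$ forces any cap/cup between two like-oriented strands to vanish. This is the same mechanism Morton uses for $1$-tangle invariants in the purely positive case, and it reduces the computation to $y_{\lambda,\mu}\cdot T^{(0)}_{n,m}(\mathcal{K})\cdot y_{\lambda,\mu}$.

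Second, using the explicit form $T^{(0)}_{n,m}(\mathcal{K})=\sum_{\pi\in S_n,\rho\in S_m} c_{\pi,\rho}(\mathcal{K})\,\omega_{\pi}\otimes\omega_{\rho}^{*}$ from (\ref{formula-T0}) together with the factorisation of $y_{\lambda,\mu}$ through $y_{\lambda}$ and the $*$-image of $y_{\mu^{\vee}}$, Proposition \ref{Proposition-ylambda} applied to each tensor factor shows that $y_{\lambda,\mu}\,(\omega_{\pi}\otimes\omega_{\rho}^{*})\,y_{\lambda,\mu}$ is either zero or equals
\begin{align*}
(-1)^{l(\sigma_1)+l(\sigma_2)}\,q^{\,l(\pi)+l(\rho)-2l(\sigma_1)-2l(\sigma_2)}\,y_{\lambda,\mu},
\end{align*}
whose scalar coefficient lies in $q^{\,l(\pi)+l(\rho)}\mathbb{Z}[q^{\pm 2}]$. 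Combining this with Theorem \ref{Theorem-cpirho}, which places $c_{\pi,\rho}(\mathcal{K})$ in $q^{\,l(\pi)+l(\rho)}a^{(n+m)w(\mathcal{K})}\mathbb{Z}[q^{\pm 2},a^{\pm 2}]$, the contribution of each $(\pi,\rho)$ lies in $q^{\,2(l(\pi)+l(\rho))}a^{(|\lambda|+|\mu|)w(\mathcal{K})}\mathbb{Z}[q^{\pm 2},a^{\pm 2}] = a^{\epsilon}\mathbb{Z}[q^{\pm 2},a^{\pm 2}]$ as required. Summing over $(\pi,\rho)$ preserves the integrality and gives the theorem.

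The main obstacle will be the vanishing step for $y_{\lambda,\mu}\cdot T^{(1)}_{n,m}(\mathcal{K})\cdot y_{\lambda,\mu}$, together with the clean factorisation of $y_{\lambda,\mu}$ that feeds Proposition \ref{Proposition-ylambda}. Because $Q_{\lambda,\mu}$ is defined by the determinantal formula (\ref{formula-Qdet}) mixing strands of both orientations (with the Littlewood--Richardson expansion (\ref{formula-Qlambdamu}) relating it to $Q_{\rho,\emptyset}Q_{\emptyset,\nu}$), a direct cap-off need not annihilate $T^{(1)}_{n,m}(\mathcal{K})$ on the nose. I expect to handle this by induction on $|\lambda|+|\mu|$: turn-backs reduce $T^{(1)}_{n,m}(\mathcal{K})$ into $H_{n-1,m-1}(q,a)$, where decorating the link reduces to decorating $\mathcal{K}$ by $Q_{\rho,\nu}$ with strictly smaller content, and the integrality of those scalars — already guaranteed inductively — transfers back to the original via (\ref{formula-Qlambdamu}). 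The combinatorial parity matches because $|\rho|+|\nu|$ and $|\lambda|+|\mu|$ differ by an even amount in each reduction step, so the exponent $\epsilon$ is preserved.
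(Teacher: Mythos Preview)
Your proposal has a genuine gap in the treatment of the turn-back term $T^{(1)}_{n,m}(\mathcal{K})$, and it relies on an idempotent $y_{\lambda,\mu}\in H_{n,m}(q,a)$ whose existence and properties you have not established from the paper's material. Your stated reason for the vanishing of $y_{\lambda,\mu}\,T^{(1)}\,y_{\lambda,\mu}$ is incorrect: the top-to-top arcs in a $(n,m)$-tangle connect one of the $n$ downward strands to one of the $m$ upward strands, i.e.\ they join \emph{oppositely} oriented strands, not like-oriented ones. So the ``antisymmetriser kills caps'' mechanism from the purely positive case does not apply here. Mixed idempotents with a genuine turn-back annihilation property do exist (Blanchet, Kosuda--Murakami), but that is outside the skein toolkit developed in this paper, and your fallback induction does not clearly close: if $y_{\lambda,\mu}$ is minimal then $y_{\lambda,\mu}\,T^{(1)}\,y_{\lambda,\mu}$ is already a scalar multiple of $y_{\lambda,\mu}$, and it is unclear how reducing to smaller $Q_{\rho,\nu}$ controls that scalar.

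The paper sidesteps all of this by a filtration trick. It never constructs $y_{\lambda,\mu}$; it works instead with the product idempotent $y_{\lambda}\otimes y_{\mu}^{*}$, whose closure is $Q_{\lambda}Q_{\mu}^{*}$. By Hadji--Morton one has $Q_{\lambda,\mu}=Q_{\lambda}Q_{\mu}^{*}+W$ with $W\in\mathcal{C}^{(n-1,m-1)}$, so applying $T_{\mathcal{K}}$ gives
\[
T_{\mathcal{K}}(Q_{\lambda}Q_{\mu}^{*})=t(\lambda,\mu)\,Q_{\lambda}Q_{\mu}^{*}+V,\qquad V\in\mathcal{C}^{(n-1,m-1)}.
\]
Now compute $T_{\mathcal{K}}(Q_{\lambda}Q_{\mu}^{*})$ as the closure of $(y_{\lambda}\otimes y_{\mu}^{*})\,T_{n,m}(\mathcal{K})$. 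The key observation is that the closure of $(y_{\lambda}\otimes y_{\mu}^{*})\,T^{(1)}_{n,m}(\mathcal{K})$ automatically lands in $\mathcal{C}^{(n-1,m-1)}$, because every basis tangle in $T^{(1)}$ has a turn-back and hence closes to a diagram with fewer essential strands. There is no need for it to vanish. Comparing the $Q_{\lambda}Q_{\mu}^{*}$-coefficient modulo $\mathcal{C}^{(n-1,m-1)}$ then gives
\[
t(\lambda,\mu)=\sum_{\pi\in S_n,\ \rho\in S_m}c_{\pi,\rho}(\mathcal{K})\,c(\pi,\lambda)\,c(\rho,\mu),
\]
where the closures of $y_{\lambda}\omega_{\pi}$ and $y_{\mu}^{*}\omega_{\rho}^{*}$ produce scalars $c(\pi,\lambda)\in q^{l(\pi)}\mathbb{Z}[q^{\pm 2}]$ and $c(\rho,\mu)\in q^{l(\rho)}\mathbb{Z}[q^{\pm 2}]$ by Proposition~\ref{Proposition-ylambda}. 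Combining with Theorem~\ref{Theorem-cpirho} gives the integrality exactly as in your second step, but without any annihilation argument or induction. The missing idea in your proposal is precisely this use of the filtration $\mathcal{C}^{(n,m)}\supset\mathcal{C}^{(n-1,m-1)}$ in place of a hard vanishing statement.
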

\begin{proof}
First, it is easy to see that the meridian map $\varphi$ commutes
with the map $T_{\mathcal{K}}$. The eigenvalues of $\varphi$ are
distinct and nonzero, then any eigenvector $Q_{\lambda,\mu}$ of
$\varphi$ is also an eigenvector of $T_{\mathcal{K}}$. Suppose
$t(\lambda,\mu)$ is the eigenvalue of $T_{\mathcal{K}}$ for its
eigenvector $Q_{\lambda,\mu}$,
  By formula (\ref{formula-tkqa}), we only need to prove the strong integrality for
  $t(\lambda,\mu)$.

For $Q_{\lambda}\in \mathcal{C}_{|\lambda|,0}$ and $Q_{\mu}^*\in
\mathcal{C}_{0,|\mu|}$, their product $Q_{\lambda}Q_{\mu}^*\in
\mathcal{C}_{|\lambda|,|\mu|}$.  It is shown in \cite{HaMo06} that $
Q_{\lambda,\mu}=Q_{\lambda}Q_{\mu}^*+W, $ where $W\in
\mathcal{C}_{|\lambda|-1,|\mu|-1}$. Since $
T_{\mathcal{K}}(Q_{\lambda,\mu})=t(\lambda,\mu)Q_{\lambda,\mu}, $
then
\begin{align} \label{formula-TKQ}
T_{\mathcal{K}}(Q_{\lambda}Q_{\mu}^*)=t(\lambda,\mu)Q_{\lambda}Q_{\mu}^*+V
\end{align}
where $V\in \mathcal{C}_{|\lambda|-1,|\mu|-1}$.

One can express $T_{\mathcal{K}}(Q_{\lambda}Q_{\mu}^*)$ as the
closure of the element $(y_{\lambda}\otimes
y_{\mu}^*)T_{n,m}(\mathcal{K})$ in $H_{n,m}(q,a)$.  By formula
(\ref{formula-T0T1}),
\begin{align}
(y_{\lambda}\otimes
y_{\mu}^*)T_{n,m}(\mathcal{K})=(y_{\lambda}\otimes
y_{\mu}^*)T_{n,m}^{(0)}(\mathcal{K})+(y_{\lambda}\otimes
y_{\mu}^*)T_{n,m}^{(1)}(\mathcal{K}).
\end{align}

Clearly, the closure of $(y_{\lambda}\otimes
y_{\mu}^*)T_{n,m}^{(1)}(\mathcal{K})$ denoted by $V'$ belongs to
$\mathcal{C}_{n-1,m-1}$, and
\begin{align}
(y_{\lambda}\otimes
y_{\mu}^*)T_{n,m}^{(0)}(\mathcal{K})=\sum_{\pi\in S_n, \rho\in
S_m}c_{\pi,\rho}(\mathcal{K})(y_\lambda \omega_{\pi}\otimes
y_{\mu}^*\omega_{\rho}^*).
\end{align}

 Clearly, the closure of
$y_\lambda \omega_{\pi}$ and $y_{\mu}^*\omega_{\rho}^*$ are equal to
$c(\pi,\lambda)Q_{\lambda}$ and $c(\rho,\mu)Q_{\mu}^*$.  By
Proposition \ref{Proposition-ylambda}, we have
\begin{align}
c(\pi,\lambda)&\in q^{l(\pi)}\mathbb{Z}[q^2], \label{formula-cpilambda} \\
c(\rho,\mu)&\in q^{l(\rho)}\mathbb{Z}[q^2].  \label{formula-crhomu}
\end{align}

Therefore, the closure of $(y_{\lambda}\otimes
y_{\mu}^*)T_{n,m}^{(0)}(\mathcal{K})$ is equal to
$C(\lambda,\mu)Q_{\lambda}Q_{\mu}^*$, with
\begin{align}
C(\lambda,\mu)=\sum_{\pi \in S_n,\rho\in
S_m}c_{\pi,\rho}(\mathcal{K})c(\pi,\lambda)c(\rho,\mu).
\end{align}
Hence we have
\begin{align}
T_{\mathcal{K}}(Q_{\lambda}Q_{\mu}^*)=C(\lambda,\mu)Q_{\lambda}Q_{\mu}^*+V',
\end{align}
where $V'\in \mathcal{C}_{n-1,m-1}$.

Therefore, comparing with formula (\ref{formula-TKQ}),
$C(\lambda,\mu)=t(\lambda,\mu)$ is the normalized framed full
colored HOMFLY-PT invariant. Applying Theorem \ref{Theorem-cpirho}
for the coefficients $c_{\pi,\rho}(\mathcal{K})$ and formulas
(\ref{formula-cpilambda}) and (\ref{formula-crhomu}), we obtain
\begin{align}
C(\lambda,\mu)\in a^{\epsilon}\mathbb{Z}[q^{\pm 2},a^{\pm 2}].
\end{align}
where $\epsilon\in \{0,1\}$ is determined by
\begin{align}
(|\lambda|+|\mu|)w(\mathcal{K})&\equiv \epsilon \mod 2.
\end{align}
\end{proof}

\begin{remark}
In \cite{Le00}, T. Le proved the strong integrality for the
normalized quantum group invariant of a knot $\mathcal{K}$ colored
by an irreducible module over any simple Lie algebra $\mathfrak{g}$
via quantum group theory.  It is easy to see Theorem
\ref{Theorem-strongintegrality} implies Le's strong integrality when
$\mathfrak{g}$ is the Lie algebra $sl_{N}\mathbb{C}$.
\end{remark}

\begin{corollary} For any framed knot $\mathcal{K}$, we have
\begin{align}
\mathcal{H}_{[\lambda,\mu]}(\mathcal{K};-q,a)&=(-1)^{(|\lambda|+|\mu|)}\mathcal{H}_{[\lambda,\mu]}(\mathcal{K};q,a)\label{formula-qsym}\\
\mathcal{H}_{[\lambda,\mu]}(\mathcal{K};q,-a)&=(-1)^{(|\lambda|+|\mu|)(w(\mathcal{K})+1)}\mathcal{H}_{[\lambda,\mu]}(\mathcal{K};q,a).
\end{align}
\end{corollary}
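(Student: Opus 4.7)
The plan is to reduce both symmetries to (i) the strong integrality statement of Theorem \ref{Theorem-strongintegrality} applied to the knot factor $\mathcal{P}_{[\lambda,\mu]}(\mathcal{K};q,a)$, and (ii) the analogous symmetries for the unknot factor $\mathcal{H}_{[\lambda,\mu]}(U;q,a)$, joined by the factorization $\mathcal{H}_{[\lambda,\mu]}(\mathcal{K};q,a)=\mathcal{P}_{[\lambda,\mu]}(\mathcal{K};q,a)\,\mathcal{H}_{[\lambda,\mu]}(U;q,a)$ recorded in (\ref{formula-Hlambdamu}). Once both pieces are in hand, the two formulas of the corollary drop out by multiplication.

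First I would exploit Theorem \ref{Theorem-strongintegrality}: since $\mathcal{P}_{[\lambda,\mu]}(\mathcal{K};q,a)\in a^{\epsilon}\mathbb{Z}[q^{\pm 2},a^{\pm 2}]$ with $\epsilon\equiv(|\lambda|+|\mu|)w(\mathcal{K})\pmod 2$, only even powers of $q$ occur, so $\mathcal{P}_{[\lambda,\mu]}(\mathcal{K};-q,a)=\mathcal{P}_{[\lambda,\mu]}(\mathcal{K};q,a)$, and only the overall prefactor $a^{\epsilon}$ is sensitive to $a\mapsto -a$, so $\mathcal{P}_{[\lambda,\mu]}(\mathcal{K};q,-a)=(-1)^{(|\lambda|+|\mu|)w(\mathcal{K})}\mathcal{P}_{[\lambda,\mu]}(\mathcal{K};q,a)$. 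Both are essentially free consequences of the integrality theorem.

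Next I would verify the corresponding symmetries for the unknot by combining the hook-content formula (\ref{formula-unknot1}) with the expansion (\ref{formula-unknot2})
\[
\mathcal{H}_{[\lambda,\mu]}(U;q,a)=\sum_{\sigma,\rho,\nu}(-1)^{|\sigma|}c_{\sigma,\rho}^{\lambda}c_{\sigma^t,\nu}^{\mu}W_{\rho}(U;q,a)W_{\nu}(U;q,a).
\]
Under $a\mapsto -a$ each factor $aq^{cn(x)}-a^{-1}q^{-cn(x)}$ flips sign while the denominator $q^{h(x)}-q^{-h(x)}$ is unchanged, so $W_{\rho}(U;q,-a)=(-1)^{|\rho|}W_{\rho}(U;q,a)$. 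Under $q\mapsto -q$, one extracts a net factor $(-1)^{\sum_{x\in\rho}(cn(x)+h(x))}$; writing $cn(x)+h(x)=\lambda_i+\lambda_j^{\vee}-2i+1$ and reducing modulo $2$ (using $n^2\equiv n\pmod 2$) collapses this exponent to $|\rho|\pmod 2$, giving $W_{\rho}(U;-q,a)=(-1)^{|\rho|}W_{\rho}(U;q,a)$. Because $|\rho|+|\nu|=|\lambda|+|\mu|-2|\sigma|\equiv|\lambda|+|\mu|\pmod 2$, both signs propagate uniformly through the sum, so
\[
\mathcal{H}_{[\lambda,\mu]}(U;-q,a)=(-1)^{|\lambda|+|\mu|}\mathcal{H}_{[\lambda,\mu]}(U;q,a),\quad \mathcal{H}_{[\lambda,\mu]}(U;q,-a)=(-1)^{|\lambda|+|\mu|}\mathcal{H}_{[\lambda,\mu]}(U;q,a).
\]

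Multiplying the two factors, the first symmetry follows with overall sign $(-1)^{|\lambda|+|\mu|}$, and the second with sign $(-1)^{(|\lambda|+|\mu|)w(\mathcal{K})}\cdot(-1)^{|\lambda|+|\mu|}=(-1)^{(|\lambda|+|\mu|)(w(\mathcal{K})+1)}$, exactly matching the two stated formulas. The only genuinely delicate step is the parity identity $\sum_{x\in\rho}(cn(x)+h(x))\equiv|\rho|\pmod 2$, which I expect to be the main (though short) calculation; everything else is either a direct invocation of Theorem \ref{Theorem-strongintegrality} or the assembly of signs from the hook-content and Littlewood--Richardson expansions.
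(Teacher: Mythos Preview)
Your proposal is correct and follows essentially the same route as the paper: factor via (\ref{formula-Hlambdamu}), read off the sign behavior of $\mathcal{P}_{[\lambda,\mu]}$ from Theorem \ref{Theorem-strongintegrality}, and establish the unknot symmetries from (\ref{formula-unknot1}) and (\ref{formula-unknot2}) using the parity identity $\sum_{x\in\rho}(cn(x)+h(x))\equiv|\rho|\pmod 2$. The paper records exactly this identity (written as $(-1)^{|\lambda|}=(-1)^{\sum_{x\in\lambda}(hl(x)+cn(x))}$) as the only nontrivial step, just as you anticipated.
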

\begin{proof}
Using the expression for the colored HOMFLY-PT invariant of unknot
(\ref{formula-unknot1}), it is straightforward to obtain
\begin{align}
W_{\lambda}(U;q,-a)&=(-1)^{|\lambda|}W_{\lambda}(U;q,a), \\
W_{\lambda}(U;-q,a)&=(-1)^{|\lambda|}W_{\lambda}(U;q,a),
\end{align}
where in the second identity, we need to use the following identity
for the partition $\lambda$:
\begin{align}
(-1)^{|\lambda|}=(-1)^{\sum_{x\in \lambda}(hl(x)+cn(x))}.
\end{align}

Then, by formula (\ref{formula-unknot2}),
\begin{align}
\mathcal{H}_{[\lambda,\mu]}(U;q,-a)&=(-1)^{|\lambda|+|\mu|}\mathcal{H}_{[\lambda,\mu]}(U;q,a), \\
\mathcal{H}_{[\lambda,\mu]}(U;-q,a)&=(-1)^{|\lambda|+|\mu|}\mathcal{H}_{[\lambda,\mu]}(U;q,a).
\end{align}

Finally, combining Theorem \ref{Theorem-strongintegrality} and
formula (\ref{formula-Hlambdamu}) together, we complete the proof.
\end{proof}

\begin{corollary}
For any framed knot $\mathcal{K}$, we have
\begin{align}
\mathcal{H}_{[\lambda,\mu]}(\mathcal{K};q^{-1},a)=(-1)^{(|\lambda|+|\mu|)}\mathcal{H}_{[\lambda^\vee,\mu^\vee]}(\mathcal{K};q,a)
\end{align}
\end{corollary}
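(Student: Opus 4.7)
The plan is to use the factorization $\mathcal{H}_{[\lambda,\mu]}(\mathcal{K};q,a)=\mathcal{P}_{[\lambda,\mu]}(\mathcal{K};q,a)\cdot\mathcal{H}_{[\lambda,\mu]}(U;q,a)$ from \eqref{formula-Hlambdamu} and to handle the unknot factor and the normalized factor separately, mirroring the pattern used in the two preceding corollaries.

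For the unknot factor, the direct computation starts from \eqref{formula-unknot1}: using the transpose identities $cn_{\nu^\vee}(x)=-cn_\nu(x)$ and $h_{\nu^\vee}(x)=h_\nu(x)$, one obtains $W_\nu(U;q^{-1},a)=(-1)^{|\nu|}W_{\nu^\vee}(U;q,a)$, where the sign records the $|\nu|$ hook denominators $q^{h(x)}-q^{-h(x)}$ flipping sign. Substituting into \eqref{formula-unknot2}, reindexing via the Littlewood--Richardson transpose symmetries $c_{\sigma,\rho}^{\lambda}=c_{\sigma^\vee,\rho^\vee}^{\lambda^\vee}$ and $c_{\sigma^t,\nu}^{\mu}=c_{\sigma,\nu^\vee}^{\mu^\vee}$, and collecting signs via $|\sigma|+|\rho|=|\lambda|$, $|\sigma|+|\nu|=|\mu|$, produces
\[
\mathcal{H}_{[\lambda,\mu]}(U;q^{-1},a) = (-1)^{|\lambda|+|\mu|}\mathcal{H}_{[\lambda^\vee,\mu^\vee]}(U;q,a).
\]

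For the normalized factor, the key observation is that the auxiliary substitution $q\to -q^{-1}$ fixes both $z=q-q^{-1}$ and $a$, hence preserves the HOMFLY-PT skein relations and induces a well-defined $R$-semilinear automorphism $\Phi$ of $\mathcal{C}$ that acts as the identity on framed link diagrams. Thus $\Phi$ commutes with the topologically defined operators $\varphi$ and $T_\mathcal{K}$. A direct check from \eqref{formula-tlambdamu} gives $t_{\lambda,\mu}(-q^{-1},a)=t_{\lambda^\vee,\mu^\vee}(q,a)$, so $\Phi(Q_{\lambda,\mu})$ is a nonzero $\varphi$-eigenvector in $\mathcal{C}^{(|\lambda|,|\mu|)}$ with eigenvalue $t_{\lambda^\vee,\mu^\vee}(q,a)$; by one-dimensionality of the $\varphi$-eigenspaces it must equal a nonzero scalar multiple $c\cdot Q_{\lambda^\vee,\mu^\vee}(q,a)$. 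Applying $\Phi$ to $T_\mathcal{K}(Q_{\lambda,\mu})=\mathcal{P}_{[\lambda,\mu]}(\mathcal{K};q,a)Q_{\lambda,\mu}$ and cancelling $c$ from both sides of the resulting equivariance equation gives $\mathcal{P}_{[\lambda,\mu]}(\mathcal{K};-q^{-1},a)=\mathcal{P}_{[\lambda^\vee,\mu^\vee]}(\mathcal{K};q,a)$. Strong integrality (Theorem~\ref{Theorem-strongintegrality}) places $\mathcal{P}_{[\lambda,\mu]}(\mathcal{K};q,a)$ in $a^\epsilon\mathbb{Z}[q^{\pm 2},a^{\pm 2}]$, so it is even in $q$ and the sign in $-q^{-1}$ may be dropped, upgrading the identity to $\mathcal{P}_{[\lambda,\mu]}(\mathcal{K};q^{-1},a)=\mathcal{P}_{[\lambda^\vee,\mu^\vee]}(\mathcal{K};q,a)$. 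Multiplying the two factor identities then gives the stated formula.

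The main subtlety is in the $\Phi$-argument: one must carefully verify that $q\to -q^{-1}$ genuinely preserves the HOMFLY-PT skein relations (which hinges on $(q-q^{-1})\big|_{q\to -q^{-1}}=q-q^{-1}$ and on the framing relation $L^{+1}=aL$ being untouched), and then check that the proportionality constant $c$ between $\Phi(Q_{\lambda,\mu})$ and $Q_{\lambda^\vee,\mu^\vee}$ cancels cleanly in the eigenvalue comparison (it does, because $\Phi$ is an automorphism and $T_\mathcal{K}$ is $R$-linear). The detour through $-q^{-1}$ rather than the naive $q^{-1}$ is forced precisely because the latter flips $z$ and does not descend to a map on $\mathcal{C}$; this is the step at which Theorem~\ref{Theorem-strongintegrality} becomes indispensable.
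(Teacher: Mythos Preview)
Your proof is correct, but it takes a longer route than the paper's. Both arguments hinge on the same key substitution $q\mapsto -q^{-1}$ and both ultimately invoke the strong integrality result (you directly, the paper via formula~\eqref{formula-qsym}) to pass from $-q^{-1}$ to $q^{-1}$. The difference is in how the $-q^{-1}$ identity is established. The paper observes in one line, from the explicit determinant formula~\eqref{formula-Qdet}, that $Q_{\lambda,\mu}\big|_{q\to -q^{-1}}=Q_{\lambda^\vee,\mu^\vee}$ exactly (the proportionality constant is $1$), which immediately gives $\mathcal{H}_{[\lambda,\mu]}(\mathcal{K};-q^{-1},a)=\mathcal{H}_{[\lambda^\vee,\mu^\vee]}(\mathcal{K};q,a)$ for any $\mathcal{K}$ in a single stroke. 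You instead factor $\mathcal{H}_{[\lambda,\mu]}=\mathcal{P}_{[\lambda,\mu]}\cdot\mathcal{H}_{[\lambda,\mu]}(U)$ and treat the two pieces separately: a direct Littlewood--Richardson computation for the unknot, and a meridian-eigenspace argument (yielding only $\Phi(Q_{\lambda,\mu})=c\,Q_{\lambda^\vee,\mu^\vee}$ for some unknown $c$, which then cancels) for the normalized factor. Your approach is more conceptual and avoids needing the explicit Jacobi--Trudi-type determinant, but the paper's shortcut via~\eqref{formula-Qdet} is considerably more economical and in fact pins down your constant $c$ as $1$.
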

\begin{proof}
By using the formula (\ref{formula-Qdet})  for $Q_{\lambda,\mu}$,
\begin{align}
Q_{\lambda,\mu}|_{q\rightarrow -q^{-1}}=Q_{\lambda^\vee,\mu^\vee},
\end{align}
It follows that
\begin{align}
\mathcal{H}_{[\lambda,\mu]}(\mathcal{K};-q^{-1},a)=\mathcal{H}_{[\lambda^\vee,\mu^\vee]}(\mathcal{K};q,a).
\end{align}
Then, combining with formula (\ref{formula-qsym}), we complete the
proof.
\end{proof}

All the above formulas can be generalized to the case of link.
\begin{theorem} \label{Theorem-linksym}
For any framed link $\mathcal{L}$, we have
\begin{align}
\mathcal{H}_{[\vec{\lambda},\vec{\mu}]}(\mathcal{L};-q,a)&=(-1)^{\sum_{\alpha=1}^L(|\lambda^{\alpha}|+|\mu^{\alpha}|)}
\mathcal{H}_{[\vec{\lambda},\vec{\mu}]}(\mathcal{L};q,a)\\
\mathcal{H}_{[\vec{\lambda},\vec{\mu}]}(\mathcal{L};q,-a)&=(-1)^{\sum_{\alpha=1}^L(|\lambda^{\alpha}|+|\mu^{\alpha}|)(w(\mathcal{K}_{\alpha})+1)}
\mathcal{H}_{[\vec{\lambda},\vec{\mu}]}(\mathcal{L};q,a)\\
\mathcal{H}_{[\vec{\lambda},\vec{\mu}]}(\mathcal{L};q^{-1},a)&=(-1)^{\sum_{\alpha=1}^L(|\lambda^{\alpha}|+|\mu^{\alpha}|)}
\mathcal{H}_{[\vec{\lambda}^\vee,\vec{\mu}^\vee]}(\mathcal{L};q,a).
\end{align}
\end{theorem}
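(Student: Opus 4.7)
The plan is to extend, for each of the three symmetries, the argument used in the two corollaries that follow Theorem~\ref{Theorem-strongintegrality}, by iterating that argument once per component of $\mathcal{L}$ and using Theorem~\ref{Theorem-strongintegral2} in place of Theorem~\ref{Theorem-strongintegrality}. The first step is to produce a multi-component analogue of the factorisation
\[
\mathcal{H}_{[\lambda,\mu]}(\mathcal{K};q,a)=\mathcal{P}_{[\lambda,\mu]}(\mathcal{K};q,a)\,\mathcal{H}_{[\lambda,\mu]}(U;q,a)
\]
from the knot case. Peeling off one component at a time using Theorem~\ref{Theorem-strongintegral2}, I would extract, for each $\alpha=1,\dots,L$, a factor $\mathcal{H}_{[\lambda^\alpha,\mu^\alpha]}(U;q,a)$ and be left with a residual factor lying in the strong integrality ring $a^{\epsilon}\mathbb{Z}[q^{\pm 2},a^{\pm 2}]$. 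Carrying this out rigorously requires showing that the proof of Theorem~\ref{Theorem-strongintegral2} (via the refined coefficient Theorem~\ref{Theorem-cpirho} and the idempotent identity Proposition~\ref{Proposition-ylambda}) goes through when all components are decorated simultaneously rather than only one; this amounts to checking that Lemmas~\ref{Lemma-z}, \ref{Lemma-symmgroup} and \ref{Lemma-a} extend verbatim to the $(n_1+\cdots+n_L,m_1+\cdots+m_L)$-cable of $\mathcal{L}$.

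Once such a factorisation is in hand, the $q\to-q$ identity is almost immediate: each factor $\mathcal{H}_{[\lambda^\alpha,\mu^\alpha]}(U;q,a)$ contributes the sign $(-1)^{|\lambda^\alpha|+|\mu^\alpha|}$ by \eqref{formula-unknot2} combined with the one-variable identities for $W_{\lambda^\alpha}(U;-q,a)$ already established in the knot corollary, while the residual factor, being even in $q$ (and having its overall $a^\epsilon$ unaffected by $q\to-q$), is preserved. Multiplying the $L$ unknot contributions gives $(-1)^{\sum_\alpha(|\lambda^\alpha|+|\mu^\alpha|)}$, as required. The $a\to-a$ identity is handled analogously: the $L$ unknot factors again contribute $(-1)^{|\lambda^\alpha|+|\mu^\alpha|}$ each, and the residual $a^\epsilon$-prefactor contributes an additional sign $(-1)^\epsilon$; a parity computation using the explicit form of $\epsilon$ from Theorem~\ref{Theorem-strongintegral2} then has to reconcile the total sign with the target exponent $\sum_\alpha(|\lambda^\alpha|+|\mu^\alpha|)(w(\mathcal{K}_\alpha)+1) \bmod 2$.

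For the $q\to q^{-1}$ identity, I would work directly at the level of the decorating patterns: from the determinantal expression~\eqref{formula-Qdet} together with the well-known behaviour of the complete symmetric functions $h_m$ under $q\mapsto -q^{-1}$, a short check gives $Q_{\lambda,\mu}|_{q\to -q^{-1}}=Q_{\lambda^\vee,\mu^\vee}$. Applied simultaneously to all $L$ decorations, this yields $\mathcal{H}_{[\vec\lambda,\vec\mu]}(\mathcal{L};-q^{-1},a)=\mathcal{H}_{[\vec\lambda^\vee,\vec\mu^\vee]}(\mathcal{L};q,a)$, and the stated identity follows by composing with the already-proved $q\to -q$ identity.

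The main obstacle is the sign bookkeeping in the $a\to-a$ case. The single-component parity $\epsilon_\alpha$ furnished by Theorem~\ref{Theorem-strongintegral2} depends not only on $w(\mathcal{K}_\alpha)$ but also on $\sum_{\beta\neq\alpha}w(\mathcal{K}_\beta)$ and on $L$, so when the $L$ componentwise factorisations are assembled into a single multi-component statement, one must check that the cross-terms in the writhes collapse, modulo $2$, to the clean exponent $\sum_\alpha(|\lambda^\alpha|+|\mu^\alpha|)(w(\mathcal{K}_\alpha)+1)$. Everything else reduces to repeating the knot-case calculations $L$ times.
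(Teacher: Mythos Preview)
Your overall strategy---separate each component's unknot contribution (which carries the sign) from a residual controlled by strong integrality (even in $q$, fixed $a$-parity)---is the same as the paper's, and your treatment of the $q\mapsto q^{-1}$ identity via $Q_{\lambda,\mu}|_{q\to -q^{-1}}=Q_{\lambda^\vee,\mu^\vee}$ applied componentwise and then composed with the $q\mapsto -q$ identity is exactly what the paper does.

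The technical route to the factorisation is different. You propose to pull out all $L$ unknot factors $\mathcal{H}_{[\lambda^\alpha,\mu^\alpha]}(U)$ at once, which would require an extension of Theorem~\ref{Theorem-strongintegral2} to the situation where \emph{every} component carries a $Q_{\lambda,\mu}$ decoration; as you note, this means re-running the $(n,m)$-tangle argument on the full multi-cable and re-verifying Lemmas~\ref{Lemma-z}--\ref{Lemma-a} in that setting. The paper avoids this. It fixes \emph{one} component $\alpha$ and expands the remaining decorations $Q_{\lambda^\beta,\mu^\beta}$ ($\beta\neq\alpha$) in the power-sum-type basis $X_{\tau}X_{\delta}^*$ via \eqref{formula-Qlambdamu} and \eqref{formula-frobeniusQ}, thereby writing $\mathcal{H}_{[\vec\lambda,\vec\mu]}(\mathcal{L})$ as a sum of terms $C\cdot\prod_{\beta\neq\alpha}\{\tau^\beta\}^{-1}\{\delta^\beta\}^{-1}\cdot t_{\mathcal{K}_\alpha}^{\mathcal{L}'}(\lambda^\alpha,\mu^\alpha)\cdot\mathcal{H}_{[\lambda^\alpha,\mu^\alpha]}(U)$ where each $\mathcal{L}'$ is a genuine cabled link (formula~\eqref{formula-HLlamdamu}). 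The already-proved single-component integrality~\eqref{formula-tLK} then applies termwise, and the explicit $\epsilon$ computed for each $\mathcal{L}'$ (the formula just above~\eqref{formula-HLlamdamu}) together with the obvious parities of $\{\tau^\beta\}^{-1}\{\delta^\beta\}^{-1}$ under $q\mapsto -q$ yields the stated signs directly. This sidesteps both the need for a new multi-component integrality theorem and the $\epsilon$-assembly bookkeeping you flag as the main obstacle: in the paper's setup only one $\epsilon$ appears per summand, and it already has the target form.
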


Suppose $\mathcal{L}$ is a link with $L$ components
$\mathcal{K}_1,...,\mathcal{K}_L$, we denote it by
$\mathcal{L}=\mathcal{K}_1\bigvee \mathcal{K}_2\bigvee \cdots
\bigvee \mathcal{K}_L$. We cut the component $\mathcal{K}_\alpha$
open and get a 1-tangle, and we can draw $\mathcal{L}$ in the
annulus as the closure of this 1-tangle. Decorating
$\mathcal{K}_\alpha$ with a diagram $Q_{\alpha}$ gives a diagram
$\mathcal{K}_1\bigvee \cdots\bigvee \mathcal{K}_\alpha \star
Q_{\alpha}\bigvee \cdots \bigvee \mathcal{K}_L$ in $\mathcal{C}$, it
induces a linear map $T_{\mathcal{K}_\alpha}^{\mathcal{L}}:
\mathcal{C}\rightarrow \mathcal{C}$.

Similarly, the eigenvectors of
$T_{\mathcal{K}_\alpha}^{\mathcal{L}}$ are given by
$Q_{\lambda,\mu}$, and we denote the eigenvalue of
$T_{\mathcal{K}_\alpha}^{\mathcal{L}}$ corresponding to eigenvector
$Q_{\lambda,\mu}$ by
$t_{\mathcal{K}_\alpha}^{\mathcal{L}}(\lambda,\mu)$.

Then the identity
\begin{align}
\mathcal{K}_1\bigvee \cdots\bigvee \mathcal{K}_\alpha \star
Q_{\lambda,\mu}\bigvee \cdots \bigvee
\mathcal{K}_L&=T_{\mathcal{K}_\alpha}^{\mathcal{L}}(Q_{\lambda,\mu})\\\nonumber
&=t_{\mathcal{K}_\alpha}^{\mathcal{L}}(\lambda,\mu)Q_{\lambda,\mu}\\\nonumber
&=t_{\mathcal{K}_\alpha}^{\mathcal{L}} (\lambda,\mu)U\star
Q_{\lambda,\mu}
\end{align}
implies that
\begin{align}
t_{\mathcal{K}_\alpha}^{\mathcal{L}}(\lambda,\mu)=\frac{\mathcal{H}(\mathcal{K}_1\bigvee
\cdots\bigvee \mathcal{K}_\alpha \star Q_{\lambda,\mu}\bigvee \cdots
\bigvee \mathcal{K}_L;q,a)}{\mathcal{H}(U\star
Q_{\lambda,\mu};q,a)}.
\end{align}

With a slight modification of the proof of Theorem
\ref{Theorem-strongintegrality},  one can show that
\begin{proposition}
Under the above setting, we have
\begin{align} \label{formula-tLK}
t_{\mathcal{K}_\alpha}^{\mathcal{L}}(\lambda,\mu)\in
a^{\epsilon}\mathbb{Z}[q^{\pm 2},a^{\pm 2}].
\end{align}
where $\epsilon\in \{0,1\}$ is determined by
\begin{align}
(|\lambda|+|\mu|)w(\mathcal{K}_{\alpha})+\sum_{\beta\neq
\alpha}w(\mathcal{K}_\beta)+(L-1)&\equiv \epsilon \mod 2.
\end{align}
\end{proposition}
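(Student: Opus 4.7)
The plan is to adapt the proof of Theorem \ref{Theorem-strongintegrality} to the multi-component setting: the decorated component $\mathcal{K}_\alpha$ plays the role of the knot, while the remaining $L-1$ components are carried along as extra closed curves inside the tangle. First I would cut $\mathcal{K}_\alpha$ open into a 1-tangle, so that $\mathcal{L}$ is drawn in the annulus with the other components appearing as closed curves in the rectangle. Taking the $(n,m)$-parallel only of this 1-tangle (with $n=|\lambda|$, $m=|\mu|$) and leaving the other components untouched produces an element $T^{\mathcal{L}}_{n,m;\alpha}\in H_{n,m}(q,a)$. Running the resolution and reduction of Lemmas \ref{Lemma-z} and \ref{Lemma-a} verbatim gives an expansion
\[
T^{\mathcal{L}}_{n,m;\alpha}=\sum_{\pi\in S_n,\,\rho\in S_m} c_{\pi,\rho}(\mathcal{L},\alpha)\,(\omega_\pi\otimes\omega_\rho^*)+T^{(1)},
\]
with $T^{(1)}$ a combination of tangles in $S^{(1)}$. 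Composing with $y_\lambda\otimes y_\mu^*$ and closing, the $T^{(1)}$-piece lands in $\mathcal{C}_{n-1,m-1}$, while the $S^{(0)}$-piece is a scalar multiple of $Q_\lambda Q_\mu^*$ whose scalar is exactly $t^{\mathcal{L}}_{\mathcal{K}_\alpha}(\lambda,\mu)$. Thus the problem reduces to establishing the link analogue of Theorem \ref{Theorem-cpirho} for $c_{\pi,\rho}(\mathcal{L},\alpha)$ and then invoking Proposition \ref{Proposition-ylambda} on the Hecke idempotent side exactly as before.

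The $z$-parity argument of Lemma \ref{Lemma-z} is purely local and transfers without change, again yielding the parity constraint $l(\pi)+l(\rho)$ on the power of $z$ in $c_{\pi,\rho}(\mathcal{L},\alpha)$. The substantive modification lies in the $a$-parity of Lemma \ref{Lemma-a}, where the skein-invariant quantity $w+L+l\pmod 2$ is tracked. For the initial diagram $T^{\mathcal{L}}_{n,m;\alpha}$ the writhe splits as $(n+m)^2 w(\mathcal{K}_\alpha)$ from the parallelized component, $\sum_{\beta\neq\alpha}w(\mathcal{K}_\beta)$ from the other self-crossings, and cross-writhes of the form $(n-m)\cdot 2\,\mathrm{lk}(\mathcal{K}_\alpha,\mathcal{K}_\beta)$ and $2\,\mathrm{lk}(\mathcal{K}_\beta,\mathcal{K}_\gamma)$, all of which are even. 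The tangle also carries $L-1$ genuine closed components and closes to $n+m$ loops from the parallelized $\mathcal{K}_\alpha$. Matching against the final reduced form $(\omega_\pi\otimes\omega_\rho^*)^{\alpha'}\otimes U^{\otimes\beta'}\otimes P^{\otimes\gamma'}$, whose $a$-exponent has parity $\alpha'+\beta'$, and applying Lemma \ref{Lemma-symmgroup} to reduce $l(\pi)+l(c(\pi))+l(\rho)+l(c(\rho))\equiv n+m\pmod 2$, I would conclude that $\alpha'+\beta'\equiv (|\lambda|+|\mu|)w(\mathcal{K}_\alpha)+\sum_{\beta\neq\alpha}w(\mathcal{K}_\beta)+(L-1)\pmod 2$, which matches the claimed $\epsilon$.

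The main obstacle is the careful bookkeeping of this $a$-parity: verifying that every cross-writhe between distinct components really drops out modulo $2$, and that the $L-1$ extra closed components contribute exactly the summand $L-1$ in $\epsilon$ through the unknot-removal scalar $s=(a-a^{-1})/(q-q^{-1})$. Once this link analogue of Theorem \ref{Theorem-cpirho} is in hand, combining with the bounds $c(\pi,\lambda)\in q^{l(\pi)}\mathbb{Z}[q^2]$ and $c(\rho,\mu)\in q^{l(\rho)}\mathbb{Z}[q^2]$ from Proposition \ref{Proposition-ylambda}, and summing over $\pi,\rho$, yields $t^{\mathcal{L}}_{\mathcal{K}_\alpha}(\lambda,\mu)\in a^{\epsilon}\mathbb{Z}[q^{\pm 2},a^{\pm 2}]$ with the stated $\epsilon$.
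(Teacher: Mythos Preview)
Your proposal is correct and is precisely the ``slight modification of the proof of Theorem~\ref{Theorem-strongintegrality}'' that the paper invokes without detail. The paper does not spell out the bookkeeping, but the structure you outline---cutting $\mathcal{K}_\alpha$ open, carrying the other $L-1$ components as closed curves inside the tangle $T^{\mathcal{L}}_{n,m;\alpha}\in H_{n,m}(q,a)$, and then rerunning Lemmas~\ref{Lemma-z}, \ref{Lemma-a} and Proposition~\ref{Proposition-ylambda}---is exactly the intended route, and your parity accounting (even cross-writhes, the extra $L-1$ from the closed components, and Lemma~\ref{Lemma-symmgroup} to cancel the $n+m$) recovers the stated $\epsilon$.
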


By ormulas (\ref{formula-Qlambdamu}) and (\ref{formula-frobeniusQ}),
we obtain
\begin{align}
Q_{\lambda,\mu}&=\sum_{\sigma,\rho,\nu}(-1)^{|\sigma|}c_{\sigma,\rho}^{\lambda}c_{\sigma^t,\nu}^{\mu}Q_{\rho}
Q_{\nu}^*\\\nonumber
&=\sum_{\sigma,\rho,\nu,\tau,\delta}(-1)^{|\sigma|}c_{\sigma,\rho}^{\lambda}c_{\sigma^t,\nu}^{\mu}\frac{\chi_{\rho}(\tau)}{\mathfrak{z}_\tau}
\frac{\chi_{\nu}(\delta)}{\mathfrak{z}_\delta}\frac{1}{\{\tau\}\{\delta\}}X_{\tau}X_{\delta}^*.
\end{align}
Therefore,
\begin{align}
\mathcal{\mathcal{L}}\star \otimes_{\beta=1}^{L}
Q_{\lambda^\alpha,\mu^\alpha}& =\mathcal{K}_1\star
Q_{\lambda^1,\mu^1}\bigvee \cdots\bigvee \mathcal{K}_\alpha \star
Q_{\lambda^\alpha,\mu^\alpha}\bigvee \cdots \bigvee
\mathcal{K}_L\star Q_{\lambda^L,\mu^L}\\\nonumber
&=\sum_{\tau^\beta,\delta^\beta, \beta\neq
\alpha}C_{\tau^1,\delta^1,..,\hat{\tau^\alpha},\hat{\delta^\alpha},..,\tau^L,\delta^L}
\prod_{\beta=1,\beta\neq
\alpha}\frac{1}{\{\tau^\beta\}\{\delta^\beta\}}\\\nonumber
&\cdot\mathcal{K}_1\star X_{\tau^1}X^*_{\delta^1}\bigvee
\cdots\bigvee \mathcal{K}_\alpha \star
Q_{\lambda^\alpha,\mu^\alpha}\bigvee \cdots \bigvee
\mathcal{K}_L\star X_{\tau^L}X^{*}_{\delta^L},
\end{align}
where
\begin{align}
C_{\tau^1,\delta^1,..,\hat{\tau^\alpha},\hat{\delta^\alpha},..,\tau^L,\delta^L}=
\sum_{\sigma^\beta,\rho^\beta,\nu^\beta,\tau^\beta,\delta^\beta,\beta\neq
\alpha}\prod_{\beta,\beta\neq
\alpha}(-1)^{|\sigma^\beta|}c_{\sigma^\beta,\rho^\beta}^{\lambda^\beta}c_{(\sigma^\beta)^t,\nu^\beta}^{\mu^\beta}
\frac{\chi_{\rho^\beta}(\tau^\beta)}{\mathfrak{z}_{\tau^\beta}}
\frac{\chi_{\nu^\beta}(\delta^\beta)}{\mathfrak{z}_{\delta^\beta}},
\end{align}
and $\hat{\tau^\alpha}, \hat{\delta^\alpha}$ denote the indexes
$\tau^\alpha, \delta^\alpha$ do not appear in the summation.

We denote the link
$\mathcal{L}_{\tau^1,\delta^1,..,\hat{\tau^\alpha},\hat{\delta^\alpha},..,\tau^L,\delta^L}=\mathcal{K}_1\star
X_{\tau^1}X^*_{\delta^1}\bigvee \cdots\bigvee \mathcal{K}_\alpha
\bigvee \cdots \bigvee \mathcal{K}_L\star
X_{\tau^L}X^{*}_{\delta^L}$. By formula (\ref{formula-tLK}) and a
careful computations of the writhe numbers for this link, we obtain
\begin{align}
t_{\mathcal{K}_\alpha}^{
\mathcal{L}_{\tau^1,\delta^1,..,\hat{\tau^\alpha},\hat{\delta^\alpha},..,\tau^L,\delta^L}}(\lambda^\alpha,\mu^\alpha)\in
a^{\epsilon}\mathbb{Z}[q^{\pm 2},a^{\pm 2}],
\end{align}
where $\epsilon\in \{0,1\}$ is determined by the following formula
\begin{align}
(|\lambda^{\alpha}|+|\mu^{\alpha}|)w(\mathcal{K}_\alpha)+\sum_{\beta\neq
\alpha}(|\lambda^{\beta}|+|\mu^{\beta}|)(w(\mathcal{K}_\beta)+1)\equiv
\epsilon \mod 2.
\end{align}

Therefore,
\begin{align} \label{formula-HLlamdamu}
\mathcal{H}(\mathcal{\mathcal{L}}\star \otimes_{\beta=1}^{L}
Q_{\lambda^\alpha,\mu^\alpha};q,a)&=\sum_{\tau^\beta,\delta^\beta,
\beta\neq
\alpha}C_{\tau^1,\delta^1,..,\hat{\tau^\alpha},\hat{\delta^\alpha},..,\tau^L,\delta^L}
\prod_{\beta=1,\beta\neq
\alpha}\frac{1}{\{\tau^\beta\}\{\delta^\beta\}}\\\nonumber &\cdot
t_{\mathcal{K}_\alpha}^{
\mathcal{L}_{\tau^1,\delta^1,..,\hat{\tau^\alpha},\hat{\delta^\alpha},..,\tau^L,\delta^L}}(\lambda^\alpha,\mu^\alpha)
\mathcal{H}(U\star Q_{\lambda^\alpha,\mu^\alpha};q,a).
\end{align}

Together with the formulas
\begin{align}
\mathcal{H}(U\star
Q_{\lambda^\alpha,\mu^\alpha};q,-a)&=(-1)^{|\lambda^{\alpha}|+|\mu^{\alpha}|}\mathcal{H}(U\star
Q_{\lambda^\alpha,\mu^\alpha};q,a), \\
\mathcal{H}(U\star
Q_{\lambda^\alpha,\mu^\alpha};-q,a)&=(-1)^{|\lambda^{\alpha}|+|\mu^{\alpha}|}\mathcal{H}(U\star
Q_{\lambda^\alpha,\mu^\alpha};q,a), \\
\mathcal{H}(U\star
Q_{\lambda^\alpha,\mu^\alpha};q,-a)&=(-1)^{|\lambda^{\alpha}|+|\mu^{\alpha}|}\mathcal{H}(U\star
Q_{(\lambda^\alpha)^\vee,(\mu^\alpha)^\vee};q,a).
\end{align}
we finish  the proof of  Theorem \ref{Theorem-linksym}.

\section{Refined LMOV integrality structure}  \label{section-LMOV functions}
\subsection{Symmetric functions and plethysms}
Recall that the power sum symmetric function of infinite variables
$\mathbf{x}=(x_1,..,x_N,..)$ is defined by
$p_{n}(\mathbf{x})=\sum_{i}x_i^n. $ We refer to Section 8.1 for more
detailed definitions about the symmetric functions. For a partition
$\lambda=(\lambda_1,...,\lambda_l)$, we define
$p_\lambda(\mathbf{x})=\prod_{j=1}^{l}p_{\lambda_j}(\mathbf{x}). $
The Schur function $s_{\lambda}(\mathbf{x})$ is determined by the
Frobenius formula
\begin{align}  \label{Frobeniusformula}
s_\lambda(\mathbf{x})=\sum_{\mu}\frac{\chi_{\lambda}(\mu)}{\mathfrak{z}_\mu}p_\mu(\mathbf{x}),
\end{align}

We let $\Lambda(\mathbf{x})$ be the ring of symmetric functions of
$\mathbf{x}=(x_1,x_2,...)$ over the ring $\mathbb{Q}(q,a)$, and let
$\langle \cdot,\cdot \rangle$ be the Hall pair on
$\Lambda(\mathbf{x})$ determined by
\begin{align}
\langle s_\lambda(\mathbf{x}),s_{\mu}(\mathbf{x})
\rangle=\delta_{\lambda,\mu}.
\end{align}
For $\vec{\mathbf{x}}=(\mathbf{x}^1,...,\mathbf{x}^L)$, denote by
$\Lambda(\vec{\mathbf{x}}):=\Lambda(\mathbf{x}^1)\otimes_{\mathbb{Z}}\cdots
\otimes_{\mathbb{Z}}\Lambda(\mathbf{x}^L)$ the ring of functions
separately symmetric in $\mathbf{x}^1,...,\mathbf{x}^L$, where
$\mathbf{x}^i=(x^{i}_{1},x^{i}_{2},...)$. We will study functions in
the ring $\Lambda(\vec{\mathbf{x}})$. For
$\vec{\mu}=(\mu^1,...,\mu^L)\in \mathcal{P}^{L}$, we let
$a_{\vec{\mu}}(\vec{\mathbf{x}})=a_{\mu^1}(\mathbf{x}^1)\cdots
a_{\mu^L}(\mathbf{x}^L)\in\Lambda(\vec{\mathbf{x}})$ be homogeneous
of degree $(|\mu^1|,..,|\mu^L|)$. Moreover, the Hall pair on
$\Lambda(\vec{\mathbf{x}})$ is given by
\begin{align}
\langle a_1(\mathbf{x}^1)\cdots a_L(\mathbf{x}^L),
b_1(\mathbf{x}^1)\cdots b_L(\mathbf{x}^L)\rangle=\langle
a_1(\mathbf{x}^1),b_1(\mathbf{x}^1) \rangle \cdots \langle
a_L(\mathbf{x}^n),b_L(\mathbf{x}^L) \rangle
\end{align}
 for
$a_1(\mathbf{x}^1)\cdots a_L(\mathbf{x}^L), b_1(\mathbf{x}^1)\cdots
b_L(\mathbf{x}^L)\in \Lambda(\vec{\mathbf{x}})$.
For $d\in \mathbb{Z}_+$, we define the $d$-th Adams operator
$\Psi_d$ as the $\mathbb{Q}$-algebra map on
$\Lambda(\vec{\mathbf{x}})$
\begin{align}
\Psi_{d}(f(\vec{\mathbf{x}};q,a))= f(\vec{\mathbf{x}}^d;q^d,a^d).
\end{align}
Denote by $\Lambda(\vec{\mathbf{x}})^+$ the set of  symmetric
functions with degree $\geq 1$. The plethystic exponential Exp and
logarithmic Log are inverse maps
\begin{align}
\text{Exp}: \Lambda(\vec{\mathbf{x}})^+\rightarrow
1+\Lambda(\vec{\mathbf{x}})^+, \ \text{Log}:1+
\Lambda(\vec{\mathbf{x}})^+\rightarrow \Lambda(\vec{\mathbf{x}})^+
\end{align}
respectively defined by (see \cite{HLRV11})
\begin{align}
\text{Exp}(f)=\exp\left(\sum_{d\geq 1}\frac{\Psi_d(f)}{d}\right), \
\text{Log}(f)=\sum_{d\geq 1}\frac{\mu(d)}{d}\Psi_d(\log(f)),
\end{align}
where $\mu$ is the M\"{o}bius function.  It is clear that
\begin{align}
\text{Exp}(f+g)=\text{Exp}(f)\text{Exp}(g), \
\text{Log}(fg)=\text{Log}(f)+\text{Log}(g),
\end{align}
and Exp$(x)=\frac{1}{1-x}$, if we use the expansion
$\log(1-x)=-\sum_{d\geq 1}\frac{x^d}{d}$.

\subsection{LMOV functions}
Consider a series of functions $S_{\vec{\lambda}}(q,a)\in
\mathbb{Q}(q,a)$, where $\vec{\lambda}\in \mathcal{P}^L$, we
introduce the partition function for
$\{S_{\vec{\lambda}}(q,a)|\vec{\lambda}\in \mathcal{P}^L\}$ which is
the following generating function
\begin{align}
Z(\vec{\mathbf{x}};q,a)=\sum_{\vec{\lambda}\in
\mathcal{P}^L}S_{\vec{\lambda}}(q,a)s_{\vec{\lambda}}(\vec{\mathbf{x}}).
\end{align}

\begin{definition}
The {\em LMOV function} for the series
$\{S_{\vec{\lambda}}(q,a)|\vec{\lambda}\in \mathcal{P}^L\}$ is given
by
\begin{align}
f_{\vec{\lambda}}(q,a)=\langle
\text{Log}(Z(\vec{\mathbf{x}};q,a)),s_{\vec{\lambda}}(\vec{\mathbf{x}})
\rangle
\end{align}
where $\langle \cdot, \cdot\rangle$ denotes the Hall pair in the
ring of symmetric functions $\Lambda(\vec{\mathbf{x}})$.
\end{definition}

In particular, if we take $\mathbf{x}^\alpha=(x_\alpha,0,0,...)$ for
all $\alpha=1,...,L$, i.e.
$$\vec{\mathbf{x}}=((x_{1},0,..),(x_{2},0,..),...,(x_L,0,..)).$$

In this special case, we obtain the following special partition
function for $\{S_{r_1,...,r_L}(q,a)\}$ which is given by
\begin{align}
\mathcal{Z}(x_{1},x_{2},...,x_L;q,a)=\sum_{r_1,...,r_L\geq
0}S_{r_1,...,r_L}(q,a)x_1^{r_1}\cdots x_L^{r_{L}}.
\end{align}

\begin{definition}
The {\em special LMOV function} for the series
$\{S_{r_1,...,r_L}(q,a)\}$ is
\begin{align}
\mathfrak{f}_{n_1,...,n_L}(q,a) =[x_1^{n_1}\cdots
x_L^{n_L}]\text{Log} \mathcal{Z}(x_{1},x_{2},...,x_L;q,a),
\end{align}
where the notation $[x_1^{n_1}\cdots x_L^{n_L}]f(x_1,...,x_L)$
denotes the coefficients of $x_1^{n_1}\cdots x_L^{n_L}$ in the
function $f(x_1,...,x_L)$.
\end{definition}

We introduce the notion of $F$-invariants $F_{\vec{\mu}}(q,a)$
(reps. $\mathcal{F}_{r_1,...,r_L}(q,a)$) for
$\{S_{\vec{\lambda}}(q,a)\}$ (resp. $\{S_{r_1,...,r_L}(q,a)\}$)
which is determined by the formula
\begin{align}
\log
Z(\vec{\mathbf{x}};q,a)=\sum_{\vec{\mu}}F_{\vec{\mu}}(q,a)p_{\vec{\mu}}(\vec{\bf{x}})
\end{align}
(resp. $\log\mathcal{Z}(x_{1},x_{2},...,x_L;q,
a)=\sum_{r_1,...,r_L\geq 0}\mathcal{F}_{r_1,...,r_L}(q,
a)x_1^{r_1}\cdots x_L^{r_L}$).

Then, by the formula (\ref{formula-logZ}) in the Appendix Section
\ref{section-Appendix}, we obtain
\begin{align} \label{formula-logZx}
F_{\vec{\mu}}(q,a)=\sum_{\Lambda\in \mathcal{P}(\mathcal{P}^L),
|\Lambda|=\vec{\mu}\in \mathcal{P}^L}\Theta_{\Lambda}Z_{\Lambda}.
\end{align}
and
\begin{align}
\mathcal{F}_{r_1,...,r_L}(q,a)=\sum_{|\Lambda|=(r_1,..,r_L)}\Theta_{\Lambda}S_{\Lambda}(q,a).
\end{align}
By a straightforward computation, we obtain
\begin{align} \label{formula-flambda}
f_{\vec{\lambda}}(q,a)&=\langle
Log(Z),s_{\vec{\lambda}}(\vec{\mathbf{x}}) \rangle\\\nonumber
&=\langle \sum_{d\geq 1}\frac{\mu(d)}{d}\circ
\Psi_d(\sum_{\vec{\mu}}F_{\vec{\mu}}(q,a)p_{\vec{\mu}}(\mathbf{x})),s_{\vec{\lambda}}(\vec{\mathbf{x}})
\rangle\\\nonumber &=\langle \sum_{d\geq
1}\frac{\mu(d)}{d}\sum_{\vec{\mu}}F_{\vec{\mu}}(q^d,a^d)p_{d\vec{\mu}}(\vec{\mathbf{x}}),s_{\vec{\lambda}}(\vec{\mathbf{x}})
\rangle\\\nonumber &=\langle \sum_{d\geq
1}\frac{\mu(d)}{d}\sum_{\vec{\mu}}F_{\vec{\mu}}(q^d,a^d)\sum_{\vec{\nu}}\chi_{\vec{\nu}}(d\cdot\vec{\mu})
s_{\vec{\nu}}(\vec{\mathbf{x}}),s_{\vec{\lambda}}(\vec{\mathbf{x}})\rangle\\\nonumber
&=\sum_{\vec{\mu}}\chi_{\vec{\lambda}}(\vec{\mu})\sum_{d|\vec{\mu}}\frac{\mu(d)}{d}F_{\vec{\mu}/d}(q^d,a^d),
\end{align}
and
\begin{align}
\mathfrak{f}_{n_1,...,n_L}(q,\vec{a})=\sum_{d\geq
1}\frac{\mu(d)}{d}\mathcal{F}_{\vec{n}/d}(q^d,a^d).
\end{align}

\begin{remark}
Here we  mention that when
$\vec{\lambda}=(\lambda^{1},...,\lambda^L)=((n_1),...,(n_L))$,
according to their definitions,
$
 f_{n_1,...,n_L}(q,a)\neq
 \mathfrak{f}_{n_1,...,n_L}(q,a).
$ That's why we use the different symbols there.
\end{remark}

\begin{definition}
We define the {\em reformulated LMOV function} for
$\{S_{\vec{\lambda}}(q,a)\}$ as a character transformation of the
LMOV functions,
\begin{align} \label{formula-gmu}
g_{\vec{\mu}}(q,a)=\frac{1}{\{\vec{\mu}\}}\sum_{\vec{\lambda}}f_{\vec{\lambda}}(q,a)
\chi_{\vec{\lambda}}(\vec{\mu}).
\end{align}
\end{definition}

Plugging the formula (\ref{formula-flambda}) into \ref{formula-gmu},
we obtain
\begin{align}
g_{\vec{\mu}}(q,\vec{a})=\frac{\mathfrak{z}_{\vec{\mu}}}{\{\vec{\mu}\}}\sum_{d|\vec{\mu}}\frac{\mu(d)}{d}F_{\vec{\mu}/d}(q^d,\vec{a}^d).
\end{align}

\begin{remark}
The original definition of the reformulated LMOV function
\cite{LaMaVa00,LaMa02,LP10} is slightly different from here. They
introduce the  formula $
T_{\lambda\mu}(\mathbf{x})=\sum_{\nu}\frac{\chi_{\lambda}(\nu)\chi_{\mu}(\nu)}{\mathfrak{z}_\nu}p_{\nu}(\mathbf{x}).
$ In particularly, $
T_{\lambda\mu}(q^\rho)=\sum_{\mu}\frac{\chi_{\lambda}(\nu)\chi_{\mu}(\nu)}{\mathfrak{z}_\nu}\prod_{i=1}^{l(\nu)}\frac{1}{\{\nu_i\}},
$ if one lets $q^\rho=(q^{-1},q^{-3},q^{-5},...)$. In
\cite{LaMaVa00,LaMa02,LP10}, the reformulated LMOV function for
$\{S_{\vec{\lambda}}(q,a)\}$ is defined as $
\hat{f}_{\vec{\mu}}(q,a)=\sum_{\vec{\lambda}}f_{\vec{\lambda}}(q,a)T_{\vec{\lambda}\vec{\mu}}(q^\rho).
$ Therefore, we have
\begin{align} \label{formula-hatfmu}
\hat{f}_{\vec{\lambda}}(q,a)=\chi_{\vec{\lambda}}(\vec{\mu})\sum_{\vec{\mu}}\frac{1}{\{\vec{\mu}\}}
\sum_{d|\vec{\mu}}\frac{\mu(d)}{d}F_{\vec{\mu}/d}(q^d,a^d).
\end{align}
It is obvious that two definitions by formulas (\ref{formula-gmu})
and (\ref{formula-hatfmu}) are related by a character transformation
\begin{align}
g_{\vec{\mu}}(q,a)=\sum_{\vec{\lambda}}\chi_{\vec{\lambda}}(\vec{\mu})\hat{f}_{\vec{\lambda}}(q,a).
\end{align}
\end{remark}

\subsection{Refined LMOV conjecture}
Given a framed link $\mathcal{L}$ with $L$ components
$\mathcal{K}_1,...,\mathcal{K}_L$. Denote by
$\vec{\tau}=(\tau^1,...,\tau^L)\in \mathbb{Z}^L$ the framing of
$\mathcal{L}$, i.e. $\tau^\alpha=w(\mathcal{K}_{\alpha})$ for
$\alpha=1,...,L$. In the following, we use the notation
$\mathcal{L}_{\vec{\tau}}$ to denote this framed link $\mathcal{L}$
if we want to emphasize its framing.

For $ \vec{\lambda}=(\lambda^1,...,\lambda^L),
\vec{\mu}=(\mu^1,...,\mu^L), \vec{\nu}=(\nu^1,...,\nu^L) \in
\mathcal{P}^L$, set
$c_{\vec{\lambda},\vec{\mu}}^{\vec{\nu}}=\prod_{\alpha=1}^Lc_{\lambda^\alpha,\mu^\alpha}^{\nu^\alpha}$,
where $c_{\lambda^\alpha,\mu^\alpha}^{\nu^\alpha}$ is the
Littlewood-Richardson coefficient.

\begin{definition}
The {\em $\tau$-framed full colored HOMFLY-PT invariants} for the
framed link $\mathcal{L}_{\vec{\tau}}$ is given by
\begin{align}
H_{\vec{\lambda}}(\mathcal{L}_{\vec{\tau}};q,a)=(-1)^{\sum_{\alpha=1}^L|\lambda^\alpha|\tau^\alpha}
a^{-\sum_{\alpha=1}^L|\lambda^\alpha|\tau^\alpha}\mathcal{H}_{\vec{\lambda}}(\mathcal{L}_{\vec{\tau}};q,a).
\end{align}
\end{definition}

\begin{definition}
The {\em $\tau$-framed composite invariant} for
$\mathcal{L}_{\vec{\tau}}$ is given as follow
\begin{align}
C_{\vec{\nu}}(\mathcal{L}_{\vec{\tau}};q,a)=(-1)^{\sum_{\alpha=1}^L|\lambda^\alpha|\tau^\alpha}\sum_{\vec{\lambda},\vec{\mu}}
a^{-\sum_{\alpha=1}^L|\nu^\alpha|\tau^\alpha}
c_{\vec{\lambda},\vec{\mu}}^{\vec{\nu}}
\mathcal{H}_{[\vec{\lambda},\vec{\mu}]}(\mathcal{L}_{\vec{\tau}};q,a).
\end{align}
\end{definition}
By using Theorem \ref{Theorem-linksym}, we obtain
\begin{align} \label{formula-Hlambda}
H_{\vec{\lambda}}(\mathcal{L}_{\vec{\tau}};q,-a)&=(-1)^{|\vec{\lambda}|}H_{\vec{\lambda}}(\mathcal{L}_{\vec{\tau}};q,a),
\\\nonumber
H_{\vec{\lambda}}(\mathcal{L}_{\vec{\tau}};-q,a)&=(-1)^{|\vec{\lambda}|}H_{\vec{\lambda}}(\mathcal{L}_{\vec{\tau}};q,a),
\\\nonumber
H_{\vec{\lambda}}(\mathcal{L}_{\vec{\tau}};q^{-1},a)&=(-1)^{|\vec{\lambda}|}H_{\vec{\lambda^{t}}}(\mathcal{L}_{\vec{\tau}};q,a),
\end{align}
and which hold similarly for the framed composite invariant
$C_{\vec{\nu}}(\mathcal{L}_{\vec{\tau}};q,a)$.

We denote the reformulated LMOV functions for $\tau$-framed colored
HOMFLY-PT invariants
$\{H_{\vec{\lambda}}(\mathcal{L}_{\vec{\tau}};q,a)\}$ and
$\tau$-framed composite invariants
$\{C_{\vec{\nu}}(\mathcal{L}_{\vec{\tau}};q,a)\}$ by
$g_{\vec{\mu}}^{(0)}(\mathcal{L}_{\vec{\tau}};q,a)$ and
$g_{\vec{\mu}}^{(1)}(\mathcal{L}_{\vec{\tau}};q,a)$ respectively.

Similarly, we write
$\mathfrak{f}^{(0)}_{\vec{n}}(\mathcal{L}_{\vec{\tau}};q,a)$ and
$\mathfrak{f}^{(1)}_{\vec{n}}(\mathcal{L}_{\vec{\tau}};q,a)$ for the
corresponding special LMOV functions for the $\tau$-framed colored
HOMFLY-PT invariants $\{H_{\vec{r}}(\mathcal{L}_{\vec{\tau}};q,a)\}$
and composite invariants
$\{C_{\vec{r}}(\mathcal{L}_{\vec{\tau}};q,a)\}$ respectively.

By formula (\ref{formula-Hlambda}), we obtain that the corresponding
$F$-invariants satisfies
\begin{align}
F_{\vec{\mu}}^{(h)}(\mathcal{L}_{\vec{\tau}};q,-a)&=(-1)^{|\vec{\mu}|}F_{\vec{\mu}}^{(h)}(\mathcal{L}_{\vec{\tau}};q,a),
\\\nonumber
F_{\vec{\mu}}^{(h)}(\mathcal{L}_{\vec{\tau}};-q,a)&=(-1)^{|\vec{\mu}|}F_{\vec{\mu}}^{(h)}(\mathcal{L}_{\vec{\tau}};q,a),
\\\nonumber
F_{\vec{\mu}}^{(h)}(\mathcal{L}_{\vec{\tau}};q^{-1},a)&=(-1)^{|\vec{\mu}|}F_{\vec{\mu}}^{(h)}(\mathcal{L}_{\vec{\tau}};q,a).
\end{align}
and these symmetries similarly hold for $
\mathcal{F}_{\vec{n}}^{(h)}(\mathcal{L}_{\vec{\tau}};q,-a). $

It follows that
\begin{align} \label{formula-gmusym}
g^{(h)}_{\vec{\mu}}(\mathcal{L}_{\vec{\tau}};q,-a)&=(-1)^{|\vec{\mu}|}g^{(h)}_{\vec{\mu}}(\mathcal{L}_{\vec{\tau}};q,a),\\\nonumber
g^{(h)}_{\vec{\mu}}(\mathcal{L}_{\vec{\tau}};-q,a)&=g^{(h)}_{\vec{\mu}}(\mathcal{L}_{\vec{\tau}};q,a),\\\nonumber
g^{(h)}_{\vec{\mu}}(\mathcal{L}_{\vec{\tau}};q^{-1},a)&=g^{(h)}_{\vec{\mu}}(\mathcal{L}_{\vec{\tau}};q,a).
\end{align}
and
\begin{align}
\mathfrak{f}^{(h)}_{\vec{n}}(\mathcal{L}_{\vec{\tau}};q,-a)&=(-1)^{|\vec{n}|}\mathfrak{f}^{(h)}_{\vec{n}}(\mathcal{L}_{\vec{\tau}};q,a),
\\\nonumber
\mathfrak{f}^{(h)}_{\vec{n}}(\mathcal{L}_{\vec{\tau}};-q,a)&=(-1)^{|\vec{n}|}\mathfrak{f}^{(h)}_{\vec{n}}(\mathcal{L}_{\vec{\tau}};q,a),
\\\nonumber
\mathfrak{f}^{(h)}_{\vec{n}}(\mathcal{L}_{\vec{\tau}};q^{-1},a)&=(-1)^{|\vec{n}|}\mathfrak{f}^{(h)}_{\vec{n}}(\mathcal{L}_{\vec{\tau}};q,a).
\end{align}
for both $h=0$ and $1$.
\begin{remark}
We know that $g^{(h)}_{\vec{\mu}}(\mathcal{L}_{\vec{\tau}};q,a)$
belongs to the ring $\mathbb{Q}[q^{\pm},a^{\pm}]$ with the
$(q^{r}-q^{-r})$ as its denominators, formula (\ref{formula-gmusym})
implies that there exists some $d_0\in \mathbb{N}$, such that
\begin{align}
g^{(h)}_{\vec{\mu}}(\mathcal{L}_{\vec{\tau}};q,a)\in
z^{-2d_0}a^{\epsilon}\mathbb{Q}[[z^2]][a^{\pm 2}]
\end{align}
where $\epsilon\in\{0,1\}$ which is determined by $|\vec{\mu}|\equiv
\epsilon \mod 2$.
\end{remark}

From formula (\ref{formula-HLlamdamu}), we know that for
$\vec{\lambda}\neq \vec{0}$, both
$H_{\vec{\lambda}}(\mathcal{L}_{\vec{\tau}};q,a)$  and
$C_{\vec{\lambda}}(\mathcal{L}_{\vec{\tau}};q,a)$ contain a factor
$\frac{a-a^{-1}}{q-q^{-1}}$.  We obtain
\begin{theorem} \label{Theorem-LMOVfunction}
The reformulated LMOV functions
$g^{(h)}_{\vec{\mu}}(\mathcal{L}_{\vec{\tau}};q,a)$ for both $h=0$
and $1$ can be written in the following form
\begin{align}
g^{(h)}_{\vec{\mu}}(\mathcal{L}_{\vec{\tau}};q,a)=(a-a^{-1})\tilde{g}^{(h)}_{\vec{\mu}}(\mathcal{L}_{\vec{\tau}};q,a),
\end{align}
and $\tilde{g}^{(h)}_{\vec{\mu}}(\mathcal{L}_{\vec{\tau}};q,a)$ has
the properties
\begin{align}
\tilde{g}^{(h)}_{\vec{\mu}}(\mathcal{L}_{\vec{\tau}};q,-a)&=(-1)^{|\vec{\mu}|-1}\tilde{g}^{(h)}_{\vec{\mu}}(\mathcal{L}_{\vec{\tau}};q,a),
\\
\tilde{g}^{(h)}_{\vec{\mu}}(\mathcal{L}_{\vec{\tau}};-q,a)&=\tilde{g}^{(h)}_{\vec{\mu}}(\mathcal{L}_{\vec{\tau}};q,a),
\\
\tilde{g}^{(h)}_{\vec{\mu}}(\mathcal{L}_{\vec{\tau}};q^{-1},a)&=\tilde{g}^{(h)}_{\vec{\mu}}(\mathcal{L}_{\vec{\tau}};q,a).
\end{align}

\end{theorem}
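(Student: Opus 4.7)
The plan is in three steps: first push the manifest factor $\frac{a-a^{-1}}{q-q^{-1}}$ carried by every non-trivially colored $H_{\vec\lambda}$ and $C_{\vec\lambda}$ up to the level of the $F$-invariants appearing in (\ref{formula-logZx}); second, transport this divisibility through the Adams/M\"obius combination that builds $g^{(h)}_{\vec\mu}$ out of the $F^{(h)}_{\vec\mu/d}$; and third, divide the already-known symmetries (\ref{formula-gmusym}) of $g^{(h)}_{\vec\mu}$ by $(a-a^{-1})$ to read off the three symmetries of $\tilde g^{(h)}_{\vec\mu}$.

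For the first step, I would observe that in the expansion (\ref{formula-logZx}) every partition $\Lambda\in\mathcal P(\mathcal P^L)$ with $|\Lambda|=\vec\mu\neq\vec 0$ consists entirely of non-zero colors $\vec\lambda$, so each factor of $Z_\Lambda^{(h)}$ is an $H_{\vec\lambda}(\mathcal L_{\vec\tau};q,a)$ (for $h=0$) or $C_{\vec\lambda}(\mathcal L_{\vec\tau};q,a)$ (for $h=1$) with $\vec\lambda\neq\vec 0$. The observation recorded just before the theorem, coming from (\ref{formula-HLlamdamu}), says that each such factor is divisible by $\frac{a-a^{-1}}{q-q^{-1}}$, and therefore by $(a-a^{-1})$, in the ambient Laurent ring. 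Summing over $\Lambda$ preserves divisibility by the regular element $(a-a^{-1})$, so one can write $F^{(h)}_{\vec\mu}=(a-a^{-1})\tilde F^{(h)}_{\vec\mu}$ with $\tilde F^{(h)}_{\vec\mu}$ in the same ring.

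For the second step, I would use the explicit formula
\begin{align*}
g^{(h)}_{\vec\mu}(\mathcal L_{\vec\tau};q,a)=\frac{\mathfrak z_{\vec\mu}}{\{\vec\mu\}}\sum_{d\mid\vec\mu}\frac{\mu(d)}{d}\,F^{(h)}_{\vec\mu/d}(q^d,a^d),
\end{align*}
in which each summand becomes $(a^d-a^{-d})\tilde F^{(h)}_{\vec\mu/d}(q^d,a^d)$ after the first step. Since $(a^d-a^{-d})/(a-a^{-1})=\sum_{j=0}^{d-1}a^{d-1-2j}$ is a Laurent polynomial in $a$, the factor $(a-a^{-1})$ pulls cleanly out of the entire sum, allowing one to \emph{define} $\tilde g^{(h)}_{\vec\mu}$ by the equation $g^{(h)}_{\vec\mu}=(a-a^{-1})\tilde g^{(h)}_{\vec\mu}$.

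For the third step, the three symmetries of $\tilde g^{(h)}_{\vec\mu}$ follow by combining this factorization with (\ref{formula-gmusym}). The substitutions $q\mapsto -q$ and $q\mapsto q^{-1}$ leave $(a-a^{-1})$ untouched and transfer verbatim, yielding the second and third stated identities. For $a\mapsto -a$ the factor picks up a sign, $(-a)-(-a)^{-1}=-(a-a^{-1})$, which combined with the $(-1)^{|\vec\mu|}$ in (\ref{formula-gmusym}) produces the claimed sign $(-1)^{|\vec\mu|-1}$. The one subtlety worth checking carefully is that the factorization $g=(a-a^{-1})\tilde g$ is valid at the level of Laurent polynomials and not merely as a rational identity; this is taken care of by the explicit constructions in the first two steps, and beyond this bookkeeping I do not anticipate a serious obstacle.
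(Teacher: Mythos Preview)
Your proposal is correct and matches the paper's (largely implicit) argument: the paper merely records that $H_{\vec\lambda}$ and $C_{\vec\lambda}$ carry the factor $\frac{a-a^{-1}}{q-q^{-1}}$ for $\vec\lambda\neq\vec 0$ and then states the theorem, with the symmetries (\ref{formula-gmusym}) already in hand; your three-step unpacking is exactly how one fills in the details. One small inaccuracy to fix in Step~1: the factors appearing in $Z_\Lambda$ via (\ref{formula-logZx})/(\ref{formula-logZ}) are the power-sum-basis coefficients $Z_{\vec\nu}=\sum_{\vec\lambda}\frac{\chi_{\vec\lambda}(\vec\nu)}{\mathfrak z_{\vec\nu}}S_{\vec\lambda}$, not the individual $H_{\vec\lambda}$ or $C_{\vec\lambda}$ themselves --- but since each such $Z_{\vec\nu}$ with $\vec\nu\neq\vec 0$ is a finite linear combination of $S_{\vec\lambda}$ with $\vec\lambda\neq\vec 0$, your divisibility conclusion is unaffected.
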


The LMOV conjecture for framed colored HOMFLY-PT invariants
\cite{MV02,CLPZ14} which is an extension form of the original LMOV
conjecture for (unframed) colored HOMFY-PT invariant
\cite{OV00,LaMaVa00,LaMa02,LP10}
 states that
\begin{align} \label{formula-LMOV1}
g_{\vec{\mu}}^{(0)}(\mathcal{L}_{\vec{\tau}};q,a)\in
z^{-2}\mathbb{Z}[z^2,a^{\pm 1}].
\end{align}
and
\begin{align} \label{formula-LMOV2}
f_{\vec{n}}^{(0)}(\mathcal{L}_{\vec{\tau}};q,a)\in
(q-q^{-1})^{-1}\mathbb{Z}[q^{\pm 1},a^{\pm 1}].
\end{align}
Later such framed version LMOV conjecture was generalized to the
case of the framed composite invariants \cite{Mar10,CZ20} which
states that $g_{\vec{\mu}}^{(1)}(\mathcal{L}_{\vec{\tau}};q,a)$ and
$f_{\vec{n}}^{(1)}(\mathcal{L}_{\vec{\tau}};q,a)$  satisfy formulas
(\ref{formula-LMOV1}) and (\ref{formula-LMOV2}) respectively.

By Theorem \ref{Theorem-LMOVfunction}, we obtain
\begin{conjecture}[Refined LMOV conjecture for framed links]
\label{conjecture-refinedLMOV1} For  $h=0$ and $1$, the reformulated
LMOV functions can be written as
\begin{align}
g^{(h)}_{\vec{\mu}}(\mathcal{L}_{\vec{\tau}};q,a)=(a-a^{-1})\tilde{g}^{(h)}_{\vec{\mu}}(\mathcal{L}_{\vec{\tau}};q,a).
\end{align}
where
\begin{align}
\tilde{g}^{(h)}_{\vec{\mu}}(\mathcal{L}_{\vec{\tau}};q,a)\in
z^{-2}a^{\epsilon}\mathbb{Z}[z^2,a^{\pm 2}]
\end{align}
$\epsilon\in \{0,1\}$ is determined by $ |\vec{\mu}|-1\equiv
\epsilon \mod 2. $

In other words, there are integral invariants
\begin{align}
\tilde{N}_{\vec{\mu},g,Q}^{(h)}(\mathcal{L}_{\vec{\tau}})\in
\mathbb{Z},
\end{align}
 such that
\begin{align}
\tilde{g}^{(h)}_{\vec{\mu}}(\mathcal{L}_{\vec{\tau}};q,a)=\sum_{g\geq
0}\sum_{Q\in
\mathbb{Z}}\tilde{N}_{\vec{\mu},g,Q}^{(h)}(\mathcal{L}_{\vec{\tau}})z^{2g-2}a^{2Q+\epsilon}\in
z^{-2}a^{\epsilon}\mathbb{Z}[z^2,a^{\pm 2}].
\end{align}
\end{conjecture}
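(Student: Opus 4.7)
The plan is to derive Conjecture \ref{conjecture-refinedLMOV1} as a conditional consequence of the standard framed LMOV conjecture together with Theorem \ref{Theorem-LMOVfunction}. That is, assuming the unrefined integrality $g^{(h)}_{\vec{\mu}}(\mathcal{L}_{\vec{\tau}};q,a)\in z^{-2}\mathbb{Z}[z^2,a^{\pm 1}]$ established (conjecturally) in \cite{MV02,CLPZ14,Mar10,CZ20}, one would feed in the factorization $g^{(h)}_{\vec{\mu}}=(a-a^{-1})\tilde{g}^{(h)}_{\vec{\mu}}$ and the three symmetries of $\tilde g^{(h)}_{\vec{\mu}}$ recorded in Theorem \ref{Theorem-LMOVfunction}, and read off (i) integrality, (ii) the $z^2$-only dependence, and (iii) the parity-$\epsilon$ restriction on powers of $a$.

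First I would handle the integrality. Expand $\tilde{g}^{(h)}_{\vec{\mu}}(\mathcal{L}_{\vec{\tau}};q,a)=\sum_{j}\tilde{g}_j(q)\,a^j$ and $g^{(h)}_{\vec{\mu}}(\mathcal{L}_{\vec{\tau}};q,a)=\sum_{k}g_k(q)\,a^k$. The identity $g^{(h)}_{\vec{\mu}}=(a-a^{-1})\tilde{g}^{(h)}_{\vec{\mu}}$ gives the recursion
\begin{equation}
g_k(q)=\tilde{g}_{k-1}(q)-\tilde{g}_{k+1}(q).
\end{equation}
Since $\tilde{g}^{(h)}_{\vec{\mu}}$ is a Laurent polynomial in $a$ (its coefficients in $a$ vanish for $|k|$ large), we may start from the top degree in $a$ and recursively solve for each $\tilde{g}_j(q)$ as a $\mathbb{Z}[z^2]$-linear combination of the $g_k(q)/z^{-2}$. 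This recovers $\tilde{g}^{(h)}_{\vec{\mu}}\in z^{-2}\mathbb{Z}[z^2,a^{\pm 1}]$ from the assumed $g^{(h)}_{\vec{\mu}}\in z^{-2}\mathbb{Z}[z^2,a^{\pm 1}]$.

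Next I would exploit the symmetries of $\tilde g^{(h)}_{\vec{\mu}}$ from Theorem \ref{Theorem-LMOVfunction}. The relation $\tilde{g}^{(h)}_{\vec{\mu}}(q,-a)=(-1)^{|\vec{\mu}|-1}\tilde{g}^{(h)}_{\vec{\mu}}(q,a)$ applied to the expansion $\sum_j \tilde{g}_j(q)a^j$ forces $\tilde{g}_j(q)=0$ whenever $j\not\equiv |\vec{\mu}|-1\pmod 2$, so every nonzero power of $a$ is of the form $2Q+\epsilon$ with $\epsilon\in\{0,1\}$ determined by $|\vec{\mu}|-1\equiv\epsilon\pmod 2$. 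The relations $\tilde{g}^{(h)}_{\vec{\mu}}(-q,a)=\tilde{g}^{(h)}_{\vec{\mu}}(q,a)$ and $\tilde{g}^{(h)}_{\vec{\mu}}(q^{-1},a)=\tilde{g}^{(h)}_{\vec{\mu}}(q,a)$ imply that each $\tilde{g}_j(q)$ is invariant under $q\mapsto -q$ and $q\mapsto q^{-1}$; combined with the fact that $\tilde{g}_j\in z^{-2}\mathbb{Z}[z^2]$ this closes the statement $\tilde{g}^{(h)}_{\vec{\mu}}\in z^{-2}a^{\epsilon}\mathbb{Z}[z^2,a^{\pm 2}]$, and the expansion into integers $\tilde N^{(h)}_{\vec{\mu},g,Q}(\mathcal{L}_{\vec{\tau}})$ is then tautological.

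The genuinely hard part, which this conditional scheme does not touch, is the input hypothesis: proving the framed LMOV integrality $g^{(h)}_{\vec{\mu}}\in z^{-2}\mathbb{Z}[z^2,a^{\pm 1}]$ itself. All the symmetry and parity information supplied by Theorem \ref{Theorem-strongintegral2} and Corollary \ref{corollary-symmetries} is only a structural refinement once that integrality is granted; producing it unconditionally would require controlling the $p$-adic valuations of the plethystic logarithm of $\sum_{\vec{\lambda}}H_{\vec{\lambda}}(\mathcal{L}_{\vec{\tau}};q,a)s_{\vec{\lambda}}(\vec{\mathbf{x}})$ in the spirit of Mellit's admissibility arguments \cite{Mel18}, or a geometric/quiver-theoretic model producing $\tilde N^{(h)}_{\vec{\mu},g,Q}$ as honest Euler characteristics. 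Thus the plan establishes Conjecture \ref{conjecture-refinedLMOV1} as a theorem of shape \emph{``standard LMOV}$\ \Longrightarrow\ $\emph{refined LMOV''}, and isolates the residual obstacle as the unchanged integrality input rather than the refinement itself.
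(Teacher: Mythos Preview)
Your proposal is correct and matches the paper's own approach. The paper presents Conjecture \ref{conjecture-refinedLMOV1} precisely as the combination of the standard framed LMOV integrality statements (\ref{formula-LMOV1})--(\ref{formula-LMOV2}) with the factorization and symmetries of Theorem \ref{Theorem-LMOVfunction}, writing simply ``By Theorem \ref{Theorem-LMOVfunction}, we obtain'' before the conjecture; you have filled in the elementary deduction (the top-down recursion on $a$-coefficients for integrality, and the parity argument from $\tilde g(q,-a)=(-1)^{|\vec\mu|-1}\tilde g(q,a)$) that the paper leaves implicit, and correctly isolated the unresolved input as the unrefined LMOV integrality itself.
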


Similarly, as to the special LMOV function, we also have the
following integrality conjecture.
\begin{conjecture} \label{conjecture-specialLMOV}
For  $h=0$ or $1$, the special LMOV functions can be written as
\begin{align}
\mathfrak{f}^{(h)}_{\vec{n}}(\mathcal{L}_{\vec{\tau}};q,a)=\frac{(a-a^{-1})}{(q-q^{-1})}
\tilde{\mathfrak{f}}^{(h)}_{\vec{n}}(\mathcal{L}_{\vec{\tau}};q,a).
\end{align}
where
\begin{align}
\tilde{\mathfrak{f}}^{(h)}_{\vec{n}}(\mathcal{L}_{\vec{\tau}};q,a)\in
a^{\epsilon} z\mathbb{Z}[q^{\pm 1},a^{\pm 2}]
\end{align}
and where $\epsilon\in \{0,1\}$ is determined by $ |\vec{n}|\equiv
\epsilon \mod 2. $ In other words, there are integral invariants
\begin{align}
\tilde{\mathcal{N}}_{\vec{\mu},i,j}^{(h)}(\mathcal{L}_{\vec{\tau}})\in
\mathbb{Z},
\end{align}
such that
\begin{align}
(q-q^{-1})\mathfrak{f}^{(h)}_{\vec{\mu}}(\mathcal{L}_{\vec{\tau}};q,a)=\sum_{i\in
\mathbb{Z}}\sum_{j\in
\mathbb{Z}}\tilde{\mathcal{N}}_{\vec{\mu},i,j}^{(h)}(\mathcal{L}_{\vec{\tau}})q^{i}a^{2j+\epsilon}\in
a^{\epsilon}\mathbb{Z}[q^{\pm 1},a^{\pm 2}].
\end{align}
\end{conjecture}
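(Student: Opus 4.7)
The plan is to split the conjecture into its two distinct halves: a factorization-plus-symmetry statement about $\tilde{\mathfrak{f}}^{(h)}_{\vec{n}}$, which should be provable directly using the machinery already established in the paper, and the deeper integrality assertion $(q-q^{-1})\mathfrak{f}^{(h)}_{\vec{\mu}}\in a^{\epsilon}\mathbb{Z}[q^{\pm 1},a^{\pm 2}]$, which is the genuinely conjectural content. I would prove the first half in full and indicate why the second is a substantive open problem, parallel to the status of Conjecture \ref{conjecture-refinedLMOV1} for the reformulated LMOV invariants.

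For the factorization $\mathfrak{f}^{(h)}_{\vec{n}}=\tfrac{a-a^{-1}}{q-q^{-1}}\,\tilde{\mathfrak{f}}^{(h)}_{\vec{n}}$, I would mimic the proof of Theorem \ref{Theorem-LMOVfunction}. The starting ingredient is the observation derived from formula \eqref{formula-HLlamdamu} (together with its composite-invariant analogue) that for every $\vec{r}\neq\vec{0}$ both $H_{\vec{r}}(\mathcal{L}_{\vec{\tau}};q,a)$ and $C_{\vec{r}}(\mathcal{L}_{\vec{\tau}};q,a)$ carry a factor of $s:=(a-a^{-1})/(q-q^{-1})$, since each arises from decorating with $Q_{\lambda,\mu}$-patterns whose unknot closure always contains $\mathcal{H}(U;q,a)=s$. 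Hence the generating function splits as $\mathcal{Z}(x_{1},\ldots,x_{L};q,a)=1+s\cdot\widetilde{\mathcal{Z}}$, and expanding the ordinary logarithm gives $\log\mathcal{Z}=\sum_{n\geq 1}(-1)^{n-1}s^{n}\widetilde{\mathcal{Z}}^{n}/n$, which is visibly divisible by $s$. The M\"obius combination defining $\mathfrak{f}^{(h)}_{\vec{n}}$ via the Adams operators $\Psi_{d}$ then propagates this divisibility through the identity $\Psi_{d}(s)=s\cdot[d]_{a}/[d]_{q}$, with the denominators that appear being absorbed against the $1/d$ factors in the M\"obius sum to leave an $s$-multiple of an element in $\mathbb{Z}[q^{\pm 1},a^{\pm 1}]$. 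The three symmetries of $\tilde{\mathfrak{f}}^{(h)}_{\vec{n}}$ then follow routinely by combining the already-established symmetries of $\mathfrak{f}^{(h)}_{\vec{n}}$ (derived from Theorem \ref{Theorem-linksym}) with the transformation rules of $s$ under $a\mapsto -a$, $q\mapsto -q$, and $q\mapsto q^{-1}$.

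The integrality assertion is the hard part and is essentially the original special LMOV integrality conjecture \eqref{formula-LMOV2} refined to $a^{\pm 2}$ coefficients. Once the factorization and the three symmetries of $\tilde{\mathfrak{f}}^{(h)}_{\vec{n}}$ are in place, the refinement from $\mathbb{Z}[q^{\pm 1},a^{\pm 1}]$ to $a^{\epsilon}\mathbb{Z}[q^{\pm 1},a^{\pm 2}]$ is automatic, so the only real obstruction is the integrality itself. Given that even the unrefined version \eqref{formula-LMOV2} is open for general framed links with composite colourings, a skein-theoretic proof seems out of reach with present methods. A viable route would be a geometric realization of $\tilde{\mathcal{N}}^{(h)}_{\vec{\mu},i,j}$ as BPS state counts in the resolved conifold as in \cite{OV00,LaMaVa00}, or, following the knot--quiver correspondence developed in \cite{LuoZhu16,KRSS17-1,KRSS17-2,PSS18,SW20-1,SW20-2}, as dimensions of cohomology of suitable quiver moduli spaces; alternatively one could hope for a categorification by a Khovanov--Rozansky-type homology adapted to the full colored and composite settings. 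The main obstacle is precisely the absence of such a construction for general framed links decorated by arbitrary pairs $(\lambda,\mu)$.
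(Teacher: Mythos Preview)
The statement you were asked about is labeled a \emph{Conjecture} in the paper, and the paper gives no proof of it; it is presented immediately after Conjecture~\ref{conjecture-refinedLMOV1} with the sentence ``Similarly, as to the special LMOV function, we also have the following integrality conjecture.'' The only ingredients the paper actually establishes in this direction are the three symmetries of $\mathfrak{f}^{(h)}_{\vec{n}}$ under $a\mapsto -a$, $q\mapsto -q$, $q\mapsto q^{-1}$, recorded just before the conjecture, which are derived from Theorem~\ref{Theorem-linksym} exactly as you propose. Your overall decomposition into (i) a provable factorization-plus-symmetry statement and (ii) a genuinely open integrality assertion matches the paper's implicit stance precisely, and your argument for (i) is the natural specialization of the paper's proof of Theorem~\ref{Theorem-LMOVfunction} to the one-row setting.

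One point in your write-up needs correcting. You assert that in the M\"obius sum the denominators coming from $\Psi_d(s)=s\cdot[d]_a/[d]_q$ are ``absorbed against the $1/d$ factors \ldots\ to leave an $s$-multiple of an element in $\mathbb{Z}[q^{\pm 1},a^{\pm 1}]$.'' This is not what happens: the factor $1/[d]_q$ is a genuine denominator in $q$, not canceled by $1/d$, so all you obtain at this stage is the factorization $\mathfrak{f}^{(h)}_{\vec{n}}=s\cdot\tilde{\mathfrak{f}}^{(h)}_{\vec{n}}$ as an identity of \emph{rational} functions. Showing that $\tilde{\mathfrak{f}}^{(h)}_{\vec{n}}$ lands in $\mathbb{Z}[q^{\pm 1},a^{\pm 1}]$ is exactly the integrality content you correctly flag as open in your next paragraph; the earlier sentence should be weakened accordingly. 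With that fix, your proposal is an accurate account of what is provable and what is conjectural, and it parallels the paper's treatment of the reformulated LMOV functions.
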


\begin{remark}
K. Liu and P. Peng \cite{LP10} first studied the mathematical
structures of LMOV conjecture for general links without framing
contribution, which is equivalent to the LMOV conjecture for colored
HOMFLY-PT invariants $W_{\vec{\lambda}}(\mathcal{L};q,a)$. They
provided a proof for this case by using cut-and-join analysis and
the cabling technique \cite{LP10}. Motivated by the work
\cite{MV02}, in \cite{CLPZ14}, the author together with Q. Chen, K.
Liu and P. Peng applied the HOMFLY-PT skein theory to study the
mathematical structures of LMOV conjecture for $\tau$-framed colored
HOMFLY-PT invariants
$H_{\vec{\lambda}}(\mathcal{L}_{\vec{\tau}};q,a)$.
\end{remark}

\subsection{A special case of the refined LMOV conjecture}
From the formula (\ref{formula-tildegmu}), we have
\begin{align}
\tilde{g}_{\vec{\mu}}^{(h)}(\mathcal{L}_{\vec{\tau}};q,a)&=\frac{1}{a-a^{-1}}g_{\vec{\mu}}^{(h)}(\mathcal{L}_{\vec{\tau}};q,a)\\\nonumber
&=\frac{1}{a-a^{-1}}\frac{\mathfrak{z}_{\vec{\mu}}}{\{\vec{\mu}\}}\sum_{d|\vec{\mu}}\frac{\mu(d)}{d}
F^{(h)}_{\frac{\vec{\mu}}{d}}(\mathcal{L}_{\vec{\tau}};q^d,a^d).
\end{align}

In particular, when $\mathcal{L}_{\vec{\tau}}$ is a framed knot
$\mathcal{K}_{\tau}$ (i.e. a link with only one component), where
$\tau=w(\mathcal{K})$ and $\mu=(p)$. By formula
(\ref{formula-logZx}), we obtain
\begin{align}
F_{(p)}^{(0)}(\mathcal{K}_{\tau};q,a)=\frac{1}{p}(-1)^{p\tau}\mathcal{H}(\mathcal{K}_\tau\star
P_{p};q,a),
\end{align}
Then the refined LMOV conjecture (\ref{conjecture-refinedLMOV1})
implies that
\begin{conjecture} \label{conjecture-Specailcase}
For any prime $p$, we have
\begin{align}
\tilde{g}_{p}^{(0)}(\mathcal{K}_{\tau};q,a)&=
\frac{1}{a-a^{-1}}\frac{1}{\{p\}}\left(\mathcal{H}(\mathcal{K}_\tau\star
P_{p};q,a)\right.\\\nonumber&\left.-(-1)^{(p-1)\tau}\Psi_p\left(\mathcal{H}(\mathcal{K}_\tau;q,a)\right)\right)\in
z^{-2}a^{\epsilon}\mathbb{Z}[z^{2},a^{\pm 2}],
\end{align}
where $\epsilon=1$ when $p=2$ and $\epsilon=0$ when $p$ is an odd
prime.
\end{conjecture}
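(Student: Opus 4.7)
The plan is to establish Conjecture~\ref{conjecture-Specailcase} in two stages: first derive the explicit closed form as a direct calculation from the definitions of this subsection, and then deduce the integrality membership from the refined LMOV Conjecture~\ref{conjecture-refinedLMOV1} specialised to $L=1$ and $\vec{\mu}=(p)$.

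For the closed form, I will specialise
\[
\tilde{g}_{\vec{\mu}}^{(0)}(\mathcal{L}_{\vec{\tau}};q,a)=\frac{1}{a-a^{-1}}\cdot\frac{\mathfrak{z}_{\vec{\mu}}}{\{\vec{\mu}\}}\sum_{d\mid\vec{\mu}}\frac{\mu(d)}{d}F_{\vec{\mu}/d}^{(0)}(\mathcal{L}_{\vec{\tau}};q^d,a^d)
\]
to the single-component case with $\vec{\mu}=(p)$. Since $p$ is prime, $\mathfrak{z}_{(p)}=p$, $\{(p)\}=\{p\}$, and the only divisors are $d=1,p$ with $\mu(1)=1$, $\mu(p)=-1$, so the sum collapses to
\[
\tilde{g}_{(p)}^{(0)}(\mathcal{K}_\tau;q,a)=\frac{1}{(a-a^{-1})\{p\}}\Bigl(pF_{(p)}^{(0)}(\mathcal{K}_\tau;q,a)-F_{(1)}^{(0)}(\mathcal{K}_\tau;q^p,a^p)\Bigr).
\]
I then insert the two $F$-invariants. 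Expanding $Z=\sum_\lambda H_\lambda s_\lambda$ in the power-sum basis via $s_\lambda=\sum_\mu\chi_\lambda(\mu)p_\mu/\mathfrak{z}_\mu$ and taking $\mathrm{Log}$, the coefficient of $p_{(p)}$ receives contributions only from the single-block configuration $\Lambda=((p))$ in~(\ref{formula-logZx}), because for any product $p_{\mu_1}\cdots p_{\mu_r}$ of total weight $p$ the only one-part realisation is $\Lambda=((p))$ itself; combined with the Frobenius relation $P_p=\sum_{\lambda\vdash p}\chi_\lambda((p))Q_\lambda$ this identifies $F_{(p)}^{(0)}(\mathcal{K}_\tau;q,a)=\tfrac{1}{p}(-1)^{p\tau}\mathcal{H}(\mathcal{K}_\tau\star P_p;q,a)$, while the analogous computation at weight $1$ yields $F_{(1)}^{(0)}(\mathcal{K}_\tau;q,a)=(-1)^\tau\mathcal{H}(\mathcal{K}_\tau;q,a)$. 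Recognising $\Psi_p\bigl(\mathcal{H}(\mathcal{K}_\tau;q,a)\bigr)=\mathcal{H}(\mathcal{K}_\tau;q^p,a^p)$ and rewriting the overall sign $(-1)^{p\tau}=(-1)^{(p-1)\tau}\cdot(-1)^\tau$ produces the displayed identity.

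With the identity established, the ring membership $\tilde{g}_{(p)}^{(0)}(\mathcal{K}_\tau;q,a)\in z^{-2}a^{\epsilon}\mathbb{Z}[z^2,a^{\pm 2}]$ is precisely the $L=1$, $\vec{\mu}=(p)$ instance of Conjecture~\ref{conjecture-refinedLMOV1}: the parity condition $|(p)|-1=p-1\equiv\epsilon\pmod 2$ correctly recovers $\epsilon=1$ for $p=2$ and $\epsilon=0$ for odd $p$. The $(a-a^{-1})$ prefactor and the involutive symmetries under $q\mapsto\pm q^{\pm 1}$ and $a\mapsto -a$ are already established unconditionally by Theorem~\ref{Theorem-reformulatedLMOV} combined with formula~(\ref{formula-gmusym}); the remaining substantive content is the sharp $z^{-2}$ pole bound together with the upgrade of coefficients from $\mathbb{Q}$ to $\mathbb{Z}$.

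I expect the hard part to be this combined cyclotomic/integrality statement, namely that $\{p\}$ divides the difference $\mathcal{H}(\mathcal{K}_\tau\star P_p;q,a)-(-1)^{(p-1)\tau}\Psi_p\bigl(\mathcal{H}(\mathcal{K}_\tau;q,a)\bigr)$ inside a suitable integral refinement of the HOMFLY-PT skein. This Frobenius/Adams-type congruence lies at the arithmetic heart of the LMOV programme and remains open in general. A natural skein-theoretic route would apply the strong integrality Theorem~\ref{Theorem-strongintegral1} to each $\mathcal{H}_\lambda(\mathcal{K}_\tau;q,a)$ with $\lambda\vdash p$ entering the Frobenius expansion of $P_p$, and then extract the required $\{p\}$-divisibility from Hecke-algebra congruences of Fermat type; this is the mechanism the paper will later execute, conditionally on Conjecture~\ref{conjecture-Alexander0}, at the $a=1$ specialisation in Section~\ref{Section-coloredAlender}.
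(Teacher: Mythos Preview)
Your proposal is correct and follows essentially the same route as the paper: specialise the general formula for $\tilde{g}_{\vec{\mu}}^{(0)}$ to a knot with $\vec{\mu}=(p)$, use that $\Lambda=((p))$ is the unique $\mathcal{P}$-partition with $|\Lambda|=(p)$ to identify $F_{(p)}^{(0)}$ with $\tfrac{1}{p}(-1)^{p\tau}\mathcal{H}(\mathcal{K}_\tau\star P_p)$, and then invoke Conjecture~\ref{conjecture-refinedLMOV1} for the ring membership. Your write-up is in fact more careful than the paper's in distinguishing what is proved unconditionally (the $(a-a^{-1})$ factorisation and the symmetries from Theorem~\ref{Theorem-reformulatedLMOV}) from what remains conjectural (the $z^{-2}$ pole bound and integrality), and your closing paragraph accurately anticipates the role this statement plays in Section~\ref{Section-coloredAlender}.
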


\begin{remark}
In other words, the refined LMOV Conjecture
\ref{conjecture-refinedLMOV1} implies that, for any prime number
$p$,
\begin{align} \label{formula-hecke}
\{p\}\mathcal{H}_{p}(\mathcal{K};q,a)- (-1)^{(p-1)w(\mathcal{K})}
\Psi_p\left(\{1\}\mathcal{H}(\mathcal{K};q,a)\right)\in
[p]^2a^{\epsilon}\mathbb{Z}[z^2,a^2].
\end{align}
where $\epsilon=0$ when $p=2$ and $\epsilon$ when $p$ is an odd
prime. Formula (\ref{formula-hecke}) is referred as Hecke lifting in
\cite{CLPZ14}.  In order to prove it,  we introduced the notion of
congruence skein relations for colored HOMFLY-PT invariants
\cite{CLPZ14} which may have independent interests elsewhere.
\end{remark}

\subsection{New integral link invariants}
The refined LMOV conjecture \ref{conjecture-refinedLMOV1} predicts
the new integral link invariants
$\tilde{N}_{\vec{\mu},g,Q}^{(h)}(\mathcal{L})$ and
$\tilde{\mathcal{N}}_{\vec{n},i,j}^{(h)}(\mathcal{L})$ for the
framed link $\mathcal{L}$. A central question in this direction is
how to define these new integral invariants directly by
geometric/algeberic/combinatoric method and find the geometric
meaning for them.

As to integral invariants
$\tilde{N}_{\vec{\mu},g,Q}^{(0)}(\mathcal{L})$,  there is an
interpretation from the physics literatures \cite{OV00, LaMaVa00}.
It was conjectured in \cite{OV00} that for the link $\mathcal{L}$,
one can construct a Lagrangian submanifold
$\mathcal{C}_{\mathcal{L}}$ of the resolved conifold, with
$b_1(\mathcal{C}_{\mathcal{L}})=L$ which is the number of the
components of $\mathcal{C}_{\mathcal{L}}$. Let $\gamma_{\alpha}$,
$\alpha=1,...,L$ be the one-cycles representing a basis of
$H_1(\mathcal{C}_{\mathcal{L}};\mathbb{Z})$. Let
$\mathcal{M}_{g,h,Q}$ be the conjectural moduli space of Riemann
surfaces of genus $g$ with $h$ holes embedded into the resolved
conifold, such that there are $h_\alpha$ holes ending on the cycles
$\gamma_{\alpha}$ for $\alpha=1,...,L$. The symmetric group
$\Sigma_{h_1}\times \cdots \times \Sigma_{h_L}$ acts on the Riemann
surfaces by exchanging the $h_\alpha$ holes that end on
$\gamma_\alpha$. Then the integral invariants
$\tilde{N}_{\vec{\mu},g,Q}^{(0)}(\mathcal{L})$ can be interpreted as
the Euler number
$\chi(\mathbf{S}_{\vec{\mu}}(H^*(\mathcal{M}_{g,h,Q})))$, where
$\mathbf{S}_{\vec{\mu}}=\mathbf{S}_{\mu^1}\otimes \cdots\otimes
\mathbf{S}_{\mu^L}$, and $\mathbf{S}_{\mu^{\alpha}}$ is referred as
being the Schur functor.

For the integral invariants
$\tilde{\mathcal{N}}_{n,i,j}^{(0)}(U_{\tau})$ for framed unknot
$U_{\tau}$, , we related it to the Betti number of the cohomological
Hall algebra and quiver variety of a corresponding quiver in
\cite{LuoZhu16,Zhu19-1}.  Then the idea of the knot-quiver
correspondence was further extended in \cite{KRSS17-1,KRSS17-2}, see
\cite{EKL20,PSS18,SW20-1,SW20-2} for more recent developments.

Obviously, these integral link invariants are fully determined by
the Chern-Simons partition function of link. From the point of view
of topological string theory, Aganagic and Vafa \cite{AV12}
introduced the  $a$-deformed $A$-polynomial for colored HOMFLY-PT
invariants which is the encode of the mirror geometry of the
topological string theory. According to the large $N$ duality of
Chern-Simons and topological string theory, these new integral link
invariants are fully determined by the $a$-deformed A-polynomial.

For a series of colored HOMFLY-PT invariant
$\{\mathcal{H}_n(\mathcal{K}; q,a)\}_{n\geq 0}$ of symmetric
representation of a knot $\mathcal{K}$, we introduce two operators
$M$ and $L$ act on $\{\mathcal{H}_n(\mathcal{K};q,a)\}_{n\geq 0}$ as
follow:
\begin{align*}
M\mathcal{H}_n=q^{n}\mathcal{H}_n,\
L\mathcal{H}_{n}=\mathcal{H}_{n+1}.
\end{align*}
then $LM=qML$.
\begin{definition}
The noncommutative $a$-deformed A-polynomial for series
$\{\mathcal{H}_n(q,a)\}_{n\geq 0}$ is a polynomial
$\hat{A}(M,L;q,a)$ of operators $M, L$, such that
\begin{align*}
\hat{A}(M,L;q,a)\mathcal{H}_n(q,a)=0, \text{for } \ n\geq 0.
\end{align*}
and $A(M,L;a)=\lim_{q\rightarrow 1}\hat{A}(M,L;q,a)$ is called the
$a$-deformed A-polynomial.
\end{definition}

The existence of $a$-deformed A-polynomial was proved rigidly in
\cite{GLL18}.

\begin{example}
As to the framed unknot $U_\tau$, the noncommutative a-deformed
A-polynomial for $U_\tau$ as follow:
\begin{align*}
\hat{A}_{U_\tau}(M,L,q;a)=(-1)^\tau(qM-1)L-M^{\tau}(a^{\frac{1}{2}}q^{\frac{1}{2}}M-a^{-\frac{1}{2}}q^{\frac{1}{2}}).
\end{align*}
and the a-deformed A-polynomial is
\begin{align*}
A_{U_\tau}(M,L;a)=\lim_{q\rightarrow
1}\hat{A}(M,L,q;a)=(-1)^\tau(M-1)L-M^{\tau}(a^{\frac{1}{2}}M-a^{-\frac{1}{2}}),
\end{align*}
From this formula, one can compute the explicit formula for the
integral invariant $\tilde{N}_{m,0,Q}(U_\tau)$, see \cite{Zhu19-3}.
\end{example}

On the other hand side, there is a proposal initiated by M.
Aganagic, T. Ekholm, L. Ng and Vafa \cite{AENV13} which connects the
topological strings and contact homology theory. In this framework,
it is conjectured that the $a$-deformed A-polynomial is equal to the
augmentation polynomial of knot contact homology after variable
transformation. Therefore, there should be a geometric way to define
these new integral link invariant from knot contact homology, we
leave the detailed discussion to another paper \cite{Zhu21}.

\section{Special polynomials} \label{section-special polynomials}
From this section, we begin to study two specializations of the
normalized framed full colored HOMFLY-PT invariants
$\mathcal{P}_{[\lambda,\mu]}(\mathcal{K};q,a)$.

By Theorem \ref{Theorem-strongintegrality}, we know that when $q=1$,
$\mathcal{P}_{[\lambda,\mu]}(\mathcal{K};q=1,a)$ is  a well-defined
polynomial of $a$ lie in the ring $\mathbb{Z}[a]$.

For a knot $\mathcal{K}$ and a partition $\lambda\in \mathcal{P}$,
P. Dunin-Barkowski, A. Mironov, A. Morozov, A. Sleptsov and A.
Smirnov \cite{BMMSS11} introduced the following special polynomial
for colored  HOMFLY-PT invariant of the knot $\mathcal{K}$
\begin{align} \label{formula-specialHlambda}
H_{\lambda}^\mathcal{K}(a)=\lim_{q\rightarrow
1}\frac{W_{\lambda}(\mathcal{K};q,a)}{W_{\lambda}(U;q,a)}.
\end{align}

After testing many examples \cite{BMMSS11,IMMM12}, they proposed the
following conjectural formula:
\begin{align} \label{formula-special1}
H_{\lambda}^{\mathcal{K}}(a)=H_{(1)}^{\mathcal{K}}(a)^{|\lambda|}.
\end{align}

A rigid mathematical proof of the formula (\ref{formula-special1})
was provided in \cite{LP10} and  in \cite{Zhu13} with different
methods.

According to the formula (\ref{formula-specialHlambda}), indeed, we
have
\begin{align}
H_{\lambda}^\mathcal{K}(a)=\lim_{q\rightarrow 1}
a^{-|\lambda|w(\mathcal{K})}\mathcal{P}_{[\lambda,\emptyset]}(\mathcal{K};q,a)=a^{-|\lambda|w(\mathcal{K})}
\mathcal{P}_{[\lambda,\emptyset]}(\mathcal{K};1,a).
\end{align}

Note that when $\lambda=(1)$,
\begin{align}
\mathcal{P}_{[(1),\emptyset]}(\mathcal{K};q,a)=\frac{\mathcal{H}(\mathcal{K};q,a)}{\mathcal{H}(U;q,a)},
\end{align}
which will be denoted by $\mathcal{P}(\mathcal{K};q,a)$ for brevity
in the following.

Therefore, the formula (\ref{formula-special1}) can rewritten as
\begin{align}
\mathcal{P}_{[\lambda,\emptyset]}(\mathcal{K};1,a)=\mathcal{P}_{[(1),\emptyset]}(\mathcal{K};1,a)^{|\lambda|}=\mathcal{P}(\mathcal{K};1,a)^{|\lambda|}.
\end{align}

Motivated by the above results, it is natural to introduce
\begin{definition}
Given a knot $\mathcal{K}$ and partitions $\lambda,\mu\in
\mathcal{P}_+$, the special normalized framed full colored HOMFLY-PT
invariants of $\mathcal{K}$ is defined by
$\mathcal{P}_{[\lambda,\mu]}(\mathcal{K};1,a)$.
\end{definition}

A direct consequence of Theorem \ref{Theorem-strongintegrality}
gives
\begin{corollary}
For any knot $\mathcal{K}$, we have
\begin{align}
\mathcal{P}_{[\lambda,\mu]}(\mathcal{K};1,a)\in
a^{\epsilon}\mathbb{Z}[a^2],
\end{align}
where $\epsilon\in \{0,1\}$ is determined by
$(|\lambda|+|\mu|)w(\mathcal{K})\equiv \epsilon \mod 2$.
\end{corollary}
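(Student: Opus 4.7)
The plan is entirely immediate: specialize Theorem~\ref{Theorem-strongintegrality} at $q=1$. That theorem already establishes
\[
\mathcal{P}_{[\lambda,\mu]}(\mathcal{K};q,a)\in a^{\epsilon}\mathbb{Z}[q^{\pm 2},a^{\pm 2}],
\]
with $\epsilon\in\{0,1\}$ determined by $(|\lambda|+|\mu|)w(\mathcal{K})\equiv\epsilon\bmod 2$. This is exactly the parity condition stated in the corollary, so no further bookkeeping of parities is required.

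The key observation that makes the specialization legitimate is precisely the content of strong integrality: a priori, $\mathcal{P}_{[\lambda,\mu]}(\mathcal{K};q,a)$ lives in the localized coefficient ring $R=\mathbb{C}[a^{\pm 1},q^{\pm 1},(q-q^{-1})^{-k}]$ used in the skein theory, and naively setting $q=1$ could produce poles from the denominators $(q-q^{-1})^{-1}$. However, Theorem~\ref{Theorem-strongintegrality} shows that after normalizing by $\mathcal{H}_{[\lambda,\mu]}(U;q,a)$, these denominators are cleared, so the invariant is an honest element of $a^{\epsilon}\mathbb{Z}[q^{\pm 2},a^{\pm 2}]$. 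Substituting $q=1$ in a Laurent polynomial is unproblematic and yields an element of $a^{\epsilon}\mathbb{Z}[a^{\pm 2}]$, with every monomial preserving its $a$-degree, hence its parity mod~$2$.

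The only minor point that warrants comment is the apparent difference between $\mathbb{Z}[a^2]$ as literally written in the corollary and the $\mathbb{Z}[a^{\pm 2}]$ that the specialization produces; these should be read as the same Laurent-polynomial ring, consistent with the conventions used throughout the introduction. There is no real obstacle here, since all of the work has already been done: the corollary is a one-line consequence of Theorem~\ref{Theorem-strongintegrality}, recorded separately only because the $q=1$ specialization is the natural starting point for the discussion of special polynomials and the identity $\mathcal{P}_{[\lambda,\mu]}(\mathcal{K};1,a)=\mathcal{P}(\mathcal{K};1,a)^{|\lambda|+|\mu|}$ that follows it.
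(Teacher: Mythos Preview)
Your argument is correct and matches the paper's approach exactly: the paper introduces this corollary with the words ``A direct consequence of Theorem~\ref{Theorem-strongintegrality} gives'' and provides no further proof. Your additional remarks on why the $q=1$ specialization is well-defined and on the $\mathbb{Z}[a^2]$ versus $\mathbb{Z}[a^{\pm 2}]$ convention are appropriate clarifications of points the paper leaves implicit.
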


As a generalization of the formula (\ref{formula-special1}),  we
proved in \cite{CZ20} (cf. Theorem 5.2  in \cite{CZ20}) that
\begin{theorem} \label{Theorem-special-1-tangle}
For any knot $\mathcal{K}$ and partitions $\lambda,\mu\in
\mathcal{P}_+$,
\begin{align}
\mathcal{P}_{[\lambda,\mu]}(\mathcal{K};1,a)=\mathcal{P}(\mathcal{K};1,a)^{|\lambda|+|\mu|}.
\end{align}
\end{theorem}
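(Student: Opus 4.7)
The plan is to reduce the statement to the already-known case $\mu = \emptyset$ by reusing the resolution machinery developed in the proof of Theorem \ref{Theorem-strongintegrality}. Recall from that proof the identity
\begin{align*}
\mathcal{P}_{[\lambda,\mu]}(\mathcal{K};q,a) = C(\lambda,\mu) = \sum_{\pi\in S_n,\,\rho\in S_m} c_{\pi,\rho}(\mathcal{K})\, c(\pi,\lambda)\, c(\rho,\mu),
\end{align*}
where $n = |\lambda|$, $m = |\mu|$, the coefficients $c_{\pi,\rho}(\mathcal{K})$ arise from expanding $T_{n,m}(\mathcal{K}) \in H_{n,m}(q,a)$ in positive permutation braids $\omega_{\pi}\otimes \omega_{\rho}^{*}$, and the coefficients $c(\pi,\lambda), c(\rho,\mu)$ come from the closures $\widehat{y_\lambda \omega_\pi} = c(\pi,\lambda)Q_\lambda$ and $\widehat{y_\mu^{*}\omega_\rho^{*}} = c(\rho,\mu)Q_\mu^{*}$.

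The first step is to take $q \to 1$ in this formula. By Lemma \ref{Lemma-z}, each coefficient $c_{\pi,\rho}(\mathcal{K})$ is divisible by $z^{l(c(\pi))+l(c(\rho))-(n+m)}$, which vanishes at $q=1$ unless $l(c(\pi)) = n$ and $l(c(\rho)) = m$, i.e.\ unless $\pi = e$ and $\rho = e$ are the identities. Moreover $c(e,\lambda) = c(e,\mu) = 1$ since $y_\lambda \cdot 1 = y_\lambda$ closes to $Q_\lambda$ (and similarly for $\mu$). Hence the formula collapses to
\begin{align*}
\mathcal{P}_{[\lambda,\mu]}(\mathcal{K};1,a) = c_{e,e}^{(n,m)}(\mathcal{K})\big|_{q=1},
\end{align*}
which notably depends only on the sizes $n,m$ and not on the partitions themselves.

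Second, I would identify $c_{e,e}^{(n,m)}(\mathcal{K})\big|_{q=1}$ with $\mathcal{P}(\mathcal{K};1,a)^{n+m}$. The $\mu=\emptyset$ case treated in \cite{CZ20} yields $c_{e,\emptyset}^{(n)}(\mathcal{K})\big|_{q=1} = \mathcal{P}(\mathcal{K};1,a)^n$, and the mirror symmetry $\mathcal{P}(\overline{\mathcal{K}};1,a) = \mathcal{P}(\mathcal{K};1,a)$ gives the analogous formula for the pure-negative case. The new ingredient is that at $q=1$ the skein relation degenerates to $\mathcal{H}(L_+) = \mathcal{H}(L_-)$, so all crossings between the $n$ upward-oriented strands and the $m$ downward-oriented strands of the $(n,m)$-cable of $\mathcal{K}$ can be freely resolved without changing the invariant. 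This decouples the two orientation groups and produces the multiplicative formula $c_{e,e}^{(n,m)}(\mathcal{K})\big|_{q=1} = c_{e,\emptyset}^{(n)}(\mathcal{K})\big|_{q=1}\cdot c_{\emptyset,e}^{(m)}(\mathcal{K})\big|_{q=1} = \mathcal{P}(\mathcal{K};1,a)^{n+m}$.

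The main obstacle is precisely this decoupling step: one must justify that at $q=1$ the coefficient of $\omega_e\otimes \omega_e^{*}$ in the resolution of $T_{n,m}(\mathcal{K})$ factors as the product of the analogous coefficients for $T_{n,0}(\mathcal{K})$ and $T_{0,m}(\mathcal{K})$. The cleanest route is to peel off one pair of oppositely oriented strands at a time, using Reidemeister II together with $z = 0$ to eliminate all mixed-orientation crossings, and to induct on $n+m$ with the pure-orientation formula of \cite{CZ20} (and its mirror) as the base case. Combining the displays above with $n = |\lambda|$ and $m = |\mu|$ then gives $\mathcal{P}_{[\lambda,\mu]}(\mathcal{K};1,a) = \mathcal{P}(\mathcal{K};1,a)^{|\lambda|+|\mu|}$.
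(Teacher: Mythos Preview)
Your first step contains a genuine error. You claim that Lemma~\ref{Lemma-z} shows $c_{\pi,\rho}(\mathcal{K})$ is divisible by $z^{l(c(\pi))+l(c(\rho))-(n+m)}$, and that this forces vanishing at $q=1$ unless $\pi=\rho=e$. But the exponent $l(c(\pi))+l(c(\rho))-(n+m)$ is \emph{non-positive} (it equals zero precisely when $\pi=\rho=e$), so divisibility by $z$ to that power is vacuous and implies nothing about the value at $z=0$. What Lemma~\ref{Lemma-z} actually gives is only a parity constraint: the $z$-powers in $c_{\pi,\rho}(\mathcal{K})$ all have the parity of $l(\pi)+l(\rho)$. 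This forces vanishing at $q=1$ when $l(\pi)+l(\rho)$ is odd, but not when it is even and positive. For instance, with $m=0$, $n=3$, and $\pi$ a $3$-cycle, one has $l(\pi)=2$, and nothing in Lemma~\ref{Lemma-z} rules out a nonzero constant term in $c_{\pi}(\mathcal{K})$. Concretely, using Proposition~\ref{Proposition-ylambda} one checks $c((123),(2,1))=c((132),(2,1))=0$, so at $q=1$ the formula gives $\mathcal{P}_{(2,1)}(\mathcal{K};1,a)=c_e|_{q=1}$ while $\mathcal{P}_{(3)}(\mathcal{K};1,a)=c_e|_{q=1}+c_{(123)}|_{q=1}+c_{(132)}|_{q=1}$; your argument does not explain why the extra terms vanish.

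Your second step is also problematic. Saying ``at $q=1$ the skein relation degenerates to $\mathcal{H}(L_+)=\mathcal{H}(L_-)$'' is not a consistent specialization: applying the crossing switch to a kink and then using RI in both directions yields $a=a^{-1}$ in the $z=0$ skein, so one cannot naively uncross strands while keeping $a$ generic. The decoupling of the $n$ upward and $m$ downward strands therefore needs a different justification.

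The paper's route (carried out in \cite{CZ20}, and illustrated here in the proof of Theorem~\ref{Theorem-special-composite}) sidesteps both issues. One expands $Q_{\lambda,\mu}$ in products of power sums $P_A P_B^*$ (plus terms with strictly fewer components), decorates $\mathcal{K}$, and invokes the Lickorish--Millett structure $\mathcal{H}(\mathcal{L};q,a)=\sum_{g\ge 0}\hat p^{\mathcal{L}}_{2g+1-L}(a)\,z^{2g-L}$ together with the product formula $\hat p^{\mathcal{L}}_{1-L}(a)=(a-a^{-1})^{L}\prod_\alpha\mathcal{P}(\mathcal{K}_\alpha;1,a)$. The most negative $z$-power in numerator and denominator is carried exactly by the terms with the maximal number of components, namely $|\lambda|+|\mu|$, and the ratio of those leading coefficients is $\mathcal{P}(\mathcal{K};1,a)^{|\lambda|+|\mu|}$. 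No statement about individual $c_{\pi,\rho}|_{q=1}$ is needed.
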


Since the special polynomial plays important role in studying the
genus expansion of the quantum invariant,  in this article, we also
introduce the special polynomial for the composite invariants.
\begin{definition}
Given a knot $\mathcal{K}$, the special composite invariants is
define as the following limit
\begin{align}
\mathcal{D}_{\lambda}(\mathcal{K};a)=\lim_{q\rightarrow
1}\frac{\mathcal{C}_{\lambda}(\mathcal{K};q,a)}{\mathcal{C}_{\lambda}(U;q,a)}.
\end{align}
\end{definition}

\begin{theorem} \label{Theorem-special-composite}
For any knot $\mathcal{K}$, we have
\begin{align}
\mathcal{D}_{\lambda}(\mathcal{K};a)=\mathcal{P}(\mathcal{K};1,a)^{|\lambda|}.
\end{align}
\end{theorem}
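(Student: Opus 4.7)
The plan is to deduce Theorem \ref{Theorem-special-composite} directly from Theorem \ref{Theorem-special-1-tangle} by unfolding the Littlewood--Richardson definition of the composite invariant. First I will expand
\begin{align*}
\mathcal{C}_{\lambda}(\mathcal{K};q,a)=\sum_{\alpha,\beta}c_{\alpha,\beta}^{\lambda}\mathcal{P}_{[\alpha,\beta]}(\mathcal{K};q,a)\mathcal{H}_{[\alpha,\beta]}(U;q,a)
\end{align*}
using (\ref{formula-Hlambdamu}). The Littlewood--Richardson coefficient $c_{\alpha,\beta}^{\lambda}$ vanishes unless $|\alpha|+|\beta|=|\lambda|$, so on every surviving term Theorem \ref{Theorem-special-1-tangle} forces the uniform value
\begin{align*}
\mathcal{P}_{[\alpha,\beta]}(\mathcal{K};1,a)=\mathcal{P}(\mathcal{K};1,a)^{|\lambda|},
\end{align*}
which crucially does not depend on the particular pair $(\alpha,\beta)$. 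This is the observation that will make the Littlewood--Richardson summation collapse.

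Next I will invoke the strong integrality Theorem \ref{Theorem-strongintegrality}, which places $\mathcal{P}_{[\alpha,\beta]}(\mathcal{K};q,a)$ in $a^{\epsilon}\mathbb{Z}[q^{\pm 2},a^{\pm 2}]$. Consequently the difference $\mathcal{P}_{[\alpha,\beta]}(\mathcal{K};q,a)-\mathcal{P}(\mathcal{K};1,a)^{|\lambda|}$ is divisible by $q^{2}-1$, so I may write it as $(q^{2}-1)G_{\alpha,\beta}(q,a)$ with $G_{\alpha,\beta}$ a Laurent polynomial regular at $q=1$. Reassembling the sum then yields the clean decomposition
\begin{align*}
\mathcal{C}_{\lambda}(\mathcal{K};q,a)=\mathcal{P}(\mathcal{K};1,a)^{|\lambda|}\,\mathcal{C}_{\lambda}(U;q,a)+(q^{2}-1)R(q,a),
\end{align*}
where $R(q,a)=\sum_{\alpha,\beta}c_{\alpha,\beta}^{\lambda}G_{\alpha,\beta}(q,a)\mathcal{H}_{[\alpha,\beta]}(U;q,a)$. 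Dividing by $\mathcal{C}_{\lambda}(U;q,a)$ reduces the theorem to showing that the correction $(q^{2}-1)R(q,a)/\mathcal{C}_{\lambda}(U;q,a)$ tends to zero as $q\to 1$.

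The main obstacle will be controlling the competing singularities at $q=1$, as both the numerator and the denominator of the correction diverge there. From the explicit product (\ref{formula-unknot1}) one sees that $W_{\mu}(U;q,a)$ has a pole of order exactly $|\mu|$ at $q=1$ with leading coefficient $(a-a^{-1})^{|\mu|}/\prod_{x\in\mu}h(x)$, and combining this with (\ref{formula-unknot2}) shows that $\mathcal{H}_{[\alpha,\beta]}(U;q,a)$ has pole of order exactly $|\alpha|+|\beta|=|\lambda|$ there. This already gives $R(q,a)=O((q-1)^{-|\lambda|})$. For the denominator, the leading coefficient of $\mathcal{C}_{\lambda}(U;q,a)$ at $q=1$ is a manifestly non-negative combination of positive contributions, and the single term $(\alpha,\beta)=(\lambda,\emptyset)$ with $c_{\lambda,\emptyset}^{\lambda}=1$ is already non-zero, so $\mathcal{C}_{\lambda}(U;q,a)$ also has pole of order exactly $|\lambda|$ with non-vanishing leading coefficient. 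Hence the correction is $O(q-1)$ near $q=1$ and vanishes in the limit, simultaneously establishing existence of $\mathcal{D}_{\lambda}(\mathcal{K};a)$ and the identity $\mathcal{D}_{\lambda}(\mathcal{K};a)=\mathcal{P}(\mathcal{K};1,a)^{|\lambda|}$.
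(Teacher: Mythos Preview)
Your proof is correct and takes a genuinely different route from the paper's. You leverage Theorem~\ref{Theorem-special-1-tangle} (already established in \cite{CZ20}) together with strong integrality to reduce the question to a pole-order comparison for the unknot invariants: since every $\mathcal{P}_{[\alpha,\beta]}(\mathcal{K};q,a)$ with $|\alpha|+|\beta|=|\lambda|$ specializes to the same value at $q=1$, the entire Littlewood--Richardson sum collapses up to a correction divisible by $(q^2-1)$, and the explicit unknot formula~(\ref{formula-unknot1}) plus positivity of the LR coefficients handles the limit. The paper, by contrast, does not invoke Theorem~\ref{Theorem-special-1-tangle} at all; it expands the skein element $Q^{\lambda}$ via the Frobenius formula into power-sum pieces $P_A P_B^*$, separates the terms according to how many components the decorated link has, and then appeals to the Lickorish--Millett structure of the HOMFLY-PT polynomial (the lemma around~(\ref{homflyexpansion})--(\ref{homflyexpansioninitial})) to identify the leading pole and its coefficient. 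Your argument is shorter and more transparent given the tools already on the table; the paper's argument is more self-contained (it reproves the relevant $q\to 1$ behavior from scratch via the cable-link HOMFLY expansion rather than importing it from Theorem~\ref{Theorem-special-1-tangle}) and its component-counting technique is the one that would adapt directly to links, where no single normalized one-tangle invariant is available.
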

We need the following lemma for the algebraic structure of HOMFLY-PT
polynomial.

\begin{lemma}[See for example \cite{LiMi87}, \cite{ZhouZhu19}] For a link $\mathcal{L}$ with $L$ components $\mathcal{K}_\alpha,
\alpha=1,..,L$, the HOMFLY-PT polynomial $P(\mathcal{L};q,a)$ of
$\mathcal{L}$ can be written in the following form
\begin{align}
P(\mathcal{L};q,a)=\sum_{g\geq
0}p^\mathcal{L}_{2g+1-L}(a)(q-q^{-1})^{2g+1-L}.
\end{align}
Moreover,
\begin{align}
p_{1-L}^{\mathcal{L}}(a)=a^{-2lk(\mathcal{L})}(a-a^{-1})^{L-1}\prod_{\alpha=1}^{L}p_0^{\mathcal{K}_\alpha}(a)
\end{align}
where $p_0^{\mathcal{K}_\alpha}(a)$ is the HOMFLY-PT polynomial of
the $\alpha$-th component $\mathcal{K}_\alpha$ of the link
$\mathcal{L}$ with $q=1$, i.e.
$p_0^{\mathcal{K}_\alpha}(a)=P(\mathcal{K}_\alpha;1,a)=a^{-w(\mathcal{K}_\alpha)}\mathcal{P}(\mathcal{K}_\alpha;1,a)$.
\end{lemma}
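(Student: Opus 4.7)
Both assertions can be obtained by induction on the crossing number of a diagram of $\mathcal{L}$, using the HOMFLY-PT skein $a\,P(L_+)-a^{-1}P(L_-)=z\,P(L_0)$ and the disjoint union rule $P(L_1\sqcup L_2)=s\,P(L_1)P(L_2)$ with $s=(a-a^{-1})/z$. The base case is the split link $\mathcal{L}'=\bigsqcup_{\alpha=1}^L\mathcal{K}_\alpha$, for which
\[
P(\mathcal{L}';q,a)=s^{L-1}\prod_{\alpha=1}^L P(\mathcal{K}_\alpha;q,a),
\]
so that $z^{L-1}P(\mathcal{L}';q,a)=(a-a^{-1})^{L-1}\prod_\alpha P(\mathcal{K}_\alpha;q,a)\in \mathbb{Z}[z^2,a^{\pm 1}]$ assuming the knot case (the classical fact that $P(\mathcal{K};q,a)$ is a polynomial in $z^2$), and the value at $z=0$ is $(a-a^{-1})^{L-1}\prod_\alpha p_0^{\mathcal{K}_\alpha}(a)$.

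For the structure assertion, the inductive step uses that smoothing a between-component crossing merges two components while smoothing a self-crossing splits a component, so the number of components of $L_0$ differs from that of $L_\pm$ by exactly one. Inductively, $P(L_0)$ then lies in $z^{2-L}\mathbb{Z}[z^2,a^{\pm 1}]$ or $z^{-L}\mathbb{Z}[z^2,a^{\pm 1}]$; the extra factor $z$ in $z\,P(L_0)$ restores the correct parity, landing it in $z^{1-L}\mathbb{Z}[z^2,a^{\pm 1}]$, the same ambient module as $P(L_\pm)$. Since any link diagram can be reduced to a split diagram by finitely many applications of the skein and crossing changes (together with Reidemeister moves, which $P$ respects), the structure $P(\mathcal{L};q,a)=\sum_{g\ge 0}p^{\mathcal{L}}_{2g+1-L}(a)\,z^{2g+1-L}$ propagates from the base case.

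For the leading coefficient, the key observation is that $\mathcal{H}(L_+)-\mathcal{H}(L_-)=z\,\mathcal{H}(L_0)$ vanishes at $z=0$; combined with $\mathcal{H}(L_0)\sim z^{1-L}$ for a between-component smoothing, this gives $\mathcal{H}(\mathcal{L};q,a)-\mathcal{H}(\tilde{\mathcal{L}};q,a)=O(z^{2-L})$ for any $\tilde{\mathcal{L}}$ obtained from $\mathcal{L}$ by a between-component crossing change. Iterating, the leading $z^{-L}$ coefficient of $\mathcal{H}(\mathcal{L};q,a)$ equals that of the split link $\mathcal{L}'=\bigsqcup_\alpha \mathcal{K}_\alpha$ reached by switching all between-component crossings. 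Such switches preserve each $\mathcal{K}_\alpha$ (only between-component crossings are changed, so self-crossings and hence individual component knot types are untouched), and contribute exactly $w(\mathcal{L})-w(\mathcal{L}')=2\,lk(\mathcal{L})$ to the writhe difference. Substituting $P=a^{-w}\mathcal{H}/s$ and passing to $z\to 0$ yields
\[
p_{1-L}^{\mathcal{L}}(a)=a^{-2\,lk(\mathcal{L})}\lim_{z\to 0}z^{L-1}P(\mathcal{L}';q,a)=a^{-2\,lk(\mathcal{L})}(a-a^{-1})^{L-1}\prod_{\alpha=1}^L p_0^{\mathcal{K}_\alpha}(a),
\]
using the base-case computation for $\mathcal{L}'$.

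The main technical obstacle is verifying the parity statement (only powers $\equiv 1-L\pmod 2$ occur) together with the lower bound $\ge 1-L$ on the power of $z$. This requires careful bookkeeping across the induction, since the skein couples links whose component counts differ by one; the explicit factor of $z$ on $P(L_0)$ must cancel exactly against the parity flip of the component count, and the induction parameter (crossings, plus an auxiliary complexity such as the number of crossing changes needed to split) has to be chosen so that each skein application strictly decreases it. Once this is in place, the leading coefficient formula reduces cleanly to the writhe/linking-number accounting at $q=1$ outlined above.
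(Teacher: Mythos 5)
Your proposal is correct and follows essentially the classical skein-induction argument of Lickorish--Millett: note that the paper itself gives no proof of this lemma (it is quoted with references to \cite{LiMi87} and \cite{ZhouZhu19}), and your route --- the split-link base case $P(\mathcal{L}')=s^{L-1}\prod_\alpha P(\mathcal{K}_\alpha)$, the parity bookkeeping through $aP(L_+)-a^{-1}P(L_-)=zP(L_0)$ with component counts shifting by one under smoothing, and the transfer of the lowest-order coefficient across between-component crossing switches with the writhe/linking-number accounting $w(\mathcal{L})-w(\mathcal{L}')=2\,lk(\mathcal{L})$ --- is exactly the argument of the cited sources. The only points to tighten are that the $L=1$ case (polynomiality of $P(\mathcal{K};q,a)$ in $z^2$) is itself part of the statement and should be either proven by the same induction run down to descending diagrams or explicitly cited as input, and that the lexicographic induction parameter you sketch (crossing number first, then number of switches needed to split or to reach a descending diagram) should be written out so that every skein application strictly decreases it.
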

By the definition of the HOMFLY-PT polynomial, we have
\begin{align} \label{homflyexpansion}
\mathcal{H}(\mathcal{L};q,a)=\sum_{g\geq
0}\hat{p}^{\mathcal{L}}_{2g+1-L}(a)(q-q^{-1})^{2g-L}
\end{align}
where
$\hat{p}^{\mathcal{L}}_{2g+1-L}(a)=a^{w(\mathcal{L})}p^{\mathcal{L}}_{2g+1-L}(a)(a-a^{-1})$.
Hence
\begin{align} \label{homflyexpansioninitial}
\hat{p}^{\mathcal{L}}_{1-L}(a)=a^{\sum_{\alpha=1}^Lw(\mathcal{K}_\alpha)}(a-a^{-1})^L\prod_{\alpha=1}^{L}p_{0}^{\mathcal{K}_\alpha}(a)
=(a-a^{-1})^L\prod_{\alpha=1}^L\mathcal{P}(\mathcal{K}_\alpha;1,a).
\end{align}

We now prove the Theorem \ref{Theorem-special-composite}.
\begin{proof}
We consider the skein element
\begin{align}
Q^{\lambda}=\sum_{\mu,\nu}c_{\mu,\nu}^{\lambda}Q_{\mu,\nu},
\end{align}
where
\begin{align}
c_{\mu,\nu}^{\lambda}=\sum_{A,B}\frac{\chi_{\mu}(A)\chi_{\nu}(B)}{\mathfrak{z}_{A}\mathfrak{z}_B}\chi_{\lambda}(A\cup
B)
\end{align}
and
\begin{align} \label{formula-Qlambdamu2}
Q_{\mu,\nu}&=Q_{\mu}Q_{\nu}^*+\sum_{\sigma\neq
\emptyset}(-1)^{|\sigma|}c_{\sigma,\rho}^{\mu}c_{\sigma^t,\tau}^{\nu}Q_{\rho}Q_{\tau}^*.
\end{align}

By using the Frobenius formula (\ref{formula-frobeniusQ}), we obtain

\begin{align}
Q^{\lambda}&=\sum_{\mu,\nu}\sum_{A,B}\frac{\chi_{\mu}(A)\chi_{\nu}(B)}{\mathfrak{z}_{A}\mathfrak{z}_B}\chi_{\lambda}(A\cup
B)Q_{\mu}Q_{\nu}^*\\\nonumber
&+\sum_{\mu,\nu}c_{\mu,\nu}^{\lambda}\sum_{\sigma\neq
\emptyset}(-1)^{|\sigma|}c_{\sigma,\rho}^{\mu}c_{\sigma^t,\tau}^{\nu}Q_{\rho}Q_{\tau}^*\\\nonumber
&=\sum_{A,B}\frac{\chi_{\lambda}(A\cup
B)}{\mathfrak{z}_{A}\mathfrak{z}_B}P_{A}P_{B}^*+\sum_{\mu,\nu}c_{\mu,\nu}^{\lambda}\sum_{\sigma\neq
\emptyset}(-1)^{|\sigma|}c_{\sigma,\rho}^{\mu}c_{\sigma^t,\tau}^{\nu}Q_{\rho}Q_{\tau}^*\\\nonumber
&=\sum_{|A|+|B|=|\lambda|}\frac{\chi_{\lambda}(1^{|\lambda|})}{\mathfrak{z}_{(1^{|A|})}\mathfrak{z}_{(1^{|B|})}}P_{(1^{|A|})}P_{(1^{|B|})}^{*}+\sum_{s}LT_s.
\end{align}

The main observation is that every terms in the first summation
\begin{align}
\sum_{|A|+|B|=|\lambda|}\frac{\chi_{\lambda}(1^{|\lambda|})}{\mathfrak{z}_{(1^{|A|})}\mathfrak{z}_{(1^{|B|})}}P_{(1^{|A|})}P_{(1^{|B|})}^{*}
\end{align}
contains $|A|+|B|=|\lambda|$ components in the skein $\mathcal{C}$,
while the remain terms $LT_s$ in the second summation have the
components less than $|\lambda|$.

By definition, we have
\begin{align}
\mathcal{C}_{\lambda}(\mathcal{K};q,a)&=\sum_{|A|+|B|=|\lambda|}\frac{\chi_{\lambda}(1^{|\lambda|})}{\mathfrak{z}_{(1^{|A|})}\mathfrak{z}_{(1^{|B|})}}
\mathcal{H}(\mathcal{K}\star
P_{(1^{|A|})}P_{(1^{|B|})}^{*};q,a)\\\nonumber
&+\sum_s\mathcal{H}(\mathcal{K}\star LT_s;q,a)
\end{align}
and
\begin{align}
\mathcal{C}_{\lambda}(U;q,a)&=\sum_{|A|+|B|=|\lambda|}\frac{\chi_{\lambda}(1^{|\lambda|})}{\mathfrak{z}_{(1^{|A|})}\mathfrak{z}_{(1^{|B|})}}
\left(\frac{a-a^{-1}}{q-q^{-1}}\right)^{|\lambda|}\\\nonumber
&+\sum_s\mathcal{H}(U\star LT_s;q,a).
\end{align}

Since $\mathcal{K}\star P_{(1^{|A|})}P_{(1^{|B|})}^*$ is a link with
$|A|+|B|=|\lambda|$ components,  according to the expansion formula
(\ref{homflyexpansion}), we have
\begin{align}
\mathcal{H}(\mathcal{K}\star
P_{(1^{|A|})}P_{(1^{|B|})}^*;q,a)=\sum_{g\geq
0}\hat{p}_{2g+1-(|\lambda|)}^{\mathcal{K}\star
P_{(1^{|A|})}P_{(1^{|B|})}^*}(a)(q-q^{-1})^{2g-|\lambda|}.
\end{align}
For the link $\mathcal{K}\star LT_s$ with the number of components
$L(\mathcal{K}\star LT_s)\leq |\lambda|-1$, we also have
\begin{align}
\mathcal{H}(\mathcal{K}\star LT_s;q,a)=\sum_{g\geq
0}\hat{p}_{2g+1-L(\mathcal{K}\star LT_s)}^{\mathcal{K}\star
LT_s}(a)(q-q^{-1})^{2g-L(\mathcal{K}\star LT_s)}.
\end{align}
Since
$\frac{\chi_{\lambda}(1^{|\lambda|})}{\mathfrak{z}_{(1^{|A|})}\mathfrak{z}_{(1^{|B|})}}\neq
0$ when $|A|+|B|=|\lambda|$, by a direct calculation, we obtain
\begin{align}
\lim_{q\rightarrow
1}\frac{\mathcal{C}_{\lambda}(\mathcal{K};q,a)}{\mathcal{C}_{\lambda}(U;q,a)}=\frac{\sum_{|A|+|B|=|\lambda|}
\frac{1}{\mathfrak{z}_{(1^{|A|})}\mathfrak{z}_{(1^{|B|})}}\hat{p}_{1-|\lambda|}^{\mathcal{K}\star
P_{(1^{|A|})}P_{(1^{|B|})}^*}(a)}{(a-a^{-1})^{|\lambda|}\sum_{|A|+|B|=|\lambda|}\frac{1}{\mathfrak{z}_{(1^{|A|})}\mathfrak{z}_{(1^{|B|})}}}
\end{align}
Moreover, the formula (\ref{homflyexpansioninitial}) implies
\begin{align}
\hat{p}_{1-|\lambda|}^{\mathcal{K}\star
P_{(1^{|A|})}P_{(1^{|B|})}^*}(a)&=(a-a^{-1})^{|\lambda|}\mathcal{P}(\mathcal{K};1,a)^{|\lambda|},
\end{align}
since the number of components $L(\mathcal{K}\star
P_{(1^{|A|})}P_{(1^{|B|})}^*)=|A|+|B|=|\lambda|$.  Therefore, we
have
\begin{align}
\lim_{q\rightarrow
1}\frac{\mathcal{C}_{\lambda}(\mathcal{K};q,a)}{\mathcal{C}_{\lambda}(U;q,a)}=\mathcal{P}_\mathcal{K}(1,a)^{|\lambda|}.
\end{align}
\end{proof}

\section{Colored Alexander polynomials} \label{Section-coloredAlender}
In \cite{IMMM12}, the authors also considered another limit for
colored HOMFLY-PT invariants
\begin{align}
A_{\lambda}(\mathcal{K};q)=\lim_{a\rightarrow
1}\frac{W_{\lambda}(\mathcal{K};q,a)}{W_{\lambda}(\mathcal{K};q,a)}
\end{align}

When $\lambda=(1)$, by its definition, we obtain
$A_{(1)}(\mathcal{K};q):=A(\mathcal{K};q)$ which is the Alexander
polynomial for knot $\mathcal{K}$, so we call
$A_{\lambda}(\mathcal{K};q)$ the colored Alexander polynomial of
$\mathcal{K}$. It motivates us to consider the limit of the full
framed colored HOMFLY-PT invariants.
\begin{definition}
The full (framed) colored Alexander polynomial of $\mathcal{K}$ is
defined by
\begin{align}
\mathcal{A}_{[\lambda,\mu]}(\mathcal{K};q):=\lim_{a\rightarrow
1}\frac{\mathcal{H}_{[\lambda,\mu]}(\mathcal{K};q,a)}{\mathcal{H}_{[\lambda,\mu]}(U;q,a)}.
\end{align}
In particular, when $\mu=\emptyset$, we have the relationship
\begin{align}
\mathcal{A}_{[\lambda,\emptyset]}(\mathcal{K};q)=q^{\kappa_{\lambda}w(\mathcal{K})}A_{\lambda}(\mathcal{K};q),
\end{align}
according to their definitions.
\end{definition}

Recall the definition of the normalized framed full colored
HOMFLY-PT invariant, actually,
\begin{align} \label{formula-Alamdbamu}
\mathcal{A}_{[\lambda,\mu]}(\mathcal{K};q)=\mathcal{P}_{[\lambda,\mu]}(\mathcal{K};q,a=1).
\end{align}
By Theorem \ref{Theorem-strongintegrality}, we obtain that
$\mathcal{P}_{[\lambda,\mu]}(\mathcal{K};q,1)$ is a well-defined
polynomial of $q$ lies in the ring $\mathbb{Z}[q^2]$.
\begin{corollary}
The full colored Alexander polynomial
$\mathcal{A}_{[\lambda,\mu]}(\mathcal{K};q)$ is well-defined and
\begin{align}
\mathcal{A}_{[\lambda,\mu]}(\mathcal{K};q)\in \mathbb{Z}[q^2].
\end{align}
\end{corollary}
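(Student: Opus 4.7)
The plan is to read off this corollary as a direct specialization of the strong integrality theorem that has already been proved. Formula (\ref{formula-Alamdbamu}) identifies
\[
\mathcal{A}_{[\lambda,\mu]}(\mathcal{K};q)=\mathcal{P}_{[\lambda,\mu]}(\mathcal{K};q,a)\Big|_{a=1},
\]
so the question is whether the $a=1$ substitution into the normalized framed full colored HOMFLY-PT invariant is well-defined and what ring it lives in. By Theorem~\ref{Theorem-strongintegrality} we already know
\[
\mathcal{P}_{[\lambda,\mu]}(\mathcal{K};q,a)\in a^{\epsilon}\mathbb{Z}[q^{\pm 2},a^{\pm 2}],
\]
which in particular is a (Laurent) polynomial in $a^{\pm 1}$ with no denominators in $a$. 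Thus the substitution $a=1$ is legitimate without taking any limit, so $\mathcal{A}_{[\lambda,\mu]}(\mathcal{K};q)$ is well-defined.

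Once well-definedness is in hand, the membership in $\mathbb{Z}[q^2]$ (understood in the paper's convention as $\mathbb{Z}[q^{\pm 2}]$) follows by evaluating the strong integrality statement at $a=1$: every monomial $a^{\epsilon}q^{2i}a^{2j}$ appearing in $\mathcal{P}_{[\lambda,\mu]}(\mathcal{K};q,a)$ carries an integer coefficient and contributes $q^{2i}$ at $a=1$, so the sum lies in $\mathbb{Z}[q^{\pm 2}]$. This is really a one-line argument, and there is no substantive obstacle; the only thing worth remarking is that the framing prefactor $a^{\epsilon}$ does not obstruct the specialization, since $1^{\epsilon}=1$ for $\epsilon\in\{0,1\}$.

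In summary the proposal is: (i) rewrite $\mathcal{A}_{[\lambda,\mu]}(\mathcal{K};q)$ via (\ref{formula-Alamdbamu}) as the $a=1$ specialization of $\mathcal{P}_{[\lambda,\mu]}(\mathcal{K};q,a)$; (ii) invoke Theorem~\ref{Theorem-strongintegrality} to see that this specialization is a genuine substitution into a (Laurent) polynomial in $a^{\pm 1}$ with coefficients in $\mathbb{Z}[q^{\pm 2}]$; (iii) conclude that $\mathcal{A}_{[\lambda,\mu]}(\mathcal{K};q)\in\mathbb{Z}[q^{\pm 2}]$. All the heavy lifting — the skein-theoretic analysis of the resolutions in $H_{n,m}(q,a)$, the use of Proposition~\ref{Proposition-ylambda}, and the parity bookkeeping in Lemmas~\ref{Lemma-z} and \ref{Lemma-a} — is already absorbed into Theorem~\ref{Theorem-strongintegrality}, and the corollary requires nothing beyond its specialization.
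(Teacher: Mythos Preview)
Your proposal is correct and matches the paper's approach exactly: the paper states this corollary immediately after formula~(\ref{formula-Alamdbamu}) and the sentence ``By Theorem~\ref{Theorem-strongintegrality}, we obtain that $\mathcal{P}_{[\lambda,\mu]}(\mathcal{K};q,1)$ is a well-defined polynomial of $q$ [that] lies in the ring $\mathbb{Z}[q^2]$,'' with no further argument given. Your write-up simply makes explicit the one-line specialization that the paper leaves implicit.
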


In \cite{BMMSS11}, the authors studied the properties for
$A_{\lambda}(\mathcal{K};q)$, they proposed the conjectural formula
\begin{align} \label{formula-Alambda}
A_{\lambda}(\mathcal{K};q)=A(\mathcal{K};q^{|\lambda|}).
\end{align}
However, in \cite{Zhu13}, we found that the formula
(\ref{formula-Alambda}) does not hold for the non-hook partition
such as $\lambda=(2,2)$, and we modified the above formula in the
following form
\begin{conjecture} \label{conjecture-Alexander}
When $\lambda$ is a hook partition,
\begin{align}
A_{\lambda}(\mathcal{K};q)=A(\mathcal{K};q^{|\lambda|}).
\end{align}
\end{conjecture}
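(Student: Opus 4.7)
The plan is to attack this via the following reduction. First, by the equivalence between the HOMFLY-PT skein-theoretic construction and the quantum group construction (reviewed in Section \ref{section-Appendix}), one can write $W_\lambda(\mathcal{K};q,a)$ via the cabling formula
\begin{align*}
W_\lambda(\mathcal{K};q,a) = \sum_{\mu\vdash |\lambda|} \frac{\chi_\lambda(\mu)}{\mathfrak{z}_\mu}\, \mathcal{H}(\mathcal{K}\star P_\mu;q,a)\cdot(\text{framing corrections}),
\end{align*}
where each $\mathcal{H}(\mathcal{K}\star P_\mu;q,a)$ is a HOMFLY-PT polynomial of a cable of $\mathcal{K}$. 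Specializing $a=1$ and using the analogous character expansion for the unknot, the identity $A_\lambda(\mathcal{K};q)=A(\mathcal{K};q^{|\lambda|})$ reduces — as indicated in the excerpt — to an identity of the form
\begin{align*}
\sum_{\mu\vdash n} \frac{\chi_\lambda(\mu)}{\mathfrak{z}_\mu}\, A(\mathcal{K};q^{\mu_1},\dots,q^{\mu_{\ell(\mu)}}) = \dim_q(\lambda)|_{a=1}\cdot A(\mathcal{K};q^n),
\end{align*}
i.e.\ an identity purely about characters of the Hecke algebra $H_n(q,1)$ paired with cabled Alexander data. The first step is therefore to make this reduction precise.

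Next I would exploit the special structure of hook characters. For a hook partition $\lambda=(a,1^{b})$ with $a+b=n$, the character $\chi_{\lambda}(\mu)$ on the cycle class $\mu$ has a clean closed form (it vanishes unless $\mu$ contains at most one ``long'' cycle, with sign governed by the arm/leg of the hook). This should collapse the sum in the character identity to a single-parameter combinatorial identity, at which point it can be matched against the known cabling behavior of the Alexander polynomial under the substitution $q\mapsto q^n$, using that the Alexander polynomial of an $n$-cable of $\mathcal{K}$ is determined by $A(\mathcal{K};q^n)$ and the linking data.

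For the explicit torus knot case $\mathcal{K}=T_{r,s}$, I would invoke the Rosso–Jones formula, which expresses $W_\lambda(T_{r,s};q,a)$ as an explicit sum over partitions with quantum dimensions and content-twists. Setting $a=1$, the quantum dimensions degenerate dramatically: only partitions contained in a hook contribute nontrivially (this is a Hecke-algebra vanishing phenomenon at $a=1$), and the remaining sum can be matched against the classical formula $A(T_{r,s};q)=(q^{rs/2}-q^{-rs/2})(q^{1/2}-q^{-1/2})/((q^{r/2}-q^{-r/2})(q^{s/2}-q^{-s/2}))$ with $q$ replaced by $q^{|\lambda|}$.

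The main obstacle will be the general-knot case: while the reduction to Hecke-algebra characters is formal, controlling the sum over $\mu\vdash n$ requires a uniform vanishing/collapse statement for hook characters against the $a=1$ specialization of cabled HOMFLY-PT polynomials, and no such identity seems available in the literature beyond special families. I would therefore expect to fully prove the conjecture only for torus knots (via Rosso–Jones), and to give the reduction to a clean character identity as the essential conceptual content in the general case — leaving the implication to the LMOV consequence for $\tilde g_p^{(0)}(\mathcal{K}_\tau;q,1)$ to be derived from the conjectural identity together with the congruence-skein machinery of Section \ref{section-LMOV functions}.
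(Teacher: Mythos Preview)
First, note that this statement is a \emph{conjecture} in the paper, not a theorem: the paper does not prove it for general knots. What the paper does is (i) reduce the general statement to a Hecke-algebra character identity (formula (\ref{formula-characteridentity})), and (ii) verify it for torus knots $T_{r,s}$ (Theorem \ref{Theorem-torusknot}). Your proposal correctly anticipates that only the torus case will go through, so on that high-level point you are aligned with the paper.

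However, your proposed reduction is not the one the paper carries out, and as written it is ill-defined. You expand $Q_\lambda$ in power sums and write a target identity involving a symbol ``$A(\mathcal{K};q^{\mu_1},\dots,q^{\mu_{\ell(\mu)}})$'', but the Alexander polynomial is a one-variable invariant and $\mathcal{H}(\mathcal{K}\star P_\mu;q,a)|_{a\to 1}$ does not factor into single-variable Alexander data in the way you suggest. The paper's reduction instead goes through the Lin--Zheng formula $W_\lambda(\mathcal{K};q,a)=q^{-\kappa_\lambda w(\mathcal{K})}a^{-|\lambda|w(\mathcal{K})}\sum_{|\Lambda|=dr}\zeta^{\Lambda}(X_\lambda(\mathcal{K}))\,s_\Lambda^*(q,a)$, where $r$ is the braid index and $\zeta^\Lambda$ are Hecke characters of $H_{dr}$. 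The $a\to 1$ limit is controlled not by the structure of $\chi_\lambda(\mu)$ for $\mu\vdash |\lambda|$, but by the fact that in the Frobenius expansion of $s_\Lambda^*(q,a)$ only the single-cycle term $\Phi=(dr)$ survives to leading order in $(a-a^{-1})$, and $\chi_\Lambda((dr))\neq 0$ forces $\Lambda$ to be a hook of size $dr$. This yields the identity (\ref{formula-characteridentity}), which lives in $H_{dr}$, not in $H_{|\lambda|}$.

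Relatedly, your claim that for a hook $\lambda=(a,1^b)$ the character $\chi_\lambda(\mu)$ ``vanishes unless $\mu$ contains at most one long cycle'' is false (e.g.\ $\chi_{(2,1)}((1^3))=2$); the relevant vanishing is the converse statement $\chi_\Lambda((n))=0$ unless $\Lambda$ is a hook, and it is applied to the \emph{outer} partition $\Lambda\vdash dr$, not to $\lambda$. For torus knots your Rosso--Jones idea is essentially what the paper does: it uses the Lin--Zheng evaluation $\zeta^\Lambda(X_\lambda(T_{r,s}))=c^\Lambda_{\lambda;r}\,q^{-s\kappa_\lambda+s\kappa_\Lambda/r}$ together with Lemma \ref{lemma-sumchi} to close the computation.
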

We proved in \cite{Zhu13} that
\begin{theorem} \label{Theorem-torusknot}
The Conjecture \ref{conjecture-Alexander} holds for torus knot
$T_{m,r}$.
\end{theorem}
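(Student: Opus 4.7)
The plan is to compute $A_{\lambda}(T_{m,r};q)$ directly from the Rosso--Jones decomposition for the colored HOMFLY-PT invariant of a torus knot, and then match the result to $A(T_{m,r};q^{|\lambda|})$ by a character identity that simplifies drastically for hook $\lambda$. Concretely, Rosso--Jones (equivalently Lin--Zheng \cite{LiZh10}) gives, after stripping framing,
\[
W_{\lambda}(T_{m,r};q,a) \;=\; \sum_{|\mu|=m|\lambda|} c^{\mu}_{\lambda,m}\, q^{r\kappa_{\mu}/m}\, W_{\mu}(U;q,a),
\]
where $c^{\mu}_{\lambda,m}$ is the Adams plethysm coefficient $s_{\lambda}[p_{m}]=\sum_{\mu} c^{\mu}_{\lambda,m}s_{\mu}$, which by Frobenius has the character expression $c^{\mu}_{\lambda,m}=\sum_{\nu}\chi_{\lambda}(\nu)\chi_{\mu}(m\nu)/\mathfrak{z}_{\nu}$. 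Dividing by $W_{\lambda}(U;q,a)$ and taking $a\to 1$, formula (\ref{formula-unknot1}) shows that $W_{\mu}(U;q,a)$ vanishes at $a=1$ to the order of the Durfee square of $\mu$, so for hook $\lambda$ only hook partitions $\mu$ contribute to the limit.

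A direct computation with (\ref{formula-unknot1}) for hook $\mu=(p,1^{k-1})$ of size $m|\lambda|$ yields
\[
\lim_{a\to 1}\frac{W_{\mu}(U;q,a)}{W_{\lambda}(U;q,a)} \;=\; (-1)^{l(\mu)-l(\lambda)}\,\frac{q^{|\lambda|}-q^{-|\lambda|}}{q^{m|\lambda|}-q^{-m|\lambda|}},
\]
so the conjecture reduces to proving
\[
\sum_{\substack{\mu\ \text{hook}\\ |\mu|=m|\lambda|}} (-1)^{l(\mu)-l(\lambda)}\, c^{\mu}_{\lambda,m}\, q^{r\kappa_{\mu}/m} \;=\; \frac{q^{rm|\lambda|}-q^{-rm|\lambda|}}{q^{r|\lambda|}-q^{-r|\lambda|}}.
\]
This is precisely the character identity (\ref{formula-characteridentity}) specialized to torus knots.

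To verify the identity I would use the Murnaghan--Nakayama generating function for hook characters, $\sum_{k\geq 0}(-t)^{k}\chi_{(n-k,1^{k})}(\nu)=\prod_{i}\bigl(1-(-t)^{\nu_{i}}\bigr)/(1+t)$, together with the simple formula $\kappa_{(p,1^{k-1})}=p(p-1)-k(k-1)$ for hook Casimirs. Substituting the generating function for both the hook $\lambda$ and the summed-over hook $\mu$, and swapping sums over $\nu$, one is left with a finite rational combinatorial identity that can be checked by residues/contour expansion in an auxiliary variable. The main obstacle is to carry out this collapse cleanly: the double-hook condition (on $\lambda$ and $\mu$), combined with the Adams rescaling $\nu\mapsto m\nu$ inside $\chi_{\mu}(m\nu)$, has to conspire with the $r$-twist $q^{r\kappa_{\mu}/m}$ to reassemble into the torus-knot quotient $[mr|\lambda|]/[r|\lambda|]$. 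Morally, this works because the Adams power $p_{m}$ matches the $\mathbb{Z}/m$ cyclic symmetry intrinsic to $T_{m,r}$; technically, turning that moral statement into a computation without ballooning case-analysis over cycle types is the delicate step, and a generating-function/residue packaging seems the most economical route.
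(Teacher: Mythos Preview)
Your approach matches the paper's. Two remarks. First, your Rosso--Jones formula as written is missing the framing prefactor $q^{-mr\kappa_\lambda}$ (together with an $a$-power that is irrelevant at $a=1$); without it the displayed identity you reduce to is off by exactly $q^{mr\kappa_\lambda}$, so once you restore it the target matches. Second, the step you flag as delicate is in fact immediate and needs no residues or case analysis: your hook-character generating function is precisely Lemma~\ref{lemma-sumchi}, and the paper simply applies it twice --- once with $u=q^{r|\lambda|}$ and $B=m\nu$ to collapse the sum over hook $\mu$ into $\prod_j\{rm|\lambda|\,\nu_j\}/\{r|\lambda|\}$, then once in reverse with $u=q^{rm|\lambda|}$ and $B=\nu$, after which character orthogonality against $\chi_\lambda$ singles out the one surviving term and yields $\{d\}\{drm\}/(\{dr\}\{dm\})=A(T_{m,r};q^{|\lambda|})$ on the nose.
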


\begin{remark}
Based on some concrete computations, we found that the analogue
conjecture doesn't hold for the full colored Alexander polynomial
(\ref{formula-Alamdbamu}).
\end{remark}

In the following, we will show that the Conjecture
\ref{conjecture-Alexander} can be reduced to an identity for the
characters of Hecke algebra.

First, we recall that every hook partition of weight $d$ can be
presented as the form $(m+1,1,...,1)$ with $n+1$ length for some $m,
n\in \mathbb{Z}_{\geq 0}$, denoted by $(m|n)$, with $m+n+1=d$. It is
clear that $\kappa_{(m|n)}=(m-n)d$.

By the property of character theory of symmetric group, one has
\makeatletter    \label{formula-chid}
\let\@@@alph\@alph
\def\@alph#1{\ifcase#1\or \or $'$\or $''$\fi}\makeatother
\begin{subnumcases}
{\chi_\lambda((d))=} (-1)^n, &\text{if $\lambda$ is a hook partition
$(m|n)$}\\\nonumber 0, &\text{otherwise}
\end{subnumcases}
\makeatletter\let\@alph\@@@alph\makeatother

Given a knot $\mathcal{K}$, let $b$ be a braid presentation of
$\mathcal{K}$. Suppose $b$ has $r$ strands,  given a partition
$\lambda$ of weight $|\lambda|=d$, we construct a $d$-cabling braid
$b^{(d,..,d)}$, that means $b^{(d,..,d)}$ is the new braid
constructed by replacing every strand in $b$ with $d$ parallel
strands. Recall the idempotent $y_{\lambda}$ of Hecke algebra
introduced Section \ref{Section-Idempotent},  let
\begin{align}
X_\lambda(\mathcal{K})=b^{(d,...,d)}\cdot (y_{\lambda}\otimes
\cdots\otimes y_{\lambda})
\end{align}
be the element in Hecke algebra $H_{dr}(q,a)$. Given a partition
$\Lambda$ of weight $|\Lambda|=dr$, denote by $\zeta^{\Lambda}$ a
character of the Hecke algebra $H_{dr}(q,a)$.

Then, we have the following formula due to X. Lin and H. Zheng
\cite{LiZh10}
\begin{align} \label{formula-Wlambda}
W_{\lambda}(\mathcal{K};q,a)=q^{-\kappa_{\lambda}w(\mathcal{K})}a^{-|\lambda|w(\mathcal{K})}\sum_{|\Lambda|=dr}\zeta^{\Lambda}(X_{\lambda}(\mathcal{K}))
s_{\lambda}^{*}(q,a),
\end{align}
where $s_{\lambda}^{*}(q,a)=\prod_{x\in
\lambda}\frac{aq^{cn(x)}-a^{-1}q^{-cn(x)}}{q^{h(x)}-q^{-h(x)}}$ is
the colored HOMFLY-PT invariant of the unknot.

Hence
\begin{align}
&A_{(m|n)}(\mathcal{K};q)=\lim_{a\rightarrow
1}\frac{W_{(m|n)}(\mathcal{K};q,a)}{s_{(m|n)}^{*}(q,a)}\\\nonumber
&=\lim_{a\rightarrow
1}\frac{q^{-d(m-n)w(\mathcal{K})}a^{-dw(\mathcal{K})}\sum_{|\Lambda|=rd}\zeta^{\Lambda}(X_{(m|n)}(\mathcal{K}))\sum_{|\Phi|=rd}
\frac{\chi_{\Lambda}(\Phi)}{\mathfrak{z}_{\Phi}}\frac{\prod_{j=1}^{l(\Phi)}\{\Phi_j\}_a}{\{\Phi_j\}}}{\sum_{\mu}\frac{\chi_{(m|n)}(\mu)}{\mathfrak{z}_{\mu}}
\prod_{j=1}^{l(\mu)}\frac{\{\mu_j\}_a}{\{\mu_j\}}}
\\\nonumber
&=\lim_{a\rightarrow
1}\frac{q^{-d(m-n)w(\mathcal{K})}\sum_{|\Lambda|=rd}\zeta^{\Lambda}(X_{(m|n)}(\mathcal{K}))\left(\frac{\chi_{\Lambda}(rd)}{rd}\frac{\{rd\}_a}{\{rd\}}
+\sum_{l(\Phi)\geq
2}\frac{\chi_{\Lambda}(\Phi)}{\mathfrak{z}_{\Phi}}\frac{\prod_{j=1}^{l(\Phi)}\{\Phi_j\}_a}{\{\Phi_j\}}\right)}{\frac{(-1)^n}{d}\frac{\{d\}_a}{\{d\}}+\sum_{l(\mu)\geq
2}\sum_{\mu}\frac{\chi_{(m|n)}(\mu)}{\mathfrak{z}_{\mu}}
\prod_{j=1}^{l(\mu)}\frac{\{\mu_j\}_a}{\{\mu_j\}}}\\\nonumber
&=\frac{q^{-d(m-n)w(\mathcal{K})}(-1)^n\{d\}}{\{rd\}}\sum_{k+l+1=rd}\zeta^{(k|l)}(X_{(m|n)}(\mathcal{K}))(-1)^l.
\end{align}

Therefore, Conjecture \ref{conjecture-Alexander} is reduced to the
following identity for the characters of the Hecke algebra.
\begin{align} \label{formula-characteridentity}
&q^{-d(m-n)w(\mathcal{K})}(-1)^n\sum_{k+l+1=dr}(-1)^l\zeta^{(k|l)}(X_{(m|n)}(\mathcal{K}))\\\nonumber
&=\sum_{k'+l'+1=r}(-1)^{l'}\zeta^{(k'|l')}(X_1(\mathcal{K}))|_{q\rightarrow
q^d}.
\end{align}

For torus knot $T_{r,s}$ which is defined to be the closure of
$(\sigma_1\cdots\sigma_{r-1})^{s}$ with $r,s$ relatively prime.  Lin
and Zheng \cite{LiZh10} computed that
\begin{align}
\zeta^{\Lambda}(X_\lambda(T_{r,s}))=c^{\Lambda}_{\lambda;r}q^{-s\kappa_{\lambda}+s\kappa_{\Lambda}/r}.
\end{align}
where
\begin{align}
c^{\Lambda}_{\lambda;r}=\sum_{\mu}\frac{\chi_{\lambda}(\mu)}{\mathfrak{z}_{\mu}}\chi_{\Lambda}(r\mu).
\end{align}

In particular, when $\lambda=(m|n)$ with $m+n+1=d$ and
$\Lambda=(k|l)$ with $k+l+1=rd$, then
$q^{\kappa_{(m|n)}}=q^{(m-n)d}$ and
$q^{\kappa_{(k|l)}}=q^{(k-l)rd}$. Hence
\begin{align}
\zeta^{(k|l)}(X_{(m|n)}(T_{r,s}))=\sum_{|\mu|=d}\frac{\chi_{(m|n)}(\mu)}{\mathfrak{z}_{\mu}}\chi_{(k|l)}(r\mu)q^{-ds(m-n)}q^{ds(k-l)}.
\end{align}

Therefore,
\begin{align}
&\sum_{k+l+1=rd}\zeta^{(k|l)}(X_{(m|n)}(T_{r,s}))(-1)^l\\\nonumber&=q^{-ds(m-n)}\sum_{|\mu|=d}\frac{\chi_{(m|n)}(\mu)}{\mathfrak{z}_{\mu}}
\sum_{k+l+1=rd}\chi_{(k|l)}(r\mu)(-1)^lq^{ds(k-l)}\\\nonumber
&=q^{-ds(m-n)}\sum_{|\mu|=d}\frac{\chi_{(m|n)}(\mu)}{\mathfrak{z}_{\mu}}\frac{\prod_{j=1}^{l(\mu)}\{drs\mu_j\}}{\{ds\}}\\\nonumber
&=q^{-ds(m-n)}\frac{\{drs\}}{\{ds\}}\sum_{|\mu|=d}\frac{\chi_{(m|n)}(\mu)}{\mathfrak{z}_{\mu}}\frac{\prod_{j=1}^{l(\mu)}\{drs\mu_j\}}{\{drs\}}\\\nonumber
&=q^{-ds(m-n)}\frac{\{drs\}}{\{ds\}}\sum_{|\mu|=d}\frac{\chi_{(m|n)}(\mu)}{\mathfrak{z}_{\mu}}\sum_{m'+n'+1=d}\chi_{(m'|n')}(\mu)(-1)^{n'}q^{rsd(m'-n')}\\\nonumber
&=q^{-ds(m-n)}\frac{\{drs\}}{\{ds\}}(-1)^nq^{drs(m-n)}.
\end{align}
where we have used the formula  (\ref{formula-sumchi}) in Lemma
\ref{lemma-sumchi} twice.

Finally, we obtain
\begin{align}
A_{(m|n)}(T_{r,s};q)=\frac{\{d\}\{drs\}}{\{dr\}\{ds\}}.
\end{align}
Comparing to the Alexander polynomial for torus knot,  Theorem
\ref{Theorem-torusknot} is proved.

\begin{lemma} (cf.  Lemma 6.5 in \cite{Zhu13}) \label{lemma-sumchi}
Given a partition $B$, we have the following identity,
\begin{align} \label{formula-sumchi}
\sum_{a+b+1=|B|}\chi_{(a|b)}(C_B)(-1)^bu^{a-b}=\frac{\prod_{j=1}^{l(B)}(u^{B_j}-u^{-B_j})}{u-u^{-1}}.
\end{align}
\end{lemma}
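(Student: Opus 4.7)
My plan is to prove the identity by induction on the length $l(B)$ using the Murnaghan--Nakayama rule applied to characters of hook representations. Denote the left-hand side by $L(u;B)$ and the right-hand side by $R(u;B)$.

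For the base case $l(B)=1$, i.e. $B=(n)$ with $n=|B|$, the hook-character value on an $n$-cycle is $\chi_{(a|b)}(C_{(n)})=(-1)^b$ for every hook $(a|b)$ of size $n$ and vanishes otherwise (this is precisely formula (\ref{formula-chid}) in the paper). Therefore
\begin{align*}
L(u;(n))=\sum_{a+b+1=n}(-1)^b(-1)^b u^{a-b}=\sum_{a=0}^{n-1}u^{2a-n+1}=\frac{u^n-u^{-n}}{u-u^{-1}}=R(u;(n)).
\end{align*}

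For the inductive step, fix $B$ with $l(B)=l\geq 2$ and pull off the part $B_l$. The key geometric observation is that in the hook $\lambda=(a+1,1^b)$ the entire Young diagram lies on the rim and any rim hook containing the corner $(1,1)$ must contain the full arm and the full leg, since otherwise the complement is not a Young diagram. Because $l\geq 2$ forces $B_l<n=a+b+1$, such a total rim hook cannot occur, and the only rim hooks of size $B_l$ in $\lambda$ are the horizontal strip at the end of the arm (requires $B_l\leq a$, leaves $(a-B_l|b)$, height $0$) and the vertical strip at the bottom of the leg (requires $B_l\leq b$, leaves $(a|b-B_l)$, height $B_l-1$). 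The Murnaghan--Nakayama rule thus gives
\begin{align*}
\chi_{(a|b)}(C_B)=[B_l\leq a]\,\chi_{(a-B_l|b)}(C_{B'})+[B_l\leq b](-1)^{B_l-1}\chi_{(a|b-B_l)}(C_{B'}),
\end{align*}
where $B'=(B_1,\dots,B_{l-1})$. Substituting into $L(u;B)$ and reindexing $a\mapsto a-B_l$ in the first sum, $b\mapsto b-B_l$ in the second, yields
\begin{align*}
L(u;B)=u^{B_l}L(u;B')+(-1)^{B_l-1}(-1)^{B_l}u^{-B_l}L(u;B')=(u^{B_l}-u^{-B_l})L(u;B'),
\end{align*}
and by the inductive hypothesis $L(u;B')=R(u;B')$, from which $L(u;B)=R(u;B)$ follows immediately.

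The main obstacle is the rim-hook enumeration: one must verify that every rim hook of $\lambda=(a+1,1^b)$ of size less than $|\lambda|$ is either a pure horizontal strip in the arm or a pure vertical strip in the leg, and that the Iverson brackets in the recursion correctly handle the degenerate shapes $(0|b)$ and $(a|0)$. Once that combinatorial point is established, the remaining work is bookkeeping of signs and exponents in the reindexing, which produces precisely the factor $u^{B_l}-u^{-B_l}$ needed to advance the induction.
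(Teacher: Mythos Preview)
Your proof is correct and takes a genuinely different route from the paper. The paper argues via generating functions: it invokes the identity (Macdonald, p.~49, problem 14)
\[
E(-u^{-1})H(u)=1+(u-u^{-1})\sum_{a,b}(-1)^b u^{a-b}s_{(a|b)}(x),
\]
then rewrites $E(-u^{-1})H(u)=\exp\big(\sum_{r\geq 1}\frac{p_r}{r}(u^r-u^{-r})\big)$ and expands it as $\sum_\lambda \frac{p_\lambda}{\mathfrak z_\lambda}\prod_j(u^{\lambda_j}-u^{-\lambda_j})$; comparing the coefficient of $p_B$ on the two sides (using the Frobenius expansion of $s_{(a|b)}$) yields the lemma in one stroke. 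Your argument instead proceeds by induction on $l(B)$ via the Murnaghan--Nakayama rule, exploiting the elementary fact that a proper border strip in a hook is either a horizontal tail of the arm or a vertical tail of the leg. The paper's approach is slicker and situates the identity inside a well-known symmetric-function formula, but it leans on an external reference; your approach is fully self-contained and makes the recursive structure (the factor $u^{B_l}-u^{-B_l}$ peeling off one part at a time) completely transparent. Both are perfectly valid; yours would read more naturally in a context where Macdonald's identity is not already at hand.
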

\begin{proof}
According to problem 14 at page 49 of \cite{Mac95}, taking $t=u$, we
have
\begin{align}
\prod_{i}\frac{1-u^{-1}x_i}{1-ux_i}=E(-u^{-1})H(u)=1+(u-u^{-1})s_{(a|b)}(x)(-1)^bu^{a-b}
\end{align}
since
$s_{(a|b)}(x)=\sum_{\lambda}\frac{\chi_{(a|b)}(C_\lambda)}{z_\lambda}p_\lambda(x)$,
and
\begin{align} \label{formula-EH}
E(-u^{-1})H(u)&=\frac{H(u)}{H(u^{-1})}\\\nonumber
&=\exp\left(\sum_{r\geq
1}\frac{p_r(x)}{r}(u^r-u^{-r})\right)\\\nonumber &=\prod_{r\geq
1}\exp\left(\frac{p_r(x)}{r}(u^r-u^{-r})\right)\\\nonumber
&=\prod_{r\geq 1}\sum_{m_r\geq
0}\frac{p_r(x)^{m_r}(u^r-u^{-r})^{m_r}}{r^{m_r}m_r!}\\\nonumber
&=\sum_{\lambda}\frac{p_{\lambda}(x)}{\mathfrak{z}_\lambda}\prod_{j=1}^{l(\lambda)}(u^{\lambda_j}-u^{-\lambda_j})
\end{align}
Comparing the coefficients of $p_B(x)$ in (\ref{formula-EH}), the
formula (\ref{formula-sumchi}) is obtained.
\end{proof}

In rest of this section, we will show that the Conjecture
\ref{conjecture-Alexander} is closely related to the Conjecture
\ref{conjecture-Specailcase} which is implied by the refined LMOV
integrality conjecture for framed links.

Recall the definition of the colored Alexander polynomial
\begin{align}
A_{(m|n)}(\mathcal{K};q)&=\lim_{a\rightarrow
1}\frac{q^{-(m-n)dw(\mathcal{K})}a^{-dw(\mathcal{K})}\mathcal{H}(\mathcal{K}\star
Q_{(m|n)};q,a)}{s_{(m|n)}(\mathbf{x})}\\\nonumber
&=q^{-(m-n)dw(\mathcal{K})}(-1)^nd(q^{d}-q^{-d})\lim_{a\rightarrow
1}\frac{\mathcal{H}(\mathcal{K}\star Q_{(m|n)};q,a)}{(a^d-a^{-d})}.
\end{align}

By the conjectural formula (\ref{conjecture-Alexander}), we obtain
\begin{align} \label{formula-lima}
\lim_{a\rightarrow 1}\frac{\mathcal{H}(\mathcal{K}\star
Q_{(m|n)};q,a)}{(a^d-a^{-d})}&=(-1)^n\frac{q^{(m-n)dw(\mathcal{K})}}{d(q^d-q^{-d})}A_{(m|n)}(\mathcal{K};q)\\\nonumber
&=(-1)^n\frac{q^{(m-n)dw(\mathcal{K})}}{d(q^d-q^{-d})}A(\mathcal{K};q^d).
\end{align}

Then, by using the Frobenius formula (\ref{formula-frobeniusQ}) and
the formula (\ref{formula-chid}), we obtain
\begin{align}
\lim_{a\rightarrow 1}\frac{\mathcal{H}(\mathcal{K}\star
P_d;q,a)}{a^{d}-a^{-d}}&=\lim_{a\rightarrow
1}\sum_{m+n+1=d}\frac{(-1)^n\mathcal{H}(\mathcal{K}\star
Q_{(m|n)};q,a)}{a^{d}-a^{-d}}\\\nonumber
&=\frac{A(\mathcal{K};q^d)}{d(q^d-q^{-d})}\sum_{m+n+1=d}q^{(m-n)dw(\mathcal{K})}.
\end{align}

On the other hand side, for the definition of  Alexander polynomial,
we have
\begin{align}
A(\mathcal{K};q)=(q-q^{-1})\lim_{a\rightarrow
1}\frac{\mathcal{H}(\mathcal{K};q,a)}{a-a^{-1}}.
\end{align}
Then
\begin{align}
\lim_{a\rightarrow
1}\Psi_{d}\left(\frac{\mathcal{H}(\mathcal{K};q,a)}{a-a^{-1}}\right)=\frac{A(\mathcal{K};q^d)}{(q^d-q^{-d})}.
\end{align}

Therefore, we obtain
\begin{align} \label{formula-limH}
&\lim_{a\rightarrow 1}\left(\frac{\mathcal{H}(\mathcal{K}\star
P_d;q,a)}{(a^{d}-a^{-d})}-(-1)^{(d-1)w(\mathcal{K})}\Psi_{d}\left(\frac{\mathcal{H}(\mathcal{K};q,a)}{a-a^{-1}}\right)\right)\\\nonumber
&=\frac{A(\mathcal{K};q^d)}{d(q^d-q^{-d})}\left(\sum_{m+n+1=d}q^{(m-n)dw(\mathcal{K})}-(-1)^{(d-1)w(\mathcal{K})}\right).
\end{align}

For any prime $p$, let
\begin{align}
G_p(\mathcal{K};q,a)=\frac{\mathcal{H}(\mathcal{K}\star
P_p;q,a)}{a-a^{-1}}-(-1)^{(p-1)w(\mathcal{K})}\frac{\Psi_{d}\left(\mathcal{H}(\mathcal{K};q,a)\right)}{a-a^{-1}}.
\end{align}
The Conjecture \ref{conjecture-Specailcase} states that,
\begin{align} \label{formula-tildeg0}
\tilde{g}_p^{(0)}(\mathcal{K};q,a)=\frac{G_p(\mathcal{K};q,a)}{\{p\}}\in
z^{-2}\mathbb{Z}[z^2,a].
\end{align}

We have the following theorem which supposes the conjectural formula
(\ref{formula-tildeg0}).
\begin{theorem}
If the Conjecture \ref{conjecture-Alexander} holds, then we have
\begin{align} \label{formula-tildegp}
\tilde{g}_p^{(0)}(\mathcal{K};q,1)\in z^{-2}\mathbb{Z}[z^2].
\end{align}
\end{theorem}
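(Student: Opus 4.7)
The plan is to exploit Conjecture~\ref{conjecture-Alexander} to obtain an explicit closed form for $\tilde{g}_p^{(0)}(\mathcal{K};q,1)$, and then to verify the required integrality by working in the subring $\mathbb{Z}[(Q+Q^{-1})^2]$ of symmetric Laurent polynomials with only even exponents. I would first compute $\lim_{a\to 1}G_p(\mathcal{K};q,a)$. Rewriting $(a-a^{-1})^{-1}=[p]_a/(a^p-a^{-p})$, using the Frobenius expansion $P_p=\sum_{m+n+1=p}(-1)^n Q_{(m|n)}$ together with formula~(\ref{formula-lima}) (which is the input from Conjecture~\ref{conjecture-Alexander}), and the direct limit $\lim_{a\to 1}\Psi_p(\mathcal{H}(\mathcal{K};q,a))/(a^p-a^{-p})=A(\mathcal{K};q^p)/\{p\}$, the factor $p=\lim_{a\to 1}[p]_a$ combines with the explicit limits to produce
\[
\tilde{g}_p^{(0)}(\mathcal{K};q,1)=\frac{A(\mathcal{K};q^p)}{\{p\}^2}\Bigl(\sum_{m+n+1=p}q^{(m-n)pw(\mathcal{K})}-p\,(-1)^{(p-1)w(\mathcal{K})}\Bigr).
\]
All remaining work is to show that the right-hand side lies in $z^{-2}\mathbb{Z}[z^2]$.

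For $p$ an odd prime, $(-1)^{(p-1)w}=1$ and the inner sum equals $[p]_{q^{pw}}$. The key elementary lemma is that in $\mathbb{Z}[Q^{\pm 1}]$ one has $[p]_Q-p=(Q-Q^{-1})^2 R(Q)$ with $R(Q)\in\mathbb{Z}[(Q+Q^{-1})^2]$: since $p-1$ is even, $[p]_Q$ is already a polynomial in $T^2=(Q+Q^{-1})^2$, and $[p]_{\pm 1}=p$ shows that $[p]_Q-p$ vanishes at $T^2=4$, hence is divisible by $T^2-4=(Q-Q^{-1})^2$. Setting $Q=q^{pw}$ and using $\{pw\}=\{p\}[w]_{q^p}$ cancels the denominator, giving $\tilde{g}_p^{(0)}(\mathcal{K};q,1)=A(\mathcal{K};q^p)\cdot [w]_{q^p}^{\,2}\cdot R(q^{pw})$. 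Each factor lies in $\mathbb{Z}[z^2]$: the classical symmetry $\Delta(t^{-1})=\Delta(t)$ gives $A(\mathcal{K};q)\in\mathbb{Z}[z^2]$, hence $A(\mathcal{K};q^p)\in\mathbb{Z}[\{p\}^2]\subset\mathbb{Z}[z^2]$ (via $\{p\}^2=[p]^2 z^2$ with $[p]\in\mathbb{Z}[z^2]$ for odd $p$); the polynomial $[w]_Q$ has definite parity in $Q+Q^{-1}$, so $[w]_Q^{\,2}$ is even in $Q+Q^{-1}$, giving $[w]_{q^p}^{\,2}\in\mathbb{Z}[(q^p+q^{-p})^2]\subset\mathbb{Z}[z^2]$; and $R(q^{pw})\in\mathbb{Z}[(q^{pw}+q^{-pw})^2]\subset\mathbb{Z}[z^2]$ by the same observation that $(q^n+q^{-n})^2$ has only even exponents in $q$.

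The case $p=2$ must be handled separately and splits according to the parity of $w=w(\mathcal{K})$. For $w$ even, $\{w\}=\{2\}[w/2]_{q^2}$ collapses the expression to $A(\mathcal{K};q^2)\,[w/2]_{q^2}^{\,2}\in\mathbb{Z}[z^2]$. For $w$ odd, the numerator becomes $A(\mathcal{K};q^2)(q^w+q^{-w})^2$; the Chebyshev-type factorization $q^w+q^{-w}=(q+q^{-1})\Phi_w$ with $\Phi_w\in\mathbb{Z}[z^2]$ (which holds for odd $w$ because $q^w+q^{-w}$ vanishes at $q=\pm i$ and has odd parity in $q+q^{-1}$) cancels $(q+q^{-1})^2$ out of $\{2\}^2=(q+q^{-1})^2 z^2$, leaving $z^{-2}A(\mathcal{K};q^2)\Phi_w^{\,2}\in z^{-2}\mathbb{Z}[z^2]$. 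The main obstacle I anticipate is the parity bookkeeping (especially in obtaining the correct factor of $p$ in the second summand of the closed form, and in treating the $p=2$, $w$ odd sub-case, where a genuine $z^{-2}$ pole does survive); the subsequent divisibility arguments are routine once one isolates the subring $\mathbb{Z}[(Q+Q^{-1})^2]$.
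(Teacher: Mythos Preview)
Your proposal is correct and follows the same overall strategy as the paper: derive the closed formula
\[
\tilde{g}_p^{(0)}(\mathcal{K};q,1)=\{p\}^{-2}A(\mathcal{K};q^p)\Bigl(\textstyle\sum_{m+n+1=p}q^{(m-n)pw}-p\,(-1)^{(p-1)w}\Bigr)
\]
from the conjecture, and then prove the requisite divisibility (packaged in the paper as Lemma~\ref{Lemma-alpha}). The argument for the divisibility lemma is, however, organized differently. For odd $p$ the paper shows that every root of $[p]_q$ is a double root of the numerator and then invokes the symmetries $q\mapsto -q$, $q\mapsto q^{-1}$ to land in $\mathbb{Z}[z^2]$; you instead pass to the single variable $Q=q^{pw}$, prove the clean one-variable fact $[p]_Q-p\in (Q-Q^{-1})^2\,\mathbb{Z}[(Q+Q^{-1})^2]$, and then recover the factor $\{p\}^2$ via $\{pw\}=\{p\}\,[w]_{q^p}$. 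For $p=2$ the paper runs an induction on $\tau$, while you split on the parity of $w$ and factor explicitly through the Chebyshev-type identity $q^w+q^{-w}=(q+q^{-1})\Phi_w$ for odd $w$. Your route is a bit more transparent in that it isolates a single polynomial identity and avoids both the root-counting and the induction; the paper's route has the minor advantage of treating all $\tau$ uniformly within each case without a parity split.
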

\begin{proof}
By formula  (\ref{formula-limH}), we obtain
\begin{align}
G_p(\mathcal{K};q,1)&=\lim_{a\rightarrow
1}\left(\frac{\mathcal{H}(\mathcal{K}\star
P_p;q,a)}{a-a^{-1}}-(-1)^{(p-1)w(\mathcal{K})}\frac{\Psi_{p}\left(\mathcal{H}(\mathcal{K};q,a)\right)}{a-a^{-1}}\right)\\\nonumber
&=p\lim_{a\rightarrow 1}\left(\frac{\mathcal{H}(\mathcal{K}\star
P_p;q,a)}{(a^p-a^{-p})}-(-1)^{(p-1)w(\mathcal{K})}\frac{\Psi_{p}\left(\mathcal{H}(\mathcal{K};q,a)\right)}{(a^p-a^{-p})}\right)\\\nonumber
&=\frac{A(\mathcal{K};q^p)}{(q^p-q^{-p})}\left(\sum_{m+n+1=p}q^{(m-n)dw(\mathcal{K})}-(-1)^{(p-1)w(\mathcal{K})}\right).
\end{align}

By Lemma \ref{Lemma-alpha}, there is a function
\begin{align}
\alpha_{p}^{w(\mathcal{K})}(q)\in \mathbb{Z}[z^2],
\end{align}
such that
\begin{align}
\sum_{m+n+1=p}q^{(m-n)dw(\mathcal{K})}-(-1)^{(p-1)w(\mathcal{K})}=\frac{\{p\}^2}{z^2}\alpha_{p}^{w(\mathcal{K})}(q).
\end{align}

Therefore,
\begin{align}
\tilde{g}_{p}^{(0)}(\mathcal{K};q,1)=\frac{G_p(\mathcal{K};q,a)}{\{p\}}=z^{-2}A(\mathcal{K};q^p)\alpha_{p}^{w(\mathcal{K})}(q).
\end{align}

Moreover, by using the properties for Alexander polynomial in our
notation, we have
\begin{align}
A(\mathcal{K};-q^p)=A(\mathcal{K};q^p), \
A(\mathcal{K};q^{-p})=A(\mathcal{K};q^p),
\end{align}
which implies that
\begin{align}
A(\mathcal{K};q^p)\in \mathbb{Z}[z^2].
\end{align}
Then we finish the proof of the formula (\ref{formula-tildegp}).
\end{proof}

\begin{lemma} \label{Lemma-alpha}
For any prime number $p$ and integer $\tau\in \mathbb{Z}$, there is
a function $\alpha_{p}^{\tau}(z)\in \mathbb{Z}[z^2]$, such that
\begin{align}
(q^{p\tau})^{p-1}+(q^{p\tau})^{p-3}+\cdots+(q^{p\tau})^{-(p-1)}-p(-1)^{(p-1)\tau}=[p]^2
\alpha_{p}^{\tau}(z).
\end{align}
\end{lemma}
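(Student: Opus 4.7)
The plan is to reduce the identity to an explicit sum-of-squares decomposition that exhibits the factor $[p]^2$ directly, avoiding any analytic/double-root argument.

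First I would set $u = q^{p\tau}$, so the left-hand side becomes $L(u) = \sum_{k=0}^{p-1} u^{p-1-2k} - p(-1)^{(p-1)\tau}$. For $p$ odd, $(p-1)\tau$ is even so $(-1)^{(p-1)\tau}=1$. Pairing the $k$-th and $(p-1-k)$-th summands (the middle term $k=(p-1)/2$ being $u^0=1$ which cancels against one copy of $p$), one gets
\[
L \;=\; \sum_{j=1}^{(p-1)/2}\bigl(u^{2j}+u^{-2j}-2\bigr) \;=\; \sum_{j=1}^{(p-1)/2}\bigl(u^{j}-u^{-j}\bigr)^{2}.
\]
Substituting $u=q^{p\tau}$ and using the standard factorization $q^{p\tau j}-q^{-p\tau j}= z\,[p\tau j] = z\,[p]\cdot [\tau j](q^p)$ yields
\[
L \;=\; z^{2}\,[p]^{2}\sum_{j=1}^{(p-1)/2}[\tau j](q^p)^{2},
\]
and the candidate is $\alpha_p^{\tau}(z)=z^{2}\sum_{j=1}^{(p-1)/2}[\tau j](q^p)^{2}$. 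To check $\alpha_p^{\tau}\in\mathbb{Z}[z^2]$, I would note that $[n](x)$ is a polynomial in $x+x^{-1}$, hence $[n](x)^{2}$ is a polynomial in $(x+x^{-1})^{2}=(x-x^{-1})^{2}+4$; applied to $x=q^p$ with $p$ odd one has $(q^p-q^{-p})^{2}=z^{2}[p]^{2}\in\mathbb{Z}[z^2]$, so $[\tau j](q^p)^{2}\in\mathbb{Z}[z^2]$.

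The case $p=2$ I would handle separately by parity of $\tau$. If $\tau=2m$, then $L=q^{4m}+q^{-4m}-2=(q^{2m}-q^{-2m})^{2}=z^{2}[2m]^{2}=[2]^{2}\,z^{2}[m](q^2)^{2}$, so one may take $\alpha_{2}^{\tau}(z)=z^{2}[m](q^2)^{2}\in\mathbb{Z}[z^2]$. If $\tau$ is odd, then $L=q^{2\tau}+q^{-2\tau}+2=(q^{\tau}+q^{-\tau})^{2}=[2]^{2}\,W_{\tau}^{2}$, where $W_{\tau}=(q^{\tau}+q^{-\tau})/(q+q^{-1})$ is the alternating sum $q^{\tau-1}-q^{\tau-3}+\cdots\pm q^{-(\tau-1)}$ of even powers of $q$, hence lies in $\mathbb{Z}[q^{2}+q^{-2}]=\mathbb{Z}[z^2]$. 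In each case $\alpha_{p}^{\tau}(z)$ is exhibited explicitly.

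The only real step of content is the sum-of-squares identity for $g(u)-p$ with $p$ odd; everything else is bookkeeping about which of $q\pm q^{-1}$ and $q^{p}\pm q^{-p}$ lie in $\mathbb{Z}[z^2]$ for each parity. I do not expect any serious obstacle: the identity $[pn]=[p]\cdot[n](q^p)$ transforms the problem to an elementary manipulation, and the parity bookkeeping for $p=2$ is direct.
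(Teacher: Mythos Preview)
Your argument is correct, and it takes a genuinely different route from the paper's proof. For odd $p$ the paper introduces $Q_p(x)=(x^p-x^{-p})/(x-x^{-1})$, checks analytically that every root of $Q_p$ is a double root of $g_p^\tau(x)=\sum_k (x^{\tau p})^{p-1-2k}-p$, deduces divisibility by $Q_p(x)^2$, and then invokes the symmetries $f_p^\tau(x)=f_p^\tau(x^{-1})=f_p^\tau(-x)$ to land in $\mathbb{Z}[z^2]$; for $p=2$ the paper proceeds by induction on $\tau$ via a three-term recurrence. By contrast, you produce the factorization explicitly: the sum-of-squares identity $\sum_{j}(u^{2j}+u^{-2j}-2)=\sum_j(u^j-u^{-j})^2$ together with $q^{p\tau j}-q^{-p\tau j}=z[p]\cdot[\tau j](q^p)$ exhibits the $[p]^2$ factor at once, and the $p=2$ case splits into two elementary squares. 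Your approach is shorter and constructive (it actually writes down $\alpha_p^\tau$), while the paper's double-root method is less explicit but perhaps more suggestive of how one might discover such divisibility without guessing the identity.

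One small remark on your integrality check for odd $p$: the implication ``$[n](x)\in\mathbb{Z}[x+x^{-1}]$ hence $[n](x)^2\in\mathbb{Z}[(x+x^{-1})^2]$'' is not automatic; it uses the additional fact that $[n](-x)=(-1)^{n-1}[n](x)$, so that $[n]$ has definite parity as a polynomial in $x+x^{-1}$ and therefore its square is even. This is immediate once stated, but worth one sentence.
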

\begin{proof}
First, we prove the case when $p=2$. In this case, we only need to
show there is a function $\alpha_{2}^{\tau}(z)$ in the ring
$\mathbb{Z}[z^2]$ such that
$q^{2\tau}+q^{-2\tau}-2(-1)^\tau=(q^2+q^{-2}+2)\alpha_{2}^{\tau}(z)$
for any $\tau\in \mathbb{N}$. It clearly holds when $\tau=0$ and
$\tau=1$. For $\tau\geq 2$, we prove it by induction. Suppose it
holds when $s\leq \tau$, now for $s=\tau+1$, indeed, we have the
identity
\begin{align}
&q^{2(\tau+1)}+q^{-2(\tau+1)}-2(-1)^{\tau+1}\\\nonumber
&=\left(q^{2\tau}+q^{-2\tau}-2(-1)^{\tau}\right)\left(q^2+q^{-2}\right)\\\nonumber
&-\left(q^{2(\tau-1)}+q^{-2(\tau-1)}-2(-1)^{\tau-1}\right)
+2(-1)^{\tau}\left(q^{2}+q^{-2}+2\right).
\end{align}
Therefore, by induction, we have
\begin{align}
&q^{2(\tau+1)}+q^{-2(\tau+1)}-2(-1)^{\tau+1}\\\nonumber
&=(q^2+q^{-2}+2)\left(\alpha_2^{\tau}(z)(z^2+2)-\alpha_2^{\tau-1}(z)+2(-1)^{\tau}\right),
\end{align}
i.e. we obtain
\begin{align}
\alpha_2^{\tau+1}(z)=\alpha_2^{\tau}(z)(z^2+2)-\alpha_2^{\tau-1}(z)+2(-1)^{\tau}.
\end{align}

When $p$ is an odd prime, given $n\in \mathbb{N}$, we introduce the
function
\begin{align} \label{Qn}
Q_n(x)=\frac{x^{n}-x^{-n}}{x-x^{-1}}=x^{n-1}+x^{n-3}+\cdots+x^{-(n-3)}+x^{-(n-1)}.
\end{align}
Then we consider the following function
\begin{align}
f_p^{\tau}(x)=\frac{(x^{\tau p})^{p-1}+(x^{\tau
p})^{p-3}\cdots+(x^{\tau p})^{-(p-3)}+(x^{\tau
p})^{-(p-1)}-p}{Q_p(x)^2}.
\end{align}

Let
\begin{align}
g_p^{\tau}(x)=(x^{\tau p})^{p-1}+(x^{\tau p})^{p-3}\cdots+(x^{\tau
p})^{-(p-3)}+(x^{\tau p})^{-(p-1)}-p.
\end{align}
Since all the roots of $Q_p(x)$ are given by
$\exp\left(\frac{\pi\sqrt{-1}j}{p}\right)$ for
$j=1,2,..,p-1,p+1,..,2p-1$. For any  root $x_0$ of $Q_p(x)$, by
direct computations, it is easy to obtain $g_p^{\tau}(x_0)=0$ and
$(g_{p}^{\tau})'(x_0)=0$. Therefore, $x_0$ is a double root of the
polynomial $g_p(x)$, then it is easy to see that $f_p^{\tau}(x)$ is
a Laurents polynomial with integral coefficients which can be
written as
\begin{align}
f_p^{\tau}(x)=c_{-i}x^{-i}+\cdots +c_0+\cdots c_{j}x^j.
\end{align}
Moreover, by the definition of $f_p^{\tau}(x)$, it is obvious that
\begin{align}
f_p^{\tau}(x)&=f_p^{\tau}(x^{-1}) \\
f_p^{\tau}(-x)&=f_p^{\tau}(x),
\end{align}
 So we must have $i=j$, $c_k=c_{-k}$ and $c_k=0$ for $k$
odd. Finally, it is easy to obtain that $f_p^{\tau}(x)\in
\mathbb{Z}[(x-x^{-1})^2]$.
\end{proof}

\section{Appendices}  \label{section-Appendix}
\subsection{Partitions and symmetric functions}
In this subsection, we fix the notations related to partitions and
symmetric functions used in this paper.

A partition $\lambda$ is a finite sequence of positive integers $%
(\lambda_1,\lambda_2,..)$ such that $\lambda_1\geq
\lambda_2\geq\cdots$. The length of $\lambda$ is the total number of
parts in $\lambda$ and denoted by
$l(\lambda)$. The weight of $\lambda$ is defined by $|\lambda|=%
\sum_{i=1}^{l(\lambda)}\lambda_i$. If $|\lambda|=d$, we say
$\lambda$ is a partition of $d$ and denoted as $\lambda\vdash d$.
The automorphism group of $\lambda$, denoted by Aut($\lambda$),
contains all the permutations that
permute parts of $\lambda$ by keeping it as a partition. Obviously, Aut($%
\lambda$) has the order $|\text{Aut}(\lambda)|=\prod_{i=1}^{l(\lambda)}m_i(%
\lambda)! $ where $m_i(\lambda)$ denotes the number of times that
$i$ occurs in $\lambda$.

Every partition is identified with a Young diagram. The Young diagram of $%
\lambda$ is a graph with $\lambda_i$ boxes on the $i$-th row for $%
j=1,2,..,l(\lambda)$, where we have enumerated the rows from top to
bottom and the columns from left to right. Given a partition
$\lambda$, we define the conjugate partition $\lambda^\vee$ whose
Young diagram is the transposed
Young diagram of $\lambda$: the number of boxes on $j$-th column of $%
\lambda^t$ equals to the number of boxes on $j$-th row of $\lambda$, for $%
1\leq j\leq l(\lambda)$. For the box in the $i$-th row and $j$-th
column of $\lambda$, we write $(i,j)\in \lambda$, and refer to
$(i,j)$ as the coordinates of the box. For $x=(i,j)\in \lambda$ we
define $cn(x)=j-i$ and $hl(x)=\lambda_i+\lambda_j^{\vee}-i-j+1$.

The following numbers associated with a given partition $\lambda$
are used frequently in this article:
\begin{align*}
\mathfrak{z}_\lambda=\prod_{j=1}^{l(\lambda)}j^{m_{j}(\lambda)}m_j(\lambda)!
\quad \text{and} \quad
k_{\lambda}=\sum_{j=1}^{l(\lambda)}\lambda_j(\lambda_j-2j+1).
\end{align*}
Obviously, $k_\lambda$ is an even number and
$k_\lambda=-k_{\lambda^\vee}$.

In the following, we will use the notation $\mathcal{P}_+$ to denote
the set of all the partitions of positive integers. Let $0$ be the
partition of $0$, i.e. the empty partition. Define
$\mathcal{P}=\mathcal{P}_+\cup \{0\}$, and $\mathcal{P}^L$ the $L$
tuple of $\mathcal{P}$.

The power sum symmetric function of infinite variables
$x=(x_1,..,x_N,..)$ is defined by $p_{n}(\mathbf{x})=\sum_{i}x_i^n.
$ Given a partition $\lambda$, define
$p_\lambda(\mathbf{x})=\prod_{j=1}^{l(\lambda)}p_{\lambda_j}(\mathbf{x}).
$ The Schur function $s_{\lambda}(\mathbf{x})$ is determined by the
Frobenius formula
\begin{align}  \label{Frobeniusformula}
s_\lambda(\mathbf{x})=\sum_{\mu}\frac{\chi_{\lambda}(\mu)}{\mathfrak{z}_\mu}p_\mu(\mathbf{x}).
\end{align}
where $\chi_\lambda$ is the character of the irreducible
representation of
the symmetric group $S_{|\lambda|}$ corresponding to $\lambda$, we have $%
\chi_{\lambda}(\mu)=0$ if $|\mu|\neq |\lambda|$. The orthogonality
of character formula gives
\begin{align}  \label{orthog}
\sum_\lambda\frac{\chi_\lambda(\mu)
\chi_\lambda(\nu)}{\mathfrak{z}_\mu}=\delta_{\mu \nu}.
\end{align}

\begin{align}
\sum_{\lambda}\chi_{\lambda}(\mu)\chi_{\lambda}(\nu)=\mathfrak{z}_{\mu}\delta_{\mu,\nu}
\end{align}

We introduce the multiple-index variable
$\vec{\mathbf{x}}=(\mathbf{x}^1,...,\mathbf{x}^L)$ where
$\mathbf{x}^{\alpha}=(\mathbf{x}^\alpha_1,\mathbf{x}^{\alpha}_2,...)$
for $\alpha=1,...,L$.

Given  $\vec{\lambda}=(\lambda^1,...,\lambda^L),
\vec{\mu}=(\mu^1,...,\mu^L) \in \mathcal{P}^L$, we introduce the
following notations
\begin{align}
[\vec{\lambda},\vec{\mu}]&=([\lambda^{1},\mu^{1}],...,[\lambda^L,\mu^L]),
\ \
\mathfrak{z}_{\vec{\mu}}=\prod_{\alpha=1}^L\mathfrak{z}_{\mu^\alpha},
\ \ \{\vec{\mu}\}=\prod_{\alpha=1}^L\{\mu^{\alpha}\} \\
\vec{\mu}^{t}&=((\mu^1)^t,...,(\mu^L)^t), \ \
\chi_{\vec{\lambda}}(\vec{\mu})=\prod_{\alpha=1}^L\chi_{\lambda^\alpha}(\mu^{\alpha})\\
 s_{\vec{\lambda}}(\vec{\mathbf{x}})&=\prod_{\alpha=1}^L
s_{\lambda^\alpha}(\mathbf{x}^\alpha), \ \
p_{\vec{\mu}}(\vec{\mathbf{x}})=\prod_{\alpha=1}^L
p_{\mu^\alpha}(\mathbf{x}^\alpha).
\end{align}

Now, we consider the set $\mathcal{P}^L$, one can define the order
of $\mathcal{P}^L$ as follow, for any $\vec{\lambda}, \vec{\mu}\in
\mathcal{P}^{L}$, $\vec{\lambda}\geq \vec{\mu}$ if and only if
$\sum_{\alpha=1}^{L}|\lambda^{\alpha}|>\sum_{\alpha=1}^L|\mu^{\alpha}|$,
or
$\sum_{\alpha=1}^{L}|\lambda^{\alpha}|=\sum_{\alpha=1}^L|\mu^{\alpha}|$
and there is a $\beta$ such that $\lambda^{\alpha}=\mu^{\alpha}$ for
$\alpha<\beta$ and $\lambda^{\beta}>\mu^{\beta}$. Define
\begin{align}
\vec{\lambda}\cup \vec{\mu}=(\lambda^1\cup \mu^1,...,\lambda^L\cup
\mu^L).
\end{align}
and $(\emptyset,...,\emptyset)$ is the empty element. Then
$\mathcal{P}^L$ is a partitionable set (cf. \cite{LP10}). For a
partitionable set $\mathcal{S}$, one can define partition with
respect to $\mathcal{S}$, a finite sequence of nonincreasing
non-minimum elements in $\mathcal{S}$. We will call it an
$\mathcal{S}$-partitions.

Let $\mathcal{P}(\mathcal{P}^L)$ be the set of all
$\mathcal{P}^L$-partitions. For a $\mathcal{P}^L$-partition
$\Lambda$, denoted by $l(\Lambda)$ the length of $\Lambda$.  One can
also define the automorphism group of $\Lambda$ as follow. Given
$\Omega\in \Lambda$, denote by $m_{\Omega}(\Lambda)$ the number of
times that $\Omega$ occurs in the parts of $\Lambda$, then we have
\begin{align}
|Aut(\Lambda)|=\prod_{\Omega\in\Lambda}m_{\Omega}(\Lambda)!.
\end{align}
We define the following quantity associated with $\Lambda$,
\begin{align}
\Theta_{\Lambda}=(-1)^{l(\Lambda)-1}\frac{(l(\Lambda)-1)!}{|Aut(\Lambda)|}.
\end{align}
We have
\begin{align} \label{formula-logZ}
\log\left(\sum_{\vec{\lambda}\in
\mathcal{P}^L}Z_{\vec{\lambda}}p_{\vec{\lambda}}(\vec{\mathbf{x}})\right)=\sum_{\Lambda\in
\mathcal{P}(\mathcal{P}^L)}Z_{\Lambda}\Theta_{\Lambda}p_{\Lambda}(\vec{\mathbf{x}})
\end{align}

\subsection{Quantum group invariants and colored HOMFLY-PT invariants}
In this section, we briefly review the definition of quantum group
invariant of link following \cite{LaMa02} and \cite{LiZh10}.
\subsubsection{Quantum group invariants}
Let $\mathfrak{g}$ be a complex simple Lie algebra and let $q$ be a
nonzero complex numbers which is not a root of unity. Let $U_{
q}(\mathfrak{g})$ be the quantum enveloping algebra of
$\mathfrak{g}$. The ribbon category structure of the set of finite
dimensional $U_{q}(\mathfrak{g})$-modules provides the following
objects.

1. For each pair of $U_{q}(\mathfrak{g})$-modules $V, W$, there is a
natural isomorphism $\check{R}_{V,W}: V\otimes W\rightarrow W\otimes
V$ such that
\begin{align} \label{formula-RUV}
\check{R}_{U\otimes V,W}&=(\check{R}_{U,W}\otimes
id_{V})(id_{U}\otimes \check{R}_{V,W}), \\\nonumber
\check{R}_{U,V\otimes W}&=(id_{V}\otimes
\check{R}_{U,W})(\check{R}_{U,V}\otimes id_{W})
\end{align}
hold for all $U_{q}(\mathfrak{g})$-modules $U,V,W$. The naturality
means
\begin{align}
(y\otimes x)\check{R}_{V,W}=\check{R}_{V',W'}(x\otimes y)
\end{align}
for $x\in Hom_{U_q(\mathfrak{g})}(V,V'), y\in
Hom_{U_{q}(\mathfrak{g})}(W,W')$. These equalities imply the
braiding relation
\begin{align}
(\check{R}_{V,W}\otimes id_{U})(id_{V}\otimes
\check{R}_{U,W})(\check{R}_{U,V}\otimes id_{W})=(id_W\otimes
\check{R}_{U,V})(\check{R}_{U,W}\otimes id_{V})(id_{U}\otimes
\check{R}_{V,W}).
\end{align}

2. There exists an element $K_{2\rho}\in U_{q}(\mathfrak{g})$, where
$\rho$ denotes the half-sum of all positive roots of $\mathfrak{g}$
such that
\begin{align}
K_{2\rho}(v\otimes w)=K_{2\rho}(v)\otimes K_{2\rho}(w)
\end{align}
for $v\in W, w\in W$. Furthermore, for any $z\in
End_{U_{q}(\mathfrak{g})}(V\otimes W)$ with $z\in \sum_{i}x_i\otimes
y_i$, $x\in End(V)$, $y_i\in End(W)$ one has the partial quantum
trace
\begin{align}
tr_W(z)=\sum_{i}tr(y_i K_{2\rho})\cdot x_i\in
End_{U_{q}(\mathfrak{g})}(V).
\end{align}

3. For every $U_q(\mathfrak{g})$-module $V$ there is a natural
isomorphism $\theta_V: V\rightarrow V$ satisfying
\begin{align}
\theta_{V}^{\pm 1}=tr_{V}\check{R}_{V,V}^{\pm 1}.
\end{align}
The naturality means $x\cdot \theta_{V}=\theta_{V'}\cdot x$ for
$x\in Hom_{U_{q}(\mathfrak{g})}(V,V')$.

In the following, we only consider the finite-dimensional
irreducible representation of $U_{q}(\mathfrak{g})$. These
representations are labeled by highest weights $\Lambda$, the
corresponding module will be denoted by $V_\Lambda$. In this case
\begin{align}
\theta_{V_{\Lambda}}=q^{(\Lambda,\Lambda+2\rho)}id_{V_{\Lambda}}
\end{align}

Let $B_n$ be the braid group of $n$ strands that is generated by
$\sigma_1,...,\sigma_{n-1}$ with two defining relations: (i)
$\sigma_i\sigma_j=\sigma_j\sigma_i$ if $|i-j|\geq 2$;  (ii)
$\sigma_i\sigma_j\sigma_i=\sigma_j\sigma_i\sigma_j$ if $|i-j|=1$.
Every link can be presented by the closure of some element, i.e.
braid in $B_n$ for some $n$. This kind of braid representation of a
link is not unique. In the following,  we first fix a braid
representation of a link, and then we construct the quantum group
invariant of the link by this braid, and finally it turn out that
such construction is independent of the choice of the braid
representation.

Let $\mathcal{L}$ be an oriented link with $L$-components
$\mathcal{K}_1,...,\mathcal{K}_L$ label by  $L$ irreducible
$U_{q}(\mathfrak{g})$-modules $V_{\Lambda^1}$,...,$V_{\Lambda^L}$
respectively. Choose a closed braid representation $\hat{b}$ of
$\mathcal{L}$ with $\beta\in B_n$ for some $n$. Then the $j$-th
strand in the braid $b$ we will associate an irreducible module
$V_{\Lambda^{\alpha_j}}$ if the $j$-th stand belongs to the
component $\mathcal{K}_{\alpha_j}$ with $\alpha_j\in \{1,2,...,L\}$.
In this way, the braid $b$ will be associated with a total module
$V_{\Lambda^{\alpha_1}}\otimes V_{\Lambda^{\alpha_2}}\otimes \cdots
\otimes V_{\Lambda^{\alpha_n}}$. The representation  of $b$ on
$V_{\Lambda^{\alpha_1}}\otimes V_{\Lambda^{\alpha_2}}\otimes \cdots
\otimes V_{\Lambda^{\alpha_n}}$ is defined as follows. If
$\sigma_i^{\pm 1}$ is an elementary braid, then
\begin{align}
\Phi(\sigma_i^{\pm 1})=id_{V_{\Lambda^{\alpha_1}}}\otimes \cdots
\otimes \check{R}_{V_{\Lambda^{\alpha_i}}\otimes
V_{\Lambda^{\alpha_{i+1}}}}^{\pm 1}\otimes \cdots \otimes
id_{V_{\Lambda^{\alpha_n}}}.
\end{align}
In this way, the braid $b$ gives an operator
\begin{align}
\Phi(b)\in End_{U_q(\mathfrak{g})}(V_{\Lambda^{\alpha_1}}\otimes
\cdots \otimes V_{\Lambda^{\alpha_n}}).
\end{align}
Due to the fundamental works \cite{Tu88-1,RT90} of Reshetikhin and
Turaev, the quantum trace
\begin{align}
tr_{V_{\Lambda^{\alpha_1}}\otimes \cdots \otimes
V_{\Lambda^{\alpha_n}}}(\Phi_{V_{\Lambda^{\alpha_1}},...,V_{\Lambda^{\alpha_n}}}(b))
\end{align}
gives a framing dependent link invariant for $\mathcal{L}$.

In order to eliminate the framing dependency, let
$w(\mathcal{K}_\alpha)$ be the {\em writhe} of $\mathcal{K}_\alpha$
in $\beta$, which is equal to the number of positive crossings minus
the number of negative crossings. Then the quantity
\begin{align} \label{formula-Ilambda}
I_{\Lambda^1,...,\Lambda^{L}}^{\mathfrak{g}}(\mathcal{L})=q^{-\sum_{\alpha=1}^{L}w(\mathcal{K}_\alpha)
(\lambda^\alpha,\lambda^\alpha+2\rho)}tr_{V_{\Lambda^{\alpha_1}}\otimes
\cdots \otimes V_{\Lambda^{\alpha_n}}}(\Phi(b))
\end{align}
defines a framing independent link invariant.

\subsubsection{$U_{q}(sl_{N}\mathbb{C})$ quantum group invariants}
Now, we consider the semi-simple Lie algebra
$\mathfrak{g}=sl_N\mathbb{C}$, it is well-known that every finite
dimensional irreducible module $V_{\lambda}$ of
$U_{\hbar}(sl_N\mathbb{C})$ is labeled by a partition $\lambda$ with
length $l(\lambda)\leq N-1$. In this case, it is straightforward to
compute that
\begin{align}
(\lambda,\lambda+2\rho)=k_\lambda+|\lambda|N-\frac{|\lambda|^2}{N},
\end{align}
where $k_\lambda=\sum_{i=1}^{l(\lambda)}\lambda_i(\lambda_i-2i+1)$.

The $U_q(sl_N\mathbb{C})$ quantum group invariants which we will
study is defined as follow (cf. \cite{LaMa02}):
\begin{align}
&W_{\lambda^1,..,\lambda^{L}}(\mathcal{L})\\\nonumber&=q^{\frac{2}{N}\sum_{\alpha<\beta}
lk(\mathcal{K}_\alpha,\mathcal{K}_{\beta})|\lambda^{\alpha}|\cdot|\lambda^{\beta}|}
I_{\lambda^1,...,\lambda^L}^{sl_{N}\mathbb{C}}(\mathcal{L})\\\nonumber
&=q^{-\sum_{\alpha=1}^{L}w(\mathcal{K}_\alpha)
(\kappa_{\lambda^{\alpha}}+|\lambda^{\alpha}|N-\frac{|\lambda^{\alpha}|^2}{N})+\frac{2}{N}\sum_{\alpha<\beta}
lk(\mathcal{K}_\alpha,\mathcal{K}_{\beta})|\lambda^{\alpha}|\cdot|\lambda^{\beta}|}tr_{V_{\lambda^{\alpha_1}}\otimes
\cdots \otimes V_{\lambda^{\alpha_n}}}(\Phi(b)).
\end{align}
\begin{remark}
The original invariant
$I_{\lambda^1,...,\lambda^{L}}^{sl_N(\mathbb{C})}(\mathcal{L})$
defined by formula (\ref{formula-Ilambda}) is a rational function in
$q^{\frac{1}{N}}$, $q$ and $q^N$. In order
 to cancel the overall powers of $q^{\frac{1}{N}}$ that
appear in
$I_{\lambda^1,...,\lambda^{L}}^{sl_N(\mathbb{C})}(\mathcal{L})$ and
simplify the notation, the term $q^{\frac{2}{N}\sum_{\alpha<\beta}
lk(\mathcal{K}_\alpha,\mathcal{K}_{\beta})|\lambda^{\alpha}|\cdot|\lambda^{\beta}|}$
was introduced in the above definition, see \cite{LaMa02}. In this
way, the $U_q(sl_N\mathbb{C})$ quantum group invariant
$W_{\lambda^1,..,\lambda^{L}}(\mathcal{L})$ becomes a rational
function of $q$ and $q^N$.
\end{remark}
\begin{definition}[cf. \cite{LiZh10}]
The {\em colored HOMFLY-PT invariants}
$W_{\lambda^1,...,\lambda^{L}}(\mathcal{L};q,a)$ of the link
$\mathcal{L}$ is a rational function of $q$ and $a$ defined by
\begin{align} \label{formula-coloredhomfly}
W_{\lambda^1,...,\lambda^{L}}(\mathcal{L};q,a)=q^{2\sum_{\alpha<
\beta}|\lambda^{\alpha}||\lambda^{\beta}|lk(\mathcal{K}_{\alpha},\mathcal{K}_{\beta})/N}
I_{\lambda^1,...,\lambda^{L}}^{sl_N(\mathbb{C})}(\mathcal{L})|_{q^{N}=a}.
\end{align}
\end{definition}
\begin{remark}
The above definition of colored HOMFLY-PT invariant does not depend
on the choice of the sufficiently large integer $N$. In other words,
given a link $\mathcal{L}$, there is an integer $N_{0}$ (which
depends on $\mathcal{L}$), such that for any $N\geq N_0$,
substituting $q^N$ with $a$ in formula (\ref{formula-coloredhomfly})
will yield the same result. We refer to \cite{Tu88-2} (cf. Lemma
4.2.4 in \cite{Tu88-2}) for the statement about how to define the
HOMFLY-PT polynomial via quantum group $U_q(sl_N\mathbb{C})$.
\end{remark}

\subsubsection{HOMFLY-PT polynomials}
In the following, we first introduce the {\em variant HOMFLY-PT
skein relations}
\begin{align} \label{formula-vskein1}
x^{-1}\mathcal{L}_{+}-x^{-1}\mathcal{L}_{-}=(q-q^{-1})\mathcal{L}_0,
\end{align}
\begin{align} \label{formula-vskein2}
\mathcal{L}(+1)=(xa)\mathcal{L},
\end{align}
to define the variant skein algebra $Sk^{(v)}(F)$ of a surface $F$,
where $\mathcal{L}(+)$ denote the link which is constructed by
adding a positive kink to $\mathcal{L}$.

 Given a framed link $\mathcal{L}$, the variant skein algebra
$Sk^{(v)}(\mathbb{R}^2)$ of the plane $\mathbb{R}^2$ will lead to
the {\em variant framed HOMPLY-PT polynomial}
$\mathcal{H}^{(v)}(\mathcal{L})$ of $\mathcal{L}$ which is
polynomial of $x,q,a$. Let $w(\mathcal{L})$ be the writhe number of
$\mathcal{L}$, a key observation is that the framing independent
HOMPLY-PT polynomial
$(xa)^{-w(\mathcal{L})}\mathcal{H}^{(v)}(\mathcal{L})$ does not
involve the variable $x$, i.e.
\begin{align} \label{formula-homfly-variant}
(xa)^{-w(\mathcal{L})}\mathcal{H}^{(v)}(\mathcal{L})=a^{-w(\mathcal{L})}\mathcal{H}(\mathcal{L}).
\end{align}

We also have the variant Hecke algebra $H_{n}^{(v)}(x,q,a)$ which is
the variant skein algebra $Sk^{(v)}(R_{n}^{n})$ of the rectangle
$R_{n}^{n}$. Correspondingly,  the idempotent $y_{\lambda}^{(v)}$ in
$H_{n}^{(v)}(x,q,a)$ are given by
\begin{align} \label{formula-idempotent-variant}
y_{\lambda}^{(v)}=\frac{1}{\alpha_\lambda}E_{\lambda}^{(v)}(a)\omega_{\pi_{\lambda}}E_{\lambda^{\vee}}^{(v)}(b)\omega_{\pi_{\lambda}}^{-1},
\end{align}
where $E_{\lambda}^{(v)}(a)=a^{(v)}_{\lambda_1}\otimes
a^{(v)}_{\lambda_2}\otimes \cdots\otimes
a^{(v)}_{\lambda_{l(\lambda)}}$ and
$E_{\mu}^{(v)}(b)=b^{(v)}_{\mu_1}\otimes b^{(v)}_{\mu_2}\otimes
\cdots\otimes b^{(v)}_{\mu_{l(\mu)}}$ for any partitions $\lambda,
\mu$, with
\begin{align}
a^{(v)}_{n}=\sum_{\pi\in S_n}(x^{-1}q)^{l(\pi)}\omega_{\pi} \ \
\text{and} \ \  b^{(v)}_{n}=\sum_{\pi\in
S_n}(-x^{-1}q^{-1})^{l(\pi)}\omega_{\pi} \in H^{(v)}_n(x,q,a).
\end{align}

Let $V$ be the module of the fundamental representation of
$U_q(sl_N\mathbb{C})$. Let $\{K_{i}^{\pm 1}, E_i, F_i|1\leq i\leq
N-1\}$ be the set of the generators of $U_{q}(sl_N\mathbb{C})$, with
a suitable basis $\{v_1,...,v_N\}$ of $V$, the fundamental
representation is given by the matrices
\begin{align}
K_i&=qE_{i,i}+q^{-1}E_{i+1,i+1}+\sum_{j\neq i}E_{j,j}\\\nonumber
E_i&=E_{i,i+1}\\\nonumber F_i&=E_{i+1,i}
\end{align}
where $E_{i,j}$ is the $N$ by $N$ matrix with $1$ in the
$(i,j)$-position and $0$ elsewhere. The action of the universal
$R$-matrix on $V\otimes V$ is given by
\begin{align}
\check{R}_{V,V}=q^{-\frac{1}{N}}\left(q\sum_{1\leq i\leq
N}E_{i,i}\otimes E_{i,i}+\sum_{1\leq i\neq j\leq N}E_{j,i}\otimes
E_{i,j}+(q-q^{-1})\sum_{1\leq i<j\leq N} E_{j,j}\otimes
E_{i,i}\right).
\end{align}
It gives
\makeatletter
\let\@@@alph\@alph
\def\@alph#1{\ifcase#1\or \or $'$\or $''$\fi}\makeatother
\begin{subnumcases}
{q^{\frac{1}{N}}\check{R}_{V,V}(v_i\otimes v_j)=}
q(v_i\otimes v_j), &$i=j$,\nonumber \\
v_j\otimes v_i, &$i<j$, \nonumber \\
v_j\otimes v_i+(q-q^{-1})v_i\otimes v_j & $i>j$.
\end{subnumcases}
\makeatletter\let\@alph\@@@alph\makeatother

Then, it is straightforward to verify the following identity
\begin{align} \label{formula-Rrelation1}
q^{\frac{1}{N}}\check{R}_{V,V}-q^{-\frac{1}{N}}\check{R}^{-1}_{V,V}=(q-q^{-1})id_{V\otimes
V}.
\end{align}

Moreover, we have
\begin{align} \label{formula-Rrelation2}
\theta_{V}^{\pm 1}=q^{\pm (N-\frac{1}{N})}id_{V}.
\end{align}

Given a framed link $\mathcal{L}$, we color all of its components by
the fundamental representation $V$. We choose a closed braid
representation $\hat{b}$ of $\mathcal{L}$ with $b\in B_{n}$ for some
$n$. Then every strand of $b$ was labeled by the fundamental
representation $V$. Since
\begin{align}
tr_{V^{\otimes n}}\Phi(b)
\end{align}
is a framed link invariant of $\mathcal{L}$, and it satisfies the
relations (\ref{formula-Rrelation1}) and (\ref{formula-Rrelation2}),
comparing with the variant skein relations (\ref{formula-vskein1})
and (\ref{formula-vskein2}), we obtain
\begin{proposition} \label{proposition-tracehomfly}
The framed link invariant $tr_{V^{\otimes n}}\Phi(b)$ is equal to
the variant HOMFLY-PT polynomial $\mathcal{H}^{(v)}(\mathcal{L})$ of
$\mathcal{L}$ after the substitutions of $x$ by $q^{-\frac{1}{N}}$
and $a$ by $q^{N}$, i.e.
\begin{align}
tr_{V^{\otimes
n}}\Phi(b)=\mathcal{H}^{(v)}(\mathcal{L})|_{x=q^{-\frac{1}{N}},a=q^{N}}.
\end{align}
\end{proposition}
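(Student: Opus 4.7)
\textbf{Proof proposal for Proposition \ref{proposition-tracehomfly}.} The strategy is to observe that both quantities are framed link invariants, and then to show that they satisfy the same defining local relations plus matching value on the unknot, so that they must coincide. By construction, $\mathcal{H}^{(v)}(\mathcal{L})$ is the universal invariant produced by the variant skein module of $\mathbb{R}^2$, hence it is characterised (up to the value on the empty diagram, or equivalently on the unknot) by the variant skein relation (\ref{formula-vskein1}) and the framing relation (\ref{formula-vskein2}). The Reshetikhin--Turaev trace $\mathrm{tr}_{V^{\otimes n}}\Phi(b)$ is a framed oriented link invariant by (\ref{formula-RUV}) and the ribbon category axioms recalled above, so it descends to a well-defined function on isotopy classes of framed links. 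Thus it suffices to match the two local moves and one initial value after the substitutions $x=q^{-1/N}$, $a=q^N$.

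First I would verify the skein relation. The identity (\ref{formula-Rrelation1}) reads $q^{1/N}\check{R}_{V,V}-q^{-1/N}\check{R}_{V,V}^{-1}=(q-q^{-1})\,\mathrm{id}_{V\otimes V}$. Applying this locally inside a braid word converts a positive crossing $\mathcal{L}_+$ (represented by $\check{R}_{V,V}$), a negative crossing $\mathcal{L}_-$ (by $\check{R}_{V,V}^{-1}$), and a smoothed crossing $\mathcal{L}_0$ (by $\mathrm{id}_{V\otimes V}$) into exactly the linear combination appearing in (\ref{formula-vskein1}) once one sets $x^{-1}=q^{1/N}$, i.e.\ $x=q^{-1/N}$; closing up with the remaining strands and taking the quantum trace then yields the same identity for $\mathrm{tr}_{V^{\otimes n}}\Phi(b)$ viewed as a map on variant skein classes. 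Next, the framing change (\ref{formula-vskein2}) asks for a scalar $xa$ whenever a positive kink is added. By (\ref{formula-Rrelation2}), a positive kink acts by $\theta_V=q^{N-1/N}\,\mathrm{id}_V$, and one computes $xa\big|_{x=q^{-1/N},\,a=q^N}=q^{-1/N}\cdot q^N=q^{N-1/N}$, matching $\theta_V$ exactly.

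It remains to match the value on the unknot $U$. On the trace side, $U$ is the closure of the trivial $1$-braid, so $\mathrm{tr}_V(\Phi(1))=\mathrm{tr}_V(K_{2\rho})$, which is the quantum dimension $[N]_q=(q^N-q^{-N})/(q-q^{-1})$. On the skein side, $\mathcal{H}(U;q,a)=(a-a^{-1})/(q-q^{-1})$ and $w(U)=0$, so by (\ref{formula-homfly-variant}), $\mathcal{H}^{(v)}(U)=\mathcal{H}(U;q,a)$, which equals $[N]_q$ after the substitution $a=q^N$. Thus both invariants agree on the unknot and satisfy the same skein and framing relations, so by a standard induction on the number of crossings of a diagram (reducing each crossing via the skein relation and each kink via the framing relation to diagrams with strictly simpler presentation until only disjoint unknots remain) they coincide on every framed link.

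The main subtlety I anticipate is not conceptual but bookkeeping: one must track the difference between the operator $\check{R}_{V,V}$, which already carries the normalising prefactor $q^{-1/N}$ visible in its explicit matrix form, and the abstract crossing symbol $\mathcal{L}_\pm$ used in the skein module, so that the substitution $x=q^{-1/N}$ really does absorb this prefactor and nothing more. Checking this carefully (in particular that the naturality (\ref{formula-RUV}) of $\check{R}$ ensures the local relation persists when embedded inside a larger braid) is what turns the argument from a formal matching of scalars into a rigorous proof.
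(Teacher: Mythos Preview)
Your proposal is correct and follows essentially the same approach as the paper: the paper argues that $tr_{V^{\otimes n}}\Phi(b)$ is a framed link invariant satisfying relations (\ref{formula-Rrelation1}) and (\ref{formula-Rrelation2}), and then simply compares these with the variant skein relations (\ref{formula-vskein1}) and (\ref{formula-vskein2}) to conclude. Your write-up is in fact more detailed than the paper's, as you also explicitly verify the unknot value and spell out the induction on crossings, which the paper leaves implicit.
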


Let $\tilde{H}^{N}_{n}(q)$ be the specialization of $H^{(v)}(x,q,a)$
by substituting with $x=q^{-\frac{1}{N}}$, $a=q^{N}$. Then by
relation (\ref{formula-Rrelation1}), the homomorphism $\Phi:
\mathbb{C}B_n\rightarrow End_{U_{q}(sl_N)}(V^{\otimes n})$ actually
gives a homomorphism $\varphi$ from $\tilde{H}^{N}_{n}(q)$ to
$End_{U_{q}(sl_N)}(V^{\otimes n})$ via
$q^{\frac{1}{N}}\sigma_i\rightarrow q^{\frac{1}{N}}\Phi(\sigma_i)$.
Let $\tilde{y}_{\lambda}^{(v)}\in \tilde{H}^{N}_{n}(q)$ be the
specialization of the idempotent $y_{\lambda}^{(v)}$ in
$H_{n}^{(v)}(x,q,a)$.

Let  $\mathcal{L}$ be a framed link with $L$ components
$\mathcal{K}_1,...,\mathcal{K}_L$, and given $L$ partitions
$\lambda^1,...,\lambda^L$. Suppose $b\in B_{m}$ is a braid
representation of $\mathcal{L}$, and the $j$-th strand of $b$
belongs to the component $\mathcal{K}_{\alpha_j}$ for $\alpha_j\in
\{1,2,...,L\}$. Let
$b^{(|\lambda^{\alpha_1}|,...,|\lambda^{\alpha_m}|)}\in B_n$ be the
braid obtained by cabling the $j$-th strand of $b$ to
$|\lambda^{\alpha_j}|$ parallel ones, where
$n=|\lambda^{\alpha_1}|+\cdots+|\lambda^{\alpha_m}|$.  By this
construction, the closure of the braid
$b^{(|\lambda^{\alpha_1}|,...,|\lambda^{\alpha_m}|)}\cdot(\tilde{y}_{\lambda^{\alpha_1}}^{(v)}\otimes\cdots
\otimes\tilde{y}_{\lambda^{\alpha_m}}^{(v)})$ is just the decorated
link $\mathcal{L}\star \otimes_{\alpha=1}^L
Q_{\lambda^{\alpha}}^{(v)}$ since
$(\tilde{y}_{\lambda^{\alpha_j}}^{(v)})^2=\tilde{y}_{\lambda^{\alpha_j}}^{(v)}$.

By Proposition \ref{proposition-tracehomfly}, we have
\begin{align} \label{formula-trV}
tr_{V^{\otimes
n}}\varphi(b^{(|\lambda^{\alpha_1}|,...,|\lambda^{\alpha_m}|)}\cdot(\tilde{y}_{\lambda^{\alpha_1}}^{(v)}\otimes\cdots
\otimes\tilde{y}_{\lambda^{\alpha_m}}^{(v)}))=\mathcal{H}^{(v)}(\mathcal{L}\star
\otimes_{\alpha=1}^L
Q_{\lambda^{\alpha}}^{(v)})|_{x=q^{-\frac{1}{N}},a=q^{N}}.
\end{align}

By the construction of the idempotent $y_{\lambda}^{(v)}$  (see
formula (\ref{formula-idempotent-variant}) ) and the formula
(\ref{formula-homfly-variant}), we conclude that
\begin{align}
(xa)^{-w(b^{(|\lambda^{\alpha_1}|,...,|\lambda^{\alpha_m}|)})}\mathcal{H}^{(v)}(\mathcal{L}\star
\otimes_{\alpha=1}^L
Q_{\lambda^\alpha}^{(v)})=a^{-w(b^{(|\lambda^{\alpha_1}|,...,|\lambda^{\alpha_m}|)})}\mathcal{H}(\mathcal{L}\star
\otimes_{\alpha=1}^L Q_{\lambda^{\alpha}}).
\end{align}

Thus we have
\begin{align}
\mathcal{H}^{(v)}(\mathcal{L}\star \otimes_{\alpha=1}^L
Q_{\lambda}^{(v)})=x^{w(b^{(|\lambda^{\alpha_1}|,...,|\lambda^{\alpha_m}|)})}\mathcal{H}(\mathcal{L}\star
\otimes_{\alpha=1}^L Q_{\lambda^{\alpha}}).
\end{align}

Then the formula (\ref{formula-trV}) leads to
\begin{align} (\label{formula-trVn})
tr_{V^{\otimes
n}}\varphi(b^{(|\lambda^{\alpha_1}|,...,|\lambda^{\alpha_m}|)}\cdot(\tilde{y}_{\lambda^{\alpha_1}}^{(v)}\otimes\cdots
\otimes\tilde{y}_{\lambda^{\alpha_m}}^{(v)}))=q^{-\frac{w(b^{(|\lambda^{\alpha_1}|,...,|\lambda^{\alpha_m}|)})}{N}}\mathcal{H}(\mathcal{L}\star
\otimes_{\alpha=1}^L Q_{\lambda^{\alpha}})|_{a=q^{N}}.
\end{align}

On the other hand side, since $\tilde{y}_{\lambda}^{(v)}$ is an
idempotent in the algebra $\tilde{H}^{N}_{n}(q)$, it follows that
\begin{align}
\varphi(\tilde{y}_{\lambda}^{(v)})\in End_{U_q(sl_N)}(V^{\otimes
|\lambda|})
\end{align}
 is an idempotent in $End_{U_q(sl_N)}(V^{\otimes
|\lambda|})$. Moreover, we have
\begin{align}
\varphi(\tilde{y}_{\lambda}^{(v)})V^{\otimes |\lambda|}=V_{\lambda}.
\end{align}

Finally, if we color every components $\mathcal{K}_1,...,
\mathcal{K}_L$ of the framed link $\mathcal{L}$ by irreducible
$U_q(sl_{N}\mathbb{C})$-modules
$V_{\lambda^{1}},...,V_{\lambda^{L}}$ respectively, then we have the
following cabling formula
\begin{proposition}
Under the above setting,
\begin{align} \label{formula-trVvarphi}
tr_{V_{\lambda^{\alpha_1}}\otimes\cdots \otimes
V_{\lambda^{\alpha_m}}}(\varphi(b))=tr_{V^{\otimes
n}}\varphi(b^{(|\lambda^{\alpha_1}|,...,|\lambda^{\alpha_m}|)}\cdot(\tilde{y}_{\lambda^{\alpha_1}}^{(v)}\otimes\cdots
\otimes\tilde{y}_{\lambda^{\alpha_m}}^{(v)})).
\end{align}
\end{proposition}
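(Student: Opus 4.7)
The plan is to realize each irreducible module $V_{\lambda^{\alpha_j}}$ as the image of a $U_q(sl_N\mathbb{C})$-equivariant idempotent sitting inside $V^{\otimes |\lambda^{\alpha_j}|}$, and then to transfer the quantum trace on the product of irreducibles into a quantum trace on $V^{\otimes n}$ twisted by the tensor product of these idempotents, using the cyclic property of the quantum trace together with the idempotent identity.

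First I would record, as already established in the passage above the statement, that $e_{\lambda} := \varphi(\tilde{y}_\lambda^{(v)}) \in \mathrm{End}_{U_q(sl_N)}(V^{\otimes |\lambda|})$ is an idempotent whose image is a copy of $V_\lambda$. Consequently the tensor product
\[
E \;:=\; e_{\lambda^{\alpha_1}}\otimes\cdots\otimes e_{\lambda^{\alpha_m}}
\]
is an equivariant idempotent on $V^{\otimes n}$ with image canonically identified with $V_{\lambda^{\alpha_1}}\otimes\cdots\otimes V_{\lambda^{\alpha_m}}$.

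Next I would verify the key compatibility: for every generator $\sigma_i \in B_m$, the operator $\varphi(\sigma_i) = \check R_{V_{\lambda^{\alpha_i}},V_{\lambda^{\alpha_{i+1}}}}^{\pm 1}$ acting on $V_{\lambda^{\alpha_1}}\otimes\cdots\otimes V_{\lambda^{\alpha_m}}$ coincides, after identifying this product with $\mathrm{Im}(E)$, with the restriction to $\mathrm{Im}(E)$ of the operator on $V^{\otimes n}$ obtained by cabling $\sigma_i$ into $|\lambda^{\alpha_i}|\cdot|\lambda^{\alpha_{i+1}}|$ elementary crossings. This is an iterated application of the naturality identities \eqref{formula-RUV} for $\check R_{U\otimes V,W}$ and $\check R_{U,V\otimes W}$, applied first to combine factors on one side of the crossing and then on the other. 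Composing these identities across a word representing the braid $b$ yields the equality
\[
\varphi(b)\circ E \;=\; \varphi\bigl(b^{(|\lambda^{\alpha_1}|,\ldots,|\lambda^{\alpha_m}|)}\bigr)\circ E
\]
of $U_q(sl_N)$-endomorphisms of $V^{\otimes n}$, where on the left $\varphi(b)$ is viewed through the identification $\mathrm{Im}(E)\cong V_{\lambda^{\alpha_1}}\otimes\cdots\otimes V_{\lambda^{\alpha_m}}$.

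Finally I would deduce the trace identity. Since $E$ is an equivariant projection and $K_{2\rho}$ acts in a group-like manner on tensor products, the partial and full quantum traces respect $E$, so that for any $z\in\mathrm{End}_{U_q(sl_N)}(V^{\otimes n})$ preserving $\mathrm{Im}(E)$ one has
\[
\mathrm{tr}_{\mathrm{Im}(E)}\bigl(z|_{\mathrm{Im}(E)}\bigr) \;=\; \mathrm{tr}_{V^{\otimes n}}(EzE) \;=\; \mathrm{tr}_{V^{\otimes n}}(zE),
\]
where the second equality uses the cyclic property of the quantum trace and $E^2=E$. Setting $z=\varphi(b^{(|\lambda^{\alpha_1}|,\ldots,|\lambda^{\alpha_m}|)})$ and combining with $zE=\varphi(b^{(|\lambda^{\alpha_1}|,\ldots,|\lambda^{\alpha_m}|)}\cdot(\tilde y_{\lambda^{\alpha_1}}^{(v)}\otimes\cdots\otimes\tilde y_{\lambda^{\alpha_m}}^{(v)}))$ gives \eqref{formula-trVvarphi}. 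The main obstacle is the compatibility step: one has to track the cabling carefully so that every elementary crossing of $b$ expands into the correct block of elementary $\check R_{V,V}$'s on the corresponding groups of parallel strands, and make sure the naturality identities \eqref{formula-RUV} are applied in the right order at each crossing; once this bookkeeping is in place, the rest of the argument is a purely categorical consequence of the ribbon structure.
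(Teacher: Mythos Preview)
Your argument is correct and follows essentially the same route as the paper. Both proofs rest on the same key ingredient: using the naturality identities \eqref{formula-RUV} to show that the cabled crossing $\varphi(\chi_{|\lambda|,|\lambda'|})$ composed with the idempotents $\varphi(\tilde y_{\lambda}^{(v)})\otimes\varphi(\tilde y_{\lambda'}^{(v)})$ recovers $\check R_{V_{\lambda'},V_{\lambda}}$ on the images --- this is exactly the paper's local identity \eqref{formula-varphi-ylambda}, which you phrase as the ``compatibility step'' --- and then iterating over the word for $b$. The paper states the local crossing identity explicitly and then iterates, while you package the same content as $\varphi(b)\circ E=\varphi(b^{(\ldots)})\circ E$ on $\mathrm{Im}(E)$ and finish with the trace cyclicity argument (which works because $E$, being $U_q$-equivariant, commutes with $K_{2\rho}$); the difference is organizational rather than mathematical.
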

\begin{proof}
We refer to Lemma 3.2 in \cite{LiZh10} for the basic idea of this
proof. Let $\chi_{n,n'}$ be the following $(n,n')$-crossing braid
\begin{align}
\chi_{n,n'}=\prod_{i=1}^{n'}(\sigma_{i+n-1}\sigma_{i+n-2}\cdots
\sigma_i)\in B_{n+n'}.
\end{align}
Applying the identities (\ref{formula-RUV}) inductively, we get
\begin{align}
\varphi(\chi_{n,n'})=\check{R}_{V^{\otimes n'},V^{\otimes n}}.
\end{align}

Let $\lambda$ and $\lambda'$ be two partitions with $|\lambda|=n$
and $|\lambda'|=n'$, then
$\varphi(\tilde{y}_{\lambda}^{(v)})V^{\otimes n}=V_{\lambda}$ and
$\varphi(\tilde{y}_{\lambda'}^{(v)})V^{\otimes n}=V_{\lambda'}$. We
obtain
\begin{align} \label{formula-varphi-ylambda}
\varphi(\tilde{y}_{\lambda}^{(v)}\otimes
\tilde{y}_{\lambda'}^{(v)})\cdot\varphi(\chi_{n,n'})=\varphi(\tilde{y}_{\lambda}^{(v)})\otimes\varphi(
\tilde{y}_{\lambda'}^{(v)})\cdot \check{R}_{V^{\otimes
n'},V^{\otimes n}}=\check{R}_{V_{\lambda'},V_{\lambda}}\cdot
\varphi(\tilde{y}_{\lambda'}^{(v)})\otimes\varphi(
\tilde{y}_{\lambda}^{(v)}).
\end{align}

Therefore, by applying formula (\ref{formula-varphi-ylambda})
iteratively, we have
\begin{align}
&tr_{V^{\otimes
n}}(\varphi(b^{(|\lambda^{\alpha_1}|,...,|\lambda^{\alpha_m}|)}\cdot\tilde{y}_{\lambda^{\alpha_1}}^{(v)}\otimes\cdots
\otimes\tilde{y}_{\lambda^{\alpha_m}}^{(v)}))\\\nonumber
&=tr_{V^{\otimes |\lambda^{\alpha_1}|}\otimes \cdots \otimes
V^{\otimes |\lambda^{\alpha_m}|} }(\varphi(b)\cdot
\varphi(\tilde{y}_{\lambda^{\alpha_1}}^{(v)})\otimes\cdots
\otimes\varphi(\tilde{y}_{\lambda^{\alpha_m}}^{(v)})\\\nonumber
&=tr_{V_{\lambda^{\alpha_1}}\otimes\cdots \otimes
V_{\lambda^{\alpha_m}}}(\varphi(b)).
\end{align}
\end{proof}

\begin{theorem}
The two definitions (\ref{formula-coloredhomfly}) and
(\ref{formula-coloredhomfly-skein}) for colored HOMFLY-PT invariant
$W_{\lambda^1,...,\lambda^L}(\mathcal{L};q,a)$ are equal.
\end{theorem}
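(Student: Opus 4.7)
The plan is to assemble the pieces already developed in the appendix and then check that all the framing correction factors cancel as expected. First, I would start from the quantum-group side. By definition,
\[
W_{\vec{\lambda}}(\mathcal{L};q,a)=q^{\frac{2}{N}\sum_{\alpha<\beta}|\lambda^\alpha||\lambda^\beta|lk(\mathcal{K}_\alpha,\mathcal{K}_\beta)}\,I^{sl_N\mathbb{C}}_{\vec{\lambda}}(\mathcal{L})\Big|_{q^N=a},
\]
where $I^{sl_N\mathbb{C}}_{\vec{\lambda}}(\mathcal{L})$ is given by (\ref{formula-Ilambda}). Substituting $(\lambda^\alpha,\lambda^\alpha+2\rho)=\kappa_{\lambda^\alpha}+|\lambda^\alpha|N-|\lambda^\alpha|^2/N$ splits the $q$-prefactor into three parts: an integer part $q^{-\sum_\alpha w(\mathcal{K}_\alpha)\kappa_{\lambda^\alpha}}$, an $N$-power part that will become $a^{-\sum_\alpha|\lambda^\alpha|w(\mathcal{K}_\alpha)}$ after the substitution $q^N\mapsto a$, and a fractional $1/N$-power part $q^{\sum_\alpha|\lambda^\alpha|^2 w(\mathcal{K}_\alpha)/N}$.

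Next, I would feed in the cabling/idempotent identification from the appendix. Proposition (\ref{formula-trVvarphi}) rewrites the quantum trace over $V_{\vec{\lambda}}$ as a quantum trace over $V^{\otimes n}$ of the cabled braid $b^{(|\lambda^{\alpha_1}|,\ldots,|\lambda^{\alpha_m}|)}$ composed with the product of skein idempotents $\tilde{y}^{(v)}_{\lambda^{\alpha_j}}$. Then identity (\ref{formula-trVn}) converts this trace into the framed HOMFLY-PT polynomial $\mathcal{H}_{\vec{\lambda}}(\mathcal{L};q,a)|_{a=q^N}$ of the decorated link, up to the overall factor $q^{-w(b^{\mathrm{cable}})/N}$, where $w(b^{\mathrm{cable}})$ is the writhe of the cabled braid.

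The key combinatorial step is to compute this cabled writhe. A direct count of positive and negative crossings under cabling gives
\[
w(b^{\mathrm{cable}})=\sum_{\alpha=1}^L|\lambda^\alpha|^2\,w(\mathcal{K}_\alpha)+2\sum_{\alpha<\beta}|\lambda^\alpha||\lambda^\beta|\,lk(\mathcal{K}_\alpha,\mathcal{K}_\beta),
\]
because each self-crossing of $\mathcal{K}_\alpha$ produces $|\lambda^\alpha|^2$ crossings after $|\lambda^\alpha|$-cabling, and each pair of crossings between $\mathcal{K}_\alpha$ and $\mathcal{K}_\beta$ contributes $|\lambda^\alpha||\lambda^\beta|$ crossings.

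Finally, I would combine everything: the fractional $q^{1/N}$-factor from the quantum group definition, the fractional $q^{1/N}$-factor appearing in front of $\mathcal{H}_{\vec{\lambda}}$, and the linking-number prefactor in $W_{\vec{\lambda}}$ all live in the subring $q^{\bullet/N}$ and one checks that they cancel exactly: the $\sum|\lambda^\alpha|^2 w(\mathcal{K}_\alpha)/N$ pieces cancel, and the $2\sum|\lambda^\alpha||\lambda^\beta|lk/N$ contributions cancel against the explicit linking-number prefactor. What remains is precisely
\[
W_{\vec{\lambda}}(\mathcal{L};q,a)=q^{-\sum_\alpha\kappa_{\lambda^\alpha}w(\mathcal{K}_\alpha)}\,a^{-\sum_\alpha|\lambda^\alpha|w(\mathcal{K}_\alpha)}\,\mathcal{H}_{\vec{\lambda}}(\mathcal{L};q,a),
\]
which is the skein definition (\ref{formula-coloredhomfly-skein}). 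The main obstacle is the careful bookkeeping of the three independent sources of fractional powers of $q$ (the linking-number prefactor, the $|\lambda|^2/N$ term in the quadratic Casimir, and the cabling correction $q^{-w(b^{\mathrm{cable}})/N}$); once the cabled writhe formula is in hand, the cancellation is a direct computation. No further essential difficulty arises, since Proposition (\ref{formula-trVvarphi}) and identity (\ref{formula-trVn}) already encode the hard representation-theoretic and skein-theoretic content.
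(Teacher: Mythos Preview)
Your proposal is correct and follows essentially the same route as the paper's proof: both invoke the cabling formula (\ref{formula-trVvarphi}) and the trace-to-HOMFLY identity (\ref{formula-trVn}), compute the writhe of the cabled braid as $\sum_\alpha|\lambda^\alpha|^2 w(\mathcal{K}_\alpha)+2\sum_{\alpha<\beta}|\lambda^\alpha||\lambda^\beta|\,lk(\mathcal{K}_\alpha,\mathcal{K}_\beta)$, and then cancel the fractional $q^{1/N}$-powers. Your write-up is in fact more explicit than the paper's about how the three sources of $q^{1/N}$ cancel, which the paper leaves to the reader.
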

\begin{proof}
According to definition (\ref{formula-coloredhomfly}),
\begin{align}
&W_{\lambda^1,...,\lambda^{L}}(\mathcal{L};q,a)\\\nonumber
&=q^{-\sum_{\alpha=1}^{L}w(\mathcal{K}_\alpha)
(\kappa_{\lambda^{\alpha}}+|\lambda^{\alpha}|N-\frac{|\lambda^{\alpha}|^2}{N})+\frac{2}{N}\sum_{\alpha<\beta}
lk(\mathcal{K}_\alpha,\mathcal{K}_{\beta})|\lambda^{\alpha}|\cdot|\lambda^{\beta}|}tr_{V_1\otimes
\cdots \otimes V_n}(\Phi(b))|_{q^N=a}.
\end{align}

By formulas (\ref{formula-trVn}) and (\ref{formula-trVvarphi}), and
the following identity
\begin{align}
\sum_{\alpha=1}^Lw(\mathcal{K}_\alpha)|\lambda^\alpha|^2+2\sum_{1\leq
\alpha<\beta\leq
L}lk(\mathcal{K}_\alpha,\mathcal{K}_\beta)|\lambda^\alpha|\cdot|\lambda^{\beta}|=w(\beta^{(n_{i_1},...,n_{i_m})}),
\end{align}
we obtain
\begin{align}
W_{\lambda^1,...,\lambda^{L}}(\mathcal{L};q,a)=q^{-\sum_{\alpha=1}^{L}w(\mathcal{K}_\alpha)
}a^{-\sum_{\alpha=1}^{L}|\lambda^{\alpha}|w(\mathcal{K}_\alpha)}\mathcal{H}(\mathcal{L}\star
\otimes_{\alpha=1}^L Q_{\lambda^{\alpha}};q,a).
\end{align}
\end{proof}

\begin{remark}
Comparing to the character formula (\ref{formula-Wlambda}) for the
colored HOMFLY-PT invariants obtained in \cite{LiZh10}, we have
\begin{align}
\mathcal{H}(\mathcal{L}\star \otimes_{\alpha=1}^L
Q_{\lambda^{\alpha}};q,a)=\sum_{|\lambda|=n}\zeta^{\lambda}(\varphi(b^{(n_{\alpha_1},...,n_{\alpha_m})}
y_{\lambda^{\alpha_1}}\otimes \cdots \otimes
y_{\lambda^{\alpha_m}}))s_{\lambda}^*(q,a).
\end{align}
This formula would be useful when we would like to apply the
character theory of Hecke algebra to study the HOMFLY-PT skein
theory.
\end{remark}

\end{document}